\DeclareUrlCommand\ULurl{%
  \renewcommand\UrlLeft{\uline\bgroup}%
  \renewcommand\UrlRight{\egroup}}
\newtheorem{theorem}{Theorem}[section] % 1st argument is your name for it
\newtheorem{lemma}[theorem]{Lemma}     % 2nd argument is what is printed
\newtheorem{corollary}[theorem]{Corollary}
\newtheorem{proposition}[theorem]{Proposition}
\title[KO-Theory of Complex Flag Varieties of Ordinary Type]% end with percent
 {KO-Theory of Complex Flag Varieties of Ordinary Type} % This is the full title of the paper
\author{Tobias Hemmert}
\newcommand\blfootnote[1]{%
  \begingroup
  \renewcommand\thefootnote{}\footnote{#1}%
  \addtocounter{footnote}{-1}%
  \endgroup
}
\begin{document}
\maketitle
\tableofcontents
\begin{abstract}
\blfootnote{This research was partially conducted in association with \emph{GRK 2240: Algebro-Geometric Methods in Algebra, Arithmetic and Topology}, which is funded by the DFG.}We compute the topological Witt groups of every complex flag manifold of
ordinary type, and thus the interesting (i.e. torsion) part of the
KO-groups of these manifolds.  Equivalently, we compute Balmer's Witt
groups of each flag variety of ordinary type over an algebraically
closed field of characteristic not two.

Our computation is based on an approach developed by Zibrowius.  For
types A, B and C, we obtain a full description not only of the additive
but also of the multiplicative structure of the graded Witt rings. 
\end{abstract}

\section*{Introduction}
As one of the first extraordinary cohomology theories to be discovered, the computation of real topological K-theory $KO^\ast$ is a classical problem. It is therefore surprising that our knowledge of the KO-rings of very common classes of spaces such as homogenous spaces is rather patchy whereas their ordinary cohomology rings are often well-understood. We want to focus here on \emph{complex flag varieties}, that is homogeneous spaces $G/H$ where $G$ is a compact semisimple Lie group and $H=C_G(S)$ is the centraliser of a torus $S\subset G$. One can show (using \cite[Lemma 13.6]{borelhirz1}) that in the compact simple Lie groups of ordinary type, up to conjugation the centralisers of a torus are given as follows:
\vspace{3mm}
\begin{center}
    \begin{tabular}{ccl}
         $G$ & \begin{tabular}{@{}c@{}}Centralisers $H$ of a torus \\ (up to conjugation)\end{tabular} \\
         \hline
         $SU(n)$ &  $S(U(n_1)\times\ldots\times U(n_l))$ & where $n_1+\ldots+n_l=n$   \\
         $SO(2n+1)$ &  $SO(2m+1)\times U(n_1)\times\ldots\times U(n_l)$ & where $m+n_1+\ldots+n_l=n$  \\
         $Sp(n)$ &   $Sp(m)\times U(n_1)\times\ldots \times U(n_l)$ & where $m+n_1+\ldots+n_l=n$ \\
         $SO(2n)$ & $SO(2m)\times U(n_1)\times \ldots\times U(n_l)$ & where $m+n_1+\ldots+n_l=n$\\
    \end{tabular}
\end{center}
\vspace{3mm}
We call the resulting homogeneous spaces $G/H$ \emph{complex flag varieties of ordinary type}. In this paper, we compute the torsion part of their KO-groups. The free part is easily determined from the rational cohomology \cite[Lemma 1.2]{Zibrowius1}, which is well-understood.

Before we go into details, let us put our results into perspective by giving a brief overview of what is already known about the KO-groups of complex flag varieties. In \cite{fujii1}, all KO-groups of complex projective space $\mathbb{CP}^n$ were computed. This was generalised to the KO-groups of complex Grassmannians $SU(m+n)/S(U(m)\times U(n))$ in \cite{konohara1}. Much more recently, the KO-groups of all complex flag varieties of the form $SU(n_1+\ldots+n_l)/S(U(n_1)\times\ldots\times U(n_l))$, i.e. with $G$ of type $A_n$, were computed in \cite{kishimotokonoohsita1}\footnote{Unfortunately, this paper contains some arithmetic mistakes so that the final result is flawed. This can already be seen from the fact that their expressions for the ranks that are given in Table 1 of \cite{kishimotokonoohsita1} do not always yield integers but fractions.}. The KO-theory of full flag varieties $G/T$, where $T$ is a maximal torus in $G$, was obtained for the simple groups $G$ of ordinary type in \cite{kishimotokonoohsita2}. This was extended to include $G=G_2,F_4,E_6$ in \cite{kishimotoohsita1}.

All the results about the KO-theory of complex flag varieties mentioned so far were obtained in essentially the same way: The authors considered the Atiyah-Hirzebruch spectral sequence for $KO^\ast$, computed the $E_2$-page including the differentials, obtained the $E_3$-page from this and then showed in each individual case that the spectral sequence collapses on the third page. The arguments required to make this work are intricate, which is why only partial results are known.

In \cite{Zibrowius1}, Zibrowius develops an alternative approach and computes the KO-groups of all full flag varieties $G/T$ in an essentially type-independent way. The basic strategy is to consider the so-called \emph{Witt groups} $W^i$. For a topological space $X$, they are defined as the cokernels of the realification maps $r_i\colon K^{2i}(X)\to KO^{2i}(X)$:
\[
W^i(X):=KO^{2i}(X)/r_i
\]
Since the KO-groups are 8-periodic, the Witt groups are 4-periodic. The important observations in \cite{Zibrowius1} are:
\begin{itemize}
\item These Witt groups are computable via a result of Bousfield (cf. Lemma \ref{lemmabousfield}).
\item  The Witt groups determine the torsion in the KO-groups of complex flag varieties \cite[Lemma 1.2]{Zibrowius1}: For a complex flag variety $X=G/H$, we have isomorphisms
\begin{align*}
KO^{2i}(X)&\cong W^{i+1}(X)\oplus \text{free part}\\
KO^{2i+1}(X)&\cong W^{i+1}(X)
\end{align*}
\item The \emph{total Witt group}
\[
W^\ast(X):=\bigoplus_{i\in\mathbb{Z}_4} W^i(X)
\]
is a graded ring, and the ring structure of $W^\ast$ determines part of the ring structure of $KO^\ast$ \cite[Remark 1.3]{Zibrowius1}.
\end{itemize}
The whole computation of the Witt ring in \cite{Zibrowius1} is essentially representation-theoretic. The result for full flag varieties can be concisely stated as follows:
\begin{theorem*}[{\cite[Theorem 3.3]{Zibrowius1}}]
Let $G$ be a simply connected compact Lie group and let $T\subset G$ be a maximal torus. The Witt ring of $G/T$ is an exterior algebra on $b_\mathbb{H}$ generators of degree 1 and $\frac{b_\mathbb{C}}{2}+b_\mathbb{R}$ generators of degree 3, where $b_\mathbb{C}$, $b_\mathbb{R}$ and $b_\mathbb{H}$ denote the number of fundamental representations of $G$ of complex, real and quaternionic type, respectively.
\end{theorem*}
In this paper we use the approach developed by Zibrowius to compute the Witt rings of all complex flag varieties of types $A_n$, $B_n$ and $C_n$ (cf. Theorems \ref{thmwittringtypea}, \ref{thmwittringtypeb} and \ref{theoremwittringtypecn}). The method also works for type $D_n$ and the arguments are very similar to the other types, but the computation is more technical because one has to distinguish more cases. In some cases of type $D_n$, we have not been able to determine the ring structure, only the Witt \emph{groups}. For type $D_n$, we therefore only briefly state our results in this paper (cf. Theorem \ref{thmwittringtypedn}), and refer to \cite{Hemmert} for details of the computation.

As an example, we obtain the following result for type $C_n$:
\begin{theorem}\label{theoremintrotypec}
Let
$X:=Sp(n)/Sp(m)\times U(n_1)\times \ldots\times U(n_l)$
where $m+n_1+\ldots+n_l=n$ and set
\[
h:=\Bigl\lfloor\frac{m}{2}\Bigr\rfloor +\Bigl\lfloor\frac{n_1}{2}\Bigr\rfloor+\ldots+\Bigl\lfloor\frac{n_l}{2}\Bigr\rfloor \text{ and }f:=\Bigl\lfloor\frac{n}{2}\Bigr\rfloor-h \text{ and }g:=\Bigl\lceil \frac{n-m}{2}\Bigr\rceil.
\]
Then there is an isomorphism of $\mathbb{Z}_4$-graded rings
\[
W^\ast(X)\cong\frac{\mathbb{Z}_2\left[a_1,\ldots,a_{\lfloor m/2\rfloor}\right]\otimes \bigotimes_{p=1}^l\mathbb{Z}_2\left[b_1^{(p)},\ldots,b_{\lfloor n_p/2\rfloor}^{(p)}\right]}{(\mu_1,\ldots,\mu_h)}\otimes_{\mathbb{Z}_2} \bigwedge(u_1,\ldots,u_f,v_1,\ldots,v_g),
\]
where the generators on the right are of degrees $|a_i|=|b_j|=0$, $|u_i|=3$ and $|v_j|=1$. The relations $\mu_j$ are given by
\[
\mu_j=\sum_{c+c_1+\ldots+c_l=j}a_c\cdot b_{c_1}^{(1)}\ldots b_{c_l}^{(l)}+\binom{2n}{2j},
\]
where it is understood that
\begin{align*}
a_0:=1 \text{ and } a_i:=a_{m-i}\text{ for all }\lfloor m/2\rfloor <i\leq m,\\
b_0^{(p)}:=1\text{ for all }1\leq p\leq l\text{ and }b_i^{(p)}:=b_{n_p-i}^{(p)}\text{ for all }1\leq p\leq l,~\lfloor n_p/2\rfloor <i\leq n_p.
\end{align*}
\end{theorem}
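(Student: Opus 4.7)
The plan is to follow Zibrowius's representation-theoretic strategy. The core tool is Bousfield's Lemma (Lemma \ref{lemmabousfield}), which describes $W^*(X)$ in terms of the complex K-theory ring together with the action of complex conjugation and its mod-$2$ reduction. The computation therefore splits into two sub-tasks: giving a concrete presentation of $K^*(X)$ as a $\mathbb{Z}_2$-graded ring, and then extracting the cokernel of the realification map with its ring structure.

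\textbf{Step 1: The complex K-theory ring.} Since $X=Sp(n)/Sp(m)\times U(n_1)\times\cdots\times U(n_l)$ is a complex flag variety, $K^*(X)$ is concentrated in even degrees and torsion-free. By Hodgkin's theorem it is isomorphic to $R(H)\otimes_{R(G)}\mathbb{Z}$, which via the symplectic splitting principle for the $Sp(m)$-factor and the usual splitting principle for each $U(n_p)$-factor admits a presentation as a polynomial ring in preliminary classes $\widetilde a_i$ (for $1\leq i\leq m$) and $\widetilde b_j^{(p)}$ (for $1\leq j\leq n_p$) modulo the relations coming from the vanishing of the higher Pontryagin classes of the ambient $Sp(n)$. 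Expanding the total symplectic Pontryagin class of a trivial $\mathbb{H}^n$-bundle in K-theory produces precisely the binomial coefficients $\binom{2n}{2j}$ that appear in the statement of the relations $\mu_j$.

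\textbf{Step 2: Realification and degree-$0$ generators.} Classify the representation-theoretic contributions by type. The $Sp(m)$-factor is quaternionic, so its even-indexed symmetric functions yield real (hence surviving) classes $a_i$ only for $1\leq i\leq\lfloor m/2\rfloor$, and odd-indexed functions produce redundant classes; similarly, for each $U(n_p)$ the Chern-root symmetric functions satisfy the palindromic identification $b_i^{(p)}\equiv b_{n_p-i}^{(p)}$ for $i>\lfloor n_p/2\rfloor$, reflecting the $V\leftrightarrow\overline V$ duality induced by realification. Rewriting the Pontryagin-class relations from Step 1 in terms of the surviving generators produces exactly the relations $\mu_1,\ldots,\mu_h$.

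\textbf{Step 3: Odd-degree generators and ring structure.} Bousfield's Lemma also produces odd-degree generators in $W^*(X)$: degree-$1$ generators arise from genuinely complex (type $\mathbb{C}$) contributions and degree-$3$ generators from quaternionic contributions that become detectable only after a double suspension. Counting fundamental representations of $H$ by type and subtracting those pulled back from $G$ yields $g=\lceil(n-m)/2\rceil$ degree-$1$ generators $v_j$ and $f=\lfloor n/2\rfloor-h$ degree-$3$ generators $u_i$. Their exterior-algebra structure, and their freeness over the degree-$0$ part, follow from naturality with respect to the projection $G/T\to G/H$ together with Zibrowius's theorem on $W^*(G/T)$.

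\textbf{Main obstacle.} The main difficulty is verifying that the relations have exactly the stated closed form $\mu_j$ with no extra terms. This requires a careful expansion of the total Pontryagin class of $Sp(n)$ in terms of the chosen generators modulo the palindromic identifications, together with the verification that the binomial coefficient $\binom{2n}{2j}$ arising from the trivialisation of the ambient $Sp(n)$-bundle survives the reduction to $W^*$ without modification. A secondary difficulty is ruling out multiplicative extensions in the Bousfield exact sequence when assembling the tensor-product decomposition; this has to be handled by comparison with the already-known multiplicative structure of $W^*(G/T)$.
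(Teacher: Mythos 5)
Your overall framework (Bousfield's lemma reducing $W^\ast$ to the Tate cohomology of $K^0$, then Hodgkin's theorem for $K^0$) is the same as the paper's, but the proposal contains a factual error in Step~3 and completely omits what is in fact the technical heart of the argument.

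\textbf{The degree assignments in Step~3 are wrong.} In type $C_n$ every irreducible representation of $Sp(n)$ (and of the factors of $H$) is self-dual, so there are no ``type $\mathbb C$'' contributions at all. The distinction that actually determines the Witt degree of the exterior generators is real vs.\ quaternionic: the classes $u_i \in W^{-1}\cong W^3$ arise from the \emph{real} representations $\tilde z_{2j}\in R(Sp(n))$ with even index, while the classes $v_j\in W^{-3}\cong W^1$ arise from the \emph{quaternionic} $\tilde z_{2j+1}$ with odd index. This is read off from Lemma~\ref{gradinglemma2} applied to the explicit relations $w_t+w_t^\ast\in\sum_i A\cdot P_i$, where $P_i=i_C^\ast(\hat z_i-\mathrm{rk}\,\hat z_i)$. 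Your claim that degree-$1$ generators come from ``genuinely complex'' contributions and degree-$3$ ones from ``quaternionic contributions after a double suspension'' misidentifies the mechanism and would give the wrong grading if carried out.

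\textbf{The argument for the exterior-algebra structure is unsound.} You propose to deduce the exterior relations and freeness ``by naturality with respect to $G/T\to G/H$.'' The induced map $W^\ast(G/H)\to W^\ast(G/T)$ is not obviously injective, so one cannot transfer $x^2=0$ or freeness along it without further work. The paper proves squaring-to-zero intrinsically (the Proposition in Section~\ref{chapteroutlinecomputation} showing $x^2=0$ for any $x\in W^{-1}\oplus W^{-3}$ whenever $K^1=0$), and gets freeness of the module structure from Lemma~\ref{tatecohlemma4} on Tate cohomology of quotients by regular sequences.

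\textbf{The genuinely hard part is missing.} Step~2 of the proposal says only that ``rewriting the Pontryagin-class relations \dots\ produces exactly the relations $\mu_1,\ldots,\mu_h$,'' and your ``main obstacle'' paragraph acknowledges this is delicate but does not address it. In the paper this is precisely the content of Propositions~\ref{propositionregularitynu} and~\ref{propositionrelationsnu}: one must prove that the classes $[P_j]=\nu_j\in h^+(R(H_C))$ indexed by the set $S=\{1,\ldots,m\}\cup\{m<k\leq 2h\text{ even}\}$ form a regular sequence, \emph{and} that every other $\nu_k$ is an explicit $\mathbb{Z}_2$- (or $R$-)linear combination of these; this requires the graded-ring regularity machinery of Section~\ref{sectionpolynomialeqns} (Corollary~\ref{corollaryinhomogeneousregseq}, Proposition~\ref{Qaregularsequence}) and the power-series argument of Proposition~\ref{propositionkernelpsi}. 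Without these verifications, conditions (A1) and (A2) are not established and Proposition~\ref{propositiontatecohofktheory} cannot be applied, so the claimed presentation of $W^\ast(X)$ is unsupported.
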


The homogeneous spaces $G/H$ we study in this paper have the structure of
complex projective varieties.  They can be written as $G_\mathbb{C}/P$, where $G_\mathbb{C}$
is a simple complex algebraic group containing $G$ as a maximal compact
subgroup, and $P$ is a parabolic subgroup of $G_\mathbb{C}$ \cite[Proposition 4.1]{Zibrowius2}. It is an observation of Zibrowius in \cite{Zibrowius1} that the computation of our topological Witt ring of $G/H$ is completely parallel to a computation of the algebraic Witt ring (in the sense of Balmer \cite{Balmer1}) of $G_\mathbb{C}/P$ and that they are thus isomorphic. More generally, this algebraic computation applies to all algebraically closed fields of characteristic not two (see also Remark \ref{remarkarbitraryfield}).

Let us state this more precisely. Recall that by classical results of Chevalley, over any algebraically closed field, the simple simply connected algebraic groups are in bijective correspondence with the connected Dynkin diagrams. The conjugacy classes of parabolic subgroups of such a simple simply connected algebraic group $G$ are in bijective correspondence with subsets of nodes of the Dynkin diagram of $G$. The following holds:

\begin{theorem}
Fix a connected Dynkin diagram and some subset of its nodes. Now let $k$ be an algebraically closed field of characteristic not two, $G_k$ be the simple simply connected algebraic group over $k$ corresponding to our Dynkin diagram and $P_k$ be a parabolic subgroup of $G_k$ corresponding to our subset of nodes of the Dynkin diagram. Then the (algebraic) Witt ring $W^\ast(G_k/P_k)=\bigoplus_{i\in \mathbb{Z}_4} W^i(G_k/P_k)$ is independent of $k$ and is isomorphic to the topological Witt ring of $G/H$ computed in this paper, where $G\subset G_\mathbb{C}$ is a maximal compact subgroup (thus a compact simple Lie group) and $H=P_\mathbb{C}\cap G$.
\end{theorem}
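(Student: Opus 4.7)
The plan is to show that the computation carried out in this paper can be run in Balmer's algebraic setting over any algebraically closed field $k$ of characteristic not two, and that the outcome depends only on the combinatorial data of the Dynkin diagram and the chosen subset of nodes.

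First I would treat $k=\mathbb{C}$. Zibrowius's observation is that our topological computation rests on two essentially formal ingredients: the $K$-ring $K^0(G/H)$ with its $\lambda$-structure and its complex-conjugation involution, and Bousfield's exact sequence relating $KO^\ast$ (hence $W^\ast$) to $K^\ast$. For the complex variety $G_\mathbb{C}/P$, each of these admits a precise algebraic counterpart: the Grothendieck ring $K_0$ of algebraic vector bundles is isomorphic to $K^0(G/H)$ via analytification, and both admit the Pittie--Steinberg description $R(P)\otimes_{R(G)}\mathbb{Z}$; the involution on $K^0$ coming from complex conjugation matches, under this comparison, the duality involution underlying Balmer's construction; and an algebraic analogue of Bousfield's exact sequence for Balmer Witt groups is available through the work of Nenashev, Walter and Schlichting. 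Running the same representation-theoretic computation then produces a Balmer Witt ring isomorphic to our topological answer.

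To extend this to an arbitrary algebraically closed $k$ of characteristic not two, I would argue that every ingredient of the algebraic computation depends only on the root datum. The $K$-ring $K_0(G_k/P_k)$ has the Pittie--Steinberg description $R(P_k)\otimes_{R(G_k)}\mathbb{Z}$, and both $R(G_k)$ and $R(P_k)$, together with their duality involutions and $\lambda$-operations, are determined by the Dynkin diagram and the chosen subset of nodes. The algebraic Bousfield sequence and the resulting Witt ring are then extracted from this data by a formal procedure that makes no reference to $k$. Therefore the Witt ring over $k$ agrees with the one over $\mathbb{C}$, which by the first step is the topological Witt ring of $G/H$.

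The main obstacle will be setting up the algebraic Bousfield sequence for Balmer's Witt groups of $G_k/P_k$ carefully enough to read off multiplicativity and $k$-independence simultaneously. Additive $k$-independence for the Balmer Witt groups of a cellular projective variety is relatively standard, but compatibility with the graded ring structure must be verified at the level of the symmetric-monoidal structure on perfect complexes with duality, and it is here that one has to be most careful in transporting the representation-theoretic computation between base fields.
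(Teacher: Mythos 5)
Your proposal takes essentially the same route as the paper, which packages the argument in Remark \ref{remarkarbitraryfield} via three citations: an algebraic Bousfield lemma for Balmer Witt groups \cite[Thm 2.3]{Zibrowius3}, Panin's algebraic analogue of Hodgkin's theorem \cite[Thm 2.2]{panin}, and the field-independence of representation rings \cite[Th\'eor\`eme 4]{Serre2}. The only minor difference is your attribution of the algebraic Bousfield sequence to Nenashev--Walter--Schlichting, whereas the precise form actually invoked (including the multiplicative comparison that you correctly flag as the delicate point) is taken from Zibrowius.
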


We now give a brief outline of this paper. In section \ref{sectionwittringstatecoh}, we explain Bousfield's lemma, which states that Witt rings can be computed as Tate cohomology rings. This is the result that makes the Witt ring computable for us. We then collect some technical lemmas on Tate cohomology that we need later for our concrete computations. Section \ref{tatecohsubsect3} comprises the computation of the representation rings of some compact Lie groups and their Tate cohomology, which we will also need for our computations. Finally, in section \ref{chapteroutlinecomputation} we are ready to give an outline of the approach of Zibrowius to the computation of the Witt rings of complex flag varieties. We extend some of his results so that they apply more generally to arbitrary complex flag varieties and not only to full flag varieties. The actual computations of the Witt rings of ordinary complex flag varieties are then performed in section \ref{sectioncomputation}. The proofs of some crucial lemmas at the heart of our computations are postponed to section \ref{sectionpolynomialeqns}.

\begin{conventions}
We write $\mathbb{Z}_n$ for the integers modulo $n$.
\end{conventions}

\begin{acknowledgements}\label{ackref}
The work in this paper is part of my PhD thesis. Many thanks are due to my advisor Marcus Zibrowius for his unwavering patience, his constant support and innumerable discussions. I also wish to thank Stefan Schr\"{o}er for answering my commutative algebra questions.
\end{acknowledgements}

\section{Witt rings and Tate cohomology}\label{sectionwittringstatecoh}
We saw in the introduction that the torsion in the KO-groups of complex flag varieties is determined by their Witt groups. The decisive result in this section is Bousfield's lemma, which says that the Witt ring of a complex flag variety is isomorphic to the Tate cohomology ring of its complex K-theory. We will see subsequently that this makes the Witt ring computable.

We give an outline of this section. In \ref{tatecohsubsect1}, we give a definition of Tate cohomology and explain the aforementioned Bousfield's lemma. In \ref{tatecohsubsect2}, we collect a few technical lemmas about the Tate cohomology of quotient rings that will be essential for our computation of the Witt ring of complex flag varieties.
\subsection{Witt rings are Tate cohomology rings}\label{tatecohsubsect1}
A \emph{$\ast$-ring} $A$ is a commutative unital ring with an involution $\ast\colon A\to A$ which is a ring isomorphism. A \emph{$\ast$-ideal} of a $\ast$-ring is an ideal which is closed under the involution. A \emph{$\ast$-module} is an abelian group with an additive involution. A \emph{$\ast$-module $M$ over a $\ast$-ring A} is an $A$-module $M$ which is also a $\ast$-module, such that the involutions on $A$ and $M$ are compatible with the module structure, i.e. $(a\cdot m)^\ast=a^\ast\cdot m^\ast$ for all $a\in A$ and $m\in M$. For example, any $\ast$-ideal of $A$ is a $\ast$-module over $A$. Every $\ast$-module is a $\ast$-module over $\mathbb{Z}$ with the trivial involution on $\mathbb{Z}$. A \emph{morphism of $\ast$-modules} is a homomorphism of modules that preserves the involution and a \emph{morphism of $\ast$-rings} is a ring homomorphism that preserves the involution. For a $\ast$-module $M$, an element $m\in M$ is \emph{self-dual} if $m^\ast=m$ and \emph{anti-self-dual} if $m^\ast=-m$.
\begin{example}
For any compact Hausdorff space $X$, its complex topological K-theory $K^0(X)$ is a $\ast$-ring with the duality induced by assigning to each complex vector bundle its dual bundle.

For any compact connected Lie group $G$, its complex representation ring $R(G)$ is a $\ast$-ring with the duality induced by assigning to each complex representation its dual representation.
\end{example}
For a $\ast$-module $M$, we define its \emph{Tate cohomology} to be $h^\ast(M)=h^+(M)\oplus h^-(M)$ where
\begin{align*}
    h^+(M):=\frac{\text{ker}(\text{id}-\ast)}{\text{im}(\text{id}+\ast)}~~\text{and}~~
    h^-(M):=\frac{\text{ker}(\text{id}+\ast)}{\text{im}(\text{id}-\ast)}
\end{align*}
In other words, $h^+(M)$ and $h^-(M)$ consist of the self-dual and anti-self-dual elements of $M$ respectively modulo those elements that are self-dual and anti-self-dual respectively for trivial reasons.

If $A$ is a $\ast$-ring, it is easily checked that the multiplication on $A$ induces a ring structure on $h^\ast(A)$ with
\begin{align*}
    h^+(A)\cdot h^+(A)\subset h^+(A)\\
    h^+(A)\cdot h^-(A)\subset h^-(A)\\
    h^-(A)\cdot h^-(A)\subset h^+(A)
\end{align*}
We now prepare to formulate Bousfield's lemma. Let $X$ be a compact Hausdorff space. We have complexification and realification maps $c_i\colon KO^{2i}(X)\to K^0(X)$ and $r_i\colon K^0(X)\to KO^{2i}(X)$, which are the ordinary complexification and realification maps $KO^{2i}(X)\to K^i(X)$ and $K^{2i}(X)\to KO^{2i}(X)$ composed with appropriate powers of the Bott isomorphism for complex K-theory. Since $\ast c_i=(-1)^i c_i$ and $c_ir_i=\text{id}+(-1)^i \ast$, the map $c_i$ descends to a map
\[
\overline{c}_i\colon KO^{2i}(X)/r_i\to h^i(K^0(X))
\]
where $KO^{2i}(X)/r_i$ denotes the cokernel of $r_i$. Since $c_ir_i=\text{id}+(-1)^i \ast$ and $r_i \ast=(-1)^i r_i$, the map $r_i$ descends to a map
\[
\overline{r}_i\colon h^i(K^0(X))\to c_i\backslash KO^{2i}(X)
\]
where $c_i\backslash KO^{2i}(X)$ denotes the kernel of $c_i$.
\begin{lemma}[(Bousfield's lemma)]\label{lemmabousfield}
Let $X$ be a compact Hausdorff space with $K^1(X)=0$. The complexification and realification maps induce isomorphisms
\begin{align*}
    W^0(X)\oplus W^2(X)\xrightarrow[(\overline{c}_0~\overline{c}_2)]{\cong} h^+(K^0(X))\xrightarrow[\left(\begin{smallmatrix}\overline{r}_{-1}\\\overline{r}_1 \end{smallmatrix}\right)]{\cong} c\backslash KO^{-2}(X)\oplus c\backslash KO^2(X)\\
    W^1(X)\oplus W^3(X)\xrightarrow[(\overline{c}_1~\overline{c}_3)]{\cong} h^-(K^0(X))\xrightarrow[\left(\begin{smallmatrix}\overline{r}_{0}\\\overline{r}_2 \end{smallmatrix}\right)]{\cong} c\backslash KO^{0}(X)\oplus c\backslash KO^4(X)
\end{align*}
The composition along each row is given by $\left(\begin{smallmatrix} \eta^2 & 0\\0 &\eta^2\end{smallmatrix}\right)$ where $\eta$ denotes the generator of $KO^{-1}(\text{pt})$. Since complexification is a ring homomorphism, we get an isomorphism of rings $\overline{c}\colon W^\ast(X)\to h^\ast(K^0(X))$.
\end{lemma}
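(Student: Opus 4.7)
Proof plan. The fundamental tool is the Wood cofibration of spectra $\Sigma KO \xrightarrow{\eta} KO \xrightarrow{c} KU$, which for any space $X$ yields a long exact sequence
$$\cdots \to KO^{n-1}(X) \xrightarrow{\eta} KO^{n-2}(X) \xrightarrow{c} K^{n-2}(X) \xrightarrow{\partial} KO^n(X) \to \cdots$$
whose boundary map $\partial$ can, after a Bott shift, be identified with a realification $r$. The hypothesis $K^1(X)=0$ combined with the $2$-periodicity of $K$ forces $K^{\mathrm{odd}}(X)=0$, so for each $i$ the long exact sequence collapses to the 5-term exact sequence
$$0 \to KO^{2i+1}(X) \xrightarrow{\eta} KO^{2i}(X) \xrightarrow{c_i} K^0(X) \xrightarrow{r_{i+1}} KO^{2i+2}(X) \xrightarrow{\eta} KO^{2i+1}(X) \to 0.$$
These sequences are the backbone of the argument.

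From each one I read off three facts: $\ker(c_i) = \mathrm{im}(\eta)$, $\mathrm{im}(c_i) = \ker(r_{i+1})$, and $\eta\colon KO^{2i+2}(X) \to KO^{2i+1}(X)$ is surjective. Chaining the last two yields that $\eta^2\colon W^{i+1}(X) \xrightarrow{\sim} c\backslash KO^{2i}(X)$ is an isomorphism: the image of $\eta^2=\eta\circ\eta$ equals $\eta\cdot KO^{2i+1}(X)=\ker(c_i)$, and the kernel is exactly $\mathrm{im}(r_{i+1})$. This is precisely the composition $\mathrm{diag}(\eta^2,\eta^2)$ that the lemma claims along each row; so once the two middle maps are shown to be isomorphisms, the factorization is forced.

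That $(\bar c_0, \bar c_2)$ lands in $h^+(K^0(X))$ (rather than also meeting $h^-$) is immediate from $\ast c_i = (-1)^i c_i$. For injectivity: if $c_0(x_0) + c_2(x_2) = (1+\ast)z$, rewrite the right-hand side as $c_0 r_0(z)$ using $c_i r_i = 1+(-1)^i \ast$, and then invoke $\ker(c_i) = \mathrm{im}(\eta)$ and the identity $\eta\cdot r_i = 0$ (which comes from $c\eta=0\in K^{-1}$) to conclude $x_0\in \mathrm{im}(r_0)$ and $x_2\in\mathrm{im}(r_2)$. For surjectivity, starting from a self-dual $y\in K^0(X)$, the parity relation $r_i\ast = (-1)^i r_i$ gives $2r_1(y)=0$ and $2r_3(y)=0$; a further chase in the 5-term sequences for $i=0$ and $i=2$ shows that $y$ can be adjusted by an element of $\mathrm{im}(1+\ast)$ so as to land in $\mathrm{im}(c_0) + \mathrm{im}(c_2)$. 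The isomorphism for $(\bar r_{-1},\bar r_1)$ then follows either by a dual diagram chase or by combining the isomorphism of the composite with that of $(\bar c_0,\bar c_2)$. The odd-degree statement $W^1\oplus W^3 \xrightarrow{\sim} h^-(K^0(X))$ is entirely symmetric.

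The main obstacle is the bookkeeping of signs and Bott shifts, especially in verifying that the composite matrix is strictly diagonal rather than mixing the two summands. Off-diagonal terms $\bar r_{-1}\bar c_2$ and $\bar r_1\bar c_0$ must be shown to vanish, which again boils down to the parity identities $\ast c_i=(-1)^i c_i$ and $r_i\ast=(-1)^i r_i$: they force the images of the would-be cross compositions into parts of $K^0(X)$ already killed by the quotient defining $h^+$. Once these relations are carefully tracked, everything else reduces to elementary linear algebra on the 5-term exact sequences derived from Wood's cofibration.
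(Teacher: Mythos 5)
The paper does not spell out a proof of this lemma; it merely cites \cite[section 1.2]{Zibrowius1}. Your plan via the Wood cofibration $\Sigma KO\to KO\to KU$ and the resulting five-term exact sequences (once $K^{\mathrm{odd}}(X)=0$ is invoked) is precisely the standard line of argument and almost certainly what the cited source does. In particular your derivation that $\eta^2\colon W^{i+1}(X)\xrightarrow{\sim} c\backslash KO^{2i}(X)$ follows from the five-term sequence is correct, and the injectivity argument for $(\overline{c}_0,\overline{c}_2)$ is the right shape.

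The one place where your justification goes astray is the vanishing of the off-diagonal entries $\overline{r}_1\overline{c}_0$ and $\overline{r}_{-1}\overline{c}_2$. You attribute this to the parity identities $\ast c_i=(-1)^ic_i$ and $r_i\ast=(-1)^ir_i$, but those only show that for self-dual $z$ the element $r_1(z)$ is killed by $2$; they do not force it to vanish, and the phrase about ``parts of $K^0(X)$ already killed by the quotient defining $h^+$'' does not apply here, since the cross-compositions land in $c\backslash KO^{\pm2}(X)$, not in a quotient of $K^0(X)$. What one actually needs is $r_{i+1}c_i=0$, and this is immediate from exactness of your own five-term sequence at $K^0(X)$: one has $\operatorname{im}(c_i)=\ker(r_{i+1})$. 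Taking $i=0$ gives $\overline{r}_1\overline{c}_0=0$ directly, and taking $i=2$ gives $r_3 c_2=0$, which becomes $\overline{r}_{-1}\overline{c}_2=0$ after identifying $r_{-1}$ with $r_3$ via $8$-periodicity of $KO$. With that correction, and assuming the surjectivity chase you gesture at is carried out in full, the plan is sound.
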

\begin{proof}
A simple proof can be found in \cite[section 1.2]{Zibrowius1}.
\end{proof}
\begin{remark}
For a complex flag variety $X$, we have $K^1(X)=0$ by \cite{Hodgkin1}, Lemma 9.2 and (9.1). Thus Bousfield's lemma applies to complex flag varieties.
\end{remark}
\subsection{Tate cohomology of quotient rings}\label{tatecohsubsect2}

We collect some technical lemmas required to compute the Tate cohomology of quotients of $\ast$-rings by $\ast$-ideals. This will be essential for our computation of Witt rings of complex flag varieties. We start by considering the Tate cohomology of $\ast$-ideals and then move on to quotient rings.

We are first interested in relating the Tate cohomology of $\ast$-ideals to the Tate cohomology of the ambient ring.
\begin{lemma}[{\cite[Proposition 4.1]{Zibrowius1}}]\label{tatecohlemma1}
Let $A$ be a $\ast$-ring.
\begin{enumerate}[(i)]
\item If $\mu\in A$ is self-dual and not a zero divisor, then
$h^{\ast}(A)\xrightarrow{\cdot[\mu]} h^{\ast}(\mu A)$
is a graded isomorphism.
\item If $\lambda,\lambda^\ast$ is a regular sequence in $A$, then
$
h^\ast(A)\xrightarrow{\cdot[\lambda\lambda^\ast]} h^\ast(\lambda,\lambda^\ast)
$
is a graded isomorphism.
\end{enumerate}
\end{lemma}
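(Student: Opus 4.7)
The plan is to prove both parts by constructing explicit short exact sequences of $*$-modules and invoking the Tate cohomology long exact sequence (which exists for any short exact sequence of $*$-modules, just as for $\mathbb{Z}/2$-group cohomology). Part (i) is essentially formal; part (ii) uses a ``Koszul-style'' quotient that has trivial Tate cohomology.

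For part (i), multiplication by $\mu$ gives an $A$-module isomorphism $A \xrightarrow{\sim} \mu A$, since $\mu$ is not a zero divisor. It respects involutions, because $(\mu a)^* = \mu^* a^* = \mu a^*$ by self-duality and commutativity. Any $*$-module isomorphism induces an isomorphism on Tate cohomology, and tracing definitions shows the induced map sends $[a] \mapsto [\mu a] = [\mu]\cdot[a]$, giving the claim.

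For part (ii), I would first apply the involution to the regular sequence $\lambda, \lambda^*$ to deduce that $\lambda^*, \lambda$ is also a regular sequence. In particular $\lambda$, $\lambda^*$, and the self-dual product $\lambda\lambda^*$ are all non-zero-divisors, and $(\lambda) \cap (\lambda^*) = (\lambda\lambda^*)$. I would then factor the map of the statement as
\[
h^*(A) \xrightarrow{\cdot [\lambda\lambda^*]} h^*((\lambda\lambda^*)) \xrightarrow{\iota_*} h^*((\lambda, \lambda^*)),
\]
where $\iota \colon (\lambda\lambda^*) \hookrightarrow (\lambda, \lambda^*)$ is the inclusion. The first map is an isomorphism by part (i) applied to $\mu = \lambda\lambda^*$, so the task reduces to showing $\iota_*$ is an isomorphism.

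For this, I would consider the short exact sequence of $*$-modules $0 \to (\lambda\lambda^*) \to (\lambda, \lambda^*) \to Q \to 0$ with $Q := (\lambda, \lambda^*)/(\lambda\lambda^*)$. The crux is an identification $Q \cong A/(\lambda^*) \oplus A/(\lambda)$ via $\overline{x\lambda + y\lambda^*} \mapsto (\overline{x}, \overline{y})$; checking that the kernel of $(x, y) \mapsto \overline{x\lambda + y\lambda^*}$ is exactly $(\lambda^*) \oplus (\lambda)$ uses both directions of regularity. Under this identification the induced involution on $Q$ becomes the swap-with-star map $(\overline{x}, \overline{y}) \mapsto (\overline{y^*}, \overline{x^*})$, where $*$ restricts to an isomorphism $A/(\lambda^*) \xrightarrow{\sim} A/(\lambda)$. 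Any such $*$-module has vanishing Tate cohomology: a self-dual element is automatically of the form $(m, m^*) = (\id + *)(m, 0)$, and similarly for anti-self-dual elements. The long exact sequence then forces $\iota_*$ to be an isomorphism. The main obstacle is the bookkeeping in identifying $Q$ and correctly tracking the involution; everything else is formal once the regular sequence is in hand.
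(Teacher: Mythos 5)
The paper does not supply its own proof of this lemma; it cites Zibrowius's Proposition~4.1 directly, so there is no in-text argument to compare against. Taken on its own merits, your proof is correct, and it is stylistically consistent with how the paper handles the neighbouring Lemma~\ref{tatecohlemma4}, namely by running short exact sequences of $\ast$-modules through the Tate cohomology long exact sequence.

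Part~(i) is a routine check: multiplication by a self-dual non-zero-divisor $\mu$ is an isomorphism of $\ast$-modules $A\to\mu A$, and passing to Tate cohomology gives $[a]\mapsto[\mu]\cdot[a]$. Part~(ii) hinges on your identification of $Q=(\lambda,\lambda^\ast)/(\lambda\lambda^\ast)$. Your claim that the kernel of $(x,y)\mapsto\overline{x\lambda+y\lambda^\ast}\in Q$ is $(\lambda^\ast)\oplus(\lambda)$ does hold: if $x\lambda+y\lambda^\ast=c\lambda\lambda^\ast$, then $x\lambda\in(\lambda^\ast)$ and regularity of $\lambda^\ast,\lambda$ forces $x\in(\lambda^\ast)$, and cancelling $\lambda^\ast$ then gives $y\in(\lambda)$; the converse is immediate. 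So $Q\cong A/(\lambda^\ast)\oplus A/(\lambda)$, and under this isomorphism the inherited involution is indeed $(m,n)\mapsto(n^\ast,m^\ast)$, which makes $Q$ a (co)induced $\ast$-module with $h^\ast(Q)=0$, since every self-dual $(m,m^\ast)$ equals $(\mathrm{id}+\ast)(m,0)$ and every anti-self-dual $(m,-m^\ast)$ equals $(\mathrm{id}-\ast)(m,0)$. Feeding $0\to(\lambda\lambda^\ast)\to(\lambda,\lambda^\ast)\to Q\to0$ into the six-term periodic exact sequence then gives the desired isomorphism, and composing with part~(i) finishes the argument. All steps, including the use of both orderings of the regular sequence, are sound.
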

We will also need the following lemma about the Tate cohomology of an ideal generated by two independent self-dual elements:
\begin{lemma}\label{tatecohlemma2}
Let $A$ be a $\ast$-ring with $h^-(A)=0$ and let $\mu_1,\mu_2\in A$ be self-dual elements that form an $A$-regular sequence. Suppose that $\text{Ann}_{h^+(A)}\left([\mu_1],[\mu_2]\right)=0$. Then
\[
h^-(A\mu_1+A\mu_2)=0
\]
and we have an isomorphism of $h^+(A)$-modules given by
\begin{align*}
    \frac{h^+(A)\oplus h^+(A)}{h^+(A)\cdot ([\mu_2],[\mu_1])}\xrightarrow{\cong} h^+(A\mu_1+A\mu_2),~~~\overline{(x,y)}\mapsto x[\mu_1]+y[\mu_2]
\end{align*}
\end{lemma}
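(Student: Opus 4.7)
My plan is to resolve $I := A\mu_1 + A\mu_2$ by a Koszul-style short exact sequence of $\ast$-modules and then read the answer off the induced long exact sequence in Tate cohomology. Concretely, I would write down
$$
0 \to A \xrightarrow{\kappa} A \oplus A \xrightarrow{\sigma} I \to 0,
$$
where $\kappa(c) := (-c\mu_2,\, c\mu_1)$ and $\sigma(a, b) := a\mu_1 + b\mu_2$. Because $\mu_1$ and $\mu_2$ are self-dual, both $\kappa$ and $\sigma$ are morphisms of $\ast$-modules. Injectivity of $\kappa$ uses that $\mu_1$ is a non-zero divisor, and the only subtle step is exactness in the middle: if $a\mu_1 + b\mu_2 = 0$, then $b\mu_2 \in A\mu_1$, and the regular-sequence hypothesis (that $\mu_2$ is a non-zero divisor on $A/\mu_1 A$) forces $b = c\mu_1$ and then $a = -c\mu_2$, so $(a,b) = \kappa(c)$.

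From this short exact sequence I would pass to the $2$-periodic long exact sequence in Tate cohomology. Since $h^-(A) = 0$ by assumption, also $h^-(A \oplus A) = 0$, and the LES collapses to a four-term sequence
$$
0 \to h^-(I) \to h^+(A) \xrightarrow{\alpha} h^+(A) \oplus h^+(A) \xrightarrow{\beta} h^+(I) \to 0.
$$
Unravelling the definitions, $\alpha$ is given by $[c] \mapsto (-[c][\mu_2],\, [c][\mu_1])$ and $\beta$ by $([a],[b]) \mapsto [a][\mu_1] + [b][\mu_2]$. The group $h^+(A)$ is $2$-torsion, because for any $x \in A$ one has $2x = (\text{id} + \ast)x$, which is a boundary; hence the sign in $\alpha$ is irrelevant, and $\alpha$ is simply multiplication by $([\mu_2],\, [\mu_1])$.

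Reading off the two ends of this sequence then yields the lemma. The kernel of $\alpha$ is exactly $\text{Ann}_{h^+(A)}([\mu_1],[\mu_2])$, which vanishes by assumption, so $h^-(I) = 0$; and the cokernel of $\alpha$ produces the asserted isomorphism $\bigl(h^+(A) \oplus h^+(A)\bigr) / h^+(A) \cdot ([\mu_2],[\mu_1]) \xrightarrow{\cong} h^+(I)$, $\overline{(x,y)} \mapsto x[\mu_1] + y[\mu_2]$. The main obstacle is the exactness of the Koszul-style sequence in the middle, which is precisely where the regular-sequence hypothesis enters; after that point, everything is a formal consequence of the long exact sequence, with the $2$-torsion of $h^+$ rendering any sign conventions harmless.
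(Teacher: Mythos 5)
Your proposal is correct and follows essentially the same approach as the paper: the same Koszul-type short exact sequence $0 \to A \to A \oplus A \to A\mu_1 + A\mu_2 \to 0$, with exactness in the middle established via the regular-sequence hypothesis, followed by passage to the four-term exact sequence in Tate cohomology using $h^-(A) = 0$ and the annihilator hypothesis to identify the kernel and cokernel. The only difference is a sign convention in $\kappa$, which you correctly observe is immaterial since $h^+(A)$ is $2$-torsion.
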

\begin{proof}
We define an $A$-module homomorphism by
\[
\varphi\colon A\oplus A\to A\mu_1+A\mu_2,~~~ (a,b)\mapsto a\mu_1+b\mu_2.
\]
$\varphi$ is clearly onto. Let $(a,b)\in\text{ker}(\varphi)$, i.e. $a\mu_1+b\mu_2=0$. By regularity of the sequence $\mu_1,\mu_2$, we deduce that $b=b'\mu_1$ for some $b'\in A$, so $(a+b'\mu_2)\mu_1=0$. Again by regularity, we have $a+b'\mu_2=0$ and so
\[
\text{ker}(\varphi)=\left\{(x\mu_2,-x\mu_1)\mid x\in A\right\},
\]
which is isomorphic to $A$ as a $\ast$-module via the isomorphism $\kappa\colon A\to \text{ker}(\varphi),~x\mapsto (x\mu_2,-x\mu_1)$. Now the short exact sequence of $\ast$-modules over $A$
\[
0\xrightarrow{} A\xrightarrow{\kappa} A\oplus A\xrightarrow{\varphi} A\mu_1+A\mu_2\xrightarrow{} 0
\]
induces the following exact sequence on Tate cohomology, using that $h^-(A)=0$:
\[
0\to h^-(A\mu_1+A\mu_2)\to h^+(A)\xrightarrow{\kappa_\ast} h^+(A)\oplus h^+(A)\xrightarrow{\varphi_\ast} h^+(A\mu_1+A\mu_2)\to 0
\]
The assertions follow from this exact sequence since
\[
\kappa_\ast\colon h^+(A)\to h^+(A)\oplus h^+(A),~~~x\mapsto (x[\mu_2],x[\mu_1])
\]
is injective as $\text{Ann}_{h^+(A)}([\mu_1],[\mu_2])=0$ by assumption.
\end{proof}
We next consider the Tate cohomology of some quotient rings.
\begin{lemma}\label{tatecohlemma4}
Let $A$ be a $\ast$-ring and $\mu,\lambda\in A$ such that $\mu$ is self-dual and not a zero divisor and $\lambda,\lambda^\ast$ is a regular sequence.
\begin{enumerate}[(i)]
\item If $[\mu]=0$ in $h^+(A)$, then
\begin{align*}
h^\ast(A/(\mu))\cong h^\ast(A)\oplus [\overline{u}]\cdot h^\ast(A)
\end{align*}
where $u\in A$ with $u+u^\ast=\mu$.
\item If $[\lambda\lambda^\ast]=0$ in $h^+(A)$, then
\begin{align*}
h^\ast(A/(\lambda,\lambda^\ast))\cong h^\ast(A)\oplus [\overline{u}]\cdot h^\ast(A)
\end{align*}
where $u\in A$ with $u+u^\ast=\lambda\lambda^\ast$.
\item Suppose further that $h^-(A)=0$. Then $h^-(A/\mu A)=0$ if and only if $[\mu]\in h^+(A)$ is not a zero divisor in $h^+(A)$. Furthermore, we have $h^\ast(A/(\mu))\cong h^+(A)/([\mu])$.
\item Suppose further that $h^-(A)=0$. Then $h^-(A/(\lambda,\lambda^\ast))=0$ if and only if $[\lambda\lambda^\ast]\in h^+(A)$ is not a zero divisor in $h^+(A)$. Furthermore, we have $h^\ast(A/(\lambda,\lambda^\ast))\cong h^+(A)/([\lambda\lambda^\ast])$.
\end{enumerate}
\end{lemma}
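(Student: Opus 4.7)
My plan is to reduce all four parts uniformly to the long exact Tate cohomology sequence of the short exact sequence of $*$-modules
\[
0 \to I \to A \to A/I \to 0,
\]
where $I=(\mu)$ for parts (i), (iii) and $I=(\lambda,\lambda^*)$ for parts (ii), (iv). The first step is to invoke Lemma \ref{tatecohlemma1}, which identifies $h^*(I)$ with $h^*(A)$ via multiplication by $[\mu]$ or $[\lambda\lambda^*]$ respectively. Under this identification, the map $h^*(I)\to h^*(A)$ induced by the inclusion $I\hookrightarrow A$ becomes the multiplication operator $\cdot[\mu]$ (resp.\ $\cdot[\lambda\lambda^*]$) on $h^*(A)$. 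This reduces the analysis of the long exact sequence to understanding a single self-map.

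For parts (i) and (ii), the standing hypothesis forces this multiplication to vanish, so the long exact sequence breaks into short exact sequences
\[
0 \to h^n(A) \xrightarrow{\pi_*} h^n(A/I) \xrightarrow{\delta} h^{n+1}(A) \to 0 \qquad (n\in\mathbb{Z}/2).
\]
To construct a splitting, I would use the element $u$: since $u+u^*=\mu\in I$ (resp.\ $\lambda\lambda^*$), the reduction $\bar u\in A/I$ is anti-self-dual and defines a class $[\bar u]\in h^-(A/I)$. By the standard recipe for the Tate connecting map (lift a representative and apply $\mathrm{id}+*$), combined with the identification above, one computes $\delta[\bar u]=[1]$. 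Since $\delta$ is $h^*(A)$-linear, the assignment $x\mapsto[\bar u]\cdot\pi_*(x)$ then provides a right inverse to $\delta$, yielding the desired decomposition $h^*(A/I)\cong h^*(A)\oplus [\bar u]\cdot h^*(A)$.

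For parts (iii) and (iv), the additional assumption $h^-(A)=0$ implies $h^-(I)=0$, and the long exact sequence collapses without requiring any splitting. One reads off $h^+(A/I)=h^+(A)/([\mu])$ (resp.\ $([\lambda\lambda^*])$) and $h^-(A/I)=\text{Ann}_{h^+(A)}([\mu])$ (resp.\ $\text{Ann}_{h^+(A)}([\lambda\lambda^*])$); the latter vanishes precisely when the relevant class is not a zero divisor in $h^+(A)$, in which case $h^*(A/I)$ reduces to $h^+(A/I)$.

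The step I expect to require the most care is the explicit identification $\delta[\bar u]=[1]$ together with the verification that $\delta$ is $h^*(A)$-linear (so that multiplication by $[\bar u]$ genuinely splits the sequence); for part (ii) this also involves checking that the connecting map for $I=(\lambda,\lambda^*)$ sends $[\bar u]$ to the generator corresponding to $[\lambda\lambda^*]$ under Lemma \ref{tatecohlemma1}(ii). Once these points are confirmed, parts (i) and (ii) follow cleanly, and parts (iii) and (iv) become essentially routine diagram chases.
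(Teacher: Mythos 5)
Your proposal is correct and, for the parts the paper actually proves (namely (iii) and (iv)), it takes essentially the same route: the long exact sequence on Tate cohomology associated with a short exact sequence whose inclusion-induced map is multiplication by $[\mu]$ (resp.\ $[\lambda\lambda^\ast]$). The only cosmetic difference is that the paper works directly with the short exact sequence $0 \to A \xrightarrow{\cdot\mu} A \to A/\mu A \to 0$ (so that the induced self-map is literally multiplication by $[\mu]$ without any identification), whereas you use $0 \to I \to A \to A/I \to 0$ and then appeal to Lemma~\ref{tatecohlemma1} to identify $h^\ast(I)$ with $h^\ast(A)$; these are equivalent since the isomorphism $A \cong \mu A$ of Lemma~\ref{tatecohlemma1}(i) composed with the inclusion $\mu A \hookrightarrow A$ is exactly $\cdot\mu$. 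For parts (i) and (ii) the paper simply cites \cite[Proposition~4.1]{Zibrowius1} rather than proving them; your splitting argument via the anti-self-dual class $[\overline{u}] \in h^-(A/I)$, the computation $\delta[\overline{u}] = [u+u^\ast] = [\mu]$ (which corresponds to $[1]$ under the identification), and the $h^+(A)$-linearity of $\delta$ is the standard argument and is correct — one small precision worth making explicit when you write it out is that the case analysis for $\delta([\overline{u}]\cdot\pi_\ast(x)) = x$ splits according to the parity of $x$, since the sign conventions in $h^+$ and $h^-$ differ, but both cases yield $[\mu a]$ as required.
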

\begin{proof}
Parts (i) and (ii) are proved in \cite[Proposition 4.1]{Zibrowius1}. We prove (iii), part (iv) is similar.

The short exact sequence of $\ast$-modules
\begin{align*}
0\rightarrow A\xrightarrow{\cdot \mu} A\rightarrow A/(\mu)\rightarrow 0
\end{align*}
induces a long exact sequence on Tate cohomology. Using that $h^-(A)=0$ by assumption, it can be written as
\begin{align*}
0\rightarrow h^-(A/(\mu))\rightarrow h^+(A)\xrightarrow{\cdot[\mu]} h^+(A)\rightarrow h^+(A/(\mu))\rightarrow 0
\end{align*}
The assertion now follows immediately.
\end{proof}
The following corollary follows immediately by induction from parts (iii) and (iv) of the previous lemma.
\begin{corollary}\label{tatecohlemma5}
Let $A$ be a $\ast$-ring such that $h^-(A)=0$. Let $\mu_1,\ldots,\mu_n\in A$ be self-dual and $\lambda_1,\ldots,\lambda_m\in A$ such that $\mu_1,\ldots,\mu_n,\lambda_1,\lambda_1^\ast,\ldots,\lambda_m,\lambda_m^\ast$ is a regular sequence in $A$. Then the following are equivalent:
\begin{enumerate}[(1)]
\item The sequence $[\mu_1],\ldots,[\mu_n],[\lambda_1\lambda_1^\ast],\ldots,[\lambda_m\lambda_m^\ast]$ is regular in $h^+(A)$.
\item $h^-(A/(\mu_1,\ldots,\mu_k))=0$ for all $1\leq k\leq n$ and $h^-(A/(\mu_1,\ldots,\mu_n,\lambda_1,\lambda_1^\ast,\ldots,\lambda_j,\lambda_j^\ast))=0$ for all $1\leq j\leq m$.
\end{enumerate}
If these equivalent conditions hold, then
\begin{align*}
h^\ast(A/(\mu_1,\ldots,\mu_n,\lambda_1,\lambda_1^\ast,\ldots,\lambda_m,\lambda_m^\ast))\cong h^+(A)/([\mu_1],\ldots,[\mu_n],[\lambda_1\lambda_1^\ast],\ldots,[\lambda_m\lambda_m^\ast])
\end{align*}
\end{corollary}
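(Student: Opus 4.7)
The plan is to prove both implications and the final ring isomorphism by a single induction that peels off one generator $\mu_k$ (or one pair $\lambda_j,\lambda_j^\ast$) at a time, using Lemma \ref{tatecohlemma4}(iii) and (iv) at each step. I would set up the tower of $\ast$-rings
\[
B_0 := A,\quad B_k := A/(\mu_1,\ldots,\mu_k)\text{ for }1\leq k\leq n,
\]
and $C_0 := B_n$,
\[
C_j := A/(\mu_1,\ldots,\mu_n,\lambda_1,\lambda_1^\ast,\ldots,\lambda_j,\lambda_j^\ast)\text{ for }1\leq j\leq m.
\]
Each of these is genuinely a $\ast$-ring because the defining ideals are $\ast$-ideals (the $\mu_i$ are self-dual, and each pair $\lambda_i,\lambda_i^\ast$ is closed under the involution), so the images of $\mu_k$ and $\lambda_j,\lambda_j^\ast$ in the relevant quotient are again self-dual, respectively form a $\ast$-closed pair.

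For the direction $(1)\Rightarrow(2)$ together with the final isomorphism, I would induct up this tower. At the inductive step I assume $h^-(B_{k-1})=0$ and, by the previous step, $h^+(B_{k-1})\cong h^+(A)/([\mu_1],\ldots,[\mu_{k-1}])$. Hypothesis $(1)$ says $[\mu_k]$ is a non-zero-divisor in this quotient, and the ring-theoretic regularity of the sequence in $A$ ensures that the image of $\mu_k$ in $B_{k-1}$ is still a non-zero-divisor. Lemma \ref{tatecohlemma4}(iii) then yields $h^-(B_k)=0$ and $h^+(B_k)\cong h^+(B_{k-1})/([\mu_k])$, closing the induction. Arriving at $C_0=B_n$, I would repeat the argument with Lemma \ref{tatecohlemma4}(iv) in place of (iii) to absorb each pair $\lambda_j,\lambda_j^\ast$, obtaining both $h^-(C_m)=0$ and the claimed description of $h^\ast(C_m)$.

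For $(2)\Rightarrow(1)$, I would run the same induction but read the ``iff'' clauses of Lemma \ref{tatecohlemma4}(iii) and (iv) in the opposite direction. At stage $k$, the combined vanishing $h^-(B_{k-1})=0$ (inductive hypothesis) and $h^-(B_k)=0$ (from $(2)$) forces $[\mu_k]$ to be a non-zero-divisor in $h^+(B_{k-1})$, and the inductively established identification of $h^+(B_{k-1})$ with $h^+(A)/([\mu_1],\ldots,[\mu_{k-1}])$ translates this into the next link of a regular sequence in $h^+(A)$. The pairs $\lambda_j,\lambda_j^\ast$ are handled identically via (iv).

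The main obstacle is essentially pure bookkeeping: verifying at every stage of the induction that the three hypotheses of Lemma \ref{tatecohlemma4}(iii) or (iv) — self-duality (respectively $\ast$-closure of the pair), the non-zero-divisor / regular-sequence condition in the current quotient, and vanishing of $h^-$ of the current ring — are all in hand. None of these checks is substantive once the tower is laid out, which is why the corollary really does follow immediately from the previous lemma.
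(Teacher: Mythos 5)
Your argument is correct and is essentially the same inductive peeling that the paper has in mind when it says the corollary "follows immediately by induction from parts (iii) and (iv) of the previous lemma." You identify the right tower of $\ast$-ring quotients, correctly observe that the image of $\mu_k$ (resp.\ of $\lambda_j,\lambda_j^\ast$) in the current quotient is again a non-zero-divisor (resp.\ regular pair) thanks to the regularity hypothesis in $A$, and you use the identification $h^+(B_k)\cong h^+(B_{k-1})/([\mu_k])$ — which the four-term exact sequence in the proof of Lemma~\ref{tatecohlemma4}(iii) furnishes unconditionally — to translate the non-zero-divisor statement at each stage into the next link of a regular sequence in $h^+(A)$; this makes both directions of the equivalence and the final isomorphism come out of the same induction.
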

Let us finally consider a situation where the element by which we divide is not regular in Tate cohomology, but is a zero divisor with special properties.
\begin{lemma}\label{tatecohlemma6}
Let $A$ be a $\ast$-ring with $h^-(A)=0$ and $\mu\in A$ be a self-dual regular element. Suppose that $\text{Ann}_{h^+(A)}([\mu])=h^+(A)\cdot [\mu]$ so that in particular $[\mu]^2=0$. Then
\[
h^\ast(A/\mu A)\cong \frac{h^+(A)}{([\mu])}\oplus \frac{h^+(A)}{([\mu])}\cdot [\overline{u}]
\]
is a free $h^+(A)/([\mu])$-module of rank 2 where $u\in A$ such that $u+u^\ast=\mu^2$.
\end{lemma}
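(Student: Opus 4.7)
My plan is to derive the result from the long exact Tate cohomology sequence associated to multiplication by $\mu$, along the lines of Lemma \ref{tatecohlemma4}(iii), and then read off the module structure by identifying the connecting homomorphism explicitly.

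First I would apply the periodic long exact sequence for Tate cohomology to the short exact sequence of $\ast$-modules
\[
0 \to A \xrightarrow{\cdot \mu} A \xrightarrow{\pi} A/\mu A \to 0,
\]
which is short exact since $\mu$ is regular. Using the hypothesis $h^-(A)=0$, this collapses to the four-term sequence
\[
0 \to h^-(A/\mu A) \xrightarrow{\delta} h^+(A) \xrightarrow{\cdot [\mu]} h^+(A) \xrightarrow{\pi_\ast} h^+(A/\mu A) \to 0.
\]
From this I immediately read off $h^+(A/\mu A) \cong h^+(A)/([\mu])$ and $h^-(A/\mu A) \cong \ker(\cdot [\mu]) = \mathrm{Ann}_{h^+(A)}([\mu])$, which by assumption equals the principal ideal $h^+(A)\cdot [\mu]$.

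Next I would compute the connecting map $\delta$ at the cycle level. Given $\bar v \in A/\mu A$ with $\bar v + \bar v^\ast = 0$, lift to $v \in A$; then $v + v^\ast = \mu w$ for a unique $w \in A$, and applying $\ast$ together with the regularity of $\mu$ forces $w^\ast = w$. A brief check of independence of lifts (both of $\bar v$ and modulo $\mu A$) shows $\delta[\bar v] = [w]\in h^+(A)$ is well defined. (Note $u$ itself exists because $[\mu]^2 = [\mu^2] = 0$ in $h^+(A)$ and $\mu^2$ is self-dual.) Applied to $u$, this gives $\delta[\bar u] = [\mu]$, so the $h^+(A)$-linear map $\delta$ sends $[\bar u]$ to a generator of $h^+(A)\cdot[\mu]$. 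Since $\delta$ is an isomorphism onto $h^+(A)\cdot[\mu]$, the element $[\bar u]$ generates $h^-(A/\mu A)$ as an $h^+(A)$-module, with annihilator exactly $\mathrm{Ann}_{h^+(A)}([\mu]) = ([\mu])$; hence $h^-(A/\mu A)$ is free of rank one over $h^+(A)/([\mu])$ on the basis $[\bar u]$.

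Combining the two summands and observing that $[\mu]$ trivially annihilates $h^\ast(A/\mu A)$ (so the $h^+(A)$-module structure factors through $h^+(A)/([\mu])$), I conclude that
\[
h^\ast(A/\mu A) \;\cong\; \frac{h^+(A)}{([\mu])} \;\oplus\; \frac{h^+(A)}{([\mu])}\cdot [\bar u],
\]
free of rank two over $h^+(A)/([\mu])$, as claimed. The main technical point, and the one place where the hypothesis $\mathrm{Ann}_{h^+(A)}([\mu]) = h^+(A)\cdot[\mu]$ enters nontrivially, is the identification of the image of $\delta$ with a \emph{principal} ideal, which is what allows $[\bar u]$ to serve as a single free generator in the $h^-$-part; without this, one would only get a non-free cyclic generator or a larger kernel.
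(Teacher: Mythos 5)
Your proof is correct and follows essentially the same route as the paper's: apply the long exact Tate cohomology sequence to $0 \to A \xrightarrow{\cdot\mu} A \to A/\mu A \to 0$, use $h^-(A)=0$ to collapse it to the four-term sequence, read off $h^+(A/\mu A)\cong h^+(A)/([\mu])$, identify $h^-(A/\mu A)$ with $\mathrm{Ann}_{h^+(A)}([\mu])=([\mu])$ via the connecting map, and exhibit $[\overline{u}]$ as a free generator by computing $\partial([\overline{u}])=[\mu]$. The paper invokes the boundary formula as ``by definition'' where you carry out the cycle-level verification explicitly, and you add the (implicit but worth noting) remark that $u$ exists because $[\mu^2]=[\mu]^2=0$, but these are expository differences rather than a different argument.
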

\begin{proof}
The short exact sequence of $A$-modules
\[
0\to A\xrightarrow{\cdot \mu} A\to A/\mu A\to 0
\]
induces an exact sequence of $h^+(A)$-modules given by
\[
0\to h^-(A/\mu A)\xrightarrow{\partial} h^+(A)\xrightarrow{\cdot [\mu]} h^+(A)\to h^+(A/\mu A)\to 0
\]
We immediately obtain $h^+(A/\mu A)\cong h^+(A)/([\mu])$. Now choose $u\in A$ such that $u+u^\ast=\mu^2$. Then $\overline{u}\in A/\mu A$ defines an element $[\overline{u}]\in h^-(A/\mu A)$ and $\partial([\overline{u}])=[\mu]\in h^+(A)$ by definition of the boundary map. Now $\partial$ induces an isomorphism
\[
h^-(A/\mu A)\xrightarrow[\cong]{\partial} \text{im}(\partial)=\text{Ann}_{h^+(A)}([\mu])=h^+(A)\cdot [\mu]
\]
Since $\text{Ann}_{h^+(A)}([\mu])=h^+(A)\cdot [\mu]$, the map $h^+(A)/([\mu])\xrightarrow{\cong} h^+(A)\cdot [\mu],~\overline{x}\mapsto x\cdot [\mu]$ is a well-defined isomorphism. Thus as an $h^+(A)/([\mu])$-module, $h^-(A/\mu A)$ is freely generated by $[\overline{u}]\in h^-(A/\mu A)$.
\end{proof}

\section{Representation rings and their Tate cohomology}\label{tatecohsubsect3}
Let $G$ be a compact connected Lie group and $H\subset G$ be a centraliser of a torus in $G$. Thus $G/H$ is a complex flag variety. In our approach to computing the Witt ring of $G/H$, it will turn out to be crucial to understand the complex representation ring of $H$ and its Tate cohomology. So it is the purpose of this section to compute these in our cases of interest, i.e. where $G$ is $SU(n)$ (type $A_n$), $Spin(2n+1)$ (type $B_n$) or $Sp(n)$ (type $C_n$). We described all the centralisers of a torus in these groups up to conjugation in the introduction, so we will explicitly go through them one by one in subsections \ref{repntatetypea}, \ref{repntatetypeb} and \ref{repntatetypec}.
\begin{notation}
For a compact Lie group $G$, we denote by $R(G)$ its complex representation ring. Assigning to each complex representation its dual representation induces an involution $\ast\colon R(G)\to R(G)$. Let us identify and fix the notation for the complex representation rings of the compact simply connected Lie groups of type $A_n$, $B_n$, $C_n$ (see \cite[Chapter VI.5-6]{brockertomdieck}):
\[
R(SU(n))\cong \mathbb{Z}\left[x_1,\ldots,x_{n-1}\right]
\]
where $x_1$ is the standard complex representation of rank $n$ and $x_i=\Lambda^i(x_1)$ for all $i$. The duality is given via $x_i^\ast=x_{n-i}$ for all $i$.
\[
R(Sp(n))\cong \mathbb{Z}\left[z_1,\ldots,z_n\right]
\]
where $z_1$ is the standard complex representation of rank $2n$ and $z_i=\Lambda^i(z_1)$ for all $i$. All representations are self-dual.
\begin{align*}
    R(Spin(2n+1))&\cong \mathbb{Z}\left[y_1,\ldots,y_{n-1},\Delta\right]
\end{align*}
where $y_1$ is induced by the standard complex representation of $SO(2n+1)$ of rank $2n+1$, $y_i=\Lambda^i(y_1)$ for all $i$, and $\Delta$ is the \emph{spin representation} of rank $2^n$. All representations of $Spin(2n+1)$ are self-dual. Furthermore, we can express $y_n=\Lambda^n(y_1)$ in terms of the polynomial generators above via the following relation:
\begin{align*}
    \Delta^2&=1+y_1+\ldots+y_{n-1}+y_n
\end{align*}
\end{notation}
We prove a few preliminary facts that we need for our computation.
\begin{lemma}\label{lemmarepresentationsandtheirtatecoh}
Let $G$ be a compact connected Lie group and $C\subset G$ be a finite central subgroup.
\begin{enumerate}[(i)]
\item The projection $G\to G/C$ induces an injection $R(G/C)\to R(G)$ with image $R(G)^C\subset R(G)$, the subring additively generated by all irreducible representations of $G$ via which $C$ acts trivially.
\item $R(G)$ is integral over $R(G/C)$.
\item $h^-(R(G))=0$.
\item The projection $G\to G/C$ induces an injective map $h^+(R(G/C))\to h^+(R(G))$.
\item Let $(\rho_i)_{i\in I}$ be a $\mathbb{Z}$-basis of $R(G)$ where each $\rho_i\colon G\to GL_{n_i}(\mathbb{C})$ is an actual (not just a virtual) representation of $G$. Suppose that each $c\in C$ acts by multiplication by a scalar via all $\rho_i$. Letting $J\subset I$ denote the set of $j\in I$ such that $C$ acts trivially via $\rho_j$, a $\mathbb{Z}$-basis of $R(G/C)$ is given by $(\rho_j)_{j\in J}$.
\end{enumerate}
\end{lemma}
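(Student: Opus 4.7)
The whole lemma can be organized around a single observation: by centrality of $C$ and Schur's lemma, every irreducible representation of $G$ restricts to $C$ via a single character, so $R(G)$ admits a $\mathbb{Z}$-module decomposition $R(G) = \bigoplus_{\chi \in \widehat{C}} R(G)_\chi$ indexed by the character group of $C$, where $R(G)_\chi$ is the $\mathbb{Z}$-span of the irreducibles on which $C$ acts by $\chi$. Since dual representations have inverse $C$-characters, the duality involution $\ast$ sends $R(G)_\chi$ to $R(G)_{\chi^{-1}}$. My plan is to extract all five parts from this decomposition.

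Part (i) is standard: a representation of $G$ factors through $G/C$ exactly when $C$ acts trivially, so pullback along $G\to G/C$ identifies $R(G/C)$ with $R(G)_1$; the injectivity uses surjectivity of $G\to G/C$. Part (v) is then formal: since $(\rho_i)_{i\in I}$ is a $\mathbb{Z}$-basis of $R(G)$ and each $\rho_i$ lies in a single summand $R(G)_{\chi_i}$ by hypothesis, the direct sum structure forces $(\rho_i)_{\chi_i=\chi}$ to be a basis of $R(G)_\chi$; specializing $\chi=1$ identifies $(\rho_j)_{j\in J}$ with a basis of $R(G/C)$.

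For part (ii), fix an irreducible $\rho$ with $C$-character $\chi_\rho$; since the order of $\chi_\rho$ divides $|C|$ we have $\chi_\rho^{|C|}=1$, so $\rho^{\otimes |C|}$ has trivial $C$-action and lies in $R(G/C)$ by (i). Hence $\rho$ satisfies the monic polynomial $X^{|C|}-\rho^{|C|}\in R(G/C)[X]$; since integral elements form a subring and irreducibles generate $R(G)$ as an abelian group, $R(G)$ is integral over $R(G/C)$. For (iii), the involution permutes the irreducibles and splits $R(G)$ as a direct sum of $\ast$-submodules of two types: $\mathbb{Z}\rho$ with trivial involution (self-dual case) and $\mathbb{Z}\rho\oplus\mathbb{Z}\rho^\ast$ with the swap involution (dual pair with $\rho\neq\rho^\ast$). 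A direct computation shows $h^-$ vanishes on each: trivially on the rank-one case, and on a swap pair because both $\ker(\id+\ast)$ and $\operatorname{im}(\id-\ast)$ equal $\mathbb{Z}(\rho-\rho^\ast)$.

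Finally, for (iv), the complement $\bigoplus_{\chi\neq 1}R(G)_\chi$ is $\ast$-stable because $\chi\neq 1$ iff $\chi^{-1}\neq 1$, yielding a splitting $R(G)=R(G/C)\oplus\bigoplus_{\chi\neq 1}R(G)_\chi$ of $\ast$-modules; Tate cohomology splits accordingly, so $h^+(R(G/C))\hookrightarrow h^+(R(G))$ is the inclusion of a direct summand and hence injective. The one point requiring care is the $\ast$-equivariance of the character decomposition used in (iv) and (v); once that compatibility is verified, the remaining arguments are either standard facts or short formal consequences, so I do not anticipate a serious obstacle.
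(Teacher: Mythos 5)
Your proof is correct and follows essentially the same route as the paper's, relying on Schur's lemma to get a scalar action of $C$ on each irreducible, the free $\mathbb{Z}$-basis of $R(G)$ by irreducibles, and the permutation action of duality on that basis. The one cosmetic improvement is that you make the character-space decomposition $R(G)=\bigoplus_{\chi\in\widehat{C}}R(G)_\chi$ explicit and observe that $\ast$ sends $R(G)_\chi$ to $R(G)_{\chi^{-1}}$, which lets you deduce part (iv) as a direct-summand argument on $\ast$-modules rather than comparing $\mathbb{Z}_2$-bases of $h^+$ directly as the paper does; both arguments use exactly the same underlying facts.
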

\begin{proof}
Part (i) follows easily from the observation that an irreducible representation of $G$ descends to an irreducible representation of $G/C$ if and only if the restriction of the representation to $C$ yields a trivial representation.

For (ii), suppose $\rho$ is an irreducible representation of $G$. By Schur's lemma, each $c\in C$ acts via $\rho$ by multiplication by some $n_c$th root of unity. Let $N:=\text{lcm}\left\{n_c\mid c\in C\right\}$, then $C$ acts trivially via $\rho^N$. This shows that all irreducible representations of $G$ are integral over $R(G/C)$. Since they generate $R(G)$, the whole of $R(G)$ is integral over $R(G/C)$.

For (iii) note that as a group, $R(G)$ is the free group on isomorphism classes of irreducible representations of $G$. Any irreducible representation is either self-dual or dual to another irreducible representation. This immediately implies $h^-(R(G))=0$.

For (iv), recall that we saw in (i) that $R(G/C)$ is the subgroup of $R(G)$ generated by all isomorphism classes of irreducible representations via which $C$ acts trivially. As a $\mathbb{Z}_2$-vector space, $h^+(R(G))$ has a basis comprising all irreducible self-dual representations of $G$, and $h^+(R(G/C))$ has a basis comprising all irreducible self-dual representations of $G$ via which $C$ acts trivially. This shows that $h^+(R(G/C))\to h^+(R(G))$ is an inclusion. If $\rho\in R(G)$ is an irreducible self-dual representation, then by Schur's Lemma, $C$ acts trivially via $\rho^N$ for some $N\in\mathbb{N}$. Hence $[\rho]^N\in h^+(R(G/C))$ and so $[\rho]\in h^+(R(G))$ is integral over $h^+(R(G/C))$.

It remains to prove (v). Every $\rho_i$ can be written as a sum of irreducible representations. Since every $c\in C$ acts by multiplication by some scalar $\zeta_{c,i}$ via $\rho_i$, it also acts by multiplication by $\zeta_{c,i}$ via every irreducible summand of $\rho_i$. In particular, $C$ acts trivially via $\rho_i$ if and only if it acts trivially via each irreducible summand. This shows that
\begin{align*}
    \bigoplus_{j\in J}\mathbb{Z} \rho_j\subset \bigoplus_{\substack{\sigma \in R(G/C)\\\text{irred.}}}\mathbb{Z}\sigma~\text{ and }~\bigoplus_{i\in I\setminus J} \mathbb{Z}\rho_i\subset \bigoplus_{\substack{\tau\in R(G)\setminus R(G/C)\\\text{irred.}}}\mathbb{Z}\tau
\end{align*}
Since the $\rho_i$ form a basis of $R(G)$, we have
\begin{align*}
    R(G)=\bigoplus_{j\in J}\mathbb{Z} \rho_j\oplus \bigoplus_{i\in I\setminus J} \mathbb{Z}\rho_i\subset \bigoplus_{\substack{\sigma \in R(G/C)\\\text{irred.}}}\mathbb{Z}\sigma\oplus \bigoplus_{\substack{\tau\in R(G)\setminus R(G/C)\\\text{irred.}}}\mathbb{Z}\tau=R(G)
\end{align*}
So the inclusions must in fact be equalities. In particular, $\bigoplus_{j\in J}\mathbb{Z} \rho_j= \bigoplus_{\substack{\sigma \in R(G/C)\\\text{irred.}}}\mathbb{Z}\sigma$. But this is $R(G/C)$ by (i).
\end{proof}
We are now ready to compute the representations rings of centralisers of tori in compact simple Lie groups and their Tate cohomology.
\subsection{Type $A_n$}\label{repntatetypea}
A maximal torus $T$ in $H_A:=S(U(n_1)\times \ldots\times U(n_l))$ is given by all diagonal matrices
\[
\text{diag}\left(z_1^{(1)},\ldots,z_{n_1}^{(1)},z_1^{(2)},\ldots,z_{n_2}^{(2)},\ldots,z_{n_l}^{(l)}\right)
\]
where $z_j^{(p)}=e^{2\pi ia_j^{(p)}}$ with $\sum_{p=1}^l\sum_{j=1}^{n_p} a_j^{(p)}=0$. Simple roots are given by $a_j^{(p)}-a_{j+1}^{(p)}$ for $1\leq p\leq l$ and $1\leq j<n_p$ and so we can deduce that the Weyl group $W$ is $S_{n_1}\times \ldots\times S_{n_l}$, where each symmetric group $S_{n_p}$ acts by permuting the $z_j^{(p)}$ for $1\leq j\leq n_p$. Thus $R(T)^W$ is generated by the elementary symmetric polynomials $\sigma_k^{(p)}=\sum_{i_1<\ldots <i_k}z_{i_1}^{(p)}\ldots z_{i_k}^{(p)}$ together with $\left(\sigma_{n_p}^{(p)}\right)^{-1}=\left(\prod_{i=1}^{n_p}z_{i}^{(p)}\right)^{-1}$ where $1\leq p\leq l$ and $1\leq k\leq n_p$. One can check that $\sigma_k^{(p)}$ is precisely the character of the representation $x_k^{(p)}:=\Lambda^k\left(x_1^{(p)}\right)$, where $x_1^{(p)}$ is the representation of $S(U(n_1)\times \ldots \times U(n_l))$ where the $p$th block $U(n_p)$ acts via the standard representation on $\mathbb{C}^{n_p}$.
\begin{proposition}\label{propositionrepnringtypea}
There is an isomorphism
\[
R(H_A)\cong \bigotimes_{p=1}^l \mathbb{Z}\left[x_1^{(p)},\ldots,x_{n_p}^{(p)}\right]/\left(\prod_{p=1}^l x_{n_p}^{(p)}-1\right)
\]
The duality on $R(H_A)$ is given by  $\left(x_k^{(p)}\right)^\ast=\left(x_{n_p}^{(p)}\right)^{-1} x_{n_p-k}^{(p)}$ under this isomorphism, using the convention that $x_0^{(p)}=1$.
\end{proposition}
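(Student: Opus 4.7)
The plan is to realise $R(H_A)$ as the Weyl-invariants $R(T)^W$, using the standard identification of the representation ring of a compact connected Lie group with the Weyl-invariant characters on a maximal torus, for $T$ and $W = S_{n_1} \times \ldots \times S_{n_l}$ as described just above the proposition.

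First I would make $R(T)$ explicit. The character lattice of the ``big'' torus $T' \subset G_A := U(n_1) \times \ldots \times U(n_l)$ is freely generated by the coordinate characters $z_j^{(p)}$, so $R(T') = \mathbb{Z}[(z_j^{(p)})^{\pm 1}]$. Restricting to $T \subset T'$ factors out the single relation $\prod_{p,j} z_j^{(p)} = 1$ coming from $\det = 1$, giving
\[
R(T) \cong R(T')/(N), \qquad N := \prod_{p,j} z_j^{(p)} - 1.
\]

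Next I would compute $R(T)^W$. The fundamental theorem of symmetric polynomials, applied block by block, yields
\[
R(T')^W \cong \bigotimes_{p=1}^l \mathbb{Z}\bigl[\sigma_1^{(p)}, \ldots, \sigma_{n_p}^{(p)}, (\sigma_{n_p}^{(p)})^{-1}\bigr],
\]
with $\sigma_k^{(p)}$ the $k$th elementary symmetric polynomial in the $z_j^{(p)}$. Both $R(T')^W$ and $R(T)^W$ have $\mathbb{Z}$-bases given by $W$-orbit sums (of monomials, respectively of monomial classes modulo $\prod z_j^{(p)}$), so the natural map $R(T')^W \to R(T)^W$ is surjective with kernel $N \cdot R(T')^W$, since $N$ is $W$-invariant and not a zero divisor. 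The relation $\prod_p \sigma_{n_p}^{(p)} = 1$ renders each inverse $(\sigma_{n_p}^{(p)})^{-1} = \prod_{q \neq p} \sigma_{n_q}^{(q)}$ redundant, so after identifying $\sigma_k^{(p)}$ with the character of $x_k^{(p)} = \Lambda^k(x_1^{(p)})$ I arrive at the claimed presentation.

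For the duality I would compute characters directly. The dual of $x_k^{(p)}$ has character obtained by inverting each $z_j^{(p)}$, and the elementary symmetric identity
\[
\sum_{i_1 < \ldots < i_k} (z_{i_1}^{(p)})^{-1} \cdots (z_{i_k}^{(p)})^{-1} \;=\; \sigma_{n_p - k}^{(p)} \big/ \sigma_{n_p}^{(p)}
\]
(obtained by clearing the common denominator $\prod_j z_j^{(p)}$) translates into $(x_k^{(p)})^* = (x_{n_p}^{(p)})^{-1} x_{n_p - k}^{(p)}$. The main subtle point is the compatibility of $W$-invariants with the quotient $R(T')/(N)$: this is not automatic for a general exact sequence of $W$-modules, but works cleanly here thanks to the explicit orbit-sum bases on both sides.
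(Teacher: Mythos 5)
Your proposal is correct and takes essentially the same route as the paper's proof: both identify $R(H_A)$ with $R(T)^W$, describe the invariants block by block via elementary symmetric polynomials, and show that the kernel of the map from the free invariant ring onto $R(T)^W$ is generated by $\prod_p \sigma_{n_p}^{(p)} - 1$. The subtlety you flag about compatibility of taking $W$-invariants with the quotient $R(T')/(N)$ is the same point the paper disposes of by observing that its map $f$ is precisely the embedding of the $W$-invariant subring of $S$, so that $f^{-1}\bigl((N)\bigr) = R(T')^W \cap (N) = N\cdot R(T')^W$.
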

\begin{proof}
Letting $S:=\bigotimes_{p=1}^l \mathbb{Z}\left[\left(z_1^{(p)}\right)^{\pm1},\ldots,\left(z_{n_p}^{(p)}\right)^{\pm1}\right]$ be a Laurent ring, the map
\begin{align*}
f\colon \bigotimes_{p=1}^l \mathbb{Z}\left[X_1^{(p)},\ldots,X_{n_p-1}^{(p)}\right]\otimes \bigotimes_{p=1}^l \mathbb{Z}\left[X_{n_p}^{(p)},\left(X_{n_p}^{(p)}\right)^{-1}\right]\to S,~~~ X_i^{(p)}\mapsto \sigma_i^{(p)}
\end{align*}
is injective since the elementary symmetric polynomials are algebraically independent. Now $R(T)=S/\left(\prod_{p=1}^l \prod_{i=1}^{n_p} z_i^{(p)}-1\right)$. Denoting by $\pi\colon S\to R(T)$ the projection map, by the comments above, $\pi\circ f$ maps surjectively onto $R(T)^W$. We have $\text{ker}(\pi\circ f)=f^{-1}\left(\left(\prod_{p=1}^l \prod_{i=1}^{n_p} z_i^{(p)}-1\right)\right)=\left(\prod_{p=1}^l X_{n_p}^{(p)}-1\right)$. The second equality here follows from the fact that $f$ is just the embedding of the subring of $S$ of all elements invariant under the action of $S_{n_1}\times\ldots\times S_{n_l}$. This yields the isomorphism. The fact that the duality is as described follows immediately from considering the duals of the $\sigma_i^{(p)}\in R(T)^W$.
\end{proof}
\begin{proposition}\label{propositiontatecohreptypea}
If not all of $n_1,\ldots, n_l$ are even, we have an isomorphism
\begin{align}\label{equationisotatetypea1}
\bigotimes_{p=1}^l\mathbb{Z}_2\left[\alpha_1^{(p)},\ldots,\alpha_{\lfloor n_p/2\rfloor}^{(p)}\right]\to h^+(R(H_A))
\end{align}
where $\alpha_i^{(p)}\mapsto \left[x_i^{(p)}\left(x_i^{(p)}\right)^\ast\right]$.

If all of $n_1,\ldots,n_l$ are even, we have an isomorphism
\begin{align}\label{equationisotatetypea2}
\frac{\bigotimes_{p=1}^l\mathbb{Z}_2\left[\alpha_1^{(p)},\ldots,\alpha_{\frac{n_p}{2}}^{(p)}\right]\otimes \mathbb{Z}_2[\epsilon]}{\left(\epsilon^2+\prod_p \alpha_{\frac{n_p}{2}}\right)}\to h^+(R(H_A))
\end{align}
where $\alpha_i^{(p)}\mapsto \left[x_i^{(p)}\left(x_i^{(p)}\right)^\ast\right]$ and $\epsilon\mapsto \left[\prod_{p=1}^l x_{\frac{n_p}{2}}^{(p)}\right]$.
\end{proposition}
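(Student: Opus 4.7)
My approach is to work directly with the monomial $\mathbb{Z}$-basis of $R(H_A)$. Since the involution $(x_i^{(p)})^{*}=(x_{n_p}^{(p)})^{-1}x_{n_p-i}^{(p)}$ permutes this basis without signs, the same argument as in the proof of Lemma \ref{lemmarepresentationsandtheirtatecoh}(iii) shows that $h^-(R(H_A))=0$ and that $h^+(R(H_A))$ is the free $\mathbb{Z}_2$-module on the classes of self-dual basis monomials. It will then suffice to identify these self-dual monomial classes with the generators of the right-hand sides of the two isomorphisms (\ref{equationisotatetypea1}) and (\ref{equationisotatetypea2}).

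Concretely, write $R(H_A)=\tilde R/(u-1)$ with $\tilde R=\bigotimes_p R(U(n_p))$ and $u=\prod_p x_{n_p}^{(p)}$; a $\mathbb{Z}$-basis of $R(H_A)$ is then given by $\tilde R$-monomials $m=\prod_{p,i}(x_i^{(p)})^{a_i^{(p)}}$ (with $a_i^{(p)}\ge 0$ for $i<n_p$) modulo the shift $m\sim u\cdot m$. The condition $[m]=[m^{*}]$ unwinds to (i) $a_i^{(p)}=a_{n_p-i}^{(p)}$ for $1\le i<n_p$, and (ii) $S_p+2a_{n_p}^{(p)}$ is independent of $p$, where $S_p:=\sum_{i<n_p}a_i^{(p)}$. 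A direct inspection using (i) shows that $S_p$ is automatically even if $n_p$ is odd and has the parity of $a_{n_p/2}^{(p)}$ if $n_p$ is even, so (ii) forces the $S_p$ to have a common parity.

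The proposition now splits according to this parity. If some $n_p$ is odd, the common parity is forced to be $0$, and the self-dual monomial classes are in bijection with tuples $(c_i^{(p)})_{1\le i\le\lfloor n_p/2\rfloor,\,1\le p\le l}$ via the normal form $\prod_{p,i}(\alpha_i^{(p)})^{c_i^{(p)}}$, yielding (\ref{equationisotatetypea1}). If all $n_p$ are even the common parity may be $0$ or $1$: parity $0$ again gives $\alpha$-monomials, while parity $1$ monomials are exactly $\epsilon\cdot\prod_{p,i}(\alpha_i^{(p)})^{c_i^{(p)}}$, where $\epsilon:=\prod_p x_{n_p/2}^{(p)}$ is self-dual in $R(H_A)$ since $\epsilon^{*}=(\prod_p x_{n_p}^{(p)})^{-1}\epsilon=\epsilon$. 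A direct expansion using $\prod_p x_{n_p}^{(p)}=1$ gives $\epsilon^{2}=\prod_p (x_{n_p/2}^{(p)})^{2}=\prod_p\alpha_{n_p/2}^{(p)}$, and the bijection between self-dual monomials and the normal forms above guarantees that no further relations hold, yielding (\ref{equationisotatetypea2}).

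The main technical hurdle will be the parity bookkeeping for the all-even case: one must verify rigorously that the parity-$1$ self-dual monomial classes exhaust the $\epsilon$-multiples of $\alpha$-monomials and that $\epsilon^{2}=\prod_p\alpha_{n_p/2}^{(p)}$ is the only relation. Both reduce to the combinatorial bijection coming from (i) and (ii) together with the direct computation of $\epsilon^{2}$ just indicated.
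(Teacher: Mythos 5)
Your proof is correct and takes essentially the same approach as the paper's: both reduce to classifying self-dual monomial classes (since the involution permutes a $\mathbb{Z}$-basis of monomials), derive the symmetry and parity conditions on the exponents, and split into cases according to whether some $n_p$ is odd. The only cosmetic difference is that you keep all $x_{n_p}^{(p)}$ and work modulo the shift $m\sim u\cdot m$, while the paper normalizes by eliminating $x_{n_l}^{(l)}$ first; the resulting conditions and the subsequent identification of the $\alpha$- and $\epsilon$-monomials are identical.
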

\begin{proof}
Recall the expression for $R(H_A)$ from Proposition \ref{propositionrepnringtypea}. The monomials in indeterminates $x_k^{(p)}$ and $\left(x_{n_q}^{(q)}\right)^{\pm1}$ for $1\leq p\leq l$ and $1\leq k<n_p$ and $1\leq q<l$ (note that we are missing out $x_{n_l}^{(l)}$) form a $\mathbb{Z}$-basis of $R(H_A)$. Since the duality takes monomials to monomials, a $\mathbb{Z}_2$-basis of $h^+(R(H_A))$ is given by the non-zero self-dual monomials in these indeterminates. Now let
\[
z:=\prod_{p=1}^l \prod_{i=1}^{n_p-1}\left(x_i^{(p)}\right)^{a_i^{(p)}}\cdot \prod_{q=1}^{l-1} \left(x_{n_q}^{(q)}\right)^{b_q}
\]
where $a_i^{(p)}\in\mathbb{N}_0$ and $b_q\in\mathbb{Z}$. Then
\begin{align*}
    z^\ast=&\prod_{p=1}^{l-1} \prod_{i=1}^{n_p-1}\left(x_{n_p}^{(p)}\right)^{-a_i^{(p)}}\left(x_{n_p-i}^{(p)}\right)^{a_i^{(p)}}\cdot\left(x_{n_{l}}^{(l)}\right)^{-a_1^{(l)}-\ldots -a_{n_l-1}^{(l)}}\cdot  \prod_{j=1}^{n_l-1}\left(x_{n_l-j}^{(l)}\right)^{a_j^{(l)}}\cdot \prod_{q=1}^{l-1} \left(x_{n_q}^{(q)}\right)^{-b_q}\\
    =&\prod_{p=1}^{l-1} \prod_{i=1}^{n_p-1}\left(x_{n_p}^{(p)}\right)^{-a_i^{(p)}}\left(x_{n_p-i}^{(p)}\right)^{a_i^{(p)}}\cdot  \prod_{j=1}^{n_l-1}\left(x_{n_l-j}^{(l)}\right)^{a_j^{(l)}}\cdot \prod_{q=1}^{l-1} \left(x_{n_q}^{(q)}\right)^{-b_q+a_1^{(l)}+\ldots+a_{n_l-1}^{(l)}}
\end{align*}
recalling that $x_{n_l}^{(l)}= \left(\prod_{p=1}^{l-1} x_{n_p}^{(p)}\right)^{-1}$ in $R(H_A)$.

So $z=z^\ast$ if and only if the following conditions hold:
\begin{align*}
    a_i^{(p)}=a_{n_p-i}^{(p)}~\text{ and }~2b_q=a_1^{(l)}+\ldots+a_{n_l-1}^{(l)}-a_1^{(q)}-\ldots-a_{n_q-1}^{(q)}
\end{align*}
for all $1\leq p\leq l$ and $1\leq i<n_p$ and all $1\leq q<l$.

If $z=z^\ast$ and some $n_p$ is odd, then we deduce from the above conditions that $a_{\frac{n_q}{2}}^{(q)}$ must be even for all $q$ where $n_q$ is even. In this case, it follows that $z$ is a monomial in indeterminates $x_{i}^{(p)}\left(x_i^{(p)}\right)^\ast$ where $1\leq p\leq l$ and $1\leq i\leq \lfloor n_p/2\rfloor$. This shows that the map (\ref{equationisotatetypea1}) above maps a $\mathbb{Z}_2$-basis to a $\mathbb{Z}_2$-basis, so it is an isomorphism.

If all of $n_1,\ldots,n_l$ are even, then we deduce that the self-dual monomials are in bijective correspondence with the $z$ of the form
\[
z=\left(\prod_{p=1}^l x_{\frac{n_p}{2}}^{(p)}\right)^d\cdot\prod_{q=1}^l\prod_{j=1}^{n_q/2}\left(x_j^{(q)}\left(x_j^{(q)}\right)^\ast\right)^{a_j^{(q)}}
\]
where $d\in\left\{0,1\right\}$ and $a_j^{(p)}\in\mathbb{N}_0$.
Thus we see that the map (\ref{equationisotatetypea2}) maps a $\mathbb{Z}_2$-basis to a $\mathbb{Z}_2$-basis, so it is an isomorphism.
\end{proof}
\subsection{Type $B_n$}\label{repntatetypeb}
We denote by $\tilde{U}(n)$ the connected double cover of $U(n)$ and recall the representation rings of these groups:
\begin{proposition}\label{proprepnringun}
The representation rings of $U(n)$ and $\tilde{U}(n)$ are
\begin{align*}
    R(U(n))\cong~ &\mathbb{Z}\left[x_1,\ldots,x_{n-1},x_n,x_n^{-1}\right]\\
    R(\tilde{U}(n))\cong~ &\mathbb{Z}\left[x_1,\ldots,x_{n-1},(x_n)^{\frac{1}{2}},(x_n)^{-\frac{1}{2}}\right]
\end{align*}
with duality in both cases given by $x_i^\ast=x_n^{-1}\cdot x_{n-i}$. Here $x_1\in R(U(n))$ is the standard representation of $U(n)$ and $x_i:=\Lambda^i(x_1)$. Under this identification, the projection $\pi_n\colon \tilde{U}(n)\to U(n)$ induces the inclusion indicated by the chosen notation, and the non-trivial element in $\text{ker}(\pi_n)$ acts trivially via $x_i\in R(\tilde{U}(n))$ for $1\leq i\leq n$ and by multiplication by $-1$ via $(x_n)^{\frac{1}{2}}$.
\end{proposition}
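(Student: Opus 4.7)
My plan is to deduce both presentations from the classical isomorphism $R(G) = R(T)^W$ for a compact connected Lie group $G$ with maximal torus $T$ and Weyl group $W$ (cf.\ \cite[Chapter VI.5--6]{brockertomdieck}). For $U(n)$ I would take the diagonal maximal torus $T_n = U(1)^n$ with coordinate characters $z_1,\ldots,z_n$, so that $R(T_n) = \mathbb{Z}[z_1^{\pm 1},\ldots,z_n^{\pm 1}]$ with Weyl group $S_n$ acting by permutation. The invariant subring is the algebra of symmetric Laurent polynomials, freely generated over $\mathbb{Z}$ by the elementary symmetric polynomials $\sigma_1,\ldots,\sigma_n$ together with $\sigma_n^{-1}$, and $\sigma_k$ is precisely the character of $\Lambda^k$ applied to the standard representation, so setting $x_k := \sigma_k$ yields the claimed presentation of $R(U(n))$.

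For $\tilde U(n)$ I would fix the concrete pullback model
$$\tilde U(n) := \{(A,\mu) \in U(n)\times U(1) \mid \det(A) = \mu^2\},$$
with $\pi_n$ the projection to the first factor. The long exact sequence of the fibration $\{(I,\pm 1)\} \to \tilde U(n) \to U(n)$ shows that $\tilde U(n)$ is connected, so $\pi_n$ is the desired connected double cover with $\ker\pi_n = \{(I,\pm 1)\}$. The maximal torus $\tilde T_n := \pi_n^{-1}(T_n)$ is cut out inside $U(1)^{n+1}$ by the single character relation $z_1\cdots z_n\mu^{-2} = 1$, and its character lattice is the cokernel of the map $\mathbb{Z}\to\mathbb{Z}^{n+1}$, $1\mapsto(1,\ldots,1,-2)$, which is torsion-free by Smith normal form. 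Hence $R(\tilde T_n) = \mathbb{Z}[z_1^{\pm 1},\ldots,z_n^{\pm 1},\mu^{\pm 1}]/(z_1\cdots z_n - \mu^2)$. The Weyl group is again $S_n$ acting trivially on $\mu$ (permutation matrices lift to $\tilde U(n)$ by picking any square root of their determinant, and the resulting lifts commute with $\mu \in U(1)$ under conjugation), so taking invariants yields
$$R(\tilde U(n)) = R(U(n))[\mu]/(\mu^2 - x_n),$$
which under $\mu = x_n^{1/2}$ is exactly the ring claimed.

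The remaining verifications are brief. The dual representation inverts every torus character, so $\sigma_k^* = \sigma_k(z_1^{-1},\ldots,z_n^{-1}) = \sigma_{n-k}/\sigma_n$, which gives $x_k^* = x_n^{-1}x_{n-k}$ simultaneously in both rings; this transfers to $\tilde U(n)$ because $R(U(n))\hookrightarrow R(\tilde U(n))$ is a $\ast$-subring. The nontrivial element $(I,-1)\in\ker\pi_n$ acts on a weight $z^a\mu^b$ by $(-1)^b$, hence trivially on each $x_i$ (which lies in the $b=0$ part) and by $-1$ on $x_n^{1/2} = \mu$, as claimed.

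The main obstacle I anticipate is the character-lattice calculation for $\tilde T_n$: one needs to check carefully that quotienting the Laurent ring by $\mu^2 - z_1\cdots z_n$ introduces no hidden torsion, and that the lifted Weyl action really is $S_n$ fixing $\mu$ with no extra relations. Once these are in hand, the remainder is bookkeeping with elementary symmetric polynomials.
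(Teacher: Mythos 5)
Your proof is correct. The paper states Proposition \ref{proprepnringun} without proof, treating it as a recalled fact, so there is no internal argument to compare against; your derivation via the classical isomorphism $R(G) \cong R(T)^W$, the explicit pullback model for $\tilde U(n)$, and the primitivity of $(1,\ldots,1,-2)$ in the character lattice is the standard way to establish it and is carried out correctly. One minor point worth making explicit: your appeal to the long exact sequence of the fibration to conclude that $\tilde U(n)$ is connected silently uses that the boundary map $\pi_1(U(n)) \to \pi_0(\ker\pi_n)$ is onto; this follows by lifting the generating loop $t\mapsto \mathrm{diag}(e^{2\pi i t},1,\ldots,1)$ to $t\mapsto(\mathrm{diag}(e^{2\pi i t},1,\ldots,1),\,e^{\pi i t})$, whose endpoint is $(I,-1)$, or alternatively from the homeomorphism $\tilde U(n)\cong SU(n)\times U(1)$ given by $(A,\mu)\mapsto(A\cdot\mathrm{diag}(\mu^{-2},1,\ldots,1),\mu)$.
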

Let $\tilde{H}_B:=\tilde{U}(n_1)\times \ldots \times\tilde{U}(n_l)\times Spin(2m+1)$. By the product theorem for complex representation rings and Proposition \ref{proprepnringun}, we have
\begin{align*}
    R(\tilde{H}_B)\cong & \bigotimes_{p=1}^l \mathbb{Z}\left[x_1^{(p)},\ldots,x_{n_p-1}^{(p)},\left(x_{n_p}^{(p)}\right)^{\pm\frac{1}{2}}\right]\text{ if }m=0\\
    R(\tilde{H}_B)\cong &\bigotimes_{p=1}^l \mathbb{Z}\left[x_1^{(p)},\ldots,x_{n_p-1}^{(p)},\left(x_{n_p}^{(p)}\right)^{\pm\frac{1}{2}}\right]\otimes \mathbb{Z}[y_1,\ldots,y_{m-1},\Delta]\text{ if }m>0
\end{align*}
By $\epsilon$ we denote the nontrivial element in $\tilde{U}(n_p)$ or $Spin(2m+1)$ in the kernel of $\tilde{U}(n_p)\to U(n_p)$ or $Spin(2m+1)\to SO(2m+1)$. Let $C_B$ be the subgroup of $\tilde{H}_B$ consisting of all elements $(a_1,\ldots,a_l,b)$ where $a_i,b\in\left\{1,\epsilon\right\}$ such that $\epsilon$ occurs an even number of times.

\begin{proposition}\label{propositionrepnringtypeb}
If $m>0$, there is an isomorphism
\begin{align*}
    \bigotimes_{p=1}^l \mathbb{Z}\left[x_1^{(p)},\ldots,x_{n_p-1}^{(p)},\left(x_{n_p}^{(p)}\right)^{\pm1}\right]\otimes \mathbb{Z}[y_1,\ldots,y_{m-1}]\otimes\mathbb{Z}[t]\to R(\tilde{H}_B/C_B)
\end{align*}
where $x_i^{(p)}\mapsto x_i^{(p)}$ for all $1\leq p\leq l$ and $1\leq i\leq n_p$, $y_j\mapsto y_j$ for $1\leq j\leq m-1$ and $t\mapsto \Delta\cdot \prod_{p=1}^l \left(x_{n_p}^{(p)}\right)^{-\frac{1}{2}}$.

If $m=0$, there is an isomorphism
\begin{align*}
    \frac{\bigotimes_{p=1}^l \mathbb{Z}\left[x_1^{(p)},\ldots,x_{n_p-1}^{(p)},\left(x_{n_p}^{(p)}\right)^{\pm1}\right]\otimes\mathbb{Z}[t]}{\left(t^2-\left(\prod_p x_{n_p}^{(p)}\right)^{-1}\right)}\to R(\tilde{H}_B/C_B)
\end{align*}
where $x_i^{(p)}\mapsto x_i^{(p)}$ for all $1\leq p\leq l$ and $1\leq i\leq n_p$ and $t\mapsto \prod_{p=1}^l \left(x_{n_p}^{(p)}\right)^{-\frac{1}{2}}$.

Under the above isomorphisms, the duality is given by
\begin{align*}
\left(x_i^{(p)}\right)^\ast&=\left(x_{n_p}^{(p)}\right)^{-1}x_{n_p-i}^{(p)}~&\text{ for all }&1\leq p\leq l,~1\leq i\leq n_p\\
y_j^\ast&=y_j~&\text{ for all }&1\leq j\leq m-1,\\t^\ast&=t\cdot\prod_p x_{n_p}^{(p)}&&
\end{align*}
\end{proposition}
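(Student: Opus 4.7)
The strategy is to apply Lemma~\ref{lemmarepresentationsandtheirtatecoh}(i), which identifies $R(\tilde{H}_B/C_B)$ with the subring of $R(\tilde{H}_B)$ generated by those irreducible representations on which $C_B$ acts trivially. Let $C$ denote the central subgroup of $\tilde{H}_B$ generated by the distinguished elements $\epsilon_p$ ($1\leq p\leq l$) and, when $m>0$, by $\epsilon_{l+1}\in Spin(2m+1)$; then $C\cong\mathbb{F}_2^{l+1}$ (or $\mathbb{F}_2^l$ if $m=0$). The decomposition $R(\tilde{H}_B)=\bigoplus_{\chi\in\widehat{C}} R(\tilde{H}_B)_\chi$ into isotypical components identifies $R(\tilde{H}_B/C_B)=\bigoplus_{\chi|_{C_B}=1} R(\tilde{H}_B)_\chi$. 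The annihilator of $C_B=\{\sum e_i=0\}$ inside $\widehat{C}$ is the line spanned by $(1,\ldots,1)$, so only the $\chi=(0,\ldots,0)$ and $\chi=(1,\ldots,1)$ components contribute.

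The standard monomial $\mathbb{Z}$-basis of $R(\tilde{H}_B)$ refines this grading. By Proposition~\ref{proprepnringun}, a monomial
\[
M=\prod_{p,\,i<n_p}\bigl(x_i^{(p)}\bigr)^{a_i^{(p)}}\cdot\prod_p\bigl(x_{n_p}^{(p)}\bigr)^{c_p/2}\cdot\prod_{j<m} y_j^{b_j}\cdot\Delta^d
\]
lies in the component of degree $(c_1\bmod 2,\ldots,c_l\bmod 2,d\bmod 2)$ (the last coordinate being dropped when $m=0$). Hence the $C_B$-fixed monomials, which form a $\mathbb{Z}$-basis of $R(\tilde{H}_B/C_B)$, are precisely those for which $c_1,\ldots,c_l$ and (when $m>0$) $d$ all have the same parity.

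It then remains to verify that the ring map described in the statement sends the obvious monomial $\mathbb{Z}$-basis of its source bijectively onto these fixed monomials. Under $t\mapsto\Delta\prod_p(x_{n_p}^{(p)})^{-1/2}$ (for $m>0$), a source monomial $\prod_{p,i}(x_i^{(p)})^{a_i^{(p)}}\prod_p(x_{n_p}^{(p)})^{c_p}\prod_j y_j^{b_j}\cdot t^d$ is sent to the fixed monomial with $(x_{n_p}^{(p)})^{1/2}$-exponent $2c_p-d$ and $\Delta$-exponent $d$; both are congruent to $d\bmod 2$, so the image really is fixed, and conversely any fixed monomial with exponents $(k_p,d)$ of common parity admits the unique preimage $c_p=(k_p+d)/2\in\mathbb{Z}$. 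For $m=0$ the same argument works with $\Delta$ absent, except that now the image of $t^2$ is already the integer-exponent monomial $\prod_p(x_{n_p}^{(p)})^{-1}$; this forces the relation $t^2=\prod_p(x_{n_p}^{(p)})^{-1}$, after which every element of the source can be written uniquely as (integer-exponent monomial)$\cdot t^d$ with $d\in\{0,1\}$, in bijection with the fixed monomials. The stated duality formulas follow termwise from Proposition~\ref{proprepnringun}; in particular $\bigl((x_{n_p}^{(p)})^{1/2}\bigr)^\ast=(x_{n_p}^{(p)})^{-1/2}$ and $\Delta^\ast=\Delta$ give $t^\ast=t\cdot\prod_p x_{n_p}^{(p)}$.

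The main obstacle is purely bookkeeping: tracking half-integer exponents carefully across the two cases. The honest technical point is the $m=0$ case, which requires the extra relation precisely because there is no $\Delta$-exponent left to absorb powers of $t$ beyond the first into the remaining polynomial generators.
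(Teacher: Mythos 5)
Your proof is correct, and the surjectivity part is essentially identical to the paper's: both arguments rest on Lemma~\ref{lemmarepresentationsandtheirtatecoh}, using the monomial $\mathbb{Z}$-basis of $R(\tilde{H}_B)$ on which $C_B$ acts by scalars, and identifying the fixed monomials as precisely those where the half-integer exponents $c_p$ of the $\bigl(x_{n_p}^{(p)}\bigr)^{1/2}$ and (when $m>0$) the exponent $d$ of $\Delta$ all share a common parity. Where you diverge from the paper is on injectivity: the paper establishes the map is a surjection between two integral domains of the same Krull dimension and concludes; you instead check directly that the evident monomial $\mathbb{Z}$-basis of the source is sent bijectively onto the basis of fixed monomials, via the change of coordinates $c_p\mapsto 2c_p-d$ with inverse $c_p=(k_p+d)/2$. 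Your route is a bit more elementary — it dispenses with the Krull-dimension and integrality machinery of parts (ii) and (iv) of the lemma — at the cost of slightly more careful exponent bookkeeping. The character-group decomposition of $R(\tilde{H}_B)$ by $\widehat{C}$ that you open with is a tidy way to package the parity condition, though it amounts to the same computation the paper does by inspecting where $C_B$ acts trivially on the monomials.
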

\begin{remark}
By Lemma \ref{lemmarepresentationsandtheirtatecoh}(i), the ring $R(\tilde{H}_B/C_B)$ is a subring of $R(\tilde{H}_B)$. This justifies our notation, for example regarding $x_i^{(p)}$ as a representation of both $\tilde{H}_B$ and $\tilde{H}_B/C_B$.
\end{remark}
\begin{proof}[of Proposition \ref{propositionrepnringtypeb}]
Suppose $m>0$. Observe that $C_B\subset \tilde{H}_B$ acts trivially via $x_i^{(p)}\in R(\tilde{H}_B)$ and $y_j\in R(\tilde{H}_B)$ for all $1\leq p\leq l$ and $1\leq i\leq n_p$ and for all $1\leq j<m$. Furthermore, $C_B$ acts trivially via $\Delta^b\cdot \prod_{p=1}^l \left(x_{n_p}^{(p)}\right)^{\frac{1}{2}a_p}\in R(\tilde{H}_B)$ if and only if all of $a_p,b$ are even or all of $a_p,b$ are odd, and otherwise some elements of $C_B$ act trivially and the others act by multiplication by $-1$.

This shows by Lemma \ref{lemmarepresentationsandtheirtatecoh}(vi), taking all the monomials as a $\mathbb{Z}$-basis of $R(\tilde{H}_B)$, that the described map is surjective. The ring on the left is an integral domain of Krull dimension $1+m+\sum_p n_p$. By Lemma \ref{lemmarepresentationsandtheirtatecoh}(ii), $R(\tilde{H}_B/C_B)$ is an integral domain with the same Krull dimension as $R(\tilde{H}_B)$, i.e. also $1+m+\sum_p n_p$. So the map must be an isomorphism.

Similar arguments apply for $m=0$.
\end{proof}
We see immediately that $h^+(R(\tilde{H}_B))$ is a polynomial ring over $\mathbb{Z}_2$ in $m+\sum_p \lfloor n_p/2\rfloor$ indeterminates, so $h^+(R(\tilde{H}_B))$ has Krull dimension $m+\sum_p \lfloor n_p/2\rfloor$. So $h^+(R(\tilde{H}_B/C_B))$ has Krull dimension $m+\sum_p \lfloor n_p/2\rfloor$ by Lemma \ref{lemmarepresentationsandtheirtatecoh}(iv).
\begin{proposition}\label{propositiontatecohreptypeb}
If not all of $n_1,\ldots,n_l$ are even, there is an isomorphism
\begin{align}\label{eqnsection2.3typeBn1}
\bigotimes_{p=1}^l\mathbb{Z}_2\left[\alpha_1^{(p)},\ldots,\alpha_{\lfloor n_p/2\rfloor}^{(p)}\right]\otimes \mathbb{Z}_2[\beta_1,\ldots,\beta_{m}]\to h^+(R(\tilde{H}_B/C_B))
\end{align}
where $\alpha_i^{(p)}\mapsto \left[x_i^{(p)}\left(x_i^{(p)}\right)^\ast\right]$ and
\[
\beta_j\mapsto \begin{cases} [y_j]\text{ if }1\leq j\leq m-1\\\left[\Lambda^m(y_1)\right]=\left[tt^\ast -1-y_1-\ldots-y_{m-1}\right]\text{ if }j=m \end{cases}
\]

If all of $n_1,\ldots,n_l$ are even, there is an isomorphism
\begin{align}\label{eqnsection2.3typeBn2}
\frac{\bigotimes_{p=1}^l \mathbb{Z}_2\left[\alpha_1^{(p)},\ldots, \alpha_{ n_p/2}^{(p)}\right]\otimes\mathbb{Z}_2[\beta_1,\ldots,\beta_m]\otimes  \mathbb{Z}_2[\delta]}{\left(\delta^2+(1+\beta_1+\ldots+\beta_m)\alpha_{n_1/2}^{(1)}\ldots \alpha_{n_l/2}^{(l)}\right)}\to h^+(R(\tilde{H}_B/C_B))
\end{align}
where $\alpha_i^{(p)}$ and $\beta_j$ are mapped as in the previous homomorphism and 
\[
\delta\mapsto\begin{cases}\left[\Delta\prod_p \left(x_{n_p}^{(p)}\right)^{-\frac{1}{2}}x_{\frac{n_p}{2}}^{(p)}\right]=\left[t\prod_p x_{\frac{n_p}{2}}^{(p)}\right]\text{ if }m>0\\\left[\prod_p \left(x_{n_p}^{(p)}\right)^{-\frac{1}{2}}x_{\frac{n_p}{2}}^{(p)}\right]=\left[t\prod_p x_{\frac{n_p}{2}}^{(p)}\right]\text{ if }m=0\end{cases}
\]
\end{proposition}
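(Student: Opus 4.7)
The plan is to imitate the proof of Proposition \ref{propositiontatecohreptypea}. By Proposition \ref{propositionrepnringtypeb} a $\mathbb{Z}$-basis of $R(\tilde{H}_B/C_B)$ is given by the monomials
\[
z=\prod_p \prod_{i=1}^{n_p-1}\bigl(x_i^{(p)}\bigr)^{a_i^{(p)}}\prod_p\bigl(x_{n_p}^{(p)}\bigr)^{c_p}\prod_{j=1}^{m-1}y_j^{d_j}\,t^e
\]
with $a_i^{(p)},d_j,e\in\mathbb{N}_0$ and $c_p\in\mathbb{Z}$ (with $e\in\{0,1\}$ when $m=0$, using the relation $t^2=\prod_p(x_{n_p}^{(p)})^{-1}$ to reduce higher powers of $t$). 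The duality formulas send each such monomial to another monomial, so a $\mathbb{Z}_2$-basis of $h^+(R(\tilde{H}_B/C_B))$ is given by the classes of the self-dual monomials. A direct computation shows that $z=z^{\ast}$ is equivalent to the symmetry $a_i^{(p)}=a_{n_p-i}^{(p)}$ together with the parity condition $2c_p=e-\sum_i a_i^{(p)}$ for each $p$. Since $\sum_i a_i^{(p)}\equiv a_{n_p/2}^{(p)}\pmod 2$ when $n_p$ is even and $\equiv 0$ when $n_p$ is odd, whether $e$ is forced to be even is exactly what distinguishes the two cases in the statement.

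In Case 1 (not all $n_p$ even), the odd $n_{p_0}$ forces $e$ to be even, hence $a_{n_p/2}^{(p)}$ to be even for every even $n_p$. The self-dual monomials are then freely parametrized by non-negative integers $r_i^{(p)}=a_i^{(p)}$ for $i<n_p/2$, $r_{n_p/2}^{(p)}=a_{n_p/2}^{(p)}/2$ (when $n_p$ is even), $s_j=d_j$, and $k=e/2$. Introducing the auxiliary class $\gamma_m:=[tt^{\ast}]=[\Delta^2]=1+\sum_{j=1}^{m}[y_j]$, one checks that the product $\prod_{p,i}(\alpha_i^{(p)})^{r_i^{(p)}}\prod_{j<m}\beta_j^{s_j}\,\gamma_m^{k}$ is precisely the class of the self-dual monomial with these parameters, giving a ring isomorphism $\bigotimes_p\mathbb{Z}_2[\alpha_i^{(p)}]\otimes\mathbb{Z}_2[\beta_1,\ldots,\beta_{m-1},\gamma_m]\xrightarrow{\cong}h^+(R(\tilde{H}_B/C_B))$. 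Since $\beta_m=[y_m]=\gamma_m+1+\sum_{j<m}\beta_j$ in characteristic $2$, replacing $\gamma_m$ by $\beta_m$ is a triangular change of variables and yields presentation~(\ref{eqnsection2.3typeBn1}).

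In Case 2 (all $n_p$ even), the parity condition becomes $a_{n_p/2}^{(p)}\equiv e\pmod 2$ for every $p$, so with $\eta\in\{0,1\}$ denoting this common parity each self-dual monomial factors uniquely as $\delta^{\eta}\cdot M$, where $\delta:=[t\prod_p x_{n_p/2}^{(p)}]$ is the smallest $\eta=1$ self-dual class and $M$ is a Case~1-type self-dual monomial. This produces the required $\mathbb{Z}_2$-basis bijection between the right-hand side of~(\ref{eqnsection2.3typeBn2}) and $h^+(R(\tilde{H}_B/C_B))$, leaving only the quadratic relation to be checked. Using $t^2=\Delta^2\prod_p(x_{n_p}^{(p)})^{-1}$ when $m>0$ (respectively $t^2=\prod_p(x_{n_p}^{(p)})^{-1}$ when $m=0$), one computes
\[
\delta^2=\bigl[\textstyle t^2\prod_p(x_{n_p/2}^{(p)})^2\bigr]=[\Delta^2]\cdot\prod_p\bigl[x_{n_p/2}^{(p)}(x_{n_p/2}^{(p)})^{\ast}\bigr]=\Bigl(1+\sum_{j=1}^{m}\beta_j\Bigr)\prod_p\alpha_{n_p/2}^{(p)},
\]
which is exactly the relation in~(\ref{eqnsection2.3typeBn2}) (with the convention $\sum_{j=1}^{0}\beta_j=0$).

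The main obstacle will be the bookkeeping: the argument contains several parallel case-splits ($m=0$ versus $m>0$, $n_p$ even versus odd, and $\eta=0$ versus $\eta=1$), and the triangular substitution between $\gamma_m=[tt^{\ast}]$ and $\beta_m=[y_m]$ must be recorded carefully, since $[y_m]$ does not correspond to a single monomial of $R(\tilde{H}_B/C_B)$ and so the clean bijection on monomial bases is most naturally phrased in terms of $\gamma_m$ before translating into the generators displayed in the statement.
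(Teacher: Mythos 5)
Your proof is correct and follows the same essential strategy as the paper: use the monomial $\mathbb{Z}$-basis of $R(\tilde{H}_B/C_B)$ from Proposition \ref{propositionrepnringtypeb}, derive the self-duality conditions $a_i^{(p)}=a_{n_p-i}^{(p)}$ and $2c_p=e-\sum_i a_i^{(p)}$, split on whether some $n_p$ is odd, express self-dual monomials as products of the claimed generators, and conclude. The only substantive difference is in how injectivity is established. The paper shows surjectivity and then invokes that a surjection between integral domains of the same finite Krull dimension is an isomorphism, whereas you establish an explicit bijection between the monomial basis of the source (parametrized by $r_i^{(p)}$, $s_j$, $k$, and $\eta\in\{0,1\}$ in the second case) and the self-dual monomial basis of $h^+(R(\tilde{H}_B/C_B))$. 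Your route is somewhat more elementary in that it avoids commutative algebra and also nicely isolates the triangular change of variables between $\gamma_m:=[tt^\ast]$ and the generator $\beta_m=[\Lambda^m(y_1)]=\gamma_m+1+\sum_{j<m}\beta_j$, which the paper folds implicitly into the surjectivity check. One piece of bookkeeping you flag but should spell out: when $m=0$ the auxiliary class degenerates to $\gamma_0=[tt^\ast]=[1]=1$, so $\gamma_m$ is not an actual polynomial variable; in that case the parameter $k$ is forced to vanish (since $e\in\{0,1\}$ and must be even in Case~1), the $\beta$-part of the tensor factor is trivial, and the basis bijection is between monomials in the $\alpha_i^{(p)}$ alone (times $\{1,\delta\}$ in Case~2). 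With that degenerate subcase made explicit the argument is complete.
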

\begin{proof}
Suppose $m>0$. In Proposition \ref{propositionrepnringtypeb}, we saw that $R(\tilde{H}_B/C_B)$ is a tensor product of a Laurent and polynomial ring. The duality takes monomials to monomials, so the non-zero self-dual monomials are a $\mathbb{Z}_2$-basis of $h^+(R(\tilde{H}_B/C_B))$. Suppose 
\[
z=t^a\prod_{p=1}^l \prod_{i=1}^{n_p-1}\left(x_i^{(p)}\right)^{b_i^{(p)}}\cdot \prod_{q=1}^l \left(x_{n_q}^{(q)}\right)^{c_q}\cdot \prod_{q=1}^{m-1}y_q^{d_q}
\]
is a self-dual monomial where $a,b_i^{(p)},d_q\in\mathbb{N}_0$ and $c_p\in\mathbb{Z}$. Then
\[
z=z^\ast=t^a\prod_{r=1}^l\left(x_{n_r}^{(r)}\right)^{a}\prod_{p=1}^l\prod_{i=1}^{n_p-1} \left(x_{n_p}^{(p)}\right)^{-b_i^{(p)}}\left(x_{n_p-i}^{(p)}\right)^{b_i^{(p)}}\cdot \prod_{q=1}^l \left(x_{n_q}^{(q)}\right)^{-c_q}\cdot \prod_{q=1}^{m-1}y_q^{d_q}
\]
This equality holds if and only if
\begin{align*}
    2c_p=a-b_1^{(p)}-\ldots-b_{n_p-1}^{(p)}~\text{ and }~b_i^{(p)}=b_{n_p-i}^{(p)}
\end{align*}
for all $1\leq p\leq l$ and $1\leq i<n_p$. 

Suppose there is some $1\leq p\leq l$ so that $n_p$ is odd. Then $\sum_{k=1}^{n_p-1} b_k^{(p)}=2\sum_{k=1}^{(n_p-1)/2} b_k^{(p)}$ is even and so $a$ is even. Then we must also have that $b_{n_r/2}^{(r)}$ is even when $n_r$ is even. This shows that $z$ can in fact be written as a product of powers of
\[
tt^\ast~\text{ and }~x_j^{(p)}\left(x_j^{(p)}\right)^\ast~\text{ and }~y_q
\]
for $1\leq p\leq l$ and $1\leq j\leq \lfloor n_p/2\rfloor$ and $1\leq q<m$. This proves that the above map (\ref{eqnsection2.3typeBn1}) is surjective, noting that under the isomorphism of Proposition \ref{propositionrepnringtypeb}, $tt^\ast$ corresponds to $\Delta^2=1+y_1+\ldots+y_m$.

Now suppose that $n_1,\ldots,n_l$ are all even. If we choose $a$ odd, we must also choose $b_{n_p/2}^{(p)}$ odd for all $1\leq p\leq l$. We deduce that $z$ can be written as a product of powers of
\[
tt^\ast~\text{ and }~t\prod_{p=1}^l x_{\frac{n_p}{2}}^{(p)}~\text{ and }~x_j^{(p)}\left(x_j^{(p)}\right)^\ast~\text{ and }~y_q
\]
for $1\leq p\leq l$ and $1\leq i<n_p$ and $1\leq q<m$. This proves that the above map (\ref{eqnsection2.3typeBn2}) is surjective, again noting that under the isomorphism of Proposition \ref{propositionrepnringtypeb}, $tt^\ast$ corresponds to $\Delta^2=1+y_1+\ldots+y_m$.

Note that in both cases, both the domain and codomain of the surjective maps are integral domains with the same Krull dimension $m+\sum_{i=1}^l n_i$. Hence the maps must in fact be isomorphisms.

Similar arguments apply for $m=0$.
\end{proof}

\subsection{Type $C_n$}\label{repntatetypec}
Let $H_C:=U(n_1)\times\ldots\times U(n_l)\times Sp(m)$. From the product theorem for complex representation rings, we immediately obtain:
\begin{proposition}\label{propositionrepnringtypec}
There is an isomorphism
\[
\bigotimes_{p=1}^l \mathbb{Z}\left[x_1^{(p)},\ldots,x_{n_p-1}^{(p)},\left(x_{n_p}^{(p)}\right)^{\pm1}\right]\otimes \mathbb{Z}[z_1,\ldots,z_m]\to R(H_C)
\]
where $x_1^{(p)}$ corresponds to the standard action of the $p$th block $U(n_p)$ and $x_i^{(p)}=\Lambda^i\left(x_1^{(p)}\right)$, and $z_1$ corresponds to the standard representation of the block $Sp(m)$ on $\mathbb{C}^{2m}$ and $z_j=\Lambda^j(z_1)$. Under this isomorphism, the duality on $R(H_C)$ is given by $\left(x_i^{(p)}\right)^\ast=\left(x_{n_p}^{(p)}\right)^{-1}x_{n_p-i}^{(p)}$ and $z_j^\ast=z_j$.
\end{proposition}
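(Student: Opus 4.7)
The proposition is essentially a direct assembly of previously stated facts, so the plan is short. I would invoke the product theorem for complex representation rings, namely that for compact connected Lie groups $G_1,G_2$, external tensor product induces an isomorphism $R(G_1)\otimes_{\mathbb{Z}} R(G_2)\xrightarrow{\cong} R(G_1\times G_2)$, and then plug in the known representation rings of the factors.

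First I would recall from Proposition \ref{proprepnringun} that $R(U(n_p))\cong \mathbb{Z}[x_1^{(p)},\ldots,x_{n_p-1}^{(p)},(x_{n_p}^{(p)})^{\pm 1}]$ with $x_1^{(p)}$ the standard representation of the $p$th block, $x_i^{(p)}=\Lambda^i(x_1^{(p)})$, and duality $(x_i^{(p)})^\ast=(x_{n_p}^{(p)})^{-1}x_{n_p-i}^{(p)}$. Next I would recall from the notation box at the start of Section \ref{tatecohsubsect3} that $R(Sp(m))\cong \mathbb{Z}[z_1,\ldots,z_m]$ with $z_1$ the standard $2m$-dimensional representation, $z_j=\Lambda^j(z_1)$, and with all representations self-dual. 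Iteratively applying the product theorem then yields the claimed isomorphism as rings.

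Finally I would note that external tensor product is compatible with the duality involutions, since dualising commutes with tensor product of representations. Therefore the involution on the right-hand side is precisely the tensor product of the involutions on the factors, which gives $(x_i^{(p)})^\ast=(x_{n_p}^{(p)})^{-1}x_{n_p-i}^{(p)}$ and $z_j^\ast=z_j$ on the generators, as stated.

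There is no genuine obstacle here: because $H_C$ is already a direct product of the groups whose representation rings were recorded above (no central quotient is needed, in contrast to types $A_n$ and $B_n$), the proof reduces entirely to citing the product theorem and Proposition \ref{proprepnringun}. The only thing worth being slightly careful about is making explicit that the involution on a tensor product of $\ast$-rings is the tensor of the individual involutions, which is immediate from the definition of the dual of an external tensor product of representations.
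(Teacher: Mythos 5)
Your proof is correct and follows exactly the approach the paper takes: the paper introduces Proposition~\ref{propositionrepnringtypec} with the single sentence ``From the product theorem for complex representation rings, we immediately obtain:'' and gives no further argument, relying precisely on the product theorem, Proposition~\ref{proprepnringun}, the stated form of $R(Sp(m))$, and compatibility of duality with external tensor products. Your write-up simply makes these implicit citations explicit.
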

\begin{proposition}\label{propositiontatecohreptypec}
We have an isomorphism
\[
\bigotimes_{p=1}^l \mathbb{Z}_2\left[\alpha_1^{(p)},\ldots,\alpha_{\lfloor n_p/2\rfloor}^{(p)}\right]\otimes \mathbb{Z}_2\left[\gamma_1,\ldots,\gamma_m\right]\to h^+(R(H_C))
\]
where $\alpha_i^{(p)}\mapsto \left[x_i^{(p)}\left(x_i^{(p)}\right)^\ast\right]$ and $\gamma_j\mapsto [z_j]$.
\end{proposition}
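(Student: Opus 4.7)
The plan is to follow the monomial-basis argument used for type $B_n$ in the proof of Proposition \ref{propositiontatecohreptypeb}, which is in fact easier here because the $Sp(m)$-factor contributes only self-dual generators $z_j$, so that no ``$\delta$-type'' correction term arises.

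First I would note that, by Proposition \ref{propositionrepnringtypec}, $R(H_C)$ has a $\mathbb{Z}$-basis consisting of the Laurent monomials in the generators $x_i^{(p)}$, $(x_{n_p}^{(p)})^{\pm1}$, $z_j$. The duality given in Proposition \ref{propositionrepnringtypec} sends each such monomial to another monomial, so the non-zero self-dual monomials descend to a $\mathbb{Z}_2$-basis of $h^+(R(H_C))$. The map in the statement is well defined since each $\alpha_i^{(p)}$ and each $\gamma_j$ is self-dual, so the task reduces to showing that every self-dual monomial can be written uniquely as a product of the $\alpha_i^{(p)}$ and $\gamma_j$.

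For a general monomial
\[
z = \prod_{p=1}^l \prod_{i=1}^{n_p-1} \bigl(x_i^{(p)}\bigr)^{b_i^{(p)}} \cdot \prod_{q=1}^l \bigl(x_{n_q}^{(q)}\bigr)^{c_q} \cdot \prod_{j=1}^m z_j^{d_j}
\]
with $b_i^{(p)}, d_j \in \mathbb{N}_0$ and $c_q \in \mathbb{Z}$, a direct computation using $(x_i^{(p)})^\ast = (x_{n_p}^{(p)})^{-1} x_{n_p-i}^{(p)}$ and $z_j^\ast = z_j$ shows that $z = z^\ast$ if and only if $b_i^{(p)} = b_{n_p-i}^{(p)}$ for all $p$ and $1 \le i < n_p$, and $2c_p = -\sum_{i=1}^{n_p-1} b_i^{(p)}$ for all $p$, while the $d_j$ are unconstrained. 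When $n_p$ is odd the symmetry already forces $\sum_i b_i^{(p)}$ to be even; when $n_p$ is even it forces $b_{n_p/2}^{(p)}$ to be even. In either case one then verifies that the self-dual monomials are precisely the products of powers of $x_i^{(p)}(x_i^{(p)})^\ast = x_i^{(p)} (x_{n_p}^{(p)})^{-1} x_{n_p-i}^{(p)}$ for $1 \le i \le \lfloor n_p/2 \rfloor$ and powers of the $z_j$.

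Hence the proposed homomorphism sends the $\mathbb{Z}_2$-monomial basis of the polynomial ring bijectively to the $\mathbb{Z}_2$-basis of self-dual monomials of $h^+(R(H_C))$, so it is an isomorphism. The only mildly delicate point is the parity condition on $b_{n_p/2}^{(p)}$ when $n_p$ is even, but this is precisely matched by the fact that $\alpha_{n_p/2}^{(p)} = (x_{n_p/2}^{(p)})^{2}(x_{n_p}^{(p)})^{-1}$ contributes only even exponents of $x_{n_p/2}^{(p)}$. No real obstacle is expected: unlike for type $B_n$, there is no need to split cases according to the parity of all $n_p$ and no relation analogous to $\delta^2 + \cdots$ appears.
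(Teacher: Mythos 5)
Your proof is correct and is exactly the monomial-basis argument the paper uses for types $A_n$ and $B_n$ (Propositions \ref{propositiontatecohreptypea} and \ref{propositiontatecohreptypeb}); the paper omits the proof for type $C_n$ precisely because $H_C$ is a direct product, so the self-duality conditions decouple factor by factor and no case distinction or $\delta$-type generator is needed, as you correctly observe. Your verification that the constraint $2c_p = -\sum_i b_i^{(p)}$ forces $b_{n_p/2}^{(p)}$ to be even when $n_p$ is even, and that this is matched by $\alpha_{n_p/2}^{(p)} = (x_{n_p/2}^{(p)})^2 (x_{n_p}^{(p)})^{-1}$, is the one non-trivial point and you handle it correctly.
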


\section{Outline of the Computation of Witt Rings}\label{chapteroutlinecomputation}
We now outline our method of computation of the Witt ring of complex flag varieties in detail. We use the approach developed in \cite{Zibrowius1}. There the author computes the Witt ring of all full flag varieties. We slightly generalise the approach to be able to apply it to all flag varieties.

Throughout this section, let $G$ be a compact simply connected Lie group and $H$ be a closed connected subgroup of maximal rank. We denote by $i\colon H\to G$ the inclusion map. We have seen that by Bousfield's lemma, $W^\ast(G/H)\cong h^\ast(K^0(G/H))$. So as a first step, we need to be able to compute $K^0(G/H)$. This is done via a theorem of Hodgkin reducing this to a computation with representation rings of $G$ and $H$, which are well-understood. We then use the results of section \ref{tatecohsubsect2} to compute $h^\ast(K^0(G/H))$ and finally see how to determine the Witt grading of $h^\ast(K^0(G/H))\cong W^\ast(G/H)$.

\subsection{K-theory of $G/H$}
The key to the complex K-theory of $G/H$ is a certain ring homomorphism $\alpha\colon R(H)\to K^0(G/H)$ which was already considered by Atiyah and Hirzebruch in \cite{AtiyahHirzebruch1}. It is defined as follows: Let $\rho\colon H\to GL_n(\mathbb{C})$ be a complex representation of $H$. Then we may regard $G\times \mathbb{C}^n$ as an $H$-space by letting $H$ act on $G$ by right multiplication and on $\mathbb{C}^n$ via $\rho$. This yields an $n$-dimensional complex vector bundle $(G\times \mathbb{C}^n)/H\to G/H$ and thus induces an additive homomorphism $\alpha\colon R(H)\to K^0(G/H)$. 

It can be checked that $\alpha$ is a ring homomorphism and preserves the dualities on $R(H)$ and $K^0(G/H)$, so it is a morphism of $\ast$-rings. Letting $\mathfrak{a}(G)\subset R(H)$ be the ideal generated by all $i^\ast(\sigma)-\text{rk}(\sigma)\in R(H)$ for $\sigma\in R(G)$, one can see that $\mathfrak{a}(G)\subset\text{ker}(\alpha)$, so $\alpha$ induces a map
\[
\overline{\alpha}\colon R(H)/\mathfrak{a}(G)\to K^0(G/H)
\]
Recalling our standing assumption that $G$ is compact, simply connected and that $H\subset G$ is of maximal rank, Hodgkin's theorem states:
\begin{theorem}[{\cite[Thm 3]{pittie1}}] \label{Hodgkin}
$\overline{\alpha}$ is an isomorphism.
\end{theorem}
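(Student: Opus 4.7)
The plan is to combine Hodgkin's Künneth-type spectral sequence for equivariant complex K-theory with the Pittie--Steinberg freeness theorem. Conceptually, the map $\alpha$ is nothing but the forgetful map from $G$-equivariant K-theory to ordinary K-theory, once one identifies $K^0_G(G/H)$ with $R(H)$ via Atiyah--Segal induction. Composing with the rank augmentation $R(G)\to\mathbb{Z}$ shows that $\alpha$ annihilates $\mathfrak{a}(G)$, so the factorization through $\overline{\alpha}$ is formal; the content of the theorem is that $\overline{\alpha}$ is bijective.

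First I would invoke Hodgkin's spectral sequence. For $G$ compact simply connected and any finite $G$-CW complex $X$, there is a strongly convergent spectral sequence
$$E_2^{p,q} = \operatorname{Tor}^{R(G)}_{-p,q}\bigl(\mathbb{Z},\, K_G^{*}(X)\bigr) \Longrightarrow K^{p+q}(X),$$
where $\mathbb{Z}$ carries the $R(G)$-module structure given by the augmentation. Applying this with $X=G/H$ uses $K_G^{*}(G/H)\cong R(H)$ and has target $K^{*}(G/H)$. The edge homomorphism in the $(0,0)$-position is exactly $\overline{\alpha}$, so it suffices to make the spectral sequence collapse onto that one slot.

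Next I would bring in the Pittie--Steinberg theorem: if $G$ is compact simply connected and $H\subset G$ is a closed connected subgroup of maximal rank, then $R(H)$ is a finitely generated \emph{free} $R(G)$-module, with an explicit basis coming from Weyl-group double cosets. Granted freeness, $\operatorname{Tor}^{R(G)}_{s,*}(\mathbb{Z},R(H))=0$ for every $s>0$, so the Hodgkin spectral sequence is concentrated on a single column. It therefore degenerates at $E_2$ and yields
$$K^{*}(G/H) \;\cong\; R(H)\otimes_{R(G)}\mathbb{Z} \;\cong\; R(H)/\mathfrak{a}(G),$$
concentrated in even degree, with the isomorphism realized by the edge map $\overline{\alpha}$. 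In particular this recovers $K^1(G/H)=0$, which is the hypothesis needed to apply Bousfield's lemma later on.

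The one genuinely hard input is the Pittie--Steinberg freeness of $R(H)$ over $R(G)$; once that is granted, the rest of the argument is formal. Both facts are classical, and in the write-up I would cite Hodgkin for the spectral sequence and Pittie (using Steinberg's basis) for the freeness, and limit the verification to checking that the edge map agrees with $\overline{\alpha}$.
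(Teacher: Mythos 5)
Your proof is correct and is essentially the standard argument due to Pittie, which is exactly what the paper cites for this theorem rather than reproving it: the Hodgkin Künneth spectral sequence for equivariant K-theory, combined with Steinberg's theorem (the paper's Theorem \ref{Steinberg}) that $R(H)$ is a free $R(G)$-module, which kills all higher Tor and forces degeneration at $E_2$. You have the right key ingredients, the identification $K_G^0(G/H)\cong R(H)$, the correct formulation of the spectral sequence via $K_G^*(X\times G)\cong K^*(X)$, and the correct observation that the edge homomorphism is $\overline{\alpha}$; the only detail worth making explicit in a full write-up is that convergence of Hodgkin's spectral sequence requires $\pi_1(G)$ torsion-free, which holds here since $G$ is simply connected.
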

\begin{remark}
The above construction can be mimicked for KO-theory. Fist, we extend the real representation ring to a graded ring using equivariant K-theory:
\begin{align*}
RO^j(H):= KO^j_H(\text{pt})~\text{ for }j\in\mathbb{Z}
\end{align*}
Similarly to the non-equivariant case, equivariant real K-theory is 8-periodic.
As outlined in \cite[§2.2 and §2.3]{Zibrowius1} for example, we can now apply the aforementioned construction analogously to real representations of $H$ and real vector bundles over $G/H$ to obtain maps
$\alpha_O^j\colon RO^j(H)\to KO^j(G/H)$.
Let $\widetilde{RO}^j(G)$ be the kernel of the restriction map $RO^j(G)\to RO^j(\left\{1\right\})$ and denote by
$
\mathfrak{a}_O^\ast(G)\subset RO^\ast(H)
$
the ideal generated by the image of $\widetilde{RO}^\ast(G)$ under the restriction map $i^\ast$ and by the images of $\mathfrak{a}(G)$ under the realification map $r_\ast$. Then $\alpha_O^j$ induces a map
\[
\overline{\alpha}_O^j\colon RO^j(H)/\mathfrak{a}_O^j(G)\to KO^j(G/H)
\]
See \cite[§2.3]{Zibrowius1} for all of this. However, \cite[Proposition 2.2, Example 2.3]{Zibrowius1} shows that $\overline{\alpha}_O^\ast$ is often far from being surjective.
\end{remark}
A crucial ingredient in the proof of Theorem \ref{Hodgkin} is the following result, which we shall also need later:
\begin{theorem}[{\cite[Thm. 1.1]{steinberg1}}]\label{Steinberg}
Let $K$ be a connected compact Lie group with $\pi_1(K)$ free and $U$ be a closed connected subgroup of maximal rank. Then $R(U)$ is free as a module over $R(K)$ by restriction.
\end{theorem}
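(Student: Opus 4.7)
The plan is to work through the common maximal torus of $U$ and $K$ and reduce everything to an invariant theory statement. Choose a maximal torus $T\subset U$; since $U$ has maximal rank in $K$, $T$ is also a maximal torus of $K$. Setting $W:=N_K(T)/T$ and $W_U:=N_U(T)/T$, we have $W_U\subseteq W$, and via the standard identification of representation rings of connected compact Lie groups with Weyl-invariant parts of the character ring of a maximal torus,
\[
R(K)\cong R(T)^{W}\qquad\text{and}\qquad R(U)\cong R(T)^{W_U},
\]
so that the claim becomes: $R(T)^{W_U}$ is free as a module over $R(T)^{W}$.

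The first main step is to establish Pittie's theorem that $R(T)$ itself is free over $R(T)^{W}$, of rank $|W|$. This is precisely where the hypothesis on $\pi_1(K)$ enters: when $\pi_1(K)$ is torsion free, the character lattice $X^*(T)$ coincides with the full weight lattice of the root system, so that $W$ acts on $R(T)=\mathbb{Z}[X^*(T)]$ as a finite reflection group whose simple reflections are realized on honest characters. One can then construct an explicit ``Schubert basis'' $\{b_w\}_{w\in W}$ of $R(T)$ over $R(T)^{W}$, mirroring the cell decomposition of the flag variety $K/T$, for instance by iteratively applying Demazure divided-difference operators to a suitable element.

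The main obstacle, and the heart of the argument, is the descent from freeness of $R(T)$ over $R(T)^{W}$ to freeness of the intermediate ring $R(T)^{W_U}$ over $R(T)^{W}$. The strategy is to pick minimal-length coset representatives $w_1,\ldots,w_r$ for $W_U\backslash W$ and to symmetrize over $W_U$: the Demazure symmetrizer $\partial_{w_0^{W_U}}$, obtained as the composite of divided-difference operators along a reduced word for the longest element of $W_U$, is $R(T)^{W}$-linear and lands in $R(T)^{W_U}$, and so produces candidate generators $p_i\in R(T)^{W_U}$ by being applied to suitably chosen Schubert basis elements. A rank count yields the expected value $|W/W_U|$, and linear independence follows from the ambient freeness of $R(T)$. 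The delicate point, which I expect to be the real technical obstacle, is verifying \emph{integral} spanning: rationally one would simply use the Reynolds averaging operator $\tfrac{1}{|W_U|}\sum_{v\in W_U}v$, but the Demazure symmetrizer circumvents these denominators, at the cost of a careful combinatorial bookkeeping in the Bruhat order on $W$. Since the result is cited directly as Theorem~1.1 of \cite{steinberg1}, we do not reprove it here but invoke it as stated.
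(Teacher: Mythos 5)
The paper does not prove this statement: it is quoted verbatim as Steinberg's Theorem~1.1 and used as a black box, so there is no internal proof to compare your sketch against. Your outline of the reduction is correct in broad strokes---pass to the common maximal torus $T$, identify $R(K)\cong R(T)^W$ and $R(U)\cong R(T)^{W_U}$, first prove $R(T)$ is free over $R(T)^W$ of rank $|W|$ using that $\pi_1(K)$ torsion-free makes $X^*(T)$ the full weight lattice, then descend to the intermediate invariant ring by coset representatives. However, your account of the construction step is not what Steinberg actually does. Steinberg does not build his basis by iterating Demazure divided-difference operators; he writes down a closed-form basis directly, namely $e_w = w\bigl(\prod_{i\colon w^{-1}\alpha_i<0} e^{\lambda_i}\bigr)$ where $\lambda_i$ are the fundamental weights, and verifies freeness by a determinant/filtration argument. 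The Demazure-operator route you describe is a later, alternative proof (and is more naturally adapted to equivariant $K$-theory of $K/T$), whereas Pittie's original 1972 argument (which Steinberg's paper is a constructive refinement of) was non-constructive via a homological dimension count. So your proposal is a legitimate strategy but not faithful to the cited source; since the paper merely invokes the theorem and you explicitly defer to the citation at the end, nothing in the paper's logic is affected either way.
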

Let us now apply Hodgkin's theorem \ref{Hodgkin} concretely. Since $G$ is simply connected, its representation ring is a polynomial ring:
\[
R(G)\cong \mathbb{Z}[\lambda_1,\lambda_1^\ast,\ldots,\lambda_a,\lambda_a^\ast,\sigma_{a+1},\ldots,\sigma_n]
\]
Here we suppose that the $\sigma_j$ are self-dual. If $\zeta$ is a representation, let us write $\tilde{\zeta}:=\zeta-\text{rk}(\zeta)$ for the reduced virtual representation of rank 0. As $R(H)$ is a free $R(G)$-module by Theorem \ref{Steinberg}, it follows that
\[
i^\ast(\tilde{\lambda}_1),i^\ast(\tilde{\lambda}_1^\ast),\ldots,i^\ast(\tilde{\lambda}_a),i^\ast(\tilde{\lambda}_a^\ast),i^\ast(\tilde{\sigma}_{a+1}),\ldots,i^\ast(\tilde{\sigma}_n)
\]
is an $R(H)$-regular sequence. Letting $I$ be the ideal generated by all these elements, we obtain by Theorem \ref{Hodgkin} that
\begin{align}\label{K0}
\overline{\alpha}\colon R(H)/I\xrightarrow[]{\cong} K^0(G/H)
\end{align}
is an isomorphism of $\ast$-rings.

\subsection{Tate cohomology of $K^0(G/H)$}
Via the isomorphism (\ref{K0}), it is now possible to compute $h^\ast(K^0(G/H))$ by computing $h^\ast(R(H)/I)$ using the results of section \ref{tatecohsubsect2}. We define the following elements in $h^+(R(H))$:
\begin{align*}
\mu_j:=\begin{cases}\left[i^\ast\left(\tilde{\lambda}_j\tilde{\lambda}_j^\ast\right)\right] &\text{ if }1\leq j\leq a\\\left[i^\ast\left(\tilde{\sigma}_j\right)\right]&\text{ if }a<j\leq n\end{cases}
\end{align*}
We make the following assumptions:
\begin{assumptions}
\item[(A1)] There is a subset $S\subset \left\{1,\ldots,n\right\}$ such that the $\mu_s$ for $s\in S$ form an $h^+(R(H))$-regular sequence in some order.
\item[(A2)] For every $t\in \left\{1,\ldots,n\right\}$, the element $\mu_t$ is contained in the ideal $(\mu_s\mid s\in S)$.
\end{assumptions}
As we will see in subsequent chapters, these assumptions are frequently satisfied in our concrete computations. In fact, showing that they hold will be the essential remaining step. However, we will see that in a few situations, these assumptions are not quite satisfied. In those cases, we will have to make slight adaptions.

Let
\[
I':=\left(\tilde{\lambda}_s,\tilde{\lambda}_s^\ast\mid s\in S,~1\leq s\leq a\right)+\left(\tilde{\sigma}_s\mid s\in S,~a<s\leq n\right)\subset R(H)
\]
From (A1), it follows by Corollary \ref{tatecohlemma5} that as rings,
\[
h^\ast\left(R(H)/I'\right)\cong h^+(R(H)/I')\cong h^+(R(H))/(\mu_s\mid s\in S)
\]
Let $\overline{S}:= \left\{1,\ldots,n\right\}\setminus S$. By (A2), for every $t\in \overline{S}$ there exists $u_t\in R(H)$ such that:
\begin{align*}
\begin{rcases}
\text{If }1\leq t\leq a\text{, then }~\overline{i^\ast\left(\tilde{\lambda}_t\tilde{\lambda}^\ast_t\right)}&=\overline{u}_t+\overline{u}_t^\ast\\
\text{If }a< t\leq n\text{, then }~\overline{i^\ast(\tilde{\sigma}_t)}&=\overline{u}_t+\overline{u}_t^\ast\\
\end{rcases}\text{ in }R(H)/I'
\end{align*}
Now by Lemma \ref{tatecohlemma4}, we have that as modules over $h^\ast(R(H)/I')$,
\begin{align}\label{isotatecoh}
h^\ast(R(H)/I)\cong h^\ast(R(H)/I')\otimes \bigwedge_{t\in\overline{S}} \left([\overline{u}_{t}]\right)
\end{align}
where $[\overline{u}_{t}]\in h^-(R(H)/I)$ for all $t\in\overline{S}$.

To show that (\ref{isotatecoh}) is also a ring isomorphism, it suffices to show that
$
[\overline{u}_t]^2=0~\text{ for all }t\in\overline{S}
$
This is immediately implied by the following Proposition, which is also of independent interest:
\begin{proposition}
Let $X$ be a finite cell complex with $K^1(X)=0$ and $x\in W^{-1}(X)\oplus W^{-3}(X)$. Then $x^2=0$.
\end{proposition}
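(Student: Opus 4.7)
The plan is to apply Bousfield's lemma (Lemma \ref{lemmabousfield}) to transfer the claim to a statement about Tate cohomology of $K^0(X)$, and then use the second exterior power operation on complex $K$-theory to realise $y^2$ as a norm.

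Since $X$ is a finite cell complex with $K^1(X)=0$, Bousfield's lemma provides a ring isomorphism $\overline{c}\colon W^\ast(X)\xrightarrow{\cong} h^\ast(K^0(X))$ taking $W^{-1}(X)\oplus W^{-3}(X)=W^{3}(X)\oplus W^{1}(X)$ isomorphically onto $h^-(K^0(X))$. It therefore suffices to prove the following: for every anti-self-dual $y\in K^0(X)$ (i.e.\ $y^\ast=-y$), the element $y^2$ lies in the image of $\id+\ast$, so that $[y^2]=0$ in $h^+(K^0(X))$.

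The natural witness is $\lambda^2(y)$. Apply the total $\lambda$-operation $\lambda_t\colon K^0(X)\to 1+tK^0(X)[[t]]$, which is a homomorphism from $(K^0(X),+)$ into the multiplicative group of power series, to the identity $y+y^\ast=0$. This yields $\lambda_t(y)\,\lambda_t(y^\ast)=1$. Comparing coefficients of $t^2$ and using $\lambda^1=\id$ gives $\lambda^2(y)+y\cdot y^\ast+\lambda^2(y^\ast)=0$, and since $y\cdot y^\ast=-y^2$ this rearranges to $y^2=\lambda^2(y)+\lambda^2(y^\ast)$. Exterior powers commute with duality (as $\Lambda^2(E^\vee)\cong(\Lambda^2 E)^\vee$ naturally, and this extends to virtual bundles), so $\lambda^2(y^\ast)=\lambda^2(y)^\ast$. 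Consequently $y^2=(\id+\ast)\lambda^2(y)$, so $[y^2]=0$ in $h^+(K^0(X))$ as needed. Passing back through $\overline{c}$, this gives $x^2=0$ for every $x\in W^{-1}(X)\oplus W^{-3}(X)$.

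There is no serious obstacle here: both Bousfield's lemma and the $\lambda$-ring structure on complex $K$-theory are entirely standard, and the proof is a two-line computation once the ingredients are assembled. The only real insight is recognising $\lambda^2(y)$ as the element whose trace $\lambda^2(y)+\lambda^2(y)^\ast$ equals $y^2$ precisely when $y$ is anti-self-dual.
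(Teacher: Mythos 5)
Your proof is correct, and it reaches the key step by a genuinely different route than the paper. The paper decomposes $x$ into its $W^{-1}$ and $W^{-3}$ components, writes $\overline{c}_{\text{even}}(x^2)$ in terms of $[c_{-1}(x_{-1})\cdot c_{-1}(x_{-1})^\ast]$ and $[c_{-3}(x_{-3})\cdot c_{-3}(x_{-3})^\ast]$, and invokes the standard fact that $y\cdot y^\ast\in\operatorname{im}(c_0)$ for all $y\in K^0(X)$; this shows $x^2\in W^0(X)$, and combined with the trivial grading observation $x^2\in W^{-2}(X)$ it forces $x^2\in W^0(X)\cap W^{-2}(X)=0$. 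You instead pick a single anti-self-dual representative $y\in K^0(X)$ of $\overline{c}(x)\in h^-(K^0(X))$ and use the $\lambda$-ring structure on complex $K$-theory to produce an explicit witness: $y^2=(\operatorname{id}+\ast)\lambda^2(y)$, which kills $[y^2]$ in $h^+(K^0(X))$ in one stroke. This is more self-contained (it never leaves complex $K$-theory, never touches $KO^\ast$ or real/quaternionic types, and does not need to split $x$ into degree pieces or appeal to the $W^0\cap W^{-2}=0$ disjointness), at the small cost of needing the fact that $\ast$ is a $\lambda$-ring automorphism of $K^0(X)$. That fact is genuine but deserves a sentence: $\ast$ and the $\lambda^k$ commute on honest bundles since $\Lambda^k(E^\vee)\cong(\Lambda^k E)^\vee$, and this extends to all of $K^0(X)$ by the universal property of the group completion because $\lambda_t\circ\ast$ and $\ast\circ\lambda_t$ are both monoid homomorphisms into $1+tK^0(X)[[t]]$ agreeing on the image of $\mathrm{Vect}(X)$. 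The paper's route leans on the realification and complexification maps and a type-of-representation argument that it reuses elsewhere in the section; yours isolates a cleaner algebraic identity.
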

\begin{proof}
Let $x=\overline{x}_{-1}+\overline{x}_{-3}$ where $x_i\in KO^{2i}(X)$ for $i\in\left\{-1,-3\right\}$. We consider the commutative square
\[
\begin{CD}
W^{-1}(X)\oplus W^{-3}(X) @>\overline{c}_{\text{odd}}=(\overline{c}_{-1},\overline{c}_{-3})>\cong> h^-(K^0(X))\\
@VVV @VVV\\
W^0(X)\oplus W^{-2}(X) @>\overline{c}_{\text{even}}=(\overline{c}_{0},\overline{c}_{-2})>\cong> h^+(K^0(X))
\end{CD}
\]
where the vertical maps are the squaring maps and the horizontal maps (induced by complexification) are isomorphisms by Bousfield's lemma \ref{lemmabousfield}. We have
\begin{align}\label{eqnsquarezero1}
\overline{c}_{\text{even}}(x^2)=\overline{c}_{\text{odd}}(x)^2=[c_{-1}(x_{-1})]^2+[c_{-3}(x_{-3})]^2
\end{align}
In $K^0(X)$, we have for $i\in\left\{-1,-3\right\}$ that $c_i(x_i)+c_i(x_i)^\ast=0$. Hence in $h^-(K^0(X))$,
\[
[c_{i}(x_{i})]=[-c_i(x_i)^\ast]=[c_i(x_i)^\ast]
\]
where the second equality holds since $h^-(K^0(X))$ is 2-torsion. Thus from (\ref{eqnsquarezero1}),
\begin{align}\label{eqnsquarezero2}
\overline{c}_{\text{even}}(x^2)=[c_{-1}(x_{-1})\cdot c_{-1}(x_{-1})^\ast]+[c_{-3}(x_{-3})\cdot c_{-3}(x_{-3})^\ast]
\end{align}
Now for any $y\in K^0(X)$, it is a well-known fact that $y\cdot y^\ast\in\text{im}(c_0)$. So (\ref{eqnsquarezero2}) implies that $\overline{c}_{\text{even}}(x^2)\in \text{im}(\overline{c}_0)$. Thus $x^2\in W^0(X)$.

On the other hand,
$
x^2=[x_{-1}]^2+[x_{-3}]^2\in W^{-2}(X)
$.
Consequently, $x^2\in W^0(X)\cap W^{-2}(X)$. So $x^2=0$ as asserted.
\end{proof}
In summary, we have proved:
\begin{proposition}\label{propositiontatecohofktheory}
Using the notation introduced above and assuming that (A1) and (A2) hold, we have a ring isomorphism
\begin{align*}
h^\ast(R(H)/I)\cong\frac{h^+(R(H))}{(\mu_s\mid s\in S)}\otimes \bigwedge_{t\in\overline{S}} \left([\overline{u}_{t}]\right)
\end{align*}
where the first factor in the tensor product is completely contained in $h^+$ and all the generators of the exterior algebra on the right are contained in $h^-$.
\end{proposition}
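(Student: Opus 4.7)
The plan is to split the quotient $R(H)/I$ into two stages reflecting the partition $\{1,\ldots,n\}=S\sqcup\overline{S}$. First I would quotient by the ``regular'' part $I'$ indexed by $S$ and compute $h^\ast(R(H)/I')$ using the tools of section \ref{tatecohsubsect2}; then I would adjoin the remaining generators one at a time, each step contributing a new free generator in $h^-$; finally I would upgrade the resulting module isomorphism to a ring isomorphism by verifying that these new generators square to zero.

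For the first stage, the inputs are assumption (A1), the regularity of the sequence $i^\ast(\tilde\lambda_1),i^\ast(\tilde\lambda_1^\ast),\ldots,i^\ast(\tilde\sigma_n)$ in $R(H)$ (which was already established above from Theorem \ref{Steinberg}), and the vanishing $h^-(R(H))=0$ from Lemma \ref{lemmarepresentationsandtheirtatecoh}(iii). Feeding these into Corollary \ref{tatecohlemma5} immediately gives
\[
h^\ast(R(H)/I')\cong h^+(R(H))/(\mu_s\mid s\in S),
\]
concentrated in $h^+$, which is exactly the left-hand tensor factor of the target. For the second stage, (A2) says that for each $t\in\overline{S}$ the class $[\mu_t]$ already vanishes in $h^+(R(H)/I')$, and hence in $h^+$ of any further quotient. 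This is precisely the hypothesis of Lemma \ref{tatecohlemma4}(i) (when $a<t\leq n$) or Lemma \ref{tatecohlemma4}(ii) (when $1\leq t\leq a$), so iteratively adjoining $\tilde\sigma_t$, resp.\ the pair $\tilde\lambda_t,\tilde\lambda_t^\ast$, to the ideal contributes a free summand generated by a new class $[\overline{u}_t]\in h^-$. Iterating over all $t\in\overline{S}$ yields the module decomposition (\ref{isotatecoh}).

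The main obstacle is upgrading this module isomorphism to a ring isomorphism: since the free-module structure already identifies products of distinct generators correctly, what remains is to show $[\overline{u}_t]^2=0$ for every $t\in\overline{S}$. This is exactly the content of the preceding proposition on squaring. Concretely, under Hodgkin's isomorphism $R(H)/I\cong K^0(G/H)$ and Bousfield's identification $h^-(K^0(G/H))\cong W^1(G/H)\oplus W^3(G/H)=W^{-3}(G/H)\oplus W^{-1}(G/H)$, each $[\overline{u}_t]$ corresponds to an element of odd Witt degree on the finite CW complex $G/H$ (which satisfies $K^1(G/H)=0$ in our cases of interest). The proposition then delivers $[\overline{u}_t]^2=0$, and the desired isomorphism of $\mathbb{Z}_4$-graded rings follows; note that the real substance of the proof lies elsewhere, in verifying (A1) and (A2) for each family of flag varieties in section \ref{sectioncomputation}, while the present statement is a clean packaging of Lemmas \ref{tatecohlemma4}, \ref{tatecohlemma5} and the squaring result.
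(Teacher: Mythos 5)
Your proposal is correct and follows essentially the same route as the paper: first quotient by the regular part $I'$ and apply Corollary \ref{tatecohlemma5} using (A1) and $h^-(R(H))=0$; then adjoin the remaining generators via Lemma \ref{tatecohlemma4}(i)/(ii) and (A2) to obtain the module decomposition; and finally upgrade to a ring isomorphism by invoking the squaring proposition on $W^{-1}\oplus W^{-3}$ through Hodgkin's and Bousfield's identifications. Your closing observation that the real work lies in verifying (A1) and (A2) case by case is also consistent with how the paper frames this proposition.
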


\subsection{Witt ring of $G/H$}
We have now computed the Tate cohomology of $R(H)/I$ and thus also the Tate cohomology of $K^0(G/H)$ via the isomorphism
\[
[\overline{\alpha}]\colon h^\ast(R(H)/I)\xrightarrow{\cong}h^\ast(K^0(G/H))
\]
induced by (\ref{K0}). It remains to determine the Witt grading of $h^\ast(K^0(G/H))$ under the isomorphism
\[
W^\ast(G/H)\xrightarrow{\overline{c}} h^\ast(K^0(G/H))
\]
of Bousfield's lemma. Using the notation as introduced in this chapter, we prove two lemmas to this end. They are generalisations of the assertion about the Witt grading of the Tate cohomology of full flag varieties in \cite[Thm 3.3]{Zibrowius1}.
\begin{lemma}\label{gradinglemma1}
Let $x\in R(H)$ be self-dual. Then it gives rise to an element $[\alpha(x)]\in h^+(K^0(G/H))$.\\If $x$ is a real representation, then $[\alpha(x)]$ corresponds to an element in $W^0(G/H)$ under Bousfield's isomorphism $\overline{c}$.\\If $x$ is a quaternionic representation, then $[\alpha(x)]$ corresponds to an element in $W^{-2}(G/H)$ under Bousfield's isomorphism $\overline{c}$.
\end{lemma}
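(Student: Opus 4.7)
The plan is to lift $\alpha(x)$ through the equivariant real K-theory ring homomorphisms $\alpha_O^j \colon RO^j(H) \to KO^j(G/H)$ mentioned in the remark following Theorem \ref{Hodgkin}. Since $\alpha$ is a morphism of $\ast$-rings and $x$ is self-dual, we have $\alpha(x)^\ast = \alpha(x^\ast) = \alpha(x)$, so $[\alpha(x)]$ is a well-defined class in $h^+(K^0(G/H))$. To identify its Witt-grading under the Bousfield isomorphism
\[
(\overline{c}_0,\overline{c}_{-2}) \colon W^0(G/H) \oplus W^{-2}(G/H) \xrightarrow{\cong} h^+(K^0(G/H)),
\]
it then suffices to exhibit a preimage under $\overline{c}_0$ in the real case and under $\overline{c}_{-2}$ in the quaternionic case; the other component of the isomorphism is then automatically forced to vanish.

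Suppose first that $x$ is a real representation. Then there is a class $x_{\mathbb{R}} \in RO^0(H) = KO^0_H(\mathrm{pt})$ whose complexification equals $x$ in $R(H)$. Applying $\alpha_O^0$ produces $\alpha_O^0(x_{\mathbb{R}}) \in KO^0(G/H)$, which projects to a class in $W^0(G/H) = KO^0(G/H)/r_0$. The essential check is a naturality statement $c_0 \circ \alpha_O^0 = \alpha \circ c$, where $c$ denotes complexification of representations on the right and of vector bundles on the left: both sides are computed by the same associated-bundle construction $E \mapsto (G \times E)/H$ applied to complexified fibres. Granting this, $\overline{c}_0$ sends the projection of $\alpha_O^0(x_{\mathbb{R}})$ to $[\alpha(x)]$, so $[\alpha(x)]$ corresponds to an element of $W^0(G/H)$.

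The quaternionic case is parallel. A quaternionic representation of $H$ provides a class $x_{\mathbb{H}}$ in $KSp^0_H(\mathrm{pt}) \cong KO^{-4}_H(\mathrm{pt}) = RO^{-4}(H)$ whose complexification (composed with the relevant complex Bott identification) is $x \in R(H)$. Applying $\alpha_O^{-4}$ and projecting to $W^{-2}(G/H)$ yields, via the analogous naturality statement $c_{-2} \circ \alpha_O^{-4} = \alpha \circ c$, a class sent to $[\alpha(x)]$ by $\overline{c}_{-2}$. The main technical obstacle in both cases is verifying this naturality of the associated-bundle construction under complexification; the real case is essentially immediate at the bundle level, but the quaternionic case requires reconciling the Bott periodicity identifications used to define $c_{-2}$ on the target side with those used to view $RO^{-4}(H)$ as the group of virtual quaternionic representations. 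For $H=T$ a maximal torus this is carried out in \cite[Theorem 3.3]{Zibrowius1}, and since that argument uses only formal properties of the $\alpha$- and $\alpha_O^{\ast}$-constructions and no special feature of the torus, it extends verbatim to the present setting.
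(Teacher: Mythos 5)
Your proof is correct and follows essentially the same route as the paper: lift $x$ to $RO^0(H)$ (resp.\ $RO^{\pm 4}(H)$) via the real (resp.\ quaternionic) structure, push it through $\alpha_O^\ast$, and use the commutative square $c_\ast\circ\alpha_O^\ast=\alpha\circ c_\ast$ to place $[\alpha(x)]$ in the image of $\overline{c}_0$ (resp.\ $\overline{c}_{\pm 2}$). The only cosmetic difference is that you index with $-4$ and $-2$ where the paper uses $4$ and $2$, which agree by $8$- and $4$-periodicity respectively.
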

\begin{proof}
Suppose $x$ is real. Then there is $x^\mathbb{R}\in RO^0(H)$ such that $x=c_0\left(x^\mathbb{R}\right)$. From the commutative diagram
\begin{align*}
\begin{CD}
RO^0(H) @>c_0>> R(H)\\
@VV\alpha_O^0V @VV\alpha V\\
KO^0(G/H) @>c_0>> K^0(G/H)
\end{CD}
\end{align*}
we deduce that 
$
\alpha(x)=\alpha\left(c_0\left(x^\mathbb{R}\right)\right)=c_0\left(\alpha_O^0\left(x^\mathbb{R}\right)\right)
$.
Thus $\alpha(x)\in \text{im}(c_0)$. Passing to Tate cohomology, we obtain
\[
[\alpha(x)]\in \text{im}(\overline{c}_0\colon W^0(G/H)\to h^+(K^0(G/H)))
\]
and hence the claim follows.

Similarly, if $x$ is quaternionic, we obtain from the commutative square
\begin{align*}
\begin{CD}
RO^4(H) @>c_2>> R(H)\\
@VV\alpha_O^4V @VV\alpha V\\
KO^4(G/H) @>c_2>> K^0(G/H)
\end{CD}
\end{align*}
that $\alpha(x)\in \text{im}(c_2)$ and hence $[\alpha(x)]\in h^+(K^0(G/H))$ corresponds to an element in $W^{-2}(G/H)$ under Bousfield's isomorphism $\overline{c}$.
\end{proof}
\begin{lemma}\label{gradinglemma2}
Let $u\in R(H)$ be such that there are $\nu_j\in R(H)$ and $\tilde{\mu}_j\in R(G)$ with $\text{rk}\left(\tilde{\mu}_j\right)=0$ such that
$
u+u^\ast=\sum_{j=1}^n \nu_j \cdot i^\ast\left(\tilde{\mu}_j\right)
$.
Then $u$ gives rise to $[\alpha(u)]\in h^-(K^0(G/H))$. 

If all the $\nu_j,\tilde{\mu}_j$ are of real type, then $[\alpha(u)]\in h^-(K^0(G/H))$ corresponds to an element in $W^{-1}(G/H)$ under Bousfield's isomorphism. 

If all the $\tilde{\mu}_j$ are of real type and all the $\nu_j$ are of quaternionic type, then $[\alpha(u)]\in h^-(K^0(G/H))$ corresponds to an element in $W^{-3}(G/H)$ under Bousfield's isomorphism.
\end{lemma}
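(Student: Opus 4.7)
\emph{Plan.} The first assertion, that $u$ gives rise to a class $[\alpha(u)]\in h^-(K^0(G/H))$, is immediate: since each $\tilde{\mu}_j$ has rank zero, $i^\ast(\tilde{\mu}_j)\in\mathfrak{a}(G)\subseteq\ker(\alpha)$, so that $\alpha(u)+\alpha(u)^\ast=\alpha(u+u^\ast)=0$ in $K^0(G/H)$. For the grading part my plan is to exploit the second half of Bousfield's Lemma~\ref{lemmabousfield}. The composite
\[ W^1(G/H)\oplus W^3(G/H)\xrightarrow[\cong]{(\overline{c}_1,\overline{c}_3)} h^-(K^0(G/H)) \xrightarrow[\cong]{(\overline{r}_0,\overline{r}_2)} c\backslash KO^0(G/H)\oplus c\backslash KO^4(G/H) \]
is the diagonal $(\eta^2,\eta^2)$. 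Since the two displayed arrows are isomorphisms, $\eta^2$ is an isomorphism on each summand, so $[\alpha(u)]$ lies in the $W^3=W^{-1}$ summand if and only if $r_0(\alpha(u))=0$ in $KO^0(G/H)$, and in the $W^1=W^{-3}$ summand if and only if $r_2(\alpha(u))=0$ in $KO^4(G/H)$.

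In the real case I would lift the data to orthogonal representation rings: pick $\nu_j^\mathbb{R}\in RO^0(H)$ and $\tilde{\mu}_j^\mathbb{R}\in\widetilde{RO}^0(G)$ with $c_0(\nu_j^\mathbb{R})=\nu_j$ and $c_0(\tilde{\mu}_j^\mathbb{R})=\tilde{\mu}_j$, and set $v^\mathbb{R}:=\sum_j\nu_j^\mathbb{R}\cdot i^\ast(\tilde{\mu}_j^\mathbb{R})\in\mathfrak{a}_O^0(G)$, so that $c_0(v^\mathbb{R})=u+u^\ast$. Applying the realification $r_0\colon R(H)\to RO^0(H)$, using $r_0\circ\ast=r_0$ together with the identity $r_0\circ c_0=2$ on $RO^0(H)$, yields $2r_0(u)=2v^\mathbb{R}$ in $RO^0(H)$. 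Since $RO^0(H)$ is free abelian on the isomorphism classes of irreducible real representations of $H$, this can be divided by $2$ to give $r_0(u)=v^\mathbb{R}\in\mathfrak{a}_O^0(G)$. By naturality of realification the square
\begin{align*}
\begin{CD}
R(H) @>r_0>> RO^0(H) \\
@V\alpha VV @VV\alpha_O^0 V \\
K^0(G/H) @>r_0>> KO^0(G/H)
\end{CD}
\end{align*}
commutes, whence $r_0(\alpha(u))=\alpha_O^0(r_0(u))=0$, closing the real case.

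The quaternionic case is parallel with $c_2\colon RO^4(H)\to R(H)$ in place of $c_0$: the multiplicativity $c_2(\nu^\mathbb{H})\cdot c_0(\tilde{\mu}^\mathbb{R})=c_2(\nu^\mathbb{H}\cdot\tilde{\mu}^\mathbb{R})$ presents $u+u^\ast$ as $c_2(v^\mathbb{H})$ with $v^\mathbb{H}\in\mathfrak{a}_O^4(G)$; the identity $r_2\circ c_2=2$ on $RO^4(H)$ and torsion-freeness of $RO^4(H)$ (free abelian on irreducible quaternionic representations) give $r_2(u)=v^\mathbb{H}\in\mathfrak{a}_O^4(G)$, and the corresponding commutative square for $r_2$ yields $r_2(\alpha(u))=0$. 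I expect the only real obstacle to be careful bookkeeping: verifying commutativity of both realification squares, the identities $r_jc_j=2$ on $RO^0$ and $RO^4$, and the multiplicativity of $c_j$ across different parities. None of this is deep, but it is the one place where an unnoticed sign or Bott degree error could spoil the argument.
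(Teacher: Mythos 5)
Your proof is correct and follows essentially the paper's route: lift the data to $RO^0(H)$ (resp.\ $RO^4(H)$), show $r_0(u)\in\mathfrak{a}_O^0(G)$ (resp.\ $r_2(u)\in\mathfrak{a}_O^4(G)$), and use the commutative realification square to conclude $r_0(\alpha(u))=0$ (resp.\ $r_2(\alpha(u))=0$), which by Bousfield's lemma places $[\alpha(u)]$ in the claimed Witt summand. The only cosmetic difference is that you derive $r_0(u)=v^\mathbb{R}$ by applying $r_0$ to both sides and invoking $r_0c_0=2$ together with torsion-freeness of $RO$, whereas the paper applies $c_0$ to $r_0(u)$ and invokes $c_0r_0=\mathrm{id}+\ast$ together with injectivity of $c_0$ --- these carry the same content.
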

\begin{remark}
Note that to apply this lemma to the $\left[\overline{u}_t\right]$ where $t\in\overline{S}$, we need to assume something stronger than (A2). We will see that in our concrete computations, this stronger condition is satisfied.
\end{remark}
\begin{proof}
First suppose all the $\nu_j,\tilde{\mu}_j$ are of real type. Then there exist $\nu_j^\mathbb{R}\in RO^0(H)$ and $\tilde{\mu}_j^\mathbb{R}\in RO^0(G)$ with $\nu_j=c_0\left(\nu_j^\mathbb{R}\right)$ and $\tilde{\mu}_j=c_0\left(\tilde{\mu}_j^\mathbb{R}\right)$. Thus we have
\[
c_0(r_0(u))=u+u^\ast=c_0\left(\sum_j \nu_j^\mathbb{R}\cdot i^\ast\left(\tilde{\mu}_j^\mathbb{R}\right)\right)
\]
Since $c_0$ is injective, we have $r_0(u)=\sum_j \nu_j^\mathbb{R}\cdot i^\ast\left(\tilde{\mu}_j^\mathbb{R}\right)\in\mathfrak{a}_O^0(G)$. From the commutative square
\begin{align*}
\begin{CD}
R(H)/\mathfrak{a}(G) @>\overline{\alpha}>> K^0(G/H)\\
@VV\overline{r}_0V @VVr_0V\\
RO^0(H)/\mathfrak{a}_O^0(G) @>\overline{\alpha}_O^0>> KO^0(G/H)
\end{CD}
\end{align*}
we see that $\alpha(u)\in \text{ker}(r_0)$. Hence\footnote{Recall that $c\backslash KO^0(G/H)$ denotes the kernel of $c_0\colon KO^0(G/H)\to K^0(G/H)$.} $[\alpha(u)]\in \text{ker}\left(\overline{r}_0\colon h^-(K^0(G/H))\to c\backslash KO^0(G/H)\right)$ and so $[\alpha(u)]\in \text{im}\left(\overline{c}_3\colon W^3(G/H)\to h^-(K^0(G/H))\right)$ by Bousfield's lemma \ref{lemmabousfield}. Thus the claim follows.

Now suppose the $\tilde{\mu}_j$ are real and the $\nu_j$ are quaternionic. Then there are $\tilde{\mu}_j^\mathbb{R}\in RO^0(H)$ and $\nu_j^\mathbb{H}\in RO^4(H)$ such that $c_0\left(\tilde{\mu}_j^\mathbb{R}\right)=\tilde{\mu}_j$ and $c_2\left(\nu_j^\mathbb{H}\right)=\nu_j$. Thus we have
\[
c_2(r_2(u))=u+u^\ast=\sum_{j=1}^n c_2\left(\nu_j^\mathbb{H}\right)\cdot c_0\left(i^\ast\left(\tilde{\mu}_j^\mathbb{R}\right)\right)=c_2\left(\sum_{j=1}^n \nu_j^\mathbb{H}\cdot i^\ast\left(\tilde{\mu}_j^\mathbb{R}\right)\right)
\]
Since $c_2$ is injective, we obtain
$
r_2(u)=\sum_{j=1}^n \nu_j^\mathbb{H}\cdot i^\ast\left(\tilde{\mu}_j^\mathbb{R}\right)\in\mathfrak{a}_O^4(G)
$.
From the commutative square
\begin{align*}
\begin{CD}
R(H)/\mathfrak{a}(G) @>\overline{\alpha}>> K^0(G/H)\\
@VV\overline{r}_2V @VVr_2V\\
RO^4(H)/\mathfrak{a}_O^4(G) @>\overline{\alpha}_O^4>> KO^4(G/H)
\end{CD}
\end{align*}
we see that $\alpha(u)\in\text{ker}(r_2)$. So by Bousfield's lemma \ref{lemmabousfield}, the claim follows.
\end{proof}
\begin{remark}\label{remarkarbitraryfield}
As mentioned in the introduction, our computations could also be carried out more generally in the algebraic context for flag varieties over an algebraically closed field of characteristic not two. Let us explain this in more detail. Suppose $G$ is a semisimple algebraic group over an algebraically closed field of characteristic not two and $P$ is a parabolic subgroup.
\begin{itemize}
\item An analogue of Bousfield's lemma holds for $G/P$, the algebraic K-theory and algebraic Witt ring (in the sense of Balmer) replacing their topological counterparts \cite[Thm 2.3]{Zibrowius3}.
\item Panin's theorem \cite[Thm 2.2]{panin} gives an analogue of Hodgkin's theorem, computing the algebraic K-theory of $G/P$ in terms of the algebraic representation rings of $G$ and $P$.
\item The representation ring of $G$ and $P$ is essentially independent of the ground field (cf. \cite[Th\'{e}or\`{e}me 4]{Serre2} and \cite[Lemma 3.4]{Zibrowius3}).
\end{itemize}
The above facts show that our computations also yield the algebraic Witt rings of the respective flag varieties over all algebraically closed fields of characteristic not two.
\end{remark}

\section{Computation of Witt rings}\label{sectioncomputation}
We now apply the approach explained in the previous section to compute the Witt ring of all complex flag varieties $G/H$ where $G=SU(n),~Spin(2n+1)$ or $Sp(n)$, i.e. where $G$ is simple of ordinary type $A_n,~B_n$ or $C_n$. We also state the result for type $D_n$ and refer to \cite{Hemmert} for a detailed proof.
\subsection{Type $A_n$}\label{sectioncomputationtypea}
\begin{notationconventions}
Up to conjugation every centraliser of a torus in $SU(n)$ is of the form $H_A=S(U(n_1)\times \ldots \times U(n_l))$ where $n_1+\ldots+n_l=n$. We let $k$ be the number of integers among $n_1,\ldots,n_l$ that are odd and $i_A\colon H_A\to SU(n)$ be the inclusion and $X_A:= SU(n)/H_A$ be the corresponding complex flag variety. We use the notation for $R(H_A)$ and $h^+(R(H_A))$ introduced in Propositions \ref{propositionrepnringtypea} and \ref{propositiontatecohreptypea} and write $R(SU(n))=\mathbb{Z}\left[x_1,\ldots,x_{n-1}\right]$ where $x_1$ is the standard complex representation of $SU(n)$ of rank $n$ and $x_i=\Lambda^i(x_1)$ for all $i$. Furthermore, it will be convenient to make the following definitions in $h^+(R(H_A))$:
\begin{align}\label{defineallbetas}
\begin{split}
\alpha_0^{(p)}:=1&\text{ for }1\leq p\leq l,\\
\alpha_{i}^{(p)}:=\alpha_{n_p-i}^{(p)}&\text{ for }1\leq p\leq l\text{ and } \lfloor n_p/2\rfloor<i\leq n_p
\end{split}
\end{align}
Note that with these definitions, we have $\alpha_i^{(p)}=\left[x_i^{(p)}\left(x_i^{(p)}\right)^\ast\right]$ for all $1\leq p\leq l$ and all $0\leq i\leq n_p$.
\end{notationconventions}
To compute $K^0(X)$ using Hodgkin's theorem, we need to determine the induced map
\begin{align*}
i_A^\ast\colon \mathbb{Z}\left[x_1,\ldots,x_{n-1}\right]\cong R(SU(n))\longrightarrow R(H_A)\cong \bigotimes_{p=1}^l \mathbb{Z}\left[x_1^{(p)},\ldots,x_{n_p}^{(p)}\right]/\left(\prod_{p=1}^l x_{n_p}^{(p)}-1\right)
\end{align*}
We see directly that $i_A^\ast(x_1)=x_1^{(1)}+\ldots +x_1^{(l)}$. Hence we obtain for $1\leq j\leq n-1$:
\begin{align*}
i_A^\ast\left(x_j-\text{rk}(x_j)\right)
= \Lambda^j\left(x_1^{(1)}+\ldots+x_1^{(l)}\right)-\binom{n}{j}
= \sum_{a_1+\ldots+a_l=j}x_{a_1}^{(1)}\ldots x_{a_l}^{(l)}-\binom{n}{j}=: P_j
\end{align*}
Then by Hodgkin's theorem \ref{Hodgkin},
\begin{align*}
K^0(X)\cong R(H_A)/(P_1,\ldots ,P_{n-1})
\end{align*}
We want to compute the Tate cohomology of this ring using Proposition \ref{propositiontatecohofktheory}. We have $P_j^\ast=P_{n-j}$ for $1\leq j\leq n-1$. Thus if $n$ is odd, we have $(n-1)/2$ mutually conjugate pairs of $P_j$'s and if $n$ is even, we have $n/2-1$ mutually conjugate pairs of $P_j$'s and one self-conjugate one, namely $P_{n/2}$. So in order to use Proposition \ref{propositiontatecohofktheory}, we need to consider the elements $\left[P_jP_j^\ast\right]$ in $h^+(R(H_A))$. Using repeatedly that $[a+a^\ast]=0$ in $h^+(R(H))$ for all $a\in R(H)$, we compute:
\begin{align*}
\left[P_jP_j^\ast\right]=&\left[\left(\sum_{a_1+\ldots+a_l=j}x_{a_1}^{(1)}\ldots x_{a_l}^{(l)}-\binom{n}{j}\right)\cdot\left(\sum_{b_1+\ldots+b_l=j}x_{b_1}^{(1)}\ldots x_{b_l}^{(l)}-\binom{n}{j}\right)^\ast\right]\\
=&\left[\sum_{\substack{a_1+\ldots+a_l=j\\b_1+\ldots+b_l=j}}x_{a_1}^{(1)}\left(x_{b_1}^{(1)}\right)^\ast\ldots x_{a_l}^{(l)}\left(x_{b_l}^{(l)}\right)^\ast+\binom{n}{j}\right]\\
=& \left[\sum_{a_1+\ldots+a_l=j}x_{a_1}^{(1)}\left(x_{a_1}^{(1)}\right)^\ast\ldots x_{a_l}^{(l)}\left(x_{a_l}^{(l)}\right)^\ast+\binom{n}{j}\right]\\
=&\sum_{a_1+\ldots+a_l=j}\alpha_{a_1}^{(1)}\ldots\alpha_{a_l}^{(l)}+\binom{n}{j}=:\mu_j
\end{align*}
For the sum defining the $\mu_j$, remember the definitions we made in (\ref{defineallbetas}).

If $n$ is even, we also need to consider
\begin{align*}
[P_{n/2}]=\left[\sum_{a_1+\ldots+a_l=n/2}x_{a_1}^{(1)}\ldots x_{a_l}^{(l)}-\binom{n}{\frac{n}{2}}\right]=
\begin{cases}
0&\text{ if }k>0\\\epsilon&\text{ if }k=0
\end{cases}
\end{align*}
This equality holds for the following reason: If $k>0$, then none of the summands $x_{a_1}^{(1)}\ldots x_{a_l}^{(l)}$ with $a_1+\ldots+a_l=\frac{n}{2}$ is self-dual, and if $k=0$, the only self-dual summand is $x_{n_1/2}^{(1)}\ldots x_{n_l/2}^{(l)}$.

In order to finally apply Proposition \ref{propositiontatecohofktheory}, we need to investigate the relations between the $\mu_j\in h^\ast(R(H_A))$. In other words, we need to check that (A1) and (A2) from Chapter \ref{chapteroutlinecomputation} are satisfied. We state the result now, postponing the proof to Propositions \ref{propmuregular} and \ref{propositionrelationsmu} in the next section.
\begin{proposition}\label{polynomialrelationstypea}
Setting $m:=\lfloor n_1/2\rfloor+\ldots+\lfloor n_l/2\rfloor$, the elements $\mu_1,\ldots,\mu_m$ form an $h^\ast(R(H_A))$-regular sequence and each $\mu_i$ for $m<i\leq \lfloor n/2\rfloor$ can be written as a $\mathbb{Z}_2$-linear combination of $\mu_1,\ldots,\mu_m$.
\end{proposition}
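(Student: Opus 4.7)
The plan is to package both claims using the generating polynomial
\[
F(t):=\prod_{p=1}^l A^{(p)}(t)+(1+t)^n\in h^+(R(H_A))[t],\qquad A^{(p)}(t):=\sum_{i=0}^{n_p}\alpha_i^{(p)}t^i,
\]
computed mod $2$, so that $\mu_j=[t^j]F(t)$ by the explicit formula for $\mu_j$ derived just above the proposition. The palindrome convention $\alpha_i^{(p)}=\alpha_{n_p-i}^{(p)}$ makes each $A^{(p)}(t)$ palindromic of degree $n_p$, and $(1+t)^n$ is palindromic of degree $n$; hence $F$ satisfies $t^n F(1/t)=F(t)$, giving $\mu_j=\mu_{n-j}$ for free. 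This symmetry handles the range $\lfloor n/2\rfloor<j<n$ and reduces the second claim of the proposition to the intermediate band $m<j\le\lfloor n/2\rfloor$.

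For this intermediate range I would exploit that over $\mathbb{Z}_2$ every odd-degree palindromic polynomial is divisible by $1+t$: writing $A^{(p)}(t)=(1+t)\widetilde A^{(p)}(t)$ whenever $n_p$ is odd gives $\prod_p A^{(p)}(t)=(1+t)^k\prod_p\widetilde A^{(p)}(t)$, where $k$ is the number of odd $n_p$, and $(1+t)^n$ similarly absorbs a factor $(1+t)^k$. After cancellation, $F(t)$ factors as $(1+t)^k$ times a palindromic polynomial of degree $2m$, so its $\lfloor n/2\rfloor+1$ nontrivial coefficients $\mu_0,\ldots,\mu_{\lfloor n/2\rfloor}$ depend on only $m$ independent unknowns from the smaller factor. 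Matching coefficients, with Lucas' theorem controlling the expansion of $(1+t)^k$, should yield the desired $\mathbb{Z}_2$-linear identities $\mu_i=\sum_{j\le m}c_{ij}\mu_j$ for $m<i\le\lfloor n/2\rfloor$. The main obstacle here is the explicit combinatorial content: extracting exact $\mathbb{Z}_2$-\emph{linear} (rather than merely ideal-theoretic) dependencies requires careful polynomial bookkeeping, which is precisely the work deferred to Propositions~\ref{propmuregular} and~\ref{propositionrelationsmu} in Section~\ref{sectionpolynomialeqns}.

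For the regularity half I would use that $h^+(R(H_A))$ is Cohen--Macaulay of Krull dimension $m$: by Proposition~\ref{propositiontatecohreptypea} it is either a polynomial ring over $\mathbb{Z}_2$ in $m$ indeterminates or a hypersurface quotient of the same dimension, both Cohen--Macaulay. So regularity of $\mu_1,\ldots,\mu_m$ is equivalent to the quotient being a finite-dimensional $\mathbb{Z}_2$-vector space. Once the second claim is in hand, all $\mu_j$ with $0\le j\le n$ vanish modulo $(\mu_1,\ldots,\mu_m)$ (note $\mu_0=\mu_n=0$ automatically), so the identity $\prod_p A^{(p)}(t)=(1+t)^n$ holds in the polynomial ring over the quotient. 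Passing to any closed point over $\overline{\mathbb{Z}_2}$ and invoking unique factorization in $\overline{\mathbb{Z}_2}[t]$ pins down $A^{(p)}(t)=(1+t)^{n_p}$, hence $\alpha_i^{(p)}\equiv\binom{n_p}{i}\pmod 2$ (with $\epsilon$ nilpotent at the point when present, since $\binom{n_p}{n_p/2}$ is even for $n_p\ge 2$), identifying a unique closed point; combined with Noetherianness and the Jacobson property this forces the quotient to be Artinian of finite $\mathbb{Z}_2$-dimension.
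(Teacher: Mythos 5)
Your proposal is correct, but it takes a genuinely different route from the paper. The paper proves regularity by grading $h^+(R(H_A))$ with $|\alpha_i^{(p)}|=i$, showing that the top homogeneous parts $h_j^{(k)}$ of the $\mu_j$ form a regular sequence via a direct prime-ideal/height argument (Proposition~\ref{Qaregularsequence}), and then lifting to the inhomogeneous $\mu_j$ via Corollary~\ref{corollaryinhomogeneousregseq}; it proves the linear dependencies by a renumbering $\sigma_m^{(k)}$, two base lemmas (\ref{lemma1relationsmu}, \ref{lemma2relationsmus}), and an inductive power-series computation of $\ker(\psi_k)$ (Proposition~\ref{propositionkernelpsi}). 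Your argument instead packages everything in the generating polynomial $F(t)=\prod_p A^{(p)}(t)+(1+t)^n$, peels off $(1+t)^k$ to obtain a palindromic factor $G(t)=\sum_{j=1}^{2m-1}g_jt^j$ of degree $2m$, and then observes that the matrix sending $(g_1,\ldots,g_m)$ to $(\mu_1,\ldots,\mu_m)$ via $\mu_j=\sum_i\binom{k}{j-i}g_i$ is unitriangular, hence invertible over $\mathbb{Z}_2$; folding $g_i=g_{2m-i}$ then expresses $\mu_j$ for $j>m$ as $\mathbb{Z}_2$-combinations of the $g_i$, hence of $\mu_1,\ldots,\mu_m$. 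You are over-modest in calling this deferred bookkeeping: the unitriangular inversion is already a complete and shorter proof of the linear-combination half than the paper's induction. For regularity you invoke that the ambient ring is Cohen--Macaulay of dimension $m$ (polynomial or hypersurface), reduce regularity to $\dim(h^+(R(H_A))/(\mu_1,\ldots,\mu_m))=0$, and establish this by a closed-point argument: at any maximal ideal the identity $\prod_p\bar A^{(p)}(t)=(1+t)^n$ and unique factorization force $\bar\alpha_i^{(p)}=\binom{n_p}{i}\bmod 2$ (and $\bar\epsilon=0$ when present), pinning down a unique closed point, whence Jacobson + Noetherian gives Artinian. This is a valid, more geometric alternative to the paper's elementary initial-term lift. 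Two small things worth making explicit: (i) one should note the quotient is nonzero, which follows because the rank homomorphism $\alpha_i^{(p)}\mapsto\binom{n_p}{i}$ kills all $\mu_j$ (by Vandermonde) and hence factors through the quotient; (ii) the paper's approach has the practical byproduct that Proposition~\ref{isogradedring} plus the explicit homogeneous sequence $h_j^{(k)}$ yields the dimension count of Proposition~\ref{propositiondimensionmu} essentially for free, whereas the CM/closed-point route would need a separate Hilbert-series computation for that step.
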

\begin{remark}
If $k=0$, then $\mu_m=\epsilon^2$ and so we have that $\mu_1,\ldots,\mu_{m-1},\epsilon=\left[P_{n/2}\right]$ also form an $h^\ast(R(H_A))$-regular sequence.
\end{remark}
Let $r:=\lfloor n/2\rfloor -m$. Note that if $k=0$, then $r=0$. As a consequence of Proposition \ref{polynomialrelationstypea}, we can find $u_1,\ldots,u_r\in R(H_A)$  such that
\begin{align}\label{defineutypea}
u_j+u_j^\ast-P_{m+j}P_{m+j}^\ast&\in\mathbb{Z}\cdot P_1P_1^\ast+\ldots+\mathbb{Z}\cdot P_mP_m^\ast\text{ for }\begin{cases}1\leq j\leq r\text{ if }n\text{ is odd}\\1\leq j<r\text{ if }n\text{ is even}\end{cases}\\
u_r+u_r^\ast&=P_{n/2}\text{ if }n\text{ is even}\nonumber
\end{align}
These give rise to $[\overline{u}_i]\in h^-(R(H_A)/(P_1,\ldots,P_{n-1}))$. Now from Proposition \ref{propositiontatecohofktheory}, we immediately obtain:
\begin{proposition}
Let $m=\lfloor n_1/2\rfloor+\ldots+\lfloor n_l/2\rfloor$ and $r=\lfloor n/2\rfloor-m$. Then
\begin{align}\label{tatecohomologytypea}
h^\ast(R(H_A)/(P_1,\ldots,P_{n-1}))\cong \frac{\bigotimes_{p=1}^l \mathbb{Z}_2\left[\alpha_1^{(p)},\ldots,\alpha_{\lfloor n_p/2\rfloor}^{(p)}\right]}{(\mu_1,\ldots,\mu_m)}\otimes \bigwedge([\overline{u}_1],\ldots,[\overline{u}_r])
\end{align}
where
\[
\mu_j=\sum_{a_1+\ldots+a_l=j}\alpha_{a_1}^{(1)}\ldots \alpha_{a_l}^{(l)}+\binom{n}{j}\text{ for }1\leq j\leq m,
\]
recalling and sticking to the definitions we made in (\ref{defineallbetas}). We have that $\alpha_j^{(p)}\in h^+$ for all $1\leq p\leq l$ and $1\leq j\leq n_p$, and $u_i\in h^-$ for all $1\leq i\leq r$.
\end{proposition}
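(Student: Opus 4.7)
The plan is to apply Proposition \ref{propositiontatecohofktheory} directly to the setup $G = SU(n)$, $H = H_A$, with the inclusion $i_A \colon H_A \hookrightarrow SU(n)$ and representation ring generators $x_1, \ldots, x_{n-1}$ of $R(SU(n))$. These generators pair up under duality, $(x_j, x_{n-j})$ for $1 \leq j < n/2$, and when $n$ is even the element $x_{n/2}$ is self-dual. Accordingly, the elements $\mu_j \in h^+(R(H_A))$ abstractly defined in Section \ref{chapteroutlinecomputation} coincide with the elements $[P_jP_j^\ast] = \mu_j$ already computed in the current subsection (for the paired indices) together with $[P_{n/2}]$ when $n$ is even; the excerpt's computation shows the latter equals $0$ unless all $n_p$ are even, in which case it equals $\epsilon$.

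To verify the framework's assumptions (A1) and (A2), invoke Proposition \ref{polynomialrelationstypea} (whose proof is postponed to Section \ref{sectionpolynomialeqns}). If not all $n_p$ are even, set $S = \{1, \ldots, m\}$: then $\mu_1, \ldots, \mu_m$ form an $h^+(R(H_A))$-regular sequence by (A1), each remaining $\mu_i$ for $m < i \leq \lfloor n/2\rfloor$ is a $\mathbb{Z}_2$-combination of these by (A2), and if $n$ is even the self-dual $[P_{n/2}] = 0$ lies trivially in this ideal. If all $n_p$ are even (so $k=0$ and $m = n/2$), use the variant from the remark after Proposition \ref{polynomialrelationstypea}: the sequence $\mu_1, \ldots, \mu_{m-1}, \epsilon$ is regular, so the index of $x_{n/2}$ is placed in $S$ as well.

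For each $t \in \overline{S}$, choose $u_t \in R(H_A)$ as in (\ref{defineutypea}); the number of such indices is $r = \lfloor n/2\rfloor - m$. Proposition \ref{propositiontatecohofktheory} then yields
\[
h^\ast(R(H_A)/(P_1,\ldots,P_{n-1})) \cong \frac{h^+(R(H_A))}{(\mu_s : s\in S)} \otimes \bigwedge([\overline{u}_t] : t\in\overline{S}).
\]
It remains to identify the quotient on the left with $\bigotimes_p \mathbb{Z}_2[\alpha_1^{(p)},\ldots,\alpha_{\lfloor n_p/2\rfloor}^{(p)}]/(\mu_1,\ldots,\mu_m)$. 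When not all $n_p$ are even, this is immediate from the presentation of $h^+(R(H_A))$ in Proposition \ref{propositiontatecohreptypea}. When all $n_p$ are even, $h^+(R(H_A))$ carries the additional generator $\epsilon$ subject to $\epsilon^2 = \prod_p \alpha_{n_p/2}^{(p)}$; after modding out by $\epsilon$, a short check using the symmetry $\alpha_i^{(p)} = \alpha_{n_p-i}^{(p)}$ together with the fact that $\binom{n}{n/2}$ is even shows that $\mu_m$ reduces to $\prod_p \alpha_{n_p/2}^{(p)}$ in $\mathbb{Z}_2$-coefficients, so the two ideal presentations coincide.

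The substantive work has been absorbed into Proposition \ref{polynomialrelationstypea}, and the verification of the regularity of $\mu_1, \ldots, \mu_m$ and the linear dependence of the remaining $\mu_i$ is the genuine obstacle; this is treated separately in Section \ref{sectionpolynomialeqns}. Once that proposition is in hand, the present result is obtained by the bookkeeping above: a correct choice of $S$ and $\overline{S}$, invocation of Proposition \ref{propositiontatecohofktheory}, and the small case distinction to rewrite $h^+(R(H_A))/(\mu_s : s\in S)$ in the stated polynomial form.
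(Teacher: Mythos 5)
Your proof is correct and follows essentially the same route as the paper: after the preparatory computations of the $P_j$, $[P_jP_j^\ast]$ and $[P_{n/2}]$, the paper likewise obtains this proposition as a direct application of Proposition \ref{propositiontatecohofktheory}, with assumptions (A1) and (A2) supplied by Proposition \ref{polynomialrelationstypea} and its following remark. Your write-up is merely more explicit about the case where all $n_p$ are even — noting that $\mu_m$ equals $\prod_p \alpha_{n_p/2}^{(p)}$ (in fact on the nose, not just modulo the lower $\mu_i$: the nontrivial terms cancel in pairs under $(a_p)\mapsto(n_p-a_p)$ and $\binom{n}{n/2}$ is even) so that $h^+(R(H_A))/(\mu_1,\ldots,\mu_{m-1},\epsilon)$ coincides with the stated polynomial quotient — which the paper leaves to the reader and its remark after Proposition \ref{polynomialrelationstypea}.
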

Recall from Bousfield's lemma \ref{lemmabousfield} that there is an isomorphism
\[
W^\ast(X_A)\xrightarrow[\cong]{\overline{c}}h^\ast(K^0(X_A))\xleftarrow[\cong]{[\overline{\alpha}]}h^\ast(R(H_A)/(P_1,\ldots,P_{n-1}))
\]
We have computed the 2-graded Tate cohomology on the right and now want to determine the 4-periodic grading of the Witt ring.

Let $b_i^{(p)}\in W^\ast(X_A)$ be the element corresponding to $\overline{\alpha}_i^{(p)}\in h^\ast(R(H_A)/(P_1,\ldots,P_{n-1}))$ under the above isomorphism for $1\leq p\leq l$ and $0\leq i\leq n_p$. We see that $\overline{\alpha}_i^{(p)}\in h^\ast(R(H_A)/(P_1,\ldots,P_{n-1}))$ is represented by a real representation for all $p$ and $i$ as $\rho\rho^\ast$ is always of real type for any complex representation $\rho$. So by Lemma \ref{gradinglemma1},
\[
b_i^{(p)}\in W^0(X_A)\text{ for all }1\leq p\leq l\text{ and }0\leq i\leq n_p
\]
For $1\leq i\leq r$, let $v_i\in W^\ast(X_A)$ be the element corresponding to $[\overline{u}_i]\in h^\ast(R(H_A)/(P_1,\ldots,P_{n-1}))$ under the above isomorphism. Using (\ref{defineutypea}) and the fact that $P_jP_j^\ast$ is real for all $j$, Lemma \ref{gradinglemma2} shows that
\[
v_i\in W^{-1}(X_A)\text{ for }1\leq i\leq r-1
\]
If $n$ is even, $P_{n/2}$ is real if $n\equiv 0$ (mod 4) and quaternionic if $n\equiv2$ (mod 4). We deduce from Lemma \ref{gradinglemma2} that
\[
v_r\in\begin{cases} W^{-1}(X_A)\text{ if }n\not \equiv2 \text{ (mod 4)}\\ W^{-3}(X_A)\text{ if } n\equiv 2\text{ (mod 4)}\end{cases}
\]
Summing up, we have proved:
\begin{theorem}\label{thmwittringtypea}
Let $m:=\lfloor n_1/2\rfloor+\ldots+\lfloor n_l/2\rfloor$ and $r:=\lfloor n/2\rfloor-m$. Then as a ring,
\begin{align*}
W^\ast(X_A(n_1,\ldots,n_l))\cong \frac{\bigotimes_{p=1}^l \mathbb{Z}_2\left[b_1^{(p)},\ldots,b_{\lfloor n_p/2\rfloor}^{(p)}\right]}{(\mu_1,\ldots,\mu_m)}\otimes \bigwedge(v_1,\ldots,v_r)
\end{align*}
where
\[
\mu_j=\sum_{a_1+\ldots+a_l=j}b_{a_1}^{(1)}\ldots b_{a_l}^{(l)}+\binom{n}{j}\text{ for }1\leq j\leq m,
\]
where we define $b_0^{(p)}:=1$ for $1\leq p\leq l$ and $b_i^{(p)}:=b_{n_p-i}^{(p)}$ for $1\leq p\leq l$ and $\lfloor n_p/2\rfloor <i\leq n_p$ (just as in (\ref{defineallbetas})). We have
\begin{align*}
b_i^{(p)}\in&~ W^0(X_A)\text{ for all }1\leq p\leq l\text{ and }0\leq i\leq n_p,\\
v_j\in&~ W^{-1}(X_A)\text{ for all }1\leq j\leq r-1,\\
v_r\in&
\begin{cases}
W^{-1}(X_A)&\text{ if }n\not\equiv2\text{ (mod 4)}\\
W^{-3}(X_A)&\text{ if }n\equiv2\text{ (mod 4)}
\end{cases}
\end{align*}
\end{theorem}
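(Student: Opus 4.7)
My plan is to assemble the three main ingredients developed throughout this subsection into a single chain of isomorphisms. First, Bousfield's Lemma \ref{lemmabousfield} (applicable since $K^1(X_A)=0$ for any complex flag variety), Hodgkin's Theorem \ref{Hodgkin} together with the explicit computation of $i_A^\ast$ above (yielding $K^0(X_A)\cong R(H_A)/(P_1,\ldots,P_{n-1})$), and the presentation (\ref{tatecohomologytypea}) of the Tate cohomology of this quotient compose to a ring isomorphism from $W^\ast(X_A)$ to the right-hand side of (\ref{tatecohomologytypea}). The hypotheses (A1), (A2) needed to apply Proposition \ref{propositiontatecohofktheory} are supplied by Proposition \ref{polynomialrelationstypea}. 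Defining $b_i^{(p)}$ and $v_j$ in $W^\ast(X_A)$ to be the preimages of $\overline\alpha_i^{(p)}$ and $[\overline u_j]$ under this isomorphism then yields the claimed ring presentation, with the relations $\mu_j$ taking exactly the displayed form from the explicit computation of $[P_jP_j^\ast]$ carried out earlier.

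It remains to determine the Witt grading of each generator using Lemmas \ref{gradinglemma1} and \ref{gradinglemma2}. Since $\alpha_i^{(p)}$ is represented by $x_i^{(p)}\bigl(x_i^{(p)}\bigr)^\ast$, and the tensor product $V\otimes V^\ast$ of any complex representation with its dual carries a canonical real structure, each $b_i^{(p)}$ lies in $W^0(X_A)$ by Lemma \ref{gradinglemma1}. For $v_j$ with $1\le j\le r-1$, and also for $v_r$ when $n$ is odd, the defining relation (\ref{defineutypea}) gives $u_j+u_j^\ast=\sum_i c_i\cdot i_A^\ast\bigl(\tilde x_i\tilde x_i^\ast\bigr)$ with $c_i\in\mathbb{Z}$ and $\tilde x_i\tilde x_i^\ast\in R(SU(n))$ of real type and rank zero (its expansion $x_ix_i^\ast-\binom{n}{i}(x_i+x_i^\ast)+\binom{n}{i}^2$ is manifestly real). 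Lemma \ref{gradinglemma2} therefore places $v_j$ in $W^{-1}(X_A)$.

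The one genuinely delicate case is $v_r$ when $n$ is even, where the grading is controlled by the type of $x_{n/2}=\Lambda^{n/2}(x_1)$. For $n\equiv 0\pmod 4$, $x_{n/2}$ is of real type, so $\tilde x_{n/2}$ is real of rank zero and Lemma \ref{gradinglemma2} gives $v_r\in W^{-1}(X_A)$ directly. For $n\equiv 2\pmod 4$, $x_{n/2}$ is quaternionic and Lemma \ref{gradinglemma2} as stated does not immediately apply, since it requires $\tilde\mu$ to be real. I would handle this by noting that $\binom{n}{n/2}$ is even when $n\equiv 2\pmod 4$, so that $\tilde x_{n/2}$ lifts through $c_2\colon\widetilde{RO}^4(SU(n))\to R(SU(n))$; the argument in the proof of Lemma \ref{gradinglemma2}, specialised via the commutative square relating $\overline\alpha$ with $\overline\alpha_O^4$, then shows that $r_2(u_r)\in\mathfrak{a}_O^4(SU(n))$, whence $v_r\in W^{-3}(X_A)$. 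This last observation is the only real obstacle; everything else in the proof is straightforward bookkeeping.
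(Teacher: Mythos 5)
Your proof is correct and takes essentially the same approach as the paper's: it composes Bousfield's lemma, Hodgkin's theorem, and Proposition \ref{propositiontatecohofktheory} (with hypotheses (A1), (A2) supplied by Proposition \ref{polynomialrelationstypea}) to obtain the ring presentation, then reads off the Witt grading from Lemmas \ref{gradinglemma1} and \ref{gradinglemma2}. You are in fact more careful than the paper on the one delicate point. The published statement of Lemma \ref{gradinglemma2} treats only the configurations $(\tilde\mu_j\text{ real},\ \nu_j\text{ real})$ and $(\tilde\mu_j\text{ real},\ \nu_j\text{ quaternionic})$, whereas the case $n\equiv 2$ (mod $4$) requires $(\tilde\mu_1\text{ quaternionic},\ \nu_1\text{ real})$ with $\tilde\mu_1=\tilde{x}_{n/2}$ and $\nu_1=1$. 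The paper invokes Lemma \ref{gradinglemma2} at this point without comment; you spot the mismatch and resolve it by rerunning the proof of that lemma through the $RO^4/\mathfrak{a}_O^4$ square, observing that $\tilde{x}_{n/2}$ lifts to $\widetilde{RO}^4(SU(n))$ precisely because $x_{n/2}$ is of quaternionic type and $\binom{n}{n/2}$ is even. That is exactly the right fix, and a welcome piece of extra rigour.
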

We want to tabulate the ranks of the Witt groups in different degrees. From the Appendix and Proposition \ref{propositiondimensionmu}, we immediately deduce:
\begin{theorem}
Let 
\begin{align*}
m:=\lfloor n_1/2\rfloor+\ldots+\lfloor n_l/2\rfloor,~~~a:=\frac{m!}{\lfloor n_1/2\rfloor!\cdot\ldots\cdot \lfloor n_l/2\rfloor!},~~~r:=\lfloor n/2\rfloor-m.
\end{align*}
We have $W^i(X_A(n_1,\ldots,n_l))\cong \mathbb{Z}_2^{a\cdot z_i}$ where $z_i$ is given as follows:

If $r=0$ (i.e. at most one $n_j$ is odd), then $z_0=1$ and $z_{-1}=z_{-2}=z_{-3}=0$.

If $r>0$ and $n\not\equiv 2$ (mod 4), we have:
\begin{center}
\resizebox{.9\textwidth}{!}{%
\begin{tabular}{c|c|c|c|c}
$r$ (4) & $z_0$ & $z_{-1}$ & $z_{-2}$ & $z_{-3}$\\\hline
$0$ & $2^{r-2}-2\cdot (-4)^{\frac{r-4}{4}}$ & $2^{r-2}$ & $2^{r-2}+2\cdot (-4)^{\frac{r-4}{4}}$ & $2^{r-2}$\\
$1$ & $2^{r-2}-2\cdot (-4)^{\frac{r-5}{4}}$ & $2^{r-2}-2\cdot (-4)^{\frac{r-5}{4}}$ & $2^{r-2}+2\cdot (-4)^{\frac{r-5}{4}}$ & $2^{r-2}+2\cdot (-4)^{\frac{r-5}{4}}$\\
$2$ & $2^{r-2}$ & $2^{r-2}+(-4)^{\frac{r-2}{4}}$ & $2^{r-2}$ & $2^{r-2}-(-4)^{\frac{r-2}{4}}$\\
$3$ & $2^{r-2}-(-4)^{\frac{r-3}{4}}$ & $2^{r-2}+(-4)^{\frac{r-3}{4}}$ & $2^{r-2}+(-4)^{\frac{r-3}{4}}$ & $2^{r-2}-(-4)^{\frac{r-3}{4}}$
\end{tabular}}\\
\end{center}
If $r>0$ and $n\equiv2$ (mod 4), we have:
\begin{center}
\resizebox{.9\textwidth}{!}{%
\begin{tabular}{c|c|c|c|c}
$r$ (4) & $z_0$ & $z_{-1}$ & $z_{-2}$ & $z_{-3}$\\\hline
$0$ & $2^{r-2}$ & $2^{r-2}+2\cdot (-4)^{\frac{r-4}{4}}$  & $2^{r-2}$& $2^{r-2}-2\cdot (-4)^{\frac{r-4}{4}}$\\
$1$ & $2^{r-2}-2\cdot (-4)^{\frac{r-5}{4}}$ & $2^{r-2}+2\cdot (-4)^{\frac{r-5}{4}}$ & $2^{r-2}+2\cdot (-4)^{\frac{r-5}{4}}$ & $2^{r-2}-2\cdot (-4)^{\frac{r-5}{4}}$\\
$2$  & $2^{r-2}+(-4)^{\frac{r-2}{4}}$ & $2^{r-2}$ & $2^{r-2}-(-4)^{\frac{r-2}{4}}$& $2^{r-2}$\\
$3$ & $2^{r-2}+(-4)^{\frac{r-3}{4}}$ & $2^{r-2}+(-4)^{\frac{r-3}{4}}$ & $2^{r-2}-(-4)^{\frac{r-3}{4}}$ & $2^{r-2}-(-4)^{\frac{r-3}{4}}$
\end{tabular}}
\end{center}
\end{theorem}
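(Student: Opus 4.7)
The plan is to read off the ranks directly from the ring presentation in Theorem \ref{thmwittringtypea}. By that theorem, $W^\ast(X_A)$ is, as a $\mathbb{Z}_2$-module, the tensor product
\[
R\otimes_{\mathbb{Z}_2}\bigwedge(v_1,\ldots,v_r),\quad R:=\frac{\bigotimes_{p=1}^l\mathbb{Z}_2\bigl[b_1^{(p)},\ldots,b_{\lfloor n_p/2\rfloor}^{(p)}\bigr]}{(\mu_1,\ldots,\mu_m)},
\]
with $R$ concentrated in $W^0$. The first step is to identify $\dim_{\mathbb{Z}_2}R$: Proposition \ref{propositiondimensionmu} (together with the computation cited in the Appendix) shows that $\mu_1,\ldots,\mu_m$ behaves like a Koszul-regular sequence of the ``right shape'' so that $R$ has dimension $a=m!/(\lfloor n_1/2\rfloor!\cdots \lfloor n_l/2\rfloor!)$. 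Since $R$ lies entirely in $W^0$, each of the four summands $W^i(X_A)$ is isomorphic to $\mathbb{Z}_2^{a\cdot z_i}$, where $z_i$ counts the $\mathbb{Z}_2$-basis elements of the exterior part $\bigwedge(v_1,\ldots,v_r)$ lying in degree $i$.

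The second step is to split into the three cases of the theorem. If $r=0$, there is nothing in the exterior part except $1\in W^0$, so $z_0=1$ and $z_{-1}=z_{-2}=z_{-3}=0$. If $r>0$ and $n\not\equiv 2\pmod 4$, then by Theorem \ref{thmwittringtypea} every $v_j$ lies in $W^{-1}$, so a squarefree monomial $v_{i_1}\cdots v_{i_s}$ lies in $W^{-s\bmod 4}$. Hence
\[
z_{-s}=\sum_{k\equiv s\,(4)}\binom{r}{k},\qquad s\in\{0,1,2,3\}.
\]
If $r>0$ and $n\equiv 2\pmod 4$, then $v_1,\ldots,v_{r-1}\in W^{-1}$ while $v_r\in W^{-3}$, and one splits the basis monomials by whether $v_r$ appears: monomials without $v_r$ contribute $\binom{r-1}{k}$ elements in degree $-k\bmod 4$, and monomials with $v_r$ contribute $\binom{r-1}{k}$ elements in degree $-(k+3)\bmod 4$.

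The third step is to evaluate these sums in closed form using the roots of unity filter
\[
\sum_{k\equiv s\,(4)}\binom{r}{k}=\tfrac14\bigl(2^r+i^{-s}(1+i)^r+(-i)^{-s}(1-i)^r\bigr)=2^{r-2}+2^{r/2-1}\cos\!\bigl(\tfrac{\pi r}{4}-\tfrac{\pi s}{2}\bigr),
\]
and its analogue $\binom{r-1}{\cdot}$. The identities $2\cdot(-4)^{(r-4)/4}=(-1)^{(r-4)/4}2^{(r-2)/2}$ etc.\ convert the trigonometric expressions into the $2^{r-2}\pm c\cdot(-4)^{\lfloor r/4\rfloor}$ form of the table. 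One then checks each of the eight tabular entries for the four residues of $r$ modulo $4$ in both parity cases of $n$.

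The only real obstacle is bookkeeping: one must keep track of which of $v_1,\ldots,v_{r-1},v_r$ contributes $-1$ versus $-3$ to the total degree, and verify that the sign $(-4)^{(r-s)/4}$ coming from $\cos(\pi r/4-\pi s/2)$ is tabulated correctly for each residue class. A small sanity check at $r=1,2,3,4$ (where all binomial sums are trivially computed) confirms the formulas and pins down the signs.
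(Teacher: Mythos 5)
Your proposal is correct and follows essentially the same route as the paper: the paper proves this by citing Proposition \ref{propositiondimensionmu} for the factor $a=\dim_{\mathbb{Z}_2}R$ and then its Appendix, which applies exactly the roots-of-unity filter (via $H(t)=(1+t^{-1})^f(1+t^{-3})^g$ evaluated at $\pm1,\pm i$) to the graded exterior algebra on $f$ degree-$(-1)$ and $g$ degree-$(-3)$ generators, specialised to $(f,g)=(r,0)$ when $n\not\equiv2\pmod4$ and $(f,g)=(r-1,1)$ when $n\equiv2\pmod4$. Your binomial-sum and cosine reformulation is equivalent, and the remaining bookkeeping you flag (converting $\cos(\pi r/4-\pi s/2)$ into $\pm(-4)^{\cdot}$ and verifying the table entries) is exactly what the Appendix tabulates.
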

\subsection{Type $B_n$}
\begin{notationconventions}
For any $k\in\mathbb{N}$, we denote by $\epsilon\in Spin(k)$ the non-trivial preimage of $1\in SO(k)$ under the covering map $Spin(k)\to SO(k)$. Up to conjugation, every centraliser of a torus in $Spin(2n+1)$ is of the form $H_B=Spin(2m+1)\times \tilde{U}(n_1)\times \ldots \times \tilde{U}(n_l)/Z$ where $m+n_1+\ldots+n_l=n$ and $Z$ is the subgroup of all elements $(x_0,x_1,\ldots,x_l)\in Spin(2m+1)\times \tilde{U}(n_1)\times \ldots \times \tilde{U}(n_l)$ such that $x_i\in \left\{1,\epsilon\right\}$ for all $0\leq i\leq l$ and $x_j=\epsilon$ for an even number of $j$. We suppose that precisely $k$ of the integers $n_1,\ldots,n_l$ are odd and let $i_B\colon H_B\to Spin(2n+1)$ be the inclusion map. We denote by $X_B=Spin(2n+1)/H_B$ the associated complex flag variety. We write $R(Spin(2n+1))=\mathbb{Z}\left[\hat{y}_1,\ldots,\hat{y}_{n-1},\Delta\right]$ where $\hat{y}_1$ is induced by the standard representation of $SO(2n+1)$ of rank $2n+1$ and $\hat{y}_i=\Lambda^i(\hat{y}_1)$ and $\Delta$ is the spin representation. For $R(H_B)$ and $h^+(R(H_B))$, we use the notation introduced in Propositions \ref{propositionrepnringtypeb} and \ref{propositiontatecohreptypeb}. In addition, it will be convenient to define the following elements in $h^+(R(H_B))$:
\begin{align}\label{definitionalphabetatypeb}
\begin{split}
    \beta_0:=1\text{ and }\beta_{m+j}:=\beta_{m+1-j}\text{ for all }1\leq j\leq m+1\\
    \alpha_0^{(p)}:=1\text{ and }\alpha_i^{(p)}:=\alpha_{n_p-i}^{(p)}\text{ for all }1\leq p\leq l\text{ and }\lfloor n_p/2\rfloor <i\leq n_p
\end{split}
\end{align}
\end{notationconventions}
We want to compute the Tate cohomology of $K^0(X_B)$ following section \ref{chapteroutlinecomputation}. As a first step, we need to compute the restriction map $i_B^\ast\colon R(Spin(2n+1))\to R(H_B)$. We see directly that
\[
i_B^\ast(\hat{y}_1)=y_1+\sum_{p=1}^l x_1^{(p)}+\left(\sum_{p=1}^l x_1^{(p)}\right)^\ast
\]
Consequently, in $h^+(R(H_B))$ we have
\begin{align*}
    [i_B^\ast(\hat{y}_f-\text{rk}(\hat{y}_f))]=&\left[\Lambda^f\left(i^\ast(\hat{y}_1)\right)-\text{rk}(\hat{y}_f)\right]\\
    = &\sum_{j=0}^{\lfloor f/2\rfloor} \beta_{f-2j}\cdot \sum_{a_1+\ldots+a_l=j}\alpha_{a_1}^{(1)}\ldots \alpha_{a_l}^{(l)}+\binom{2n+1}{f}=:\xi_f,
\end{align*}
keeping in mind the definitions we made in (\ref{definitionalphabetatypeb}).
We still need to compute the restriction of the spin representation $\Delta$: We have that in $h^\ast(R(H))$,
\begin{align}\label{equationsection5.2restndelta}
    [i^\ast(\Delta)]=
    \begin{cases}
    0&\text{ if }k>0\\ \delta &\text{ if }k=0
    \end{cases}
\end{align}
In order to apply the results of section \ref{chapteroutlinecomputation} to compute $h^\ast(K^0(X_B))$, we need to investigate the relations between $\xi_1,\ldots,\xi_{n-1},[i^\ast(\Delta)]\in h^\ast(R(H_B))$.
\begin{proposition}\label{propositionrelationsregularityxi}
Let
\[
S:=\left\{i\in\mathbb{Z}\mid 1\leq i\leq m\right\}\cup \left\{i\in\mathbb{Z}\mid m< i\leq 2\left(\left\lfloor \frac{m}{2}\right\rfloor+\left\lfloor \frac{n_1}{2}\right\rfloor+\ldots+\left\lfloor \frac{n_l}{2}\right\rfloor\right)\text{ even}\right\}
\]
and consider $R:=\bigotimes_{p=1}^l \mathbb{Z}_2\left[\beta_1^{(p)},\ldots,\beta_{\lfloor n_p/2\rfloor}^{(p)}\right]$ as a subring of $h^\ast(R(H_B))$.
\begin{description}
\item[Suppose $k>0$.] Then the elements $\xi_j$ for $j\in S$ form an $h^\ast(R(H_B))$-regular sequence in some order. If $i\in \left\{1,2,\ldots,n-1\right\}\setminus S$, then $\xi_i$ is an $R$-linear combination of the $\xi_j$ for $j\in S$. We have $[i^\ast(\Delta)]=0$.
\item[Suppose $k=0$ and $m$ is even.] Then the elements $\xi_j$ for $j\in S\setminus \left\{n\right\}$ together with $\left[i^\ast(\Delta)\right]=\delta$ form an $h^\ast(R(H_B))$-regular sequence in some order. If $i\in\left\{1,\ldots,n-1\right\}\setminus S$, then $\xi_i$ is an $R$-linear combination of the $\xi_j$ where $j\in S\setminus \left\{n\right\}$.
\item[Suppose $k=0$ and $m$ is odd.] Then the elements $\xi_j$ for $j\in S$ form an $h^\ast(R(H_B))$-regular sequence in some order. Let $I$ be the ideal generated by these elements. Then $\overline{\delta}\in h^+(R(H_B))/I$ is a zero divisor with annihilator the ideal generated by $\overline{\delta}$ itself. The elements $\xi_i$ for $i\in \left\{1,2,\ldots,n-1\right\}\setminus S$ are $R$-linear combinations of the $\xi_j$ for $j\in S$.
\end{description}
\end{proposition}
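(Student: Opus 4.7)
The plan is to expose the $\xi_f$ as coefficients of a palindromic polynomial and to combine this product structure with a Cohen--Macaulay system-of-parameters argument. Setting $\tilde\xi_f := \xi_f + \binom{2n+1}{f}$, a rearrangement of the defining formula yields
\begin{equation*}
\sum_{f=0}^{2n+1}\tilde\xi_f\, t^f \;=\; B(t)\cdot A(t), \qquad B(t):=\sum_{i=0}^{2m+1}\beta_i\, t^i,\quad A(t):=\prod_{p=1}^l\sum_{k=0}^{n_p}\alpha_k^{(p)}\, t^{2k}.
\end{equation*}
The palindromy conventions in (\ref{definitionalphabetatypeb}) make $B$ and $A$ palindromic of degrees $2m+1$ and $2(n-m)$ respectively, and combined with $\binom{2n+1}{f}=\binom{2n+1}{2n+1-f}$ this yields the key symmetry $\xi_f=\xi_{2n+1-f}$; in particular $\xi_0=\xi_{2n+1}=0$.

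For regularity, the main input is that $|S|=m+\sum_p\lfloor n_p/2\rfloor$ equals the Krull dimension of $h^+(R(H_B))$ computed after Proposition \ref{propositionrepnringtypeb}. When $k>0$, this ring is a polynomial ring and hence Cohen--Macaulay; when $k=0$, it is a hypersurface quotient of a polynomial ring, hence still Cohen--Macaulay. For a Cohen--Macaulay ring of Krull dimension $N$, any $N$ elements form a regular sequence (in some order) iff they are a system of parameters, so it suffices to verify that the quotient by the candidate sequence is a finite $\mathbb{Z}_2$-algebra. Using the generating-function expansion, the leading term of $\xi_j$ for $1\le j\le m$ is $\beta_j$, so inductively $\beta_1,\ldots,\beta_m$ become polynomials in the $\alpha$'s modulo $(\xi_1,\ldots,\xi_m)$; the remaining even-indexed $\xi_{2k}$ with $m<2k\le 2h$ then impose monic relations on the $\alpha$-variables, collapsing $R$ to a finite-dimensional algebra.

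For the assertion that each $\xi_i$ with $i\in\{1,\ldots,n-1\}\setminus S$ is an $R$-linear combination of the $\xi_j$ for $j\in S$, I would split $B(t)=B_e(t^2)+t\,B_o(t^2)$ so that the even-indexed $\xi_{2k}$ are coefficients of $B_e(t^2)A(t)$ and the odd-indexed $\xi_{2k+1}$ are coefficients of $B_o(t^2)A(t)$. Odd indices $2k+1>m$ outside $S$ are handled by substituting the expressions for $\beta_1,\ldots,\beta_m$ already obtained from $\xi_1,\ldots,\xi_m$; even indices $2k>2h$ are handled via palindromy $\xi_{2k}=\xi_{2n+1-2k}$, whose smaller index already lies in $S$ or in the already-treated odd range.

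The three subcases for $\delta=[i^*(\Delta)]$ follow by a bookkeeping argument. If $k>0$, equation (\ref{equationsection5.2restndelta}) gives $[i^*(\Delta)]=0$ and nothing further is required. If $k=0$ and $m$ is even, then $n\in S$ but $\xi_n$ is not among the actual defining relations (since $R(Spin(2n+1))$ only contributes $\xi_1,\ldots,\xi_{n-1}$); the identity $\delta^2=(1+\beta_1+\ldots+\beta_m)\prod_p\alpha_{n_p/2}^{(p)}$ pairs $\xi_n$ with $\delta^2+f$, so replacing $\xi_n$ by $\delta$ in $S$ preserves the system-of-parameters property. If $k=0$ and $m$ is odd, then $S\subset\{1,\ldots,n-1\}$ and the $\xi_j$ for $j\in S$ already form the regular sequence; the free module structure of $h^+(R(H_B))$ over its polynomial subring (with $\mathbb{Z}_2$-basis $\{1,\delta\}$) descends to the quotient, reducing the annihilator statement to the claim that the right-hand side of $\delta^2=f$ already lies in $I$, which forces $\text{Ann}(\bar\delta)=(\bar\delta)$. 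The main obstacle is proving the combinatorial identities underlying both the collapse of non-$S$ indices into $R$-linear combinations and the precise annihilator statement; these genuine polynomial manipulations are the content of Section \ref{sectionpolynomialeqns}.
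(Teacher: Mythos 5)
Your generating-function reformulation $\sum_f\tilde\xi_f t^f = B(t)A(t)$ and the resulting palindromy $\xi_f=\xi_{2n+1-f}$ is a clean way to package the conventions in (\ref{definitionalphabetatypeb}), and your Cohen--Macaulay/system-of-parameters framing of the regularity statement is a legitimate alternative to the route the paper actually takes (which is to put a grading on $h^+(R(H_B))$, identify the top-degree homogeneous part of each $\xi_j$, prove the ideal generated by those homogeneous pieces has maximal height in Proposition~\ref{Qaregularsequence}, and then lift via Corollary~\ref{corollaryinhomogeneousregseq}). Your treatment of the $\delta$ cases also matches the paper's in substance: using that $h^+(R(H_B))$ is free of rank two over $A$ with basis $\{1,\delta\}$, and reducing the annihilator claim to $\delta^2\in I$.

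However, two of your intermediate steps have genuine gaps. First, the CM argument is only as good as the finiteness verification: you need $h^+(R(H_B))/(\xi_j\mid j\in S)$ to be a finite $\mathbb{Z}_2$-algebra, and the assertion that the even $\xi_{2k}$ with $m<2k\le 2h$ ``impose monic relations on the $\alpha$-variables'' is precisely the nontrivial combinatorial content; it is, after eliminating $\beta_1,\ldots,\beta_m$, exactly Proposition~\ref{Qaregularsequence} in disguise, and you have not actually argued it. Second, the palindromy step for even indices outside $S$ does not work as stated: for $2h<2k\le n-1$ the partner index is $2n+1-2k\ge n+2$, which is \emph{larger} than $2k$, not smaller, and lies outside the range $\{1,\ldots,n-1\}$ altogether; it is therefore not ``already treated.'' The paper instead gets these relations by passing to the quotient by the odd $\xi_i$'s, substituting $\gamma_i=\hat\alpha_{2i}$, and invoking the type-$A$ relations of Proposition~\ref{propositionrelationsmu}, together with Lemma~\ref{lemmaevenxioddxi} to reduce odd indices to even ones. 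Finally, the statement requires the coefficients of these linear combinations to lie in the \emph{unitary} subring $R$, not merely in $h^+(R(H_B))$; your substitution argument does not obviously keep the coefficients inside $R$, whereas the paper tracks this carefully in Lemma~\ref{lemmaevenxioddxi} and Remark~\ref{remarkxi}.
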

\begin{proof}
The claims for $k>0$ are immediate from Propositions \ref{propositionregularityxi} and \ref{propositionrelationsxi}, Remark \ref{remarkxi} and equation (\ref{equationsection5.2restndelta}).

Suppose $k=0$ and $m$ is even. Proposition \ref{propositionregularityxi} shows that
\begin{align}\label{eqnregularsequencexi1}
\xi_1,\xi_2,\ldots,\xi_{m-1},\xi_m,\xi_{m+2},\xi_{m+4},\ldots,\xi_{n-2},\xi_n
\end{align}
is a regular sequence in the subring
\[
A:=\mathbb{Z}_2[\beta_1,\ldots,\beta_m]\otimes \bigotimes_{p=1}^l \mathbb{Z}_2\left[\alpha_1^{(p)},\ldots,\alpha_{\lfloor n_p/2\rfloor}^{(p)}\right]\subset h^\ast(R(H_B))
\]
In $h^\ast(R(H_B))$, we have
\[
\delta^2=\sum_{i=0}^m \beta_i\cdot \alpha_{\frac{n_1}{2}}^{(1)}\ldots\alpha_{\frac{n_l}{2}}^{(l)}
\]
Remark \ref{remarkregularityxi} shows that in the sequence (\ref{eqnregularsequencexi1}), we may replace $\xi_n$ by $\delta^2$ and still have an $A$-regular sequence. But since $h^\ast(R(H_B))$ is a free module of rank 2 over $A$, the sequence is also $h^\ast(R(H_B))$-regular. But then replacing $\delta^2$ by $\delta$ clearly still gives an $h^\ast(R(H_B))$-regular sequence, as required.

The statement about the linear relations is immediate from Proposition \ref{propositionrelationsxi} and Remark \ref{remarkxi}.

Lastly, suppose $k=0$ and $m$ is odd. From Proposition \ref{propositionregularityxi}, we deduce that the $\xi_j$ for $j\in S$ form an $h^\ast(R(H_B))$-regular sequence. Proposition \ref{propositionrelationsxi} and Remark \ref{remarkxi} imply that the elements $\xi_i$ for $i\in\left\{1,2,\ldots,n-1\right\}\setminus S$ are $R$-linear combinations of the $\xi_j$ for $j\in S$. Now from Lemma \ref{lemmaevenxioddxi}, we have that $\beta_i+\beta_{i-1}\in I$ for every odd $1\leq i\leq m$. This shows that
\[
\delta^2=(1+\beta_1+\ldots+\beta_{m-1}+\beta_m)\cdot \alpha_{n_1/2}^{(1)}\ldots \alpha_{n_l/2}^{(l)}\in I
\]
Let us identify the annihilator of $\overline{\delta}$ in $h^+(R(H_B))/I$. Let $A$ be as defined above in the previous case, then $h^\ast(R(H_B))$ is still a free $A$-module of rank 2 with basis $1,\delta$. Since $I$ is generated by elements in $A$, we deduce that $h^+(R(H_B))/I$ is a free $A/A\cap I$-module with basis $1$, $\overline{\delta}$. From this and the fact that $\overline{\delta}^2=0$, it follows immediately that the annihilator of $\overline{\delta}$ in $h^+(R(H_B))/I$ is as claimed.
\end{proof}
Now let $S$ be as in the previous Proposition and 
\[
\overline{S}:=\left\{1,\ldots,n-1\right\}\setminus S \text{ and } B:=\bigotimes_{p=1}^l\mathbb{Z}\left[x_1^{(p)}\left(x_1^{(p)}\right)^\ast,\ldots,x_{\lfloor n_p/2\rfloor}^{(p)}\left(x_{\lfloor n_p/2\rfloor}^{(p)}\right)^\ast\right],
\]
where we regard $B$ as a subring of $R(H_B)$. The previous Proposition implies the following:

In all cases and for all $t\in\overline{S}$, we can find $w_{t}\in R(H_B)$ such that
\begin{align}\label{eqnrestrictionofrepnstypeb1}
w_{t}+w_{t}^\ast-i_B^\ast(\hat{y}_{t}-\text{rk}(\hat{y}_{t}))\in \sum_{s\in S}B\cdot i^\ast(\hat{y}_s-\text{rk}(\hat{y}_s))
\end{align}
Moreover, if $k>0$, we see from (\ref{equationsection5.2restndelta}) that we can find $\eta\in R(H_B)$ such that
\begin{align}\label{eqnrestrictionofrepnstypeb2}
\eta+\eta^\ast=i_B^\ast(\Delta)
\end{align}
If $k=0$ and $m$ is odd, we can find $\kappa\in R(H_B)$ such that
\begin{align}\label{eqnrestrictionofrepnstypeb3}
\kappa+\kappa^\ast-i^\ast(\Delta)^2\in \sum_{s\in S}B\cdot i^\ast(\hat{y}_s-\text{rk}(\hat{y}_s))
\end{align}
We use Proposition \ref{propositionrelationsregularityxi} to show:
\begin{proposition}
If not all of $n_1,\ldots,n_l$ are even, then
\[
h^\ast(K^0(X_B))\cong \frac{\mathbb{Z}_2[\beta_1,\ldots,\beta_m]\otimes\bigotimes_{p=1}^l\mathbb{Z}_2\left[\alpha_1^{(p)},\ldots,\alpha_{\left\lfloor\frac{n_p}{2}\right\rfloor}^{(p)}\right]}{\left(\xi_j\mid j\in S\right)}\otimes \bigwedge_{t\in\overline{S}}(\left[\overline{w}_{t}\right])\otimes\bigwedge(\left[\overline{\eta}\right])
\]
If $m,n_1,\ldots,n_l$ are even, then
\[
h^\ast(K^0(X_B))\cong \frac{\mathbb{Z}_2[\beta_1,\ldots,\beta_m]\otimes\bigotimes_{p=1}^l\mathbb{Z}_2\left[\alpha_1^{(p)},\ldots,\alpha_{\frac{n_p}{2}}^{(p)}\right]}{\left(\xi_j\mid j\in S\setminus\left\{n\right\}\right)+\left(\beta_m\alpha_{n_1/2}^{(1)}\ldots\alpha_{n_l/2}^{(l)}\right)}\otimes \bigwedge_{t\in\overline{S}}(\left[\overline{w}_{t}\right])
\]
If $n_1,\ldots,n_l$ are even and $m$ is odd, then
\[
h^\ast(K^0(X_B))\cong \frac{\mathbb{Z}_2[\beta_1,\ldots,\beta_m]\otimes\bigotimes_{p=1}^l\mathbb{Z}_2\left[\alpha_1^{(p)},\ldots,\alpha_{\frac{n_p}{2}}^{(p)}\right]}{\left(\xi_j\mid j\in S\right)}\otimes \bigwedge_{t\in\overline{S}}(\left[\overline{w}_{t}\right])\otimes\bigwedge(\left[\overline{\kappa}\right])
\]
In the above,
\[
\xi_j=\sum_{i=0}^{\lfloor j/2\rfloor} \beta_{j-2i}\cdot \sum_{a_1+\ldots+a_l=i}\alpha_{a_1}^{(1)}\ldots \alpha_{a_l}^{(l)}+\binom{2n+1}{j}
\]
recalling the definitions we made in (\ref{definitionalphabetatypeb}). In all the above expressions, the left factors of the tensor products are contained in $h^+$ and the generators of the exterior algebras all lie in $h^-$.
\end{proposition}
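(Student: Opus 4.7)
The plan is to follow the framework of Section \ref{chapteroutlinecomputation}. By Hodgkin's Theorem \ref{Hodgkin}, $K^0(X_B) \cong R(H_B)/I$ where $I$ is the ideal generated by $i_B^\ast(\tilde{\hat y}_j)$ for $1\le j\le n-1$ and $i_B^\ast(\tilde{\Delta})$; since all polynomial generators of $R(Spin(2n+1))$ are self-dual, we are in the ``all $\sigma$'' situation of Section \ref{chapteroutlinecomputation} with $\mu$-elements $\xi_j$ and $[i_B^\ast(\tilde{\Delta})]\in h^+(R(H_B))$, the latter being $0$ or $\delta$ according to $k>0$ or $k=0$ by \eqref{equationsection5.2restndelta}. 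Each case is then handled by feeding the input of Proposition \ref{propositionrelationsregularityxi} into Proposition \ref{propositiontatecohofktheory} (or a close variant), and finally identifying the quotient of $h^+(R(H_B))$ via Proposition \ref{propositiontatecohreptypeb}.

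\textbf{Case 1 ($k>0$).} Proposition \ref{propositionrelationsregularityxi} directly verifies assumptions (A1) and (A2): the sequence $\{\xi_s\mid s\in S\}$ is $h^+(R(H_B))$-regular, the remaining $\xi_t$ lie in its ideal, and $[i^\ast(\tilde{\Delta})]=0$ is trivially in it. Proposition \ref{propositiontatecohofktheory} then provides exterior generators $[\overline w_t]$ for $t\in\overline S$ (arising from each $\xi_t$) \emph{together with} an extra exterior class $[\overline{\eta}]$ coming from $\tilde{\Delta}$, where $\eta+\eta^\ast=i_B^\ast(\Delta)$ as in \eqref{eqnrestrictionofrepnstypeb2}. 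Substituting the formula for $h^+(R(H_B))$ from Proposition \ref{propositiontatecohreptypeb} yields the first claimed isomorphism.

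\textbf{Case 3 ($k=0$, $m$ odd).} The $\{\xi_s\mid s\in S\}$ again form an $h^+$-regular sequence and the $\xi_t$ with $t\in\overline S$ produce exterior generators $[\overline w_t]$ as in Case~1. To handle $\tilde{\Delta}$, Proposition \ref{propositionrelationsregularityxi} tells us that $\overline{\delta}$ is a zero divisor in $h^+(R(H_B))/(\xi_s\mid s\in S)$ with $\mathrm{Ann}(\overline{\delta})=(\overline{\delta})$. This is exactly the hypothesis of Lemma \ref{tatecohlemma6}, applied with $A=R(H_B)/(\tilde{\hat y}_s\mid s\in S)$ (whose Tate cohomology is concentrated in degree $+$ by Corollary \ref{tatecohlemma5}) and $\mu=i_B^\ast(\tilde{\Delta})$. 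The lemma produces the extra exterior generator $[\overline{\kappa}]$ with $\kappa+\kappa^\ast=i_B^\ast(\Delta)^2$ as in \eqref{eqnrestrictionofrepnstypeb3}, giving the third displayed formula.

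\textbf{Case 2 ($k=0$, $m$ even).} Here we modify the framework of Section \ref{chapteroutlinecomputation} by taking the regular sequence to be $\{\xi_s\mid s\in S\setminus\{n\}\}\cup\{\delta\}$; this is $h^+$-regular by Proposition \ref{propositionrelationsregularityxi}, while the underlying sequence $\{\tilde{\hat y}_s\mid s\in S\setminus\{n\}\}\cup\{\tilde{\Delta}\}$ is $R(H_B)$-regular by Theorem \ref{Steinberg}, so Corollary \ref{tatecohlemma5} applies and gives $h^\ast(R(H_B)/J_0)\cong h^+(R(H_B))/((\xi_s)_{s\in S\setminus\{n\}},\delta)$ for the corresponding partial ideal $J_0$. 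The remaining generators $\tilde{\hat y}_t$ with $t\in\overline S$ then contribute exterior classes $[\overline w_t]$ via Lemma \ref{tatecohlemma4}(i) (their $\xi_t$ are $R$-linear combinations of $\{\xi_s\mid s\in S\setminus\{n\}\}$ and so vanish modulo $J_0$), while $\tilde{\hat y}_n$ produces no new exterior class: the identity $i^\ast(\Delta)^2=1+i^\ast(y_1)+\dots+i^\ast(y_m)$ in $R(H_B)$ forces any choice of the corresponding $u_n$ to reduce to $0$ in the quotient by $\delta$. Finally, substituting $\delta=0$ in the presentation of $h^+(R(H_B))$ from Proposition \ref{propositiontatecohreptypeb} turns the relation $\delta^2+(1+\beta_1+\dots+\beta_m)\alpha_{n_1/2}^{(1)}\cdots\alpha_{n_l/2}^{(l)}$ into $(1+\beta_1+\dots+\beta_m)\alpha_{n_1/2}^{(1)}\cdots\alpha_{n_l/2}^{(l)}$, which collapses to $\beta_m\alpha_{n_1/2}^{(1)}\cdots\alpha_{n_l/2}^{(l)}$ modulo $(\xi_s\mid s\in S\setminus\{n\})$ once $\xi_1,\dots,\xi_{m-1}$ are used to rewrite $1+\beta_1+\dots+\beta_{m-1}$.

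The main obstacle is Case~2. The delicate points are (i) verifying that the $\tilde{\hat y}_n$ generator really does \emph{not} produce an additional exterior class once $\delta$ has already been killed (which uses the specific relation $i^\ast(\Delta)^2=1+i^\ast(y_1)+\dots+i^\ast(y_m)$ in $R(H_B)$), and (ii) carrying out the final simplification of the polynomial relation to the form $\beta_m\alpha_{n_1/2}^{(1)}\cdots\alpha_{n_l/2}^{(l)}$ via the $\xi_j$'s; both require the additional auxiliary identities that will be established in the postponed section on polynomial equations.
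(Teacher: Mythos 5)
Your proof follows essentially the same route as the paper's: Proposition~\ref{propositionrelationsregularityxi} plus Proposition~\ref{propositiontatecohofktheory} for Case~1, the same plus Lemma~\ref{lemmaevenxioddxi} for Case~2, and the same plus Lemma~\ref{tatecohlemma6} for Case~3, and the argument is correct.

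One remark: your ``delicate point (i)'' in Case~2 is a phantom concern. The Hodgkin ideal $\mathfrak{a}(G)\subset R(H_B)$ is generated by $i_B^\ast(\tilde{\hat y}_1),\dots,i_B^\ast(\tilde{\hat y}_{n-1})$ and $i_B^\ast(\tilde\Delta)$ --- there is no $\hat y_n$ among the polynomial generators of $R(Spin(2n+1))$, and $\overline S\subset\{1,\dots,n-1\}$ by definition, so there simply is no $\tilde{\hat y}_n$ to account for. (Relatedly, the identity you cite, $i^\ast(\Delta)^2=1+i^\ast(y_1)+\dots+i^\ast(y_m)$, misquotes $\Delta^2=1+\hat y_1+\dots+\hat y_{n-1}+\hat y_n$ in $R(Spin(2n+1))$.) Your ``delicate point (ii)'' --- reducing $(1+\beta_1+\dots+\beta_m)\alpha_{n_1/2}^{(1)}\cdots\alpha_{n_l/2}^{(l)}$ to $\beta_m\alpha_{n_1/2}^{(1)}\cdots\alpha_{n_l/2}^{(l)}$ modulo the $\xi_s$ --- is the genuine content, and it is exactly what the paper handles via Lemma~\ref{lemmaevenxioddxi}.
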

\begin{proof}
The first case follows immediately from Propositions \ref{propositionrelationsregularityxi} and \ref{propositiontatecohofktheory}. So does the second case, using in addition that $\beta_{2i+1}+\beta_{2i}$ is contained in the ideal $( \xi_j\mid j\in S,~j\leq 2i+1)$ for all $i$ by Lemma \ref{lemmaevenxioddxi}. The third case follows similarly from Proposition \ref{propositionrelationsregularityxi}, using Lemma \ref{tatecohlemma6} in addition.
\end{proof}
Recall from Bousfield's lemma \ref{lemmabousfield} that there is an isomorphism
\[
W^\ast(X_B)\xrightarrow[\cong]{\overline{c}}h^\ast(K^0(X_B))\xleftarrow[\cong]{[\overline{\alpha}]}h^\ast(R(H_B)/\mathfrak{a}(G))
\]
We have computed the Tate cohomology on the right and want to determine the Witt grading. So under the above isomorphism, we let
\begin{itemize}
    \item $a_i\in W^\ast(X_B)$ correspond to $\overline{\alpha}_i$ for $1\leq i\leq m$ in all cases
    \item $b_j^{(p)}\in W^\ast(X_B)$ correspond to $\overline{\beta}_j^{(p)}$ for all $1\leq p\leq l$ and $0\leq j\leq n_p$ in all cases
    \item $u_{t} \in W^\ast(X_B)$ correspond to $\left[\overline{w}_{t}\right]$ for all $t\in \overline{S}$ in all cases
    \item $c_1\in W^\ast(X_B)$ correspond to $[\overline{\eta}]$ if not all of $n_1,\ldots,n_l$ are even
    \item $c_2\in W^\ast(X_B)$ correspond to $[\overline{\kappa}]$ if $n_1,\ldots,n_l$ are even and $m$ is odd
\end{itemize}
Recall that the subring $B$ of $R(H_B)$ consists entirely of real representations and $\lambda_j\in R(Spin(2n+1))$ is real for all $j$.

We deduce from Lemma \ref{gradinglemma1} that $a_i\in W^0(X_B)$ for all $1\leq i\leq m$ and $b_j^{(p)}\in W^0(X)$ for all $1\leq p\leq l$ and $1\leq j\leq \lfloor n_p/2\rfloor$.

From equation (\ref{eqnrestrictionofrepnstypeb1}) and Lemma \ref{gradinglemma2}, we deduce that $u_t\in W^{-1}(X_B)$ for all $t\in \overline{S}$.

Suppose not all of $n_1,\ldots, n_l$ are even. From equation (\ref{eqnrestrictionofrepnstypeb2}) and Lemma \ref{gradinglemma2}, recalling that $\Delta\in R(Spin(2n+1))$ is of real type if $n\equiv 0,3$ (mod 4) and of quaternionic type if $n\equiv 1,2$ (mod 4), we deduce that
\[
c_1\in
\begin{cases}
W^{-1}(X_B)&\text{ if }n\equiv 0,3\text{ (mod 4)}\\
W^{-3}(X_B)&\text{ if }n\equiv 1,2\text{ (mod 4)}
\end{cases}
\]
Suppose $n_1,\ldots,n_l$ are even and $m$ is odd. Then Lemma \ref{gradinglemma2} and equation (\ref{eqnrestrictionofrepnstypeb3}) show that $c_2\in W^{-1}(X_B)$.

In summary, we have proved:
\begin{theorem}\label{thmwittringtypeb}
Let
\begin{align*}
S:=&\left\{i\in\mathbb{Z}\mid 1\leq i\leq m\right\}\cup \left\{i\in\mathbb{Z}\mid m< i\leq 2\left(\left\lfloor \frac{m}{2}\right\rfloor+\left\lfloor \frac{n_1}{2}\right\rfloor+\ldots+\left\lfloor \frac{n_l}{2}\right\rfloor\right)\text{ even}\right\}\\
\overline{S}:=&\left\{1,\ldots,n-1\right\}\setminus S
\end{align*}
If not all of $m,n_1,\ldots,n_l$ are even, then
\[
W^\ast(X_B)\cong \frac{\mathbb{Z}_2[b_1,\ldots,b_m]\otimes\bigotimes_{p=1}^l\mathbb{Z}_2\left[a_1^{(p)},\ldots,a_{\left\lfloor\frac{n_p}{2}\right\rfloor}^{(p)}\right]}{\left(\xi_j\mid j\in S\right)}\otimes \bigwedge_{t\in\overline{S}}(u_{t})\otimes\bigwedge(c)
\]
If $m,n_1,\ldots,n_l$ are even, then
\[
W^\ast(X_B)\cong \frac{\mathbb{Z}_2[b_1,\ldots,b_m]\otimes\bigotimes_{p=1}^l\mathbb{Z}_2\left[a_1^{(p)},\ldots,a_{\frac{n_p}{2}}^{(p)}\right]}{\left(\xi_j\mid j\in S\setminus\left\{n\right\}\right)+\left(b_m a_{\frac{n_1}{2}}^{(1)}\ldots a_{\frac{n_l}{2}}^{(l)}\right)}\otimes \bigwedge_{t\in\overline{S}}(u_{t})
\]
In the above,
\[
\xi_j=\sum_{i=0}^{\lfloor j/2\rfloor} b_{j-2i}\cdot \sum_{q_1+\ldots+q_l=i}a_{q_1}^{(1)}\ldots  a_{q_l}^{(l)}+\binom{2n+1}{j}
\]
where we make the following definitions:
\begin{align*}
\begin{split}
    b_0:=1\text{ and }b_{m+j}:=b_{m+1-j}\text{ for all }1\leq j\leq m+1\\
    a_0^{(p)}:=1\text{ and }a_i^{(p)}:=a_{n_p-i}^{(p)}\text{ for all }1\leq p\leq l\text{ and }\lfloor n_p/2\rfloor <i\leq n_p
\end{split}
\end{align*}
Furthermore, we have that all $b_i,a_j^{(p)}\in W^0(X_B)$ and all $u_{t}\in W^{-1}(X_B)$ and
\[
c\in
\begin{cases}
W^{-3}(X_B)&\text{ if not all of }n_1,\ldots,n_l \text{ are even and }n\equiv 1,2\text{ (mod 4)}\\
W^{-1}(X_B)&\text{ else}
\end{cases}
\]
\end{theorem}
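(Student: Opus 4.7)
The plan is to assemble the statement from the three preceding ingredients: Bousfield's lemma, Hodgkin's theorem, and the proposition immediately above the theorem which identifies $h^\ast(K^0(X_B))$ in each of the three cases. By Bousfield's lemma \ref{lemmabousfield} there is a ring isomorphism $\overline{c}\colon W^\ast(X_B)\xrightarrow{\cong} h^\ast(K^0(X_B))$, and via Hodgkin's theorem \ref{Hodgkin} the right-hand side equals $h^\ast(R(H_B)/\mathfrak{a}(G))$. So the additive and multiplicative structure on the right is already established, and the task reduces to transporting the $\mathbb{Z}_2$-grading on Tate cohomology to the $\mathbb{Z}_4$-grading on the Witt ring. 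I would therefore define $a_i$, $b_j^{(p)}$, $u_t$, $c$ to be the preimages under $\overline{c}\circ [\overline{\alpha}]^{-1}$ of the corresponding classes $\overline{\alpha}_i$, $\overline{\beta}_j^{(p)}$, $[\overline{w}_t]$, and $[\overline{\eta}]$ or $[\overline{\kappa}]$ produced by the preceding proposition.

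For the degree zero generators I would invoke Lemma \ref{gradinglemma1}: each $\alpha_i$ and each $\beta_j^{(p)}$ is represented by a virtual representation of the form $\rho\rho^\ast$ or by (a polynomial in) the $y_j$ which come from the standard real orthogonal representation, so both types are of real type. Hence their images land in $W^0(X_B)$. For the $u_t$, the defining equation (\ref{eqnrestrictionofrepnstypeb1}) expresses $w_t+w_t^\ast$ as a $B$-linear combination of pullbacks of $\hat{y}_s-\operatorname{rk}(\hat{y}_s)$, with coefficients in the real subring $B\subset R(H_B)$ and the $\hat{y}_s$ themselves being real. Lemma \ref{gradinglemma2} (the real/real case) then places $u_t$ in $W^{-1}(X_B)$.

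The generator $c$ requires a little more care. If not all $n_p$ are even, then $c=c_1$ comes from $\eta$ with $\eta+\eta^\ast = i_B^\ast(\Delta)$, as in (\ref{eqnrestrictionofrepnstypeb2}). This is of the form covered by Lemma \ref{gradinglemma2} with a single summand whose real-vs-quaternionic type is determined entirely by the type of the spin representation $\Delta\in R(\operatorname{Spin}(2n+1))$. The standard table for the spin representation (real iff $n\equiv 0,3\pmod 4$, quaternionic iff $n\equiv 1,2\pmod 4$) then gives the dichotomy in the statement. If instead all $n_p$ are even but $m$ is odd, then $c=c_2$ comes from $\kappa$ with $\kappa+\kappa^\ast\equiv i_B^\ast(\Delta)^2\pmod{\text{ideal}}$, and since $\Delta^2 = 1+y_1+\cdots+y_m$ is patently real-orthogonal, the real/real case of Lemma \ref{gradinglemma2} forces $c_2\in W^{-1}(X_B)$. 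This case uses Lemma \ref{tatecohlemma6} implicitly through the previous proposition, which is why $c_2$ can be constructed.

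The main obstacle is essentially bookkeeping: one must verify that the generators I have singled out in the multiplicative decomposition from the previous proposition really are the ones covered by Lemmas \ref{gradinglemma1} and \ref{gradinglemma2}, which in turn requires showing that the classes $\overline{w}_t$, $\overline{\eta}$, $\overline{\kappa}$ can be lifted to elements of $R(H_B)$ satisfying the hypotheses of Lemma \ref{gradinglemma2} with \emph{real} coefficients $\nu_j$ and \emph{real} $\tilde{\mu}_j=\hat{y}_s-\operatorname{rk}(\hat{y}_s)$ or $\tilde{\mu}=\Delta$. This amounts to a careful choice of lift, guided by equations (\ref{eqnrestrictionofrepnstypeb1})--(\ref{eqnrestrictionofrepnstypeb3}); the relations obtained in Proposition \ref{propositionrelationsregularityxi} are what make such a lift available. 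The remaining subtlety, the type of $c$, is then pinned down by the classical type of $\Delta$ and so requires no new argument.
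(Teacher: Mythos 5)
Your proposal is correct and follows essentially the same route as the paper: assemble the computation of $h^\ast(K^0(X_B))$ from the preceding proposition (built on Bousfield, Hodgkin, and the $\xi_j$-relations), define the named generators as preimages under $\overline{c}\circ[\overline{\alpha}]^{-1}$, and then determine degrees via Lemma \ref{gradinglemma1} for the polynomial generators and Lemma \ref{gradinglemma2} together with equations (\ref{eqnrestrictionofrepnstypeb1})--(\ref{eqnrestrictionofrepnstypeb3}) and the classical real/quaternionic type of $\Delta\in R(\mathrm{Spin}(2n+1))$ for $u_t$, $c_1$, $c_2$. The only slip is inessential: for $c_2$ the relevant spin representation is the one of $\mathrm{Spin}(2n+1)$ with $\Delta^2=1+\hat y_1+\cdots+\hat y_n$, not the one of $\mathrm{Spin}(2m+1)$, but since both squares are sums of exterior powers of a real orthogonal representation the real/real case of Lemma \ref{gradinglemma2} applies exactly as you say.
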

It is easily checked that in all of the above cases, the number of exterior algebra generators in the above expressions is $\sum_{i=1}^l n_i-\lfloor n_i/2\rfloor$.
From Proposition \ref{propositiondimensionquotientxi} and the Appendix, we can tabulate the ranks of the Witt groups in all degrees:
\begin{theorem} Let
\begin{align*}
a:=\frac{\left(\lfloor \frac{m}{2}\rfloor+\lfloor \frac{n_1}{2}\rfloor+\ldots+\lfloor \frac{n_{k+l}}{2}\rfloor\right)!}{\lfloor \frac{m}{2}\rfloor!\cdot \lfloor \frac{n_1}{2}\rfloor!\cdot\ldots\cdot \lfloor \frac{n_{k+l}}{2}\rfloor!}~\text{ and }~ r:=\sum_{i=1}^l n_i-\lfloor n_i/2\rfloor
\end{align*}
We have $W^i(X_B)\cong \mathbb{Z}_2^{a\cdot z_i}$ where $z_i$ is given as follows:\\
If $r=0$, then $z_0=1$ and $z_{-1}=z_{-2}=z_{-3}=0$.\\
If not all of $n_1,\ldots,n_l$ are even and $n\equiv 1\text{ or }2$ (mod 4), then we have:
\begin{center}
\resizebox{.9\textwidth}{!}{%
\begin{tabular}{c|c|c|c|c}
$r$ (4) & $z_0$ & $z_{-1}$ & $z_{-2}$ & $z_{-3}$\\\hline
$0$ & $2^{r-2}$ & $2^{r-2}+2\cdot (-4)^{\frac{r-4}{4}}$  & $2^{r-2}$& $2^{r-2}-2\cdot (-4)^{\frac{r-4}{4}}$\\
$1$ & $2^{r-2}-2\cdot (-4)^{\frac{r-5}{4}}$ & $2^{r-2}+2\cdot (-4)^{\frac{r-5}{4}}$ & $2^{r-2}+2\cdot (-4)^{\frac{r-5}{4}}$ & $2^{r-2}-2\cdot (-4)^{\frac{r-5}{4}}$\\
$2$  & $2^{r-2}+(-4)^{\frac{r-2}{4}}$ & $2^{r-2}$ & $2^{r-2}-(-4)^{\frac{r-2}{4}}$& $2^{r-2}$\\
$3$ & $2^{r-2}+(-4)^{\frac{r-3}{4}}$ & $2^{r-2}+(-4)^{\frac{r-3}{4}}$ & $2^{r-2}-(-4)^{\frac{r-3}{4}}$ & $2^{r-2}-(-4)^{\frac{r-3}{4}}$
\end{tabular}}
\end{center}
Otherwise, we have:
\begin{center}
\resizebox{.9\textwidth}{!}{%
\begin{tabular}{c|c|c|c|c}
$r$ (4) & $z_0$ & $z_{-1}$ & $z_{-2}$ & $z_{-3}$\\\hline
$0$ & $2^{r-2}-2\cdot (-4)^{\frac{r-4}{4}}$ & $2^{r-2}$ & $2^{r-2}+2\cdot (-4)^{\frac{r-4}{4}}$ & $2^{r-2}$\\
$1$ & $2^{r-2}-2\cdot (-4)^{\frac{r-5}{4}}$ & $2^{r-2}-2\cdot (-4)^{\frac{r-5}{4}}$ & $2^{r-2}+2\cdot (-4)^{\frac{r-5}{4}}$ & $2^{r-2}+2\cdot (-4)^{\frac{r-5}{4}}$\\
$2$ & $2^{r-2}$ & $2^{r-2}+(-4)^{\frac{r-2}{4}}$ & $2^{r-2}$ & $2^{r-2}-(-4)^{\frac{r-2}{4}}$\\
$3$ & $2^{r-2}-(-4)^{\frac{r-3}{4}}$ & $2^{r-2}+(-4)^{\frac{r-3}{4}}$ & $2^{r-2}+(-4)^{\frac{r-3}{4}}$ & $2^{r-2}-(-4)^{\frac{r-3}{4}}$
\end{tabular}}
\end{center}
\end{theorem}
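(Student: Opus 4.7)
The plan is to read off the ranks directly from the ring structure established in Theorem \ref{thmwittringtypeb}. In every case that ring has the form
\[
W^\ast(X_B) \cong Q \otimes_{\mathbb{Z}_2} E,
\]
where $Q$ is the quotient ring appearing on the left of the two displayed expressions in Theorem \ref{thmwittringtypeb} (concentrated in Witt degree $0$) and $E$ is a $\mathbb{Z}_2$-exterior algebra on $r = \sum_i (n_i - \lfloor n_i/2 \rfloor)$ generators of odd Witt degree. Since this decomposition is graded, $\dim_{\mathbb{Z}_2} W^i(X_B) = \dim_{\mathbb{Z}_2}(Q) \cdot \dim_{\mathbb{Z}_2}(E^i)$, so the computation splits into two independent factors.

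For the factor $Q$, I would appeal to Proposition \ref{propositiondimensionquotientxi} (proved in section \ref{sectionpolynomialeqns}), which asserts that $\dim_{\mathbb{Z}_2}(Q)$ equals the multinomial $a$ in the statement, uniformly across the two shapes of quotient produced by Theorem \ref{thmwittringtypeb}: quotient by $(\xi_j)_{j \in S}$ alone in the first case, and quotient by $(\xi_j)_{j \in S \setminus \{n\}}$ together with the extra relation $b_m a_{n_1/2}^{(1)} \cdots a_{n_l/2}^{(l)}$ in the second. The degenerate case $r = 0$ then reads $E = \mathbb{Z}_2$ in degree $0$ only and immediately yields $z_0 = 1$, $z_{-1} = z_{-2} = z_{-3} = 0$.

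For the factor $E$, inspection of Theorem \ref{thmwittringtypeb} shows that its generators fall into exactly two configurations: in the generic one all $r$ generators lie in $W^{-1}$; in the exceptional one, triggered precisely when some $n_i$ is odd and $n \equiv 1$ or $2 \pmod 4$ so that the generator $c$ coming from the quaternionic spin representation $\Delta$ sits in $W^{-3}$, one generator lies in $W^{-3}$ and the remaining $r - 1$ in $W^{-1}$. In both configurations the monomials of $E$ in a given Witt degree can be enumerated by a roots-of-unity filter applied to the generating polynomial
\[
f(x) = \prod_{j=1}^r (1 + x^{d_j}) \in \mathbb{Z}[x]/(x^4 - 1),
\]
where $d_j \equiv -1$ or $-3 \pmod 4$ according to the Witt degree of the $j$-th generator. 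This reduces to $(1 + x^3)^r$ in the generic case and to $(1+x)(1+x^3)^{r-1}$ in the exceptional one; evaluating at $x = 1, i, -1, -i$ and using $(1 \pm i)^r = 2^{r/2} e^{\pm i \pi r/4}$ yields closed forms $\tfrac{1}{4}\bigl[2^r + 2^{r/2+1} \cos \theta_{i, r}\bigr]$, and the $2^{r/2}$ factors repackage as the $(-4)^{(r-k)/4}$-shaped expressions that appear in the tables, precisely because $(1+i)^4 = -4$.

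The conceptual content thus lies entirely in Proposition \ref{propositiondimensionquotientxi} and the tensor decomposition; the main obstacle is the routine bookkeeping of running the four residues of $r$ modulo $4$ against the four Witt degrees to populate the sixteen cells of each table, and of matching the two configurations of $E$ with the two tabulated cases (where the ``otherwise'' table additionally absorbs the subcase of $c \in W^{-1}$ alongside the no-$c$ subcase). As a sanity check, each row of both tables sums to $2^r = \dim_{\mathbb{Z}_2}(E)$, so $\sum_i \dim W^i(X_B) = a \cdot 2^r$ as required, and the low cases $r \leq 3$ can be verified by direct enumeration.
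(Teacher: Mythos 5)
Your proposal matches the paper's approach exactly.  The paper's own proof is the single sentence preceding the theorem, which appeals to Proposition~\ref{propositiondimensionquotientxi} for the dimension of the degree-zero factor and to the Appendix for the Poincar\'e series of the $\mathbb{Z}_4$-graded exterior algebra; your proposal spells out precisely this tensor decomposition, the same appeal to Proposition~\ref{propositiondimensionquotientxi}, and the roots-of-unity filter computation that is carried out in the Appendix with $(f,g)=(r-1,1)$ in the quaternionic-$\Delta$ case and $(f,g)=(r,0)$ otherwise.
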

\subsection{Type $C_n$}
\begin{notationconventions}
Up to conjugation, every centraliser of a torus in $Sp(n)$ is of the form $H_C=Sp(m)\times U(n_1)\times \ldots\times U(n_l)$ where $m+n_1+\ldots+n_l=n$. Let $i_C\colon H_C\to Sp(n)$ be the inclusion and $X_C=Sp(n)/H_C$ be the associated complex flag variety. We write $R(Sp(n))=\mathbb{Z}\left[\hat{z}_1,\ldots,\hat{z}_n\right]$ where $\hat{z}_1$ is the standard representation of rank $2n$ and $\hat{z}_j=\Lambda^j(\hat{z}_1)$. For $R(H_C)$ and $h^+(R(H_C))$, we use the notation introduced in Propositions \ref{propositionrepnringtypec} and \ref{propositiontatecohreptypec}. In addition, it will be convenient to define the following elements in $h^+(R(H_C))$:
\begin{align}\label{definitionsbetatypec}
\begin{split}
\gamma_0:=1\text{ and }\gamma_{m+i}:=\gamma_{m-i}\text{ for all }1\leq i\leq m,\\
\alpha_0^{(p)}:=1\text{ for all }1\leq p\leq l\text{ and }\alpha_i^{(p)}:=\alpha_{n_p-i}^{(p)}\text{ for all }1\leq p\leq l,~\lfloor n_p/2\rfloor <i\leq n_p
\end{split}
\end{align}
Note that with these definitions, we have $\gamma_j=\left[\Lambda^j(z_1)\right]$ for all $0\leq j\leq 2m$ and $\alpha_i^{(p)}=\left[x_i^{(p)}\left(x_i^{(p)}\right)^\ast\right]$ for all $1\leq p\leq l$ and $0\leq i\leq n_p$.
\end{notationconventions}
We need to determine the induced map $i_C^\ast$ on representation rings. We see directly that
\[
i_C^\ast(\hat{z}_1)=z_1+\sum_{p=1}^l x_1^{(p)}+\sum_{p=1}^l \left(x_1^{(p)}\right)^\ast
\]
Hence we obtain for $1\leq j\leq n$ that
\begin{align*}
i^\ast(\hat{z}_j-\text{rk}(\hat{z}_j))=&\Lambda^j\left(z_1+\sum_{p=1}^l x_1^{(p)}+\sum_{p=1}^l \left(x_1^{(p)}\right)^\ast\right)-\binom{2n}{j}\\
=&\sum_{c+d_1+d_1'+\ldots+d_l+d_l'=j} z_c\cdot x_{d_1}^{(1)}\left(x_{d_1'}^{(1)}\right)^\ast\ldots x_{d_l}^{(l)}\left(x_{d_l'}^{(l)}\right)^\ast-\binom{2n}{j}=:P_j
\end{align*}
Using that $[a+a^\ast]=0$ in $h^+(R(H_C))$ for all $a\in R(H)$, we deduce that in $h^+(R(H_C))$,
\begin{align*}
    [P_j]=&\sum_{c+2d_1+\ldots +2d_l=j}\gamma_c\alpha_{d_1}^{(1)}\ldots \alpha_{d_l}^{(l)}+\binom{2n}{j}\\
    =& \sum_{d=0}^{\lfloor j/2\rfloor}\gamma_{j-2d}\sum_{d_1+\ldots+d_l=d}\alpha_{d_1}^{(1)}\ldots \alpha_{d_l}^{(l)}+\binom{2n}{j}=:\nu_j
\end{align*}
where we keep in mind the definitions we made in (\ref{definitionsbetatypec}).

In order to apply Proposition \ref{propositiontatecohofktheory} to compute $h^\ast(K^0(X_C))$, we need to determine the relations between the $\nu_j\in h^+(R(H_C))$, i.e. show that (A1) and (A2) from section \ref{chapteroutlinecomputation} are satisfied. We state the result now and postpone the proof to Propositions \ref{propositionregularitynu} and \ref{propositionrelationsnu}.
\begin{proposition}\label{relationsbetweennu}
Let
\[
S:= \left\{k\in\mathbb{N}\mid 1\leq k\leq m\right\}\cup\left\{k\in\mathbb{N}\mid m<k\leq 2(\lfloor m/2\rfloor+\lfloor n_1/2\rfloor+\ldots+\lfloor n_l/2\rfloor)\text{ even}\right\}
\]
Then the $\nu_s$ for $s\in S$ form an $h^+(R(H))$-regular sequence in some order.\\
If $k$ is even, then $\nu_k$ is a $\mathbb{Z}_2$-linear combination of the $\nu_s$ for even $s\in S$.\\
If $k$ is odd, then $\nu_k$ is a $\bigotimes_{p=1}^l\mathbb{Z}_2\left[\beta_1^{(p)},\ldots,\beta_{\lfloor n_p/2\rfloor}^{(p)}\right]$-linear combination of the $\nu_s$ for odd $s\in S$.\\
In particular, all $\nu_k$ are contained in the ideal $(\nu_s\mid s\in S)$.
\end{proposition}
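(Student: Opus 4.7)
The plan rests on a parity decoupling in the defining formula for $\nu_j$: since each summand $\gamma_{j-2d}\,\sigma_d$ has index $j-2d$ of the same parity as $j$, and $\binom{2n}{j}\equiv 0\pmod{2}$ whenever $j$ is odd (by Lucas' theorem, as the low bit of $2n$ is $0$ while that of $j$ is $1$), the $\nu_j$ with odd $j$ involve only the odd-indexed $\gamma_i$'s with coefficients in $R_\alpha:=\bigotimes_p\mathbb{Z}_2[\alpha_1^{(p)},\ldots,\alpha_{\lfloor n_p/2\rfloor}^{(p)}]$, and those with even $j$ involve only the even-indexed $\gamma_i$'s with $R_\alpha$-coefficients. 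Accordingly I would split the polynomial ring $h^+(R(H_C))\cong R_\alpha\otimes_{\mathbb{Z}_2}\mathbb{Z}_2[\gamma_1,\gamma_3,\ldots]\otimes_{\mathbb{Z}_2}\mathbb{Z}_2[\gamma_2,\gamma_4,\ldots,\gamma_{2\lfloor m/2\rfloor}]$ and treat the two parities in parallel.

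For the odd indices, whenever $1\leq 2r+1\leq m$, the formula reads $\nu_{2r+1}=\gamma_{2r+1}+(\text{combination of }\gamma_{2r-1},\gamma_{2r-3},\ldots)$ with coefficients in $R_\alpha$, a lower-triangular unipotent system in the odd $\gamma_i$'s over $R_\alpha$. Hence the odd $\nu_s$ with $s\in S$ form a regular sequence in $R_\alpha[\gamma_1,\gamma_3,\ldots]$ (and therefore in $h^+(R(H_C))$ by flatness of the even-$\gamma$ factor), and the coefficient matrix is invertible over $R_\alpha$. For arbitrary odd $k$, $\nu_k$ is $R_\alpha$-linear in the odd $\gamma_i$ with $i\leq m$ (the wrap-around $\gamma_{m+i}=\gamma_{m-i}$ preserves parity), so inverting the triangular system rewrites $\nu_k$ as an $R_\alpha$-linear combination of the odd $\nu_s$ with $s\in S$, proving both the odd part of the regularity statement and the odd case of the linear-combination statement.

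For the even indices, the same triangular reduction disposes of $\nu_2,\nu_4,\ldots,\nu_{2\lfloor m/2\rfloor}$: these successively express $\gamma_2,\gamma_4,\ldots,\gamma_{2\lfloor m/2\rfloor}$ as explicit elements of $R_\alpha$, and modulo this reduction the remaining even relations $\nu_{2r}$ for $\lfloor m/2\rfloor<r\leq M$ become honest polynomials in $R_\alpha$. The main obstacle of the proof is then twofold: one must show that these $\sum_p\lfloor n_p/2\rfloor$ reduced polynomials form a regular sequence in $R_\alpha$, whose Krull dimension matches their number---equivalently, that the quotient is Artinian---and, more delicately, that every $\nu_k$ with $k$ even and $k>2M$ lies in the $\mathbb{Z}_2$-span (not merely the $R_\alpha$-span) of the even $\nu_s\in S$.

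To handle both parts I would work with the generating-function identity $\sum_j\nu_j\,t^j=\Gamma(t)\,\Pi(t)+(1+t)^{2n}$, where $\Gamma(t)=\sum_j\gamma_j\,t^j$ and $\Pi(t)=\prod_p\sum_i\alpha_i^{(p)}\,t^{2i}$ are palindromic of respective degrees $2m$ and $2\sum_pn_p$. The palindromic symmetry forces $\nu_j=\nu_{2n-j}$, which restricts attention to $j\leq n$, and the Artinianity of the quotient should follow from a Hilbert-series calculation whose target rank is dictated by the known Euler characteristic $|W_{Sp(n)}/W_{H_C}|$ of $X_C$. The $\mathbb{Z}_2$-rigidity of the surplus even relations I expect to come from the specific shape of $(1+t)^{2n}$ modulo $2$ given by Lucas' theorem, combined with the palindromic structure of $\Gamma\,\Pi$: a careful comparison of coefficients in each homogeneous component should force the extra relations to be scalar, not polynomial, combinations of the even $\nu_s$ with $s\in S$.
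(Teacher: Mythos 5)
Your treatment of the odd indices is correct and matches the paper's Lemma~\ref{lemmanuregular1}: the unipotent triangular shape $\nu_k=\alpha_k+(\text{lower odd $\alpha$'s over }R_\alpha)$ for odd $k\le m$ yields both the regularity of the odd part and the $R_\alpha$-linear expression for every odd $\nu_k$ in one stroke.

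For the even indices, the two points you yourself flag as ``the main obstacle'' are left unresolved, and that is where the proof actually lives. The paper's route is to pass to $\overline S:=S/(\alpha_i:i\ \text{odd})$, set $\overline\gamma_j:=\overline\alpha_{2j}$ (so that $\overline\gamma_j=\overline\gamma_{m-j}$), and observe that $\overline\nu_{2i}=\sum_j\overline\gamma_{i-j}\overline\mu_j+\binom{2n}{i}$ is, up to constant, literally one of the polynomials $\mu^{(k)}_i$ of the type-$A$ subsection with one additional reflected block of variables of size $m$. Regularity is then inherited from Proposition~\ref{propmuregular} (via the graded degeneration of Corollary~\ref{corollaryinhomogeneousregseq} and the prime-avoidance of Proposition~\ref{Qaregularsequence}), and the $\mathbb{Z}_2$-linearity of the surplus relations is inherited from Proposition~\ref{propositionrelationsmu}, whose proof is the delicate power-series induction of Proposition~\ref{propositionkernelpsi}; a final rank-function argument kills the constant $1$. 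Your outline does not identify this reduction, and your proposed substitutes are not carried out: the Hilbert-series argument for Artinianity is only sketched, and the palindromy $\nu_j=\nu_{2n-j}$ you extract from $\Gamma(t)\Pi(t)+(1+t)^{2n}$ merely halves the index range --- it does not by itself produce $\mathbb{Z}_2$-\emph{scalar} (as opposed to $R_\alpha$-coefficient) dependencies. Obtaining those from Lucas' theorem plus ``a careful comparison of coefficients'' is exactly what Proposition~\ref{propositionkernelpsi} labours to prove; asserting it as an expectation is a genuine gap.
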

Let 
\[
\overline{S}:=\left\{1,\ldots,n\right\}\setminus S \text{ and } A:=\bigotimes_{p=1}^l\mathbb{Z}\left[x_1^{(p)}\left(x_1^{(p)}\right)^\ast,\ldots,x_{\lfloor n_p/2\rfloor}^{(p)}\left(x_{\lfloor n_p/2\rfloor}^{(p)}\right)^\ast\right],
\]
where $A$ is regarded as a subring of $R(H_C)$. Then Proposition \ref{relationsbetweennu} shows that for all $t\in\overline{S}$, we can find $w_{t}\in R(H)$ such that
\begin{align}\label{polynomialsPvanishtypec}
\begin{split}
    w_{t}+w_{t}^\ast-P_{t}&\in \sum_{i\in S \text{ even}} \mathbb{Z}\cdot P_i\text{ if }t\in \overline{S}\text{ is even},\\
    w_{t}+w_{t}^\ast-P_{t}&\in \sum_{i\in S \text{ odd}} A\cdot P_i\text{ if }t\in \overline{S}\text{ is odd}
\end{split}
\end{align}
Now from Proposition \ref{propositiontatecohofktheory}, we deduce that
\[
h^\ast(R(H_C)/(P_1,...,P_n))\cong \frac{\mathbb{Z}_2[\alpha_1,\ldots,\alpha_m]\otimes \bigotimes_{p=1}^l\mathbb{Z}_2\left[\beta_1^{(p)},\ldots,\beta_{\lfloor n_p/2\rfloor}^{(p)}\right]}{(\nu_s\mid s\in S)}\otimes \bigwedge_{t\in\overline{S}}(\left[\overline{w}_{t}\right])
\]
We will show in Lemma \ref{lemmanuregular1} that
\[
(\nu_s\mid s\in S\text{ odd})=(\gamma_i\mid 1\leq i\leq m\text{ odd})
\]
Hence setting $\zeta_i:=\gamma_{2i}$ for $0\leq i\leq m$, we can simplify the above expression and deduce:
\begin{proposition}
Let $h:=\lfloor m/2\rfloor +\lfloor n_1/2\rfloor +\ldots+\lfloor n_l/2\rfloor$. There is a ring isomorphism
\[
h^\ast(X_C)\cong \frac{\mathbb{Z}_2\left[\zeta_1,\ldots,\zeta_{\lfloor m/2\rfloor}\right]\otimes \bigotimes_{p=1}^l\mathbb{Z}_2\left[\alpha_1^{(p)},\ldots,\alpha_{\lfloor n_p/2\rfloor}^{(p)}\right]}{\left(\mu_1,\ldots,\mu_h\right)}\otimes \bigwedge_{t\in\overline{S}}(\left[\overline{w}_{t}\right])
\]
with
\[
\mu_j=\sum_{c+c_1+\ldots c_l=j}\zeta_c\alpha_{c_1}^{(1)}\ldots \alpha_{c_l}^{(l)}+\binom{2n}{2j}~\text{ for }1\leq j\leq h,
\]
where we recall the definitions we made in (\ref{definitionsbetatypec}) and that $\zeta_0=1$, $\zeta_{i}=\zeta_{m-i}$ for $\lfloor m/2\rfloor<i\leq m$.\\The left factor in the tensor product is contained in $h^+$ and $[\overline{w}_{t}]\in h^-$ for all $t\in \overline{S}$.
\end{proposition}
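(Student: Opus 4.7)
The plan is to obtain the claimed presentation by taking the isomorphism for $h^\ast(R(H_C)/(P_1,\ldots,P_n))$ just established and repackaging it using Lemma~\ref{lemmanuregular1}. Since $K^0(X_C) \cong R(H_C)/(P_1,\ldots,P_n)$ via the map $\overline{\alpha}$ of Theorem~\ref{Hodgkin}, this also yields the stated isomorphism for $h^\ast(X_C) = h^\ast(K^0(X_C))$.

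First I decompose $S = S_{\mathrm{odd}} \sqcup S_{\mathrm{even}}$ according to parity, noting that $S_{\mathrm{even}} = \{2j : 1 \leq j \leq h\}$ has exactly $h$ elements. Lemma~\ref{lemmanuregular1} identifies the ideal generated by the odd-indexed relations as
\[
(\nu_s \mid s \in S_{\mathrm{odd}}) = (\gamma_i \mid 1 \leq i \leq m,\ i \text{ odd}).
\]
Quotienting the polynomial subring $\mathbb{Z}_2[\gamma_1,\ldots,\gamma_m]$ by this subideal kills every odd-indexed generator, leaving a polynomial ring on $\gamma_2, \gamma_4, \ldots, \gamma_{2\lfloor m/2\rfloor}$. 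Renaming $\zeta_i := \gamma_{2i}$ identifies this with $\mathbb{Z}_2[\zeta_1,\ldots,\zeta_{\lfloor m/2\rfloor}]$, and the reflection convention $\gamma_{m+j} = \gamma_{m-j}$ from (\ref{definitionsbetatypec}) transports directly to $\zeta_i = \zeta_{m-i}$ for $\lfloor m/2 \rfloor < i \leq m$, matching the statement.

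What remains is to rewrite the even-indexed relations $\nu_{2j}$, $1 \leq j \leq h$, under this substitution. The change of variables $c := j-d$ in the inner sum gives
\[
\nu_{2j} = \sum_{d=0}^{j} \gamma_{2j-2d} \sum_{d_1+\ldots+d_l = d} \alpha_{d_1}^{(1)}\ldots\alpha_{d_l}^{(l)} + \binom{2n}{2j} = \sum_{c+c_1+\ldots+c_l = j} \zeta_c\, \alpha_{c_1}^{(1)}\ldots\alpha_{c_l}^{(l)} + \binom{2n}{2j} = \mu_j,
\]
using the reflection conventions to interpret $\zeta_c$ and $\alpha_{c_p}^{(p)}$ for indices beyond the polynomial range. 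Combined with the exterior algebra factor $\bigwedge_{t \in \overline{S}}([\overline{w}_t])$, which is untouched by the simplification, this assembles into the claimed presentation; the graded placement (polynomial factor in $h^+$, exterior generators in $h^-$) is inherited directly from the previous proposition.

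The real obstacle is Lemma~\ref{lemmanuregular1}, deferred to section~\ref{sectionpolynomialeqns}. Once the equality $(\nu_s \mid s\in S_{\mathrm{odd}}) = (\gamma_i \mid i \text{ odd},\ 1 \leq i \leq m)$ is in hand, the present proposition is essentially a bookkeeping exercise via the substitution $\zeta_i = \gamma_{2i}$ together with a routine re-indexing of $\nu_{2j}$.
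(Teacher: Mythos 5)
Your argument matches the paper's: both derive the presentation by applying Lemma~\ref{lemmanuregular1} to replace the odd-indexed relations $\nu_s$ by the odd $\gamma_i$'s, renaming $\zeta_i := \gamma_{2i}$, and re-indexing the even-indexed relations $\nu_{2j}$ into the $\mu_j$. The paper treats this as an immediate consequence of the preceding display together with Lemma~\ref{lemmanuregular1}, and your write-up correctly fills in the bookkeeping that the paper leaves implicit.
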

Recall from Bousfield's lemma \ref{lemmabousfield} that there is an isomorphism
\[
W^\ast(X_C)\xrightarrow[\cong]{\overline{c}}h^\ast(K^0(X_C))\xleftarrow[\cong]{[\overline{\alpha}]}h^\ast(R(H_C)/(P_1,\ldots,P_{n}))
\]
We have computed the Tate cohomology on the right and want to determine the Witt grading.

Let $k$ be the number of odd integers among $m,n_1,\ldots, n_l$. We set
\[
f:=\lfloor k/2\rfloor=\Bigl\lfloor\frac{n}{2}\Bigr\rfloor-\Bigl\lfloor\frac{m}{2}\Bigr\rfloor -\Bigl\lfloor\frac{n_1}{2}\Bigr\rfloor-\ldots-\Bigl\lfloor\frac{n_l}{2}\Bigr\rfloor\text{ and }g:=\Bigl\lceil \frac{n-m}{2}\Bigr\rceil
\]
Then $f$ is the number of even integers in $\overline{S}$ and $g$ is the number of odd integers in $\overline{S}$.

Under the above isomorphism, let
\begin{itemize}
    \item $b_i^{(p)}\in W^\ast(X_C)$ correspond to $\overline{\alpha}_i^{(p)}\in h^\ast(R(H_C)/(P_1,\ldots, P_n))$ for all $i$, $p$.
    \item $a_i\in W^\ast(X_C)$ correspond to $\overline{\zeta}_i\in h^\ast(R(H_C)/(P_1,\ldots, P_n))$ for all $i$.
    \item $u_i\in W^\ast(X_C)$ for $1\leq i\leq f$ correspond to the $\left[\overline{w}_{t}\right] \in h^\ast(R(H_C)/(P_1,\ldots,P_n))$ for all even $t\in\overline{S}$.
    \item $v_j\in W^\ast(X_C)$ for $1\leq j\leq g$ correspond to the $\left[\overline{w}_{t}\right]\in h^\ast(R(H_C)/(P_1,\ldots,P_n))$ for all odd $t\in\overline{S}$.
\end{itemize}
We see that $\overline{\alpha}_i^{(p)}\in h^\ast(R(H_C)/(P_1,\ldots,P_n))$ is represented by a real representation (as $\rho\rho^\ast$ is real for any complex representation $\rho$). Thus by Lemma \ref{gradinglemma1}, $b_i^{(p)}\in W^0(X_C)$ for all $1\leq p\leq l$ and $1\leq i \leq \lfloor n_p/2\rfloor$.

As $\zeta_i=\gamma_{2i}$ is represented by a real representation, Lemma \ref{gradinglemma1} implies that $a_i\in W^0(X_C)$ for all $i$.

We know that $P_i\in R(H_C)$ is quaternionic for odd $i$ and real for even $i$. Thus we deduce from (\ref{polynomialsPvanishtypec}) and Lemma \ref{gradinglemma2} that $u_i\in W^{-1}(X_C)$ for $1\leq i\leq f$ and that $v_j\in W^{-3}(X_C)$ for $1\leq j\leq g$.

All in all, we have shown:
\begin{theorem}\label{theoremwittringtypecn}
Let
\[
h:=\Bigl\lfloor\frac{m}{2}\Bigr\rfloor +\Bigl\lfloor\frac{n_1}{2}\Bigr\rfloor+\ldots+\Bigl\lfloor\frac{n_l}{2}\Bigr\rfloor \text{ and }f:=\Bigl\lfloor\frac{n}{2}\Bigr\rfloor-h \text{ and }g:=\Bigl\lceil \frac{n-m}{2}\Bigr\rceil
\]
There is a ring isomorphism
\[
W^\ast(X_C)\cong\frac{\mathbb{Z}_2\left[a_1,\ldots,a_{\lfloor m/2\rfloor}\right]\otimes \bigotimes_{p=1}^l\mathbb{Z}_2\left[b_1^{(p)},\ldots,b_{\lfloor n_p/2\rfloor}^{(p)}\right]}{(\mu_1,\ldots,\mu_h)}\otimes \bigwedge(u_1,\ldots,u_f,v_1,\ldots,v_g)
\]
where
\[
\mu_j=\sum_{c+c_1+\ldots+c_l=j}a_c\cdot b_{c_1}^{(1)}\ldots b_{c_l}^{(l)}+\binom{2n}{2j},
\]
recalling that we set
\begin{align*}
a_0:=1 \text{ and } a_i:=a_{m-i}\text{ for all }\lfloor m/2\rfloor <i\leq m\\
b_0^{(p)}:=1\text{ for all }1\leq p\leq l\text{ and }b_i^{(p)}:=b_{n_p-i}^{(p)}\text{ for all }1\leq p\leq l,~\lfloor n_p/2\rfloor <i\leq n_p
\end{align*}
The left factor in the above tensor product is contained in $W^0(X_C)$. Furthermore, $u_i\in W^{-1}(X_C)$ for all $1\leq i\leq f$ and $v_j\in W^{-3}(X_C)$ for all $1\leq j\leq g$.
\end{theorem}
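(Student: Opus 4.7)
The plan is to apply the framework of section \ref{chapteroutlinecomputation} with $G = Sp(n)$ and $H = H_C$. By Bousfield's lemma \ref{lemmabousfield}, it suffices to compute $h^\ast(K^0(X_C))$ as a $\mathbb{Z}_4$-graded ring, and by Hodgkin's theorem \ref{Hodgkin} this equals $h^\ast(R(H_C)/(P_1,\ldots,P_n))$ with $P_j = i_C^\ast(\hat{z}_j - \mathrm{rk}(\hat{z}_j))$. Every $\hat{z}_j$ is self-dual, so no conjugate pair $\lambda,\lambda^\ast$ occurs and each generator contributes only through its class $\nu_j = [P_j] \in h^+(R(H_C))$, already computed explicitly above.

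The first substantive step is to verify hypotheses (A1) and (A2) of section \ref{chapteroutlinecomputation}. This is exactly Proposition \ref{relationsbetweennu}, which produces a subset $S \subset \{1,\ldots,n\}$ such that $\{\nu_s\}_{s \in S}$ is $h^+(R(H_C))$-regular and the remaining $\nu_t$ lie in the ideal it generates. Granting this, Proposition \ref{propositiontatecohofktheory} yields a ring isomorphism
\[
h^\ast(R(H_C)/(P_1,\ldots,P_n)) \cong \frac{h^+(R(H_C))}{(\nu_s : s \in S)} \otimes_{\mathbb{Z}_2} \bigwedge_{t \in \overline{S}} ([\overline{w}_t]),
\]
with the exterior generators in $h^-$. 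To bring the relations into the form stated in the theorem, invoke Lemma \ref{lemmanuregular1} to identify the ideal generated by the odd-indexed relations with the one generated by the odd $\gamma_i$; eliminating these variables, renaming $\zeta_i := \gamma_{2i}$, and reading off the even-indexed $\nu_{2j}$ as $\mu_j$ produces the displayed presentation of the $h^+$-part.

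Finally, the Witt grading is pinned down via Lemmas \ref{gradinglemma1} and \ref{gradinglemma2}. Each $a_i$ corresponds to $\gamma_{2i} = [z_{2i}]$ and each $b_j^{(p)}$ to $[x_j^{(p)}(x_j^{(p)})^\ast]$; both are of real type (the latter because $\rho\rho^\ast$ is always real), so Lemma \ref{gradinglemma1} places all polynomial generators in $W^0(X_C)$. For the exterior generators, $\hat{z}_t \in R(Sp(n))$ is of real type when $t$ is even and of quaternionic type when $t$ is odd, while the coefficients appearing in (\ref{polynomialsPvanishtypec}) lie in the real subring $A$; Lemma \ref{gradinglemma2} therefore places $[\overline{w}_t]$ in $W^{-1}(X_C)$ for even $t \in \overline{S}$ (the $u_i$, counted by $f$) and in $W^{-3}(X_C)$ for odd $t \in \overline{S}$ (the $v_j$, counted by $g$). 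The real obstacle in the whole argument is Proposition \ref{relationsbetweennu}, a delicate combinatorial assertion about the explicit polynomials $\nu_j$ that the paper postpones to section \ref{sectionpolynomialeqns}; once it is granted, the remainder is a direct assembly of the machinery of sections \ref{sectionwittringstatecoh}--\ref{chapteroutlinecomputation}.
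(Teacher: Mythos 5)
Your proposal is correct and mirrors the paper's own proof step by step: Hodgkin and Bousfield to reduce to Tate cohomology of $R(H_C)/(P_1,\ldots,P_n)$, Proposition \ref{relationsbetweennu} to verify (A1)--(A2), Proposition \ref{propositiontatecohofktheory} for the ring isomorphism, Lemma \ref{lemmanuregular1} to eliminate the odd $\gamma_i$ and rewrite the relations as the $\mu_j$, and Lemmas \ref{gradinglemma1} and \ref{gradinglemma2} for the grading. You correctly identify Proposition \ref{relationsbetweennu} as the only non-routine ingredient; the rest is assembly exactly as the paper does it.
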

We now tabulate the ranks of the Witt groups in the different degrees. This immediately follows from Proposition \ref{propositiondimensionquotientnu} and the Appendix.
\begin{theorem}
Let $h$, $f$ and $g$ be as in the previous theorem and
\[
a:=\frac{h!}{\lfloor m/2\rfloor!\lfloor n_1/2\rfloor!\ldots\lfloor n_l/2\rfloor!}
\]
We have $W^i(X_C(m,n_1,\ldots,n_l))\cong \mathbb{Z}_2^{a\cdot z_i}$ where $z_i$ is given as follows:

If $f=g=0$, then $z_0=1$ and $z_{-1}=z_{-2}=z_{-3}=0$.

If $(f,g)\neq (0,0)$, then the $z_i$ are given as in the Appendix.
\end{theorem}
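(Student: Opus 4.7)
By Theorem \ref{theoremwittringtypecn}, the ring $W^\ast(X_C)$ decomposes as a tensor product of two $\mathbb{Z}_4$-graded $\mathbb{Z}_2$-vector spaces: the quotient ring
\[
Q := \frac{\mathbb{Z}_2[a_1, \ldots, a_{\lfloor m/2\rfloor}] \otimes \bigotimes_{p=1}^l \mathbb{Z}_2[b_1^{(p)}, \ldots, b_{\lfloor n_p/2\rfloor}^{(p)}]}{(\mu_1, \ldots, \mu_h)},
\]
which is concentrated in Witt degree $0$, and the exterior algebra $E := \bigwedge(u_1, \ldots, u_f, v_1, \ldots, v_g)$ with $u_i \in W^{-1}$ and $v_j \in W^{-3}$. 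Hence as $\mathbb{Z}_2$-modules
\[
W^i(X_C) \cong Q \otimes_{\mathbb{Z}_2} E^i,
\]
so that $\dim_{\mathbb{Z}_2} W^i(X_C) = \dim_{\mathbb{Z}_2}(Q) \cdot z_i$ with $z_i := \dim_{\mathbb{Z}_2}(E^i)$. Proposition \ref{propositiondimensionquotientnu} supplies $\dim_{\mathbb{Z}_2} Q = a$, which accounts for the multiplicative constant in the statement, so the whole problem reduces to computing the Hilbert function of the $\mathbb{Z}_4$-graded exterior algebra $E$.

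When $f = g = 0$, the algebra $E$ is just $\mathbb{Z}_2$ sitting in degree $0$, so $z_0 = 1$ and $z_{-1} = z_{-2} = z_{-3} = 0$, as asserted. For $(f,g) \neq (0,0)$, writing $-1 \equiv 3$ and $-3 \equiv 1$ mod $4$, the $\mathbb{Z}_4$-graded Poincar\'{e} series of $E$ is
\[
P_E(t) = (1+t^3)^f (1+t)^g \pmod{t^4 - 1}.
\]
The coefficients $z_j$ are recovered from $P_E$ by the discrete Fourier transform on $\mathbb{Z}_4$, namely $z_j = \tfrac{1}{4} \sum_{\zeta^4 = 1} \zeta^{-j} P_E(\zeta)$. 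Substituting $\zeta \in \{1, i, -1, -i\}$ gives $P_E(1) = 2^{f+g}$, $P_E(-1) = 0$ if $f+g \ne 0$ has contributions cancelling, and $P_E(\pm i) = (1 \mp i)^f (1 \pm i)^g$, whose moduli and arguments determine terms of the form $2^{(r-2)}$ and $(-4)^{\lfloor(r-k)/4\rfloor}$ with $r := f+g$. Splitting into the four residue classes of $r$ mod $4$ and separating real from imaginary parts produces exactly the tabulated four-by-four arrays in the Appendix.

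The main obstacle is purely bookkeeping: correctly matching signs and exponents of $-4$ across the sixteen entries of the two tables, and verifying that the formulas collapse to integers in every case (for instance, the half-integer appearance of $2^{r-2}$ when $r = 1$ is always compensated by a corresponding half-integer contribution from the $(-4)^{(r-k)/4}$ term). No further topological input is needed beyond the identification of $Q$ as sitting in degree $0$ and the explicit Witt degrees of the exterior generators, both of which are already established by Theorem \ref{theoremwittringtypecn}.
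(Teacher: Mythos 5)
Your proof is correct and takes the same route as the paper: the theorem is an immediate corollary of Theorem \ref{theoremwittringtypecn} (which gives the decomposition $W^\ast(X_C)\cong Q\otimes E$ with $Q$ concentrated in Witt degree $0$ and $E$ the exterior algebra on $f$ degree-$(-1)$ and $g$ degree-$(-3)$ generators), Proposition \ref{propositiondimensionquotientnu} (which gives $\dim_{\mathbb{Z}_2}Q=a$), and the Appendix's roots-of-unity computation of the $\mathbb{Z}_4$-graded Hilbert function of $E$. The discrete Fourier inversion formula you write out is exactly what the Appendix does with $H(t)=(1+t^{-1})^f(1+t^{-3})^g$, and your $(1+t^3)^f(1+t)^g$ agrees with it modulo $t^4-1$.
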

\subsection{Type $D_n$}
The computations for type $D_n$ are largely analogous to the other types. They are more elaborate because more cases need to be distinguished, so we only state the results and refer to \cite{Hemmert} for full details. Note that in one particular case, we only determine the Witt groups without the ring structure.
\begin{notationconventions}
We let $X_D=SO(2n)/SO(2m)\times U(n_1)\times \ldots \times U(n_l)$. Every complex flag variety which is a quotient of $SO(2n)$ is of this form.
\end{notationconventions}
\begin{theorem}\label{thmwittringtypedn}
Let $N:=\left\lfloor \frac{m}{2}\right\rfloor+\left\lfloor \frac{n_1}{2}\right\rfloor+\ldots+\left\lfloor \frac{n_l}{2}\right\rfloor$ and $T:=\left\{j\mid m\leq j\leq n-1\right\}\setminus \left\{i\mid i\leq 2\left(\left\lfloor\frac{m}{2}\right\rfloor+\left\lfloor\frac{n_1}{2}\right\rfloor+\ldots+\left\lfloor\frac{n_l}{2}\right\rfloor\right)\right\}$.

If $n$ and $m>2$ are odd or if $n$ is odd and $m=0$, we have a ring isomorphism
\[
W^\ast(X_D)\cong \frac{\mathbb{Z}_2\left[b_1,\ldots,b_{\left\lfloor\frac{m}{2}\right\rfloor}\right]\otimes\bigotimes_{p=1}^l \mathbb{Z}_2\left[\beta_1^{(p)},\ldots,\beta_{\left\lfloor\frac{n_p}{2}\right\rfloor}^{(p)}\right]}{(\mu_i\mid 1\leq i\leq N)}\otimes \bigwedge_{t\in T}(u_t)
\]
If $n$ is odd and $m\geq 2$ is even, we have a ring isomorphism
\[
W^\ast(X_D)\cong \frac{\mathbb{Z}_2\left[b_1,\ldots,b_{\frac{m}{2}},d_1,d_2\right]\otimes\bigotimes_{p=1}^l \mathbb{Z}_2\left[a_1^{(p)},\ldots,a_{\left\lfloor\frac{n_p}{2}\right\rfloor}^{(p)}\right]}{(d_1+d_2+b_{\frac{m}{2}},~d_1 d_2)+(\mu_i\mid 1\leq i\leq N)}\otimes \bigwedge_{t\in T}(u_t)
\]
If $n$ is even and $m>2$ is odd or if $n$ is even and $m=0$ and not all of $n_1,\ldots,n_l$ are even, we have a ring isomorphism
\[
W^\ast(X_D)\cong \frac{\mathbb{Z}_2\left[b_1,\ldots,b_{\left\lfloor\frac{m}{2}\right\rfloor}\right]\otimes\bigotimes_{p=1}^l \mathbb{Z}_2\left[a_1^{(p)},\ldots,a_{\left\lfloor\frac{n_p}{2}\right\rfloor}^{(p)}\right]}{(\mu_i\mid 1\leq i\leq N)}\otimes \bigwedge_{t\in T\setminus \left\{n-1\right\}}(u_t)\bigwedge(v_+,v_-)
\]
If $n$ is even, $m\geq2$ is even and not all of $n_1,\ldots,n_l$ are even, we have a ring isomorphism
\[
W^\ast(X_D)\cong \frac{\mathbb{Z}_2\left[b_1,\ldots,b_{\frac{m}{2}},d_1,d_2\right]\otimes\bigotimes_{p=1}^l \mathbb{Z}_2\left[a_1^{(p)},\ldots,a_{\left\lfloor\frac{n_p}{2}\right\rfloor}^{(p)}\right]}{(d_1+d_2+b_{\frac{m}{2}},~d_1d_2)+(\mu_i\mid 1\leq i\leq N)}\otimes \bigwedge_{t\in T\setminus \left\{n-1\right\}}(u_t)\otimes\bigwedge(v_+,v_-)
\]
If $m=0$ and $n_1,\ldots,n_l$ are even, we have a ring isomorphism
\[
W^\ast(X_D)\cong \frac{\bigotimes_{p=1}^l \mathbb{Z}_2\left[a_1^{(p)},\ldots,a_{\left\lfloor\frac{n_p}{2}\right\rfloor}^{(p)}\right]}{(\mu_i\mid 1\leq i\leq N)}\otimes \bigwedge_{t\in T\setminus\left\{n-1\right\}}(u_t)\otimes\bigwedge(v)
\]
In all of the above cases, the generators are of degrees $|a_i|=|b_j|=|d_1|=|d_2|=0$, $|u_t|=-1$ and
\[
|v|=|v_-|=|v_+|= 
\begin{cases}
-1 \text{ if }n\equiv 0\text{ (mod 4)}\\
-3 \text{ if }n\equiv 2\text{ (mod 4)}
\end{cases}
\]
The relations $\mu_i$ are given by
\[
\mu_i:=\sum_{p+q_1+\ldots+q_l=i} b_{p}a_{q_1}^{(1)}\ldots a_{q_l}^{(l)}+\binom{2n}{2i}
\]
where it is understood that
\begin{align*}
b_0:=1\text{ and }b_{i}:=b_{m-i}\text{ for all }\lfloor m/2\rfloor <i\leq m,\\
a_0^{(p)}:=1\text{ for all }1 \leq p\leq l\text{ and }a_i^{(p)}:=a_{n_p-i}^{(p)}\text{ for all }1\leq p\leq l,~\lfloor n_p/2\rfloor <i\leq n_p
\end{align*}
\end{theorem}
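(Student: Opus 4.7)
The plan is to follow the same four-step pipeline used for types $A_n$, $B_n$, $C_n$: (i) replace the pair $(SO(2n), H_D)$ by the simply connected cover $Spin(2n)$ and the preimage $\tilde{H}_D$ of $H_D$ in $Spin(2n)$, which, when $m \geq 2$, can be written as a central quotient of $Spin(2m) \times \tilde{U}(n_1) \times \ldots \times \tilde{U}(n_l)$ by a subgroup $C_D$ analogous to the $C_B$ of section \ref{repntatetypeb}; (ii) compute $R(\tilde{H}_D/C_D)$ and its Tate cohomology $h^+(R(\tilde{H}_D/C_D))$ along the lines of Propositions \ref{propositionrepnringtypeb} and \ref{propositiontatecohreptypeb}, separating the subcases by the parities of $m$ and the $n_i$; (iii) compute the restriction $i_D^\ast \colon R(Spin(2n)) \to R(\tilde{H}_D/C_D)$, extract elements $\mu_j$ analogous to the $\xi_j$ of type $B_n$ together with the contribution of the two half-spin representations $\Delta^\pm$, and check via polynomial identities in the spirit of section \ref{sectionpolynomialeqns} that assumptions (A1) and (A2) hold (or a suitable modification thereof in the exceptional case); (iv) apply Proposition \ref{propositiontatecohofktheory} to read off $h^\ast(K^0(X_D))$ and then Lemmas \ref{gradinglemma1}, \ref{gradinglemma2} to pin down the Witt grading.

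The two genuinely new features compared with type $B_n$ are the presence of two half-spin representations $\Delta^+, \Delta^-$ of $Spin(2n)$, so that $R(Spin(2n))$ is generated by $\hat{y}_1, \ldots, \hat{y}_{n-2}, \Delta^+, \Delta^-$ subject to a relation expressing $\hat{y}_{n-1}$ in terms of the half-spins, and, when $m \geq 2$ is even, the analogous pair $\Delta^+_m, \Delta^-_m$ sitting inside the subgroup. The former pair produces the exterior generators $v_+, v_-$ when $n$ is even (both half-spin representations are self-dual and give distinct exterior classes) or a single generator $v$ when $n$ is odd or when all the $n_i$ are even with $m = 0$ (in which case $\Delta^+$ and $\Delta^-$ are either interchanged by the duality or coupled by the quotient by $C_D$, so only one exterior class survives). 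The latter pair produces $d_1, d_2$: after passing to Tate cohomology these two half-spins of $Spin(2m)$ survive as classes whose sum satisfies $d_1 + d_2 = b_{m/2}$, arising from the identity $(\Delta^+_m + \Delta^-_m)^2 = 1 + y_1 + \ldots + y_m$, and whose product lies in the ideal generated by the other $\xi_j$, yielding $d_1 d_2 = 0$.

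The Witt grading is determined exactly as in the earlier subsections: the classes $a_i, b_j, d_1, d_2$ come from self-dual representations obtained by complexification from real representations, so Lemma \ref{gradinglemma1} places them in $W^0$; each $u_t$ corresponds via Lemma \ref{gradinglemma2} to a real representation $\hat{y}_t$ of $Spin(2n)$, hence lies in $W^{-1}$; and $v, v_\pm$ arise from the half-spin representations of $Spin(2n)$, which are of real type for $n \equiv 0 \pmod 4$ and of quaternionic type for $n \equiv 2 \pmod 4$, giving $|v| = |v_\pm| = -1$ or $-3$ accordingly. The main obstacle, and the reason for the lengthy case distinction in the statement, is the simultaneous presence of two quadratic half-spin relations (one from $Spin(2n)$, one from $Spin(2m)$ when $m \geq 2$ is even), whose images in $h^+$ can be zero divisors so that Corollary \ref{tatecohlemma5} no longer applies directly; Lemma \ref{tatecohlemma6} together with careful bookkeeping of annihilators has to take its place. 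For the case where $n$ is even, $m \geq 2$ is even and all of $n_1, \ldots, n_l$ are even, this bookkeeping is not sharp enough to determine the multiplicative relations among $v_+, v_-, d_1, d_2$, and this is precisely the case in which only the additive structure is obtained.
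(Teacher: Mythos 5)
The paper does not actually prove Theorem \ref{thmwittringtypedn}; it explicitly defers the full type-$D_n$ computation to \cite{Hemmert}, saying only that the method is ``largely analogous to the other types.'' So there is no in-paper proof to compare against. Your sketch does outline precisely the pipeline the paper uses elsewhere: pass to $Spin(2n)$ and write the preimage of $H_D$ as a central quotient analogous to $\tilde H_B/C_B$; compute the representation ring and its Tate cohomology along the lines of Propositions \ref{propositionrepnringtypeb} and \ref{propositiontatecohreptypeb}; compute $i_D^\ast$; verify the regularity and relation assumptions (or their weakenings via Lemma \ref{tatecohlemma6}); apply Proposition \ref{propositiontatecohofktheory}; and pin down the grading with Lemmas \ref{gradinglemma1} and \ref{gradinglemma2}. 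Your identification of the new ingredients is consistent with the theorem: the half-spins $\Delta^\pm$ of $Spin(2n)$ account for the odd-degree generators whose degree is $-1$ or $-3$ according to $n\bmod 4$; the half-spins of $Spin(2m)$ for $m\geq 2$ even account for $d_1,d_2$ with $d_1+d_2=b_{m/2}$ and $d_1d_2=0$; and the single genuinely unresolved case is $m,n_1,\ldots,n_l$ all even with $0<m<n$, as the paper states.

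Two caveats. First, this is a roadmap rather than a proof: the substantive work --- computing $h^+(R(\tilde H_D/C_D))$ across all parity subcases, analysing the restrictions of $\hat y_j$ and $\Delta^\pm$, and above all the type-$D_n$ analogues of Propositions \ref{propositionregularityxi} and \ref{propositionrelationsxi} that establish (A1), (A2) or their replacements --- is not carried out, and that is where all the difficulty lies. Second, one piece of your casework is off: you say $\Delta^+,\Delta^-$ produce ``a single generator $v$ when $n$ is odd.'' In the theorem, no $v$ appears when $n$ is odd. When $n$ is odd, $\Delta^\pm$ are dual to each other; the resulting self-dual class $[i^\ast(\tilde\Delta^+\tilde\Delta^-)]$ either contributes one of the $u_t$ (essentially the $u_{n-1}$ slot in $T$) or is absorbed into the relations, and in the subcase $n$ odd, $m$ odd, all $n_p$ even one finds $T=\emptyset$ and nothing survives at all. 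The label $v$ is reserved for the case $m=0$ with all $n_i$ even, which forces $n$ even. This is bookkeeping rather than a conceptual error, but you should separate these possibilities carefully when carrying the computation through.
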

For the remaining case, we have only been able to determine the additive structure of the Witt groups:
\begin{theorem}
Suppose $m,n_1,\ldots,n_l$ are even where $0<m<n$. Let
\begin{align*}
 b:=&\frac{(n/2-1)!}{(m/2)!\cdot (n_1/2)!\cdot \ldots \cdot (n_l/2)!}\cdot \left(\frac{n_1}{2}+\ldots +\frac{n_l}{2}\right)\\
 c:=&\frac{(n/2-1)!}{(m/2)!\cdot (n_1/2)!\cdot \ldots \cdot (n_l/2)!}\cdot \frac{m}{2}\\
a:=&\frac{(n/2)!}{(m/2)!\cdot (n_1/2)!\cdot \ldots \cdot (n_l/2)!}+b
\end{align*}
Letting $z_i$ be the rank of the 4-periodically graded exterior algebra on $\frac{n_1+\ldots+n_l}{2}-1$ generators of degree $-1$, we have:
\[
\text{dim}_{\mathbb{Z}_2}(W^i(X_D))=
\begin{cases}
a(z_i+z_{i+1})\text{ if }n\equiv 0\text{ (mod 4)}\\
az_i+cz_{i+1}+2bz_{i+3}\text{ if }n\equiv 2\text{ (mod 4)}
\end{cases}
\]
\end{theorem}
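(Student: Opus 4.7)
The plan is to follow the same pipeline used in Sections~\ref{sectioncomputationtypea}--\ref{repntatetypec} and in the preceding cases of Theorem~\ref{thmwittringtypedn}. First, produce a presentation of $R(\tilde H_D/C_D)$ and its Tate cohomology analogous to Propositions~\ref{propositionrepnringtypeb} and~\ref{propositiontatecohreptypeb}, taking care of the $Spin(2n)$-specific subtlety that there are \emph{two} half-spin representations $\Delta_+,\Delta_-$, both self-dual since $n$ is even. Next, compute the restrictions $i_D^\ast(\hat y_j)$ of the generators of $R(Spin(2n))$, producing classes $\xi_j\in h^+(R(\tilde H_D/C_D))$ together with two classes $\delta_\pm$ coming from $\Delta_\pm$. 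Finally, apply Proposition~\ref{propositiontatecohofktheory} to compute $h^\ast(K^0(X_D))$ and read off the Witt grading via Lemmas~\ref{gradinglemma1} and~\ref{gradinglemma2}; the dichotomy $n\equiv 0$ versus $n\equiv 2\pmod 4$ will come directly from whether $\Delta_\pm$ is of real or quaternionic type.

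The essential new feature, and the reason the ring structure cannot be fully pinned down, is that when $m$ and all $n_p$ are even the monomial $\prod_p x_{n_p/2}^{(p)}$ together with the half-spin classes $\delta_\pm$ produce elements in Tate cohomology that interact in a way which blocks direct application of Corollary~\ref{tatecohlemma5}. Concretely, after killing the regular subsequence among the $\xi_s$ given by the $D_n$-analog of Proposition~\ref{propositionrelationsregularityxi}, the class $\delta_+$ (equivalently $\delta_-$, modulo a linear relation involving $b_{m/2}$) becomes a zero divisor in the quotient whose annihilator is exactly the principal ideal it generates. This is the setting of Lemma~\ref{tatecohlemma6}, which contributes an additional free rank-two summand and explains the doubled ``$a = d+b$'' count. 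The hard part is verifying the annihilator hypothesis: I would do this by writing $h^+(R(\tilde H_D/C_D))$ explicitly as a free module over its subring of ``doubled'' symmetric functions (in the spirit of the computations for types $B$ and $C$) and checking the condition on monomial generators.

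With Lemma~\ref{tatecohlemma6} in hand, the dimensions fall out by counting. The polynomial quotient $\mathbb{Z}_2[b_1,\ldots,b_{m/2},a_i^{(p)}]/(\mu_1,\ldots,\mu_N)$ has $\mathbb{Z}_2$-dimension $d:=\frac{(n/2)!}{(m/2)!\prod_p (n_p/2)!}$, and one checks the identities $d = b+c$ and $a = d+b$ used in the statement. The half-spin contribution effectively doubles this dimension but distributes it across two neighbouring Witt degrees in a non-trivial way, producing the coefficients $a$, $c$ and $2b$. Tensoring with the 4-periodic ranks $z_i$ of the exterior algebra on $\tfrac{n_1}{2}+\ldots+\tfrac{n_l}{2}-1$ anti-self-dual generators (all of Witt degree $-1$ by Lemma~\ref{gradinglemma2}, since the corresponding restrictions of generators of $R(Spin(2n))$ are of real type) then yields the claimed formulae, with the case split $n\equiv 0,2\pmod 4$ determined by the real or quaternionic type of $\Delta_\pm$. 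Since only the additive dimensions are asserted, the absence of a complete ring description causes no obstruction here.
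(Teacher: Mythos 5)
The paper does not prove this theorem --- the proof is deferred to \cite{Hemmert} --- so there is no paper argument to compare your proposal against; this review addresses the proposal on its own terms.

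Your overall framework (Panin/Hodgkin, Bousfield, Tate cohomology of $R(\tilde H_D/C_D)$, read off the Witt grading via Lemmas~\ref{gradinglemma1} and~\ref{gradinglemma2}) is the right one, and several of your observations are correct: the extra subtlety comes from the two half-spin representations $\Delta_\pm$; the $n\equiv 0$ versus $n\equiv 2\pmod 4$ dichotomy is governed by their real/quaternionic type; and the arithmetic identities $d=b+c$ and $a=d+b$ check out. However, the proposal has a genuine gap at precisely the step where the claimed dimensions must be produced. Lemma~\ref{tatecohlemma6} is not an adequate tool here: it applies when a \emph{single} self-dual element $\mu$ with $\mathrm{Ann}([\mu]) = ([\mu])$ is killed, and its conclusion is that $h^\ast(A/\mu A)$ is a \emph{free} rank-two module over $h^+(A)/([\mu])$. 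That would yield Witt-group dimensions of the form $d\cdot z_i + d\cdot z_{i+k}$ for the appropriate shift $k$, with the same coefficient $d$ in both places. The actual formula in the statement has three \emph{distinct} coefficients $a$, $c$, $2b$ (or $a$, $a$ after combining, in the $n\equiv 0$ case) that are not equal to $d=\frac{(n/2)!}{(m/2)!\prod_p(n_p/2)!}$, so the Tate cohomology cannot simply be a free rank-two module over that polynomial quotient. This is consistent with the paper's statement that the ring structure could not be determined in this case: if Lemma~\ref{tatecohlemma6} applied, the ring structure would follow.

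Concretely, in type $D_n$ with $m$ and all $n_p$ even, \emph{both} half-spin classes $\delta_+$ and $\delta_-$ become problematic after killing the regular part of the $\xi$'s, and they satisfy a relation involving $b_{m/2}$ (your ``linear relation'' remark). One is then in the territory of Lemma~\ref{tatecohlemma2} rather than Lemma~\ref{tatecohlemma6}; the resulting quotient $\frac{h^+(A)\oplus h^+(A)}{h^+(A)\cdot([\mu_2],[\mu_1])}$ is not a free module and its dimensions in each Witt degree have to be computed by a bespoke analysis of the boundary maps in the long exact sequence, tracking which summands land in which of $W^0, W^{-1}, W^{-2}, W^{-3}$. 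Your sentence ``the half-spin contribution effectively doubles this dimension but distributes it across two neighbouring Witt degrees in a non-trivial way, producing the coefficients $a$, $c$ and $2b$'' is not a proof of that distribution --- it is exactly the assertion that needs to be proved, and nothing you wrote derives the specific coefficients $a=2b+c$, $c$, and $2b$ or explains the factor of $2$. Without that analysis, the dimension formula is asserted rather than established.
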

\section{Polynomial relations}\label{sectionpolynomialeqns}
The purpose of this section is to prove the remaining Propositions from the previous section, thus completing our computations. These propositions each consist of two parts: a statement about the regularity of a certain sequence of polynomials and the relations that hold when extending this sequence by more polynomials. So in the first subsection, we prove a general result on regular sequences of inhomogeneous elements in graded rings. This is crucial to prove the regularity statements about our polynomials, which will be proved together with the statements about relations between them in the subsequent subsections. 
\subsection{Regularity of (in)homogeneous sequences in graded rings}
In this subsection, we prove a few results about regular sequences that we need. First we prove the decisive result about sequences of inhomogeneous elements in a graded ring. Roughly speaking, we show that if the highest homogeneous components of the inhomogeneous elements form a regular sequence, then so do the inhomogeneous elements themselves. This is a useful result because it is often easier to establish regularity of a sequence of homogeneous elements. So after establishing this result, we consider a certain sequence of homogeneous polynomials and prove that it is regular. Combining these two results will almost immediately imply the statements about regularity that we will need.

Suppose $R=\bigoplus_{p\in\mathbb{Z}} R_p$ is a commutative graded ring with $R_p=0$ for $p<0$. We define an ascending filtration of $R$ by subgroups by
\[
F^a R:=\bigoplus_{p\leq a} R_p
\]
This is not a filtration by ideals, but $F^a\cdot F^b\subset F^{a+b}$ for all $a,b\in\mathbb{Z}$. So $\text{gr}_F R:=\bigoplus_{a\in\mathbb{Z}} F^aR/F^{a-1}R$ inherits a ring structure. It is easy to see that $\text{gr}_F R\cong R$. Suppose now we are given elements $y_1,\ldots,y_n\in R$ with
\[
y_i=x_i+\text{lower degree terms}~~~(1\leq i\leq n)
\]
where $x_i\in R_{k_i}$ is homogeneous.
\begin{lemma}\label{lemmainhomogeneoussequence}
Suppose $x_1,\ldots,x_n$ is a regular sequence in every order in $R$ and $r_1,\ldots,r_l\in R$ with
\[
z:= r_1y_1+\ldots+r_ly_l\in F^aR
\]
Then the term of degree $a$ of $z$ is in $(x_1,\ldots,x_l)$.
\end{lemma}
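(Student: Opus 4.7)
The natural strategy is induction on the quantity
\[
N := \max_i \deg_{\mathrm{top}}(r_i y_i) = \max_i (\deg_{\mathrm{top}}(r_i) + k_i),
\]
reducing to the case $N \leq a$. Write $y_i = x_i + y_i'$ with $y_i' \in F^{k_i-1} R$, and, for each $i$, write $r_i = \bar r_i + r_i'$ where $\bar r_i$ is the homogeneous component of $r_i$ of degree $N - k_i$ (possibly zero) and $r_i'$ has top degree strictly less.

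\emph{Base case} ($N \leq a$). If $N < a$, the degree-$a$ component of $z$ is zero. If $N = a$, the degree-$a$ part of $r_i y_i$ reduces to $r_{i,a-k_i} x_i$: indeed, the contribution from $r_i y_i'$ in degree $a$ would require a homogeneous piece of $r_i$ of degree $> a - k_i$, and such a piece vanishes since $\deg_{\mathrm{top}} r_i \leq a - k_i$. Summing gives a degree-$a$ term in $(x_1,\ldots,x_l)$.

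\emph{Inductive step} ($N > a$). Since $z \in F^a R$, the degree-$N$ part of $z$ vanishes; by the same observation as in the base case, this reads
\[
\sum_{i=1}^l \bar r_i\, x_i = 0 \quad \text{in } R.
\]
Here $x_1,\ldots,x_l$ is a subsequence of $x_1,\ldots,x_n$ and hence is itself regular in every order. For a regular sequence the first syzygy module is generated by the Koszul relations, so there exist homogeneous $b_{ij} \in R$ (for $i<j$) with
\[
\bar r_i \;=\; \sum_{j<i} b_{ji}\, x_j \;-\; \sum_{j>i} b_{ij}\, x_j.
\]
Now substitute $x_j = y_j - y_j'$ into $\sum_i \bar r_i y_i$. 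The ``top'' pieces, coming from $x_j \leftrightarrow y_j$, produce $\sum_{i<j} b_{ij}(y_i y_j - y_i y_j) = 0$, so what remains is
\[
\sum_i \bar r_i\, y_i \;=\; \sum_i s_i\, y_i, \qquad s_i := -\sum_{j<i} b_{ji}\, y_j' + \sum_{j>i} b_{ij}\, y_j',
\]
and each $s_i$ has top degree strictly less than $N - k_i$ because each $y_j'$ has top degree strictly less than $k_j$. Setting $R_i := r_i' + s_i$ we obtain $z = \sum_i R_i\, y_i$ with $\max_i \deg_{\mathrm{top}}(R_i y_i) < N$, so the inductive hypothesis applies.

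\emph{Main obstacle.} The key maneuver is the Koszul-syzygy step: using regularity in every order to write each $\bar r_i$ as an $R$-linear combination of the other $x_j$'s in a coherent (antisymmetric) fashion, so that when the substitution $x_j = y_j - y_j'$ is made, the top-degree contributions cancel pairwise. Everything else is bookkeeping about homogeneous components.
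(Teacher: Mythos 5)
Your proof is correct. It shares the paper's global strategy — inducting on the maximal degree $N$ (the paper's $M$) of the given presentation $z=\sum_i r_i y_i$ — but the inductive step is handled differently. The paper, after deducing from $\sum_p r_{i_p}'x_{i_p}=0$ that each top coefficient $r_{i_p}'$ lies in $(x_1,\ldots,x_l)$, passes to the \emph{different element} $z':=z-\sum_p r_{i_p}'y_{i_p}$, applies the lemma inductively to $z'$, and then transfers the conclusion back to $z$ using $r_{i_p}'\in(x_1,\ldots,x_l)$. You instead leave $z$ unchanged: the Koszul description of the first syzygy module of the regular sequence $x_1,\ldots,x_l$ lets you rewrite the top coefficients $\bar r_i$ antisymmetrically in terms of the $x_j$, and after the substitution $x_j=y_j-y_j'$ the Koszul antisymmetry makes the top-degree pieces cancel, producing a new presentation $z=\sum_i R_i y_i$ of the \emph{same} element with strictly smaller $N$. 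This is a genuine advantage: since $z$ itself is untouched, the hypothesis $z\in F^aR$ is automatically preserved across the induction, whereas in the paper's version the element $z'$ need not lie in $F^aR$ (the subtracted term $\sum_p r_{i_p}'y_{i_p}$ can contribute in degrees strictly between $a$ and $M$), so the appeal to the inductive hypothesis there requires more care than the text gives. The price of your route is the somewhat heavier input — exactness of the Koszul complex in degree one — where the paper uses only the more elementary fact that $\sum_i a_ix_i=0$ forces each $a_i\in(x_1,\ldots,x_l)$.
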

\begin{proof}
We write $r_i=r_i'+r_i''$ where $r_i'$ is homogeneous of highest degree in $r_i$ and $r_i''$ is a sum of terms of lower degree. We prove the claim by induction on
\[
M:=\text{max}\left(\left\{\text{deg}(r_i')\text{deg}(x_i)\mid 1\leq i\leq l\right\}\right)
\]
If $M\leq a$, then the term in $z$ of degree $a$ is a sum of some of the terms $r_i'x_i$ and so is in $(x_1,\ldots,x_l)$.\\Suppose $M>a$ and let $1\leq i_1,\ldots,i_k\leq l$ be all the distinct indices such that $\text{deg}(r_{i_p}')\text{deg}(x_{i_p})=M$ for $1\leq p\leq k$. Then $r_{i_1}'x_{i_1}+\ldots+r_{i_k}'x_{i_k}$ is the term in $z$ of highest degree $M$ and so
\[
r_{i_1}'x_{i_1}+\ldots+r_{i_k}'x_{i_k}=0
\]
since $M>a$ and $z\in F^a R$. Since the $x_i$ are a regular sequence in any order, we deduce that $r_{i_p}'\in (x_1,\ldots,x_l)$ for $1\leq p\leq k$. But then consider
\[
z':=z-\sum_{p=1}^k r_{i_p}'y_{i_p}=r_1y_1+\ldots+r_ly_l-\sum_{p=1}^k r_{i_p}'y_{i_p}
\]
By induction, we deduce that the term of degree $a$ in $z'$ is in $(x_1,\ldots,x_l)$. But since $r_{i_p}'\in (x_1,\ldots,x_n)$ for $1\leq p\leq k$, this also holds for $z$.
\end{proof}
\begin{remark}
The assertion is not true if we do not assume regularity of the sequence $x_i$. For example, consider
$R=\mathbb{Z}_2[\alpha,\beta]\text{   with deg}(\alpha)=\text{deg}(\beta)=1$,
and set $y_1=\alpha$ and $y_2=\alpha^2+\beta$ so that $x_1=\alpha$ and $x_2=\alpha^2$.
Then $\alpha y_1+y_2=\beta\in F^1 R$, but $\beta\not\in(x_1,x_2)$.
\end{remark}
The filtration $F^\ast R$ induces a filtration $F^\ast R_l$ of $R_l:=R/(y_1,\ldots,y_l)$ for every $1\leq l\leq n$ via the quotient map $q_l\colon R\to R_l$.
\begin{proposition}\label{isogradedring}
We use the notation introduced in this section.
\begin{enumerate}[(i)]
\item There is a surjective ring homomorphism
\[
R/(x_1,\ldots,x_l)\to \text{gr}_F(R_l)=\bigoplus_{a\in \mathbb{Z}} F^aR_l/F^{a-1}R_l
\]
of graded rings for every $1\leq l\leq n$.
\item If $x_1,\ldots,x_n$ is an $R$-regular sequence in every order, the surjection from (i) is an isomorphism.
\end{enumerate}
\end{proposition}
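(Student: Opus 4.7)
The plan is to construct the map of part (i) explicitly on homogeneous components and then use Lemma \ref{lemmainhomogeneoussequence} to get injectivity in part (ii).

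For (i), I would define the homomorphism $\varphi\colon R\to \mathrm{gr}_F(R_l)$ on a homogeneous element $r\in R_a$ by $\varphi(r):=[q_l(r)]\in F^aR_l/F^{a-1}R_l$, and extend $\mathbb{Z}$-linearly. Since $\mathrm{gr}_F(R_l)$ is graded and $q_l$ respects the filtration multiplicatively, $\varphi$ is a ring homomorphism. Surjectivity is immediate: any class in $F^aR_l/F^{a-1}R_l$ is represented by $q_l(r)$ for some $r\in F^aR$, and writing $r=r_a+r_{<a}$ with $r_a\in R_a$ and $r_{<a}\in F^{a-1}R$, one gets $[q_l(r)]=[q_l(r_a)]=\varphi(r_a)$. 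The key point is that $\varphi$ factors through $R/(x_1,\ldots,x_l)$: for each $1\le i\le l$, $x_i\in R_{k_i}$, and since $y_i=x_i+(\text{terms in }F^{k_i-1}R)$ while $q_l(y_i)=0$, we get $q_l(x_i)\in F^{k_i-1}R_l$, so $\varphi(x_i)=0$.

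For (ii), I would verify injectivity by a direct argument using Lemma \ref{lemmainhomogeneoussequence}. Suppose $r\in R$ with $\varphi(r)=0$; decompose $r=\sum_a r_a$ into homogeneous pieces. Since $\mathrm{gr}_F(R_l)$ is graded, each $\varphi(r_a)=[q_l(r_a)]=0$, which means $q_l(r_a)\in F^{a-1}R_l$, i.e.\ there exists $s_a\in F^{a-1}R$ with $q_l(r_a-s_a)=0$. Hence $r_a-s_a\in(y_1,\ldots,y_l)$, say $r_a-s_a=\sum_{i=1}^{l}t_iy_i$, and this element lies in $F^aR$. Lemma \ref{lemmainhomogeneoussequence}, applied with our regular sequence of homogeneous highest-degree parts $x_1,\ldots,x_l$ (these form a regular sequence in any order by hypothesis on the full $x_1,\ldots,x_n$), tells us that the degree-$a$ component of $r_a-s_a$ lies in $(x_1,\ldots,x_l)$. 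That component is exactly $r_a$, since $s_a\in F^{a-1}R$. Summing over $a$ yields $r\in(x_1,\ldots,x_l)$, completing injectivity.

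The only subtlety worth double-checking is that the map is well-defined on $R/(x_1,\ldots,x_l)$ and is truly a graded ring homomorphism; this reduces to the observation above that $q_l(x_i)$ lies strictly below degree $k_i$ in the induced filtration, which is where the shape $y_i=x_i+\text{lower}$ is essential. The main substantive input is Lemma \ref{lemmainhomogeneoussequence}: once that is in hand, injectivity in (ii) is essentially immediate. So no serious obstacle is anticipated beyond carefully tracking filtration degrees and ensuring that ``regular sequence in every order'' passes to any initial segment, which it does trivially.
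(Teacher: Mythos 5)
Your proof is correct and takes essentially the same route as the paper: part (i) is the same construction, and part (ii) relies on Lemma \ref{lemmainhomogeneoussequence} in exactly the same way. The only stylistic difference is that you verify injectivity of the surjection directly (decomposing $r$ into homogeneous pieces and using the lemma to show each $r_a$ lies in $(x_1,\ldots,x_l)$), whereas the paper builds an explicit two-sided inverse and uses the lemma to check that it is well-defined; since the map is already known to be surjective, these amount to the same argument.
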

\begin{remark}
Since the $x_i$ are homogeneous,
\[
R/(x_1,\ldots,x_l)\cong \bigoplus_{a\in\mathbb{Z}} \frac{R_a}{R_a\cap(x_1,\ldots,x_l)}
\]
is naturally a graded ring. Furthermore, the condition in (ii) that $x_1,\ldots,x_n$ be regular in \emph{every} order is not much stronger than that $x_1,\ldots,x_n$ be regular in \emph{some} order: If $R$ is Noetherian and $R_0$ is a field, these conditions are equivalent.
\end{remark}
\begin{proof}
We first prove (i). For every $a\in\mathbb{Z}$, we have a surjective additive homomorphism
\begin{align*}
\frac{F^aR}{F^{a-1}R}\twoheadrightarrow \frac{\text{im}(F^aR\xrightarrow{q_l}R/(y_1,\ldots,y_l))}{\text{im}(F^{a-1}R\xrightarrow{q_l}R/(y_1,\ldots,y_l))}=\frac{F^aR_l}{F^{a-1}R_l},~~
[x]\mapsto [q_l(x)]
\end{align*}
These homomorphisms induce a surjective ring homomorphism $\text{gr}_F R\to \text{gr}_F R_l$ and thus we obtain a surjective ring homomorphism
\[
R= \bigoplus_{a\in \mathbb{Z}} R_a\cong \text{gr}_F R\to \text{gr}_F R_l
\]
We observe that $x_1,\ldots,x_l$ are all in the kernel of this map. Hence this induces a surjective ring homomorphism
$
f\colon R/(x_1,\ldots,x_l)\twoheadrightarrow \text{gr}_F R_l
$.

We now prove (ii). We define an inverse of the map $f$ in (i). First, for every $a\in\mathbb{Z}$ we define a map
\begin{align*}
F^aR_l=\text{im}(F^aR\xrightarrow{q_l}R/(y_1,\ldots,y_l))&\to R_a/(R_a\cap(x_1,\ldots,x_l)),\\
q_l(x+\text{lower degree terms})&\mapsto [x]~~~\text{(where}~x\in R_a\text{)}
\end{align*}
This is well-defined by Lemma \ref{lemmainhomogeneoussequence}. It is clearly an additive homomorphism for every $a\in\mathbb{Z}$ and induces a homomorphism
\[
\frac{F^aR_l}{F^{a-1}R_l}\to \frac{R_a}{R_a\cap(x_1,\ldots,x_l)}
\]
for every $a\in \mathbb{Z}$. All these maps induce a ring homomorphism
$
g\colon \text{gr}_F R_l\to R/(x_1,\ldots,x_l)
$
which is, by construction, inverse to the homomorphism $f$ constructed in (i).
\end{proof}
We can now prove the main result of this section:
\begin{corollary}\label{corollaryinhomogeneousregseq}
Let $R=\bigoplus_{p\in\mathbb{Z}}$ be a commutative graded ring with $R_p=0$ for $p<0$. Let $y_1,\ldots,y_n\in R$ be such that for all $1\leq i\leq n$,
\[
y_i=x_i+\text{lower degree terms}
\]
where $x_i\in R_{k_i}$ is homogeneous.
If $x_1,\ldots,x_n$ is $R$-regular in every order, then so is $y_1,\ldots,y_n$.
\end{corollary}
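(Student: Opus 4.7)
The plan is to prove, by induction on $l$, that $y_{l+1}$ is not a zero-divisor on $R_l := R/(y_1, \ldots, y_l)$ for each $0 \le l < n$; since the hypothesis on the $x_i$ is symmetric under permutation, applying the same argument to an arbitrary permutation of the indices will yield regularity of $y_1, \ldots, y_n$ in every order. The central tool is the isomorphism of Proposition \ref{isogradedring}(ii), which identifies $R/(x_1, \ldots, x_l)$ with the associated graded $\mathrm{gr}_F R_l$. Because $R_p = 0$ for $p < 0$, the filtration $F^\bullet R$ is separated with $F^{-1} R = 0$, and the induced filtration on $R_l$ inherits this separation property.

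Now suppose $z \in R_l$ satisfies $y_{l+1} \cdot z = 0$ and, for contradiction, that $z \neq 0$. I would choose the minimal $a \ge 0$ with $z \in F^a R_l$ together with a lift $\tilde{z}' \in F^a R$ whose top-degree term $\tilde{z} \in R_a$ represents a nonzero class in $F^a R_l / F^{a-1} R_l$; under the isomorphism of Proposition \ref{isogradedring}(ii), this is also a nonzero class in $(R/(x_1, \ldots, x_l))_a$. The product $y_{l+1} \tilde{z}'$ lifts $y_{l+1} z = 0$ and therefore lies in the ideal $(y_1, \ldots, y_l) \subset R$, and its component in degree $k_{l+1} + a$ is $x_{l+1} \tilde{z}$. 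Lemma \ref{lemmainhomogeneoussequence} then gives $x_{l+1} \tilde{z} \in (x_1, \ldots, x_l)$.

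Reordering so that $x_1, \ldots, x_l, x_{l+1}$ comes first, the hypothesis that the $x_i$ form a regular sequence in every order tells us that $x_{l+1}$ is a non-zero-divisor on $R/(x_1, \ldots, x_l)$, forcing $\tilde{z} \in (x_1, \ldots, x_l)$. Translating back through the isomorphism of Proposition \ref{isogradedring}(ii), the class of $z$ in $F^a R_l / F^{a-1} R_l$ vanishes, so $z \in F^{a-1} R_l$, contradicting the minimality of $a$. Hence $z = 0$, as required.

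I do not anticipate any genuine obstacle: both Lemma \ref{lemmainhomogeneoussequence} and Proposition \ref{isogradedring} have been set up to deliver precisely what is needed, and the induction step is a direct consequence once one keeps track of the passage between a lift in $R$ and its image in $R_l$. The only points requiring minor care are the separation of the filtration on $R_l$, which is immediate from $R_p = 0$ for $p < 0$, and remembering to invoke regularity of the $x_i$ in the specific order $x_1, \ldots, x_l, x_{l+1}$ — which is permitted only because the hypothesis asserts regularity in \emph{every} order.
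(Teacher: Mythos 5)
Your proof is correct and follows essentially the same route as the paper's: both arguments pick the minimal filtration degree in which the putative zero-divisor class lives, invoke the identification of $\mathrm{gr}_F R_l$ with $R/(x_1,\ldots,x_l)$ from Proposition \ref{isogradedring}(ii), and then use regularity of the $x_i$ in the order $x_1,\ldots,x_l,x_{l+1}$ to push the class one step down the filtration, contradicting minimality. The only cosmetic difference is that you lift to $R$ and apply Lemma \ref{lemmainhomogeneoussequence} directly, whereas the paper multiplies classes inside $\mathrm{gr}_F R_l$ and applies the explicit inverse isomorphism $g$ from the proof of Proposition \ref{isogradedring}(ii) — these are the same step in different clothing.
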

\begin{proof}
Suppose $y_1,\ldots,y_l$ is an $R$-regular sequence and suppose $r\in R$ with $ry_{l+1}\in (y_1,\ldots,y_l)$. Suppose $q_l(r)\neq 0$ in $R_l=R/(y_1,\ldots,y_l)$. Then there is a minimal $p\in \mathbb{Z}$ such that $q_l(r)\in F^p R_l$. Consider $[q_l(r)]\in F^pR_l/F^{p-1}R_l$ and $[q_l(y_{l+1})]=[q_l(x_{l+1})]\in F^{k_{l+1}}R_l/F^{k_{l+1}-1}R_l$. Then
\[
0=[q_l(r)]\cdot [q_l(y_{l+1})]=[q_l(r)]\cdot [q_l(x_{l+1})]~~~\text{in }~\frac{F^{p+k_{l+1}}R_l}{F^{p+k_{l+1}-1}R_l}\subset\text{gr}_F R_l.
\]
Applying the isomorphism $g$ from the proof of Proposition \ref{isogradedring} (ii) and using that $x_1,\ldots,x_{l+1}$ is $R$-regular, we deduce that $q_l(r)=0$ in $F^p R_l/F^{p-1}R_l$ and so $q_l(r)\in F^{p-1}R_l$. This contradicts the minimality of $p$.
\end{proof}
Now we turn to a particular sequence of polynomials and prove its regularity. Let $K$ be a field. We consider the polynomial ring
\[
A=K[X_{1,1},X_{1,2},\ldots,X_{1,n_1},X_{2,1},\ldots,X_{m,1},\ldots,X_{m,n_m}]
\]
as a graded ring with grading given by $|X_{i,j}|=j$. We define certain polynomials $Q_a$ by
\[
Q_a=\sum_{i_1+\ldots+i_m=a}X_{1,i_1}X_{2,i_2}\ldots X_{m,i_m}~~~~~\left(1\leq a\leq N:=\sum_{j=1}^m n_j\right)
\]
where in this sum, we follow the convention that $X_{k,0}=1$ for $1\leq k\leq m$. Note that $Q_a$ is homogeneous of degree $a$.
\begin{proposition}\label{Qaregularsequence}
The $Q_a$ for $1\leq a\leq N$ form a regular sequence.
\end{proposition}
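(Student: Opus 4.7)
The plan is to reduce the claim to a zero-dimensional fibre computation, leveraging that the ambient graded polynomial ring $A$ is Cohen--Macaulay of Krull dimension $N$. Each $Q_a$ is homogeneous of degree $a$ in the grading $|X_{i,j}|=j$, and in a graded Cohen--Macaulay ring of dimension $N$, any sequence of $N$ homogeneous elements cutting out a zero-dimensional quotient is automatically a regular sequence in every order. It therefore suffices to show that $A/(Q_1,\ldots,Q_N)$ is Artinian, or equivalently --- after base change to $\overline{K}$ --- that the vanishing locus $V(Q_1,\ldots,Q_N)\subset\overline{K}^{N}$ reduces to the single point $\{0\}$.

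The crucial observation is the generating-function identity
\[
\sum_{a=0}^{N} Q_a\, t^a \;=\; \prod_{k=1}^{m} \bigl(1 + X_{k,1} t + X_{k,2} t^2 + \cdots + X_{k,n_k} t^{n_k}\bigr)
\]
in $A[t]$ (with the convention $Q_0:=1$), obtained by expanding the right-hand side and reading off the coefficient of $t^a$. At a point $(x_{i,j})\in V(Q_1,\ldots,Q_N)$ the left-hand side evaluates to the constant polynomial $1$, giving the identity $\prod_{k=1}^{m} P_k(t) = 1$ in $\overline{K}[t]$, where each $P_k(t) = 1 + x_{k,1} t + \cdots + x_{k,n_k} t^{n_k}$ has constant term $1$. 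Since degrees add under polynomial multiplication in the integral domain $\overline{K}[t]$, one has $\sum_{k}\deg P_k = 0$, and non-negativity forces $\deg P_k = 0$, hence $P_k = 1$, for every $k$. Consequently all $x_{k,j}$ vanish, so $V(Q_1,\ldots,Q_N) = \{0\}$, as required.

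I anticipate no serious obstacle. The whole argument rests on spotting the generating-function identity, after which the geometric computation and the appeal to Cohen--Macaulay theory are entirely routine. An alternative approach by induction on $m$ (eliminating the variables $X_{m,1},\ldots,X_{m,n_m}$ in turn by solving for them modulo $Q_1,\ldots,Q_{n_m}$) is possible but involves more bookkeeping and does not as transparently deliver regularity in every order, the stronger property that is needed when Proposition \ref{Qaregularsequence} is combined with Corollary \ref{corollaryinhomogeneousregseq} in the applications.
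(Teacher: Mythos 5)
Your proof is correct, and it takes a genuinely different route to the same key reduction. Both you and the paper reduce to showing that $(Q_1,\ldots,Q_N)$ has maximal height $N$ in the Cohen--Macaulay ring $A$, so that the quotient is Artinian and the homogeneous $Q_a$ automatically form a regular sequence. Where you differ is in \emph{how} you establish maximal height. The paper argues directly with prime ideals: given a prime $P \supset (Q_1,\ldots,Q_N)$ that is not the irrelevant ideal, it picks, for each ``block'' $i$ with some $X_{i,j}\notin P$, the largest such index $j_i$, sets $J=\sum_i j_i$, and shows that exactly one monomial of $Q_J$ (namely $\prod X_{i,j_i}$) lies outside $P$, so $Q_J\notin P$ --- a contradiction. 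You instead pass to $\overline K$ and use the generating-function identity $\sum_a Q_a t^a = \prod_{k=1}^m\bigl(1+X_{k,1}t+\cdots+X_{k,n_k}t^{n_k}\bigr)$; at a point of $V(Q_1,\ldots,Q_N)$ this factorises $1$ in the integral domain $\overline K[t]$, forcing each factor to be the constant $1$ and hence all coordinates to vanish. Your generating-function identity is in fact the structural reason behind the paper's combinatorics (the monomial $\prod X_{i,j_i}$ the paper isolates is exactly the ``leading term'' picked out by the factorisation), so the two proofs are close in spirit; yours makes the mechanism more transparent and avoids the by-hand monomial bookkeeping, at the small cost of invoking the Nullstellensatz and base change to $\overline K$, which the paper's purely ideal-theoretic argument does not need.
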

\begin{proof}
Since the $Q_a$ are homogeneous, it is sufficient to show that the ideal generated by the $Q_a$ is of maximal height. So let $P$ be a prime ideal with $Q_a\in P$ for $1\leq a\leq N$. It is sufficient to show that $P=(X_{i,j}\mid 1\leq i\leq m,~1\leq j\leq n_i)$.\\
Suppose this is false and let $\left\{i_1,\ldots,i_r\right\}=\left\{i\mid \exists j\colon X_{i,j}\not\in P\right\}$. By assumption, this set is non-empty. For every $1\leq k\leq r$, let $j_k$ be the largest $j$ with $X_{i_k,j}\not \in P$.\\
Let $J=\sum_{k=1}^r j_k$. Then every monomial in $Q_J$ either contains some $X_{i,j}$ with $i\not\in\left\{i_1,\ldots,i_r\right\}$, in which case this monomial is in $P$, or it contains only monomials $X_{i,j}$ with $i\in \left\{i_1,\ldots,i_r\right\}$. In the latter case, there is either some $1\leq k\leq r$ such that the monomial contains $X_{i_k,j}$ with $j>j_k$ (in which case this monomial is in $P$ according to the maximality condition on $j_k$) or the monomial is equal to $X_{i_1,j_1}\ldots X_{i_r,j_r}$, which is not in $P$ since $P$ is prime. So there is only one monomial in $Q_J$ which is not in $P$. Thus $Q_J\not\in P$. But this is a contradiction. Hence $P=(X_{i,j})$ as required.
\end{proof}
Let $I$ be the ideal in $A$ generated by the $Q_a$ for $1\leq a\leq N$. As the $Q_a$ form a regular sequence, we have $\text{ht}(I)=N$ and so $A/I$ has Krull dimension 0. Hence $A/I$ is a finite-dimensional $K$-vector space. From \cite[Corollary 3.3]{stanley} it is not hard to compute the vector space dimension of $A/I$ and we obtain:
\begin{proposition}\label{propdimensionQaalgebra}
$\text{dim}_K(A/I)=F(A/I,1)=\frac{(n_1+\ldots+n_m)!}{n_1!\cdot\ldots\cdot n_m!}$
\end{proposition}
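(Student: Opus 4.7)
The plan is to invoke Stanley's formula for the Hilbert series of a complete intersection, simplify the resulting rational function, and then evaluate at $t=1$.

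First I would record the Hilbert series of $A$ with respect to the grading $|X_{i,j}|=j$. Since $A$ is a polynomial ring on the generators $X_{i,j}$, its Hilbert series factors as
\[
F(A,t)=\prod_{i=1}^{m}\prod_{j=1}^{n_i}\frac{1}{1-t^j}.
\]
The polynomials $Q_a$ are homogeneous of degree $a$, and by Proposition \ref{Qaregularsequence} they form a regular sequence. Thus the hypotheses of \cite[Corollary 3.3]{stanley} apply, yielding
\[
F(A/I,t)=F(A,t)\cdot\prod_{a=1}^{N}(1-t^a)=\frac{\prod_{a=1}^{N}(1-t^a)}{\prod_{i=1}^{m}\prod_{j=1}^{n_i}(1-t^j)}.
\]
Since $A/I$ has Krull dimension $0$ (the ideal $I$ has maximal height $N$), this rational function is in fact a polynomial in $t$, so both sides are defined at $t=1$ and $\dim_K(A/I)=F(A/I,1)$.

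The next step is to evaluate this expression at $t=1$. I would factor $(1-t)$ out of each term: writing $[k]_t:=1+t+\cdots+t^{k-1}$ so that $1-t^k=(1-t)[k]_t$, the numerator contributes an overall factor $(1-t)^N$, and the denominator contributes the same factor $(1-t)^{n_1+\cdots+n_m}=(1-t)^N$. These cancel and leave
\[
F(A/I,t)=\frac{\prod_{a=1}^{N}[a]_t}{\prod_{i=1}^{m}\prod_{j=1}^{n_i}[j]_t}.
\]
Setting $t=1$ gives $[k]_1=k$, hence
\[
F(A/I,1)=\frac{\prod_{a=1}^{N}a}{\prod_{i=1}^{m}\prod_{j=1}^{n_i}j}=\frac{N!}{\prod_{i=1}^{m}n_i!}=\frac{(n_1+\cdots+n_m)!}{n_1!\cdots n_m!},
\]
which is the required formula.

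There is essentially no hard step here once Proposition \ref{Qaregularsequence} is in hand: the only subtlety is making sure that the cited Stanley result applies (which needs the $Q_a$ to be a homogeneous regular sequence, which we have), and that the cancellation of $(1-t)^N$ in numerator and denominator is legitimate (which it is, because the total degree of $N$ in the denominator matches the number of factors in the numerator). The rest is a bookkeeping calculation with the $q$-analogue identity $1-t^k=(1-t)[k]_t$.
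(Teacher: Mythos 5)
Your proof is correct and fills in exactly the calculation that the paper leaves to the reader (the paper only remarks that the dimension follows ``without difficulty'' from Stanley's Corollary 3.3 and Proposition~\ref{Qaregularsequence}). The decomposition of the Hilbert series via $1-t^k=(1-t)[k]_t$, the cancellation of $(1-t)^N$, and the evaluation at $t=1$ are all carried out correctly.
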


\subsection{The polynomials $\mu_j$}
Let $n_i\in \mathbb{N}$ for each $i\in \mathbb{N}$ be such that for some $l\in\mathbb{N}_0$, the integer $n_i$ is even if $1\leq i\leq l$ and odd if $i>l$. For every $k\in \mathbb{N}_0$, we define
\[
R_k:=\frac{\bigotimes_{p=1}^{k+l}\mathbb{Z}_2\left[\beta_0^{(p)},\ldots,\beta_{n_p}^{(p)}\right]}{\left(\beta_i^{(p)}+\beta_{n_p-i}^{(p)}\mid 1\leq p\leq k+l,~0\leq i\leq n_p\right)+\left(\beta_0^{(p)}+1\mid 1\leq p\leq k+l\right)}
\]
Clearly, $R_k$ is isomorphic to a polynomial ring in $\sum_{p=1}^{k+l}\lfloor n_p/2\rfloor$ indeterminates. The relations in $R_k$ ensure that there is a sort of mirror symmetry among each family of generators.

The ring $R_k$ reflects of course the Tate cohomology ring of representation rings of centralisers of tori in $SU(n)$ in our computations for type $A_n$. We define a mod-2 rank ring homomorphism via
\begin{align*}
\text{rk}\colon R_k\to \mathbb{Z}_2,~~~\beta_i^{(p)}\mapsto \binom{n_p}{i}
\end{align*}
This is, of course, also a reflection of the mod-2 rank function induced on Tate cohomology by the rank function on the representation ring of centralisers of tori.

Now for $k\geq 1$, we define an inclusion map
\[
\kappa_{k-1}\colon R_{k-1}\to R_k,~~~\beta_i^{(p)}\mapsto \beta_i^{(p)}
\]
The following polynomials are the main objects of study in this section: For $m\in\mathbb{Z}$ and $k\in\mathbb{N}_0$, we define
\[
\mu_m^{(k)}:=\sum_{a_1+\ldots+a_{k+l}=m}\beta_{a_1}^{(1)}\ldots \beta_{a_{k+l}}^{(k+l)}\in R_k
\]
From the interpretation of the $\mu_m^{(k)}$ as restrictions of representations of $SU(n)$ to centralisers of tori in Tate cohomology or directly from a combinatorial interpretation, it is clear that $\text{rk}\left(\mu_m^{(k)}\right)=\binom{n_1+\ldots+n_{k+l}}{m}$. We define a reduced version of the above polynomials by
\[
\tilde{\mu}_m^{(k)}:=\mu_m^{(k)}+\text{rk}\left(\mu_m^{(k)}\right)
\]
These are precisely the polynomials we considered in section \ref{sectioncomputationtypea}. Note that $\mu_m^{(k)}=0$ if $m\not\in\left\{0,1,\ldots,n_1+\ldots+n_{k+l}\right\}$ and $\mu_0^{(k)}=1$. Furthermore,
\begin{align*}
\mu_m^{(k)}=&\sum_{a_1+\ldots+a_{k+l}=m}\beta_{a_1}^{(1)}\ldots \beta_{a_{k+l}}^{(k+l)}=\sum_{a_1+\ldots+a_{k+l}=m}\beta_{n_1-a_1}^{(1)}\ldots \beta_{n_{k+l}-a_{k+l}}^{(k+l)}\\
=&\sum_{b_1+\ldots+b_{k+l}=n_1+\ldots+n_{k+l}-m}\beta_{b_1}^{(1)}\ldots \beta_{b_{k+l}}^{(k+l)}=\mu_{n_1+\ldots+n_{k+l}-m}^{(k)}
\end{align*}
So we will only consider the polynomials $\mu_m^{(k)}$ for $0\leq m\leq \lfloor (n_1+\ldots+n_{k+l})/2\rfloor$.
\begin{example}\label{exampleformu}
Let $l=0$ and $k=2$ and $n_1=3$, $n_2=5$. We write the polynomials $\mu_j$ in terms of the indeterminates $\beta_i^{(p)}$ where $i\leq \lfloor n_p/2\rfloor$:
\begin{align*}
    \mu_1&=\mu_7=\beta_1^{(1)}+\beta_1^{(2)}\\
    \mu_2&=\mu_6=\beta_1^{(1)}+\beta_1^{(1)}\beta_1^{(2)}+\beta_2^{(2)}\\
    \mu_3&=\mu_5=1+\beta_1^{(1)}\beta_1^{(2)}+\beta_1^{(1)}\beta_2^{(2)}+\beta_2^{(2)}\\
    \mu_4&=0
\end{align*}
\end{example}
Already in the above example, the polynomials $\mu_j$ appear to be quite complicated. We will however show that there is a quite orderly pattern of how they relate to one another.
\subsubsection{Regularity and dimension}
We noted above that $R_k$ is just a polynomial ring in $\lfloor n_1/2\rfloor+\ldots+\lfloor n_{k+l}/2\rfloor$ indeterminates. Hence this is also the maximal length of a regular sequence in $R_k$. In fact, we show:
\begin{proposition}\label{propmuregular}
The elements
\[
\mu_1^{(k)},\mu_2^{(k)},\ldots,\mu_{\lfloor n_1/2\rfloor+\ldots+\lfloor n_{k+l}/2\rfloor}^{(k)}\in R_k
\]
form an $R_k$-regular sequence. The same holds for
\[
\tilde{\mu}_1^{(k)},\tilde{\mu}_2^{(k)},\ldots,\tilde{\mu}_{\lfloor n_1/2\rfloor+\ldots+\lfloor n_{k+l}/2\rfloor}^{(k)}\in R_k
\]
\end{proposition}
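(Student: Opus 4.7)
The plan is to reduce the claim to Proposition \ref{Qaregularsequence} via the inhomogeneous-to-homogeneous transfer provided by Corollary \ref{corollaryinhomogeneousregseq}. First I would view $R_k$ as the polynomial ring over $\mathbb{Z}_2$ in the independent generators $\beta_i^{(p)}$ for $1 \leq p \leq k+l$ and $1 \leq i \leq \lfloor n_p/2 \rfloor$, and grade it by setting $|\beta_i^{(p)}| = i$.

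Next I would identify the top-degree component of $\mu_m^{(k)}$ in this grading. Each monomial $\beta_{a_1}^{(1)} \cdots \beta_{a_{k+l}}^{(k+l)}$ appearing in the defining sum (with $\sum_p a_p = m$ and $0 \leq a_p \leq n_p$) folds via the mirror symmetry relations to $\beta_{a_1'}^{(1)} \cdots \beta_{a_{k+l}'}^{(k+l)}$ with $a_p' := \min(a_p, n_p - a_p) \leq a_p$. Its degree is $\sum_p a_p' \leq m$, with equality precisely when $a_p \leq \lfloor n_p/2 \rfloor$ for every $p$. Hence the top-degree term of $\mu_m^{(k)}$ is
\[
x_m := \sum_{\substack{a_1 + \ldots + a_{k+l} = m \\ 0 \leq a_p \leq \lfloor n_p/2 \rfloor}} \beta_{a_1}^{(1)} \cdots \beta_{a_{k+l}}^{(k+l)},
\]
which coincides, under the relabelling $X_{p,i} \leftrightarrow \beta_i^{(p)}$, with the homogeneous polynomial $Q_m$ of Proposition \ref{Qaregularsequence} (the ``$m$'' of that proposition plays the role of $k + l$, and each ``$n_p$'' there plays the role of $\lfloor n_p/2 \rfloor$). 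Proposition \ref{Qaregularsequence}, applied over $K = \mathbb{Z}_2$, then gives regularity of $x_1, \ldots, x_N$ in $R_k$, where $N := \sum_p \lfloor n_p/2 \rfloor$. Because $R_k$ is Noetherian and $(R_k)_0 = \mathbb{Z}_2$ is a field, the remark following Proposition \ref{isogradedring} upgrades this to regularity in every order, so Corollary \ref{corollaryinhomogeneousregseq} applies and yields regularity of $\mu_1^{(k)}, \ldots, \mu_N^{(k)}$.

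For the reduced sequence $\tilde{\mu}_m^{(k)} = \mu_m^{(k)} + \binom{n_1 + \ldots + n_{k+l}}{m}$, the added scalar lies in $(R_k)_0$ and so does not affect the top-degree component $x_m$ (since $m \geq 1$); the identical invocation of Corollary \ref{corollaryinhomogeneousregseq} then delivers regularity of the $\tilde{\mu}_m^{(k)}$. The only nontrivial point in the whole argument is the grading bookkeeping, namely verifying that folding via the mirror symmetry strictly lowers the degree of any monomial in which some index $a_p$ exceeds $\lfloor n_p/2 \rfloor$; once this observation is in place, the identification with $Q_m$ and the conclusion follow with no further work.
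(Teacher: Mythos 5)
Your proof is correct and follows exactly the same route as the paper: grade $R_k$ by $|\beta_i^{(p)}|=i$, identify the top homogeneous component of $\mu_m^{(k)}$ as the polynomial $h_m^{(k)}$ of Proposition \ref{Qaregularsequence}, and apply Corollary \ref{corollaryinhomogeneousregseq}. Your explicit verification that folding via the mirror relations strictly drops degree, and your appeal to the remark after Proposition \ref{isogradedring} to upgrade ``regular in some order'' to ``regular in every order,'' merely spell out two small steps the paper leaves implicit.
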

\begin{proof}
We regard $R_k$ as a graded ring where the grading is defined by setting $\lvert \beta_i^{(p)}\rvert=i$ for all $1\leq p\leq k+l$ and $0\leq i\leq \lfloor n_p/2\rfloor$. Note that the $\mu_m^{(k)}$ are not homogeneous with respect to this grading. But if $1\leq m\leq \lfloor n_1/2\rfloor+\ldots+\lfloor n_{k+l}/2\rfloor$, then $\mu_m^{(k)}$ has highest homogeneous component of degree $m$  given by
\[
\sum_{\substack{a_1+\ldots+a_{k+l}=m\\a_p\leq \lfloor n_p/2\rfloor\text{ for }1\leq p\leq k+l}}\beta_{a_1}^{(1)}\ldots \beta_{a_{k+l}}^{(k+l)}=:h_m^{(k)}
\]
In Proposition \ref{Qaregularsequence}, we showed precisely that these $h_m^{(k)}$ form a regular sequence for $1\leq m\leq \lfloor n_1/2\rfloor+\ldots+\lfloor n_{k+l}/2\rfloor$. Hence we deduce from Corollary \ref{corollaryinhomogeneousregseq} that the $\mu_m^{(k)}$ form an $R_k$-regular sequence for $1\leq m\leq \lfloor n_1/2\rfloor+\ldots+\lfloor n_{k+l}/2\rfloor$.

The same proof also works for the $\tilde{\mu}_m^{(k)}$ since they have the same homogeneous components as the $\mu_m^{(k)}$ except in degree 0.
\end{proof}
From Proposition \ref{isogradedring}, we see that 
\[
R_k/\left(\tilde{\mu}_1^{(k)},\ldots,\tilde{\mu}_{\lfloor n_1/2\rfloor+\ldots+\lfloor n_{k+l}/2\rfloor}^{(k)}\right)~\text{ and }~R_k/\left(h_1^{(k)},\ldots,h_{\lfloor n_1/2\rfloor+\ldots+\lfloor n_{k+l}/2\rfloor}^{(k)}\right)
\]
have the same $\mathbb{Z}_2$-vector space dimension because the former ring has a filtration such that the associated graded ring is the latter ring. We computed this dimension in Proposition \ref{propdimensionQaalgebra} and thus obtain:
\begin{proposition}\label{propositiondimensionmu}
\[
\text{dim}_{\mathbb{Z}_2}\left(R_k/\left(\tilde{\mu}_1^{(k)},\ldots,\tilde{\mu}_{\lfloor n_1/2\rfloor+\ldots+\lfloor n_{k+l}/2\rfloor}^{(k)}\right)\right)=\frac{\left(\lfloor \frac{n_1}{2}\rfloor+\ldots+\lfloor \frac{n_{k+l}}{2}\rfloor\right)!}{\lfloor \frac{n_1}{2}\rfloor!\cdot\ldots\cdot \lfloor \frac{n_{k+l}}{2}\rfloor!}
\]
\end{proposition}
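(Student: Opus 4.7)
The plan is to reduce this inhomogeneous dimension count to a homogeneous one that is already handled by Proposition \ref{propdimensionQaalgebra}, via the filtration/associated-graded machinery of Proposition \ref{isogradedring}. Setting $N:=\lfloor n_1/2\rfloor+\ldots+\lfloor n_{k+l}/2\rfloor$ and grading $R_k$ by $|\beta_i^{(p)}|=i$, each $\tilde{\mu}_m^{(k)}$ has highest homogeneous component $h_m^{(k)}$ of degree $m$, as already noted in the proof of Proposition \ref{propmuregular}. Under the identification of $R_k$ with the polynomial ring $\mathbb{Z}_2[\beta_i^{(p)}\mid 1\leq p\leq k+l,~1\leq i\leq \lfloor n_p/2\rfloor]$, the sequence $h_1^{(k)},\ldots,h_N^{(k)}$ is exactly the sequence $Q_1,\ldots,Q_N$ from Proposition \ref{Qaregularsequence} (with the roles of $m$ and $n_i$ there played by $k+l$ and $\lfloor n_i/2\rfloor$ here).

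Next I would invoke Proposition \ref{isogradedring}(ii) with $y_i=\tilde{\mu}_i^{(k)}$ and $x_i=h_i^{(k)}$. Its hypothesis---regularity of $h_1^{(k)},\ldots,h_N^{(k)}$ in every order---follows from Proposition \ref{Qaregularsequence} because that proof only shows that the ideal generated by the $Q_a$ has maximal height, and this argument is symmetric in the $Q_a$ (so any reordering still gives a regular sequence). Proposition \ref{isogradedring}(ii) then supplies a graded ring isomorphism
\[
R_k/\bigl(h_1^{(k)},\ldots,h_N^{(k)}\bigr)\xrightarrow{\cong}\mathrm{gr}_F\bigl(R_k/\bigl(\tilde{\mu}_1^{(k)},\ldots,\tilde{\mu}_N^{(k)}\bigr)\bigr).
\]

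To finish, I would observe that the pieces $F^aR_k$ are finite-dimensional $\mathbb{Z}_2$-vector spaces (since $R_k$ is a polynomial ring graded in non-negative degrees with finitely many generators in each degree), and the induced filtration on the quotient $R_k/(\tilde{\mu}_1^{(k)},\ldots,\tilde{\mu}_N^{(k)})$ is exhaustive with finite-dimensional pieces; so the filtered ring and its associated graded have the same total $\mathbb{Z}_2$-dimension. Combined with the isomorphism above and the explicit count from Proposition \ref{propdimensionQaalgebra}, this gives exactly $\frac{(\lfloor n_1/2\rfloor+\ldots+\lfloor n_{k+l}/2\rfloor)!}{\lfloor n_1/2\rfloor!\cdots\lfloor n_{k+l}/2\rfloor!}$, as asserted. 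I do not foresee a genuine obstacle: the entire argument is bookkeeping on top of Propositions \ref{Qaregularsequence}, \ref{isogradedring} and \ref{propdimensionQaalgebra}, and the mildest point of care---that passing to the associated graded preserves total dimension here---is guaranteed by the finiteness of the graded pieces of $R_k$.
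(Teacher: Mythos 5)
Your proposal matches the paper's proof: the paper likewise applies Proposition~\ref{isogradedring} to the filtration of $R_k/(\tilde{\mu}_1^{(k)},\ldots,\tilde{\mu}_N^{(k)})$ induced by the grading $|\beta_i^{(p)}|=i$, identifies the associated graded with $R_k/(h_1^{(k)},\ldots,h_N^{(k)})$, and reads off the dimension from Proposition~\ref{propdimensionQaalgebra}. You merely spell out the two hypotheses the paper leaves implicit (regularity of the leading terms in every order, and that the filtration has finite-dimensional pieces so dimension is preserved), which is fine and does not change the argument.
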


\subsubsection{Linear combinations}
We now want to show the following:
\begin{proposition}\label{propositionrelationsmu}
For every $\lfloor n_1/2\rfloor+\ldots+\lfloor n_{k+l}/2\rfloor <m\leq \lfloor (n_1+\ldots+ n_{k+l})/2\rfloor$, the polynomial $\mu_m^{(k)}$ (or $\tilde{\mu}_m^{(k)}$) is a $\mathbb{Z}_2$-linear combination of the polynomials 
\[
\mu_0^{(k)},\mu_1^{(k)},\ldots,\mu_{\lfloor n_1/2\rfloor+\ldots+\lfloor n_{k+l}/2\rfloor}^{(k)} \text{ (or }\tilde{\mu}_1^{(k)},\ldots,\tilde{\mu}_{\lfloor n_1/2\rfloor+\ldots+\lfloor n_{k+l}/2\rfloor}^{(k)}\text{)}
\]
in $R_k$.
\end{proposition}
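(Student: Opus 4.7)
The plan is to encode all of the polynomials $\mu_m^{(k)}$ in a single generating function
\[
P(t):=\prod_{p=1}^{k+l}Q_p(t)=\sum_m \mu_m^{(k)}\,t^m,\qquad Q_p(t):=\sum_{i=0}^{n_p}\beta_i^{(p)}t^i\in R_k[t],
\]
and to factor out a suitable power of $(1+t)$ using the palindrome symmetry of the $\beta_i^{(p)}$. For every index $p$ with $n_p$ odd, the relation $\beta_i^{(p)}=\beta_{n_p-i}^{(p)}$ pairs indices of opposite parity, so $Q_p(1)=0$ in $R_k$; hence $(1+t)\mid Q_p(t)$ in $R_k[t]$, and we may write $Q_p(t)=(1+t)S_p(t)$ with $S_p$ again palindromic of degree $n_p-1$. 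Doing this for each of the $k$ odd indices produces
\[
P(t)=(1+t)^k\,\widetilde P(t),\qquad \widetilde P(t):=\prod_{n_p\text{ odd}}S_p(t)\cdot\prod_{n_p\text{ even}}Q_p(t),
\]
with $\widetilde P(t)$ palindromic of degree $\sum_p n_p-k=2M$, where $M:=\sum_p\lfloor n_p/2\rfloor$.

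Writing $\widetilde P(t)=\sum_{m=0}^{2M}\nu_m\,t^m$, the palindrome $\nu_m=\nu_{2M-m}$ immediately places every $\nu_m$ in the $\mathbb{Z}_2$-linear span $V$ of $\nu_0,\ldots,\nu_M$ inside $R_k$. Reading off the coefficient of $t^m$ in $P(t)=(1+t)^k\widetilde P(t)$ gives
\[
\mu_m^{(k)}=\sum_{j=0}^{k}\binom{k}{j}\,\nu_{m-j}
\]
(with $\nu_j=0$ outside $[0,2M]$), so every $\mu_m^{(k)}$ also lies in $V$. Specialising to $0\le m\le M$, the same formula is lower-triangular with $1$'s on the diagonal (the coefficient of $\nu_m$ is $\binom{k}{0}=1$), so the tuples $(\nu_0,\ldots,\nu_M)$ and $(\mu_0^{(k)},\ldots,\mu_M^{(k)})$ span exactly the same $\mathbb{Z}_2$-subspace of $R_k$. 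Combining these two observations, $\mu_m^{(k)}$ is a $\mathbb{Z}_2$-linear combination of $\mu_0^{(k)},\ldots,\mu_M^{(k)}$ for \emph{every} $m$, and in particular in the critical range $M<m\le\lfloor N/2\rfloor$.

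The analogous statement for $\widetilde\mu_m^{(k)}=\mu_m^{(k)}+\binom{N}{m}$ then follows at once: both $\widetilde\mu_m^{(k)}$ and the $\widetilde\mu_j^{(k)}$ lie in the kernel of the mod-$2$ rank map $r\colon R_k\to\mathbb{Z}_2$, so after rewriting a relation $\mu_m^{(k)}=\sum_{j=0}^Mc_j\mu_j^{(k)}$ in terms of the $\widetilde\mu$'s and comparing ranks, the constant part must vanish and we get $\widetilde\mu_m^{(k)}=\sum_{j=1}^Mc_j\widetilde\mu_j^{(k)}$. The only step that demands genuine input is the divisibility $(1+t)\mid Q_p(t)$ for $n_p$ odd; once the factorisation $P(t)=(1+t)^k\widetilde P(t)$ is established, everything else is formal coefficient-bookkeeping driven by the palindrome symmetry of $\widetilde P(t)$, and I expect no serious obstacle beyond carefully tracking which $p$'s contribute to the exponent $k$.
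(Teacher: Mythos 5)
The proposal is correct and takes a genuinely different — and noticeably cleaner — route than the paper.

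The paper first establishes two base relations (Lemmas~\ref{lemma1relationsmu} and~\ref{lemma2relationsmus}: $\sigma^{(k)}_{k/2}=0$ for even $k\ge 2$ and $\sum_{j\le\lfloor k/2\rfloor}\sigma^{(k)}_j=0$ for odd $k\ge 3$), then introduces the ring $\mathbb{Z}_2\langle t\rangle$ of bounded-above Laurent series together with the maps $\psi_k$, and finally proves by an induction on $k$ (Proposition~\ref{propositionkernelpsi}) that the family $P_{k-2j}(t)/(1+t^{-1})^{2j}$, respectively $P_{k-2j-1}(t)/(1+t^{-1})^{2j+1}$, lies in $\ker\psi_k$; reading off top-degree terms gives the relations. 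Your argument bypasses that entire apparatus. You encode all the $\mu^{(k)}_m$ in the single polynomial $P(t)=\prod_p Q_p(t)$ and extract a factor $(1+t)^k$, using only the observation that $(1+t)\mid Q_p(t)$ whenever $n_p$ is odd (valid because $Q_p(1)=\sum_i\beta_i^{(p)}=0$ in $\mathbb{Z}_2$ by pairing $\beta_i^{(p)}$ with $\beta_{n_p-i}^{(p)}$, and division by the monic factor $t-1=1+t$ works over any commutative ring). Since $R_k$ is an integral domain, the cofactor $\widetilde P(t)$ is again a product of palindromic polynomials, hence palindromic of degree $N-k=2M$, and the rest is mechanical: the convolution $\mu^{(k)}_m=\sum_j\binom{k}{j}\nu_{m-j}$ plus the palindromy $\nu_i=\nu_{2M-i}$ place every $\mu^{(k)}_m$ in $\operatorname{span}(\nu_0,\dots,\nu_M)$, and the unitriangular change of basis on $0\le m\le M$ identifies that span with $\operatorname{span}(\mu^{(k)}_0,\dots,\mu^{(k)}_M)$. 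The passage from $\mu$ to $\tilde\mu$ via the rank map is the same in both treatments. Your proof is shorter, avoids the Laurent-series framework and the induction on $k$, and makes the coefficients of the relations explicit; I see no gap.
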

The proof will take up the remainder of this section.

Let us first see how the assertion about the $\mu_m^{(k)}$ implies the assertion about the $\tilde{\mu}_m^{(k)}$. Assuming the claim about the unreduced polynomials, it follows that for every $m$,
\[
\tilde{\mu}_m^{(k)}\in \mathbb{Z}_2\cdot 1+\mathbb{Z}_2\cdot \tilde{\mu}_1^{(k)}+\ldots+\mathbb{Z}_2\cdot \tilde{\mu}_{\lfloor n_1/2\rfloor+\ldots+\lfloor n_{k+l}/2\rfloor}^{(k)}
\]
Applying the rank function rk to both sides and using that $\text{rk}\left(\tilde{\mu}_j^{(k)}\right)=0$ for all $j$, we see that actually,
\[
\tilde{\mu}_m^{(k)}\in \mathbb{Z}_2\cdot \tilde{\mu}_1^{(k)}+\ldots+\mathbb{Z}_2\cdot \tilde{\mu}_{\lfloor n_1/2\rfloor+\ldots+\lfloor n_{k+l}/2\rfloor}^{(k)}
\]
as required.

Thus we are left to prove the assertion about the unreduced polynomials. Let us first change the numbering of the indeterminates and the polynomials in a convenient way. In $R_k$, we define
\begin{align*}
    \alpha_i^{(p)}&:=\beta_{i+\lfloor n_p/2\rfloor}^{(p)}\text{ for all }1\leq p\leq k+l \text{ and }-\lfloor n_p/2\rfloor \leq i\leq \lceil n_p/2\rceil \\\sigma_m^{(k)}&:=\mu_{m+\lfloor n_1/2\rfloor +\ldots+\lfloor n_{k+l}/2\rfloor}^{(k)}\text{ for all }m\in\mathbb{Z}.
\end{align*}
These definitions are chosen so that
\begin{align}\label{symmetryalpha}
\alpha_i^{(p)}=\begin{cases} \alpha_{-i}^{(p)}\text{ if }1\leq p\leq l\\\alpha_{-i+1}^{(p)}\text{ if }l<p\leq k+l\end{cases}
\end{align}
and so that the $\sigma_m^{(k)}$ form an $R_k$-regular sequence for all $-\lfloor n_1/2\rfloor-\ldots-\lfloor n_{k+l}/2\rfloor< m\leq 0 $ and 
\begin{align}\label{symmetrysigma}
\sigma_m^{(k)}=\sigma_{-m+k}^{(k)}\text{ for all }m
\end{align}
Also note that
$
\sigma_m^{(k)}=\sum_{a_1+\ldots+a_{k+l}=m} \alpha_{a_1}^{(1)}\ldots \alpha_{a_{k+l}}^{(k+l)}
$.

Now we rephrase Proposition \ref{propositionrelationsmu} as follows:
\begin{proposition}\label{propositionrelationssigma}
For every $0< m\leq \lfloor k/2\rfloor$, the polynomial $\sigma_m^{(k)}$ is a $\mathbb{Z}_2$-linear combination of the polynomials $\sigma_i^{(k)}$ for $i\leq 0$.
\end{proposition}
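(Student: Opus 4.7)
My plan is to prove Proposition~\ref{propositionrelationssigma} by a dimension count. Translating back via $\sigma_m^{(k)} = \mu_{m+D}^{(k)}$ with $D := \sum_{p=1}^{k+l}\lfloor n_p/2\rfloor$ and $N := \sum_p n_p = 2D + k$, the statement is equivalent to showing that every $\mu_m^{(k)}$ with $D < m \leq \lfloor N/2 \rfloor$ lies in the $\mathbb{Z}_2$-span of $\mu_0^{(k)}, \mu_1^{(k)}, \ldots, \mu_D^{(k)}$.

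I would first package the $\mu_m^{(k)}$ as coefficients of the generating polynomial
\[
\tilde{G}(t) := \prod_{p=1}^{k+l} P_p(t) \in R_k[t], \qquad P_p(t) := \sum_{j=0}^{n_p} \beta_j^{(p)} t^j,
\]
so that $[t^m]\,\tilde{G}(t) = \mu_m^{(k)}$. The crucial step is the factorization $\tilde{G}(t) = (1+t)^k\,\tilde{H}(t)$ with $\tilde{H}(t) \in R_k[t]$ of degree exactly $N - k = 2D$. This rests on a characteristic-two cancellation: for each $p$ with $n_p$ odd, the palindromic relation $\beta_j^{(p)} = \beta_{n_p-j}^{(p)}$ forces
\[
P_p(1) = \sum_{j=0}^{n_p}\beta_j^{(p)} = 2\sum_{j \leq \lfloor n_p/2\rfloor}\beta_j^{(p)} = 0,
\]
so $(1+t)$ divides $P_p(t)$ in $R_k[t]$; with exactly $k$ odd families, $(1+t)^k$ divides $\tilde{G}(t)$. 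The palindromicities $\tilde{G}(t) = t^N \tilde{G}(t^{-1})$ and $(1+t)^k = t^k (1+t^{-1})^k$ then combine to yield $\tilde{H}(t) = t^{2D}\tilde{H}(t^{-1})$, i.e.\ its coefficients $\tilde{h}_i := [t^i]\,\tilde{H}(t)$ satisfy $\tilde{h}_i = \tilde{h}_{2D-i}$.

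The dimension count falls out: expanding $(1+t)^k$ gives $\mu_m^{(k)} = \sum_{i=0}^k \binom{k}{i}\,\tilde{h}_{m-i}$, so every $\mu_m^{(k)}$ is a $\mathbb{Z}_2$-linear combination of the $\tilde{h}_i$. Because palindromicity reduces the distinct $\tilde{h}_i$'s to $\tilde{h}_0, \tilde{h}_1, \ldots, \tilde{h}_D$, the $\mathbb{Z}_2$-span of all $\mu_m^{(k)}$ has dimension at most $D + 1$. Conversely, $\mu_0^{(k)}, \mu_1^{(k)}, \ldots, \mu_D^{(k)}$ are $\mathbb{Z}_2$-linearly independent: grading $R_k$ by $|\beta_j^{(p)}| := j$, the highest homogeneous part of $\mu_i^{(k)}$ is the polynomial $h_i^{(k)}$ of degree $i$ (as in the proof of Proposition~\ref{propmuregular}), and these are non-zero by Proposition~\ref{Qaregularsequence}, so the top-degree component of any hypothetical non-trivial $\mathbb{Z}_2$-relation yields a contradiction. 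Hence the span has dimension exactly $D+1$ and is generated by $\mu_0^{(k)}, \ldots, \mu_D^{(k)}$, so every remaining $\mu_m^{(k)}$ is a $\mathbb{Z}_2$-linear combination of these, proving the proposition.

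The main obstacle is establishing the divisibility $(1+t)^k \mid \tilde{G}(t)$ and the resulting palindromicity of $\tilde{H}(t)$; once these are in hand, the conclusion is essentially a pigeonhole on dimensions using the regular-sequence input from Propositions~\ref{propmuregular} and~\ref{Qaregularsequence}.
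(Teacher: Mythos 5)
Your argument is correct, and it takes a genuinely different route from the paper. The paper first extracts a handful of explicit ``seed'' relations (Lemmas \ref{lemma1relationsmu} and \ref{lemma2relationsmus}, giving $\sigma_{k/2}^{(k)}=0$ for even $k$ and $\sum_{j\leq\lfloor k/2\rfloor}\sigma_j^{(k)}=0$ for odd $k$), then encodes linear combinations of the $\sigma_j^{(k)}$ as elements of the series ring $\mathbb{Z}_2\langle t\rangle$ via the map $\psi_k$, and proves Proposition~\ref{propositionkernelpsi} by an induction on $k$ (using Lemma~\ref{lemmapsireductiontok-1} to pass from $R_k$ to $R_{k-1}$) that produces explicit kernel elements $(1+t^{-1})^{-2j}P_{k-2j}(t)$ whose leading terms realize the required relations. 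Your approach sidesteps the induction entirely: you package the $\mu_m^{(k)}$ as coefficients of $\tilde{G}(t)=\prod_p P_p(t)$, observe that the palindromicity $\beta_j^{(p)}=\beta_{n_p-j}^{(p)}$ in characteristic two forces $P_p(1)=0$ for each of the $k$ odd families so that $(1+t)^k\mid\tilde{G}(t)$, derive the palindromicity $\tilde{h}_i=\tilde{h}_{2D-i}$ of the quotient $\tilde{H}(t)$, and then close the argument by a dimension count (the span of all $\mu_m^{(k)}$ sits inside the $(D+1)$-dimensional span of $\tilde{h}_0,\ldots,\tilde{h}_D$, while $\mu_0^{(k)},\ldots,\mu_D^{(k)}$ are already $(D+1)$ independent vectors by the leading-term argument from Propositions~\ref{propmuregular} and~\ref{Qaregularsequence}). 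The tradeoff is clear: the paper's induction yields explicit formulas for the linear relations (which is aesthetically pleasing, and the intermediate Lemmas~\ref{lemma1relationsmu}--\ref{lemma2relationsmus} have independent interest), whereas your pigeonhole argument is shorter, avoids power-series bookkeeping, and makes the role of the $k$ odd families completely transparent; the price is that you do not exhibit the relations explicitly, which the application (Proposition~\ref{propositionrelationsmu}) does not require anyway.
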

We prove a few first lemmas in this direction:
\begin{lemma}\label{lemma1relationsmu}
If $k\geq2$ is even, then $\sigma_{k/2}^{(k)}=0$.
\end{lemma}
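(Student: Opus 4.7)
The plan is to exhibit a fixed-point-free involution on the index set of the defining sum of $\sigma_{k/2}^{(k)}$ under which corresponding monomials agree; characteristic-$2$ cancellation in $R_k$ then gives $0$. Concretely, on the set
$$T := \{(a_1,\dots,a_{k+l})\in\mathbb{Z}^{k+l} : a_1+\cdots+a_{k+l} = k/2\}$$
I would define $\tau$ by $\tau(a)_p = -a_p$ for $1\le p\le l$ and $\tau(a)_p = 1-a_p$ for $l<p\le k+l$.

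Three routine checks carry the argument. First, $\tau$ preserves the constraint, since
$$\sum_p \tau(a)_p \;=\; -\sum_{p\le l} a_p \;+\; \sum_{p>l}(1-a_p) \;=\; k - k/2 \;=\; k/2,$$
where I use that $k$ is even so that $k/2\in\mathbb{Z}$ and $T$ makes sense. Second, $\tau$ is fixed-point-free on $T$: the hypothesis $k\ge 2$ guarantees at least one index $p>l$, and on that coordinate $\tau(a)_p=a_p$ forces $a_p=1/2\notin\mathbb{Z}$. Third, the monomial attached to $a$ equals the monomial attached to $\tau(a)$: this is precisely the content of (\ref{symmetryalpha}), which gives $\alpha_{a_p}^{(p)}=\alpha_{-a_p}^{(p)}=\alpha_{\tau(a)_p}^{(p)}$ for $p\le l$ and $\alpha_{a_p}^{(p)}=\alpha_{1-a_p}^{(p)}=\alpha_{\tau(a)_p}^{(p)}$ for $p>l$.

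With these facts in hand, I would partition $T$ into $\tau$-orbits, each of size exactly $2$, and rewrite
$$\sigma_{k/2}^{(k)} \;=\; \sum_{a\in T}\prod_{p=1}^{k+l}\alpha_{a_p}^{(p)} \;=\; \sum_{\{a,\tau(a)\}}\Bigl(\prod_p\alpha_{a_p}^{(p)} + \prod_p\alpha_{\tau(a)_p}^{(p)}\Bigr) \;=\; \sum_{\{a,\tau(a)\}} 2\prod_p\alpha_{a_p}^{(p)} \;=\; 0$$
in $R_k$, which has $\mathbb{Z}_2$-coefficients. There is no real obstacle; the only subtle point is ensuring the involution has no fixed points, which is why the hypothesis $k\ge 2$ (equivalently, the existence of at least one odd $n_p$ in the current numbering) is exactly what is needed. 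Note that $l$ may well be $0$, in which case every coordinate contributes a non-trivial shift and the argument simplifies further.
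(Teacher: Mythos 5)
Your proof is correct and is essentially the same as the paper's: the paper pairs each index tuple $(a_1,\dots,a_{k+l})$ with the one obtained via the symmetry (\ref{symmetryalpha}) and observes that the two differ since $k>0$, which is exactly your involution $\tau$. You merely make the involution explicit, which is a stylistic clarification rather than a different argument.
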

\begin{proof}
$\sigma_{k/2}^{(k)}$ is the sum of all monomials of the form $\alpha_{a_1}^{(1)}\ldots\alpha_{a_{k+l}}^{(k+l)}$ with $\sum_{i=1}^{k+l}a_i=k/2$. But by (\ref{symmetryalpha}), we have
\begin{align}\label{sigmarelations1}
\alpha_{a_1}^{(1)}\ldots \alpha_{a_{k+l}}^{(k+l)}=\alpha_{-a_1}^{(1)}\ldots\alpha_{-a_l}^{(l)}\alpha_{-a_{l+1}+1}^{(l+1)}\ldots\alpha_{-a_{k+l}+1}^{(k+l)}
\end{align}
Since $-\sum_{i=1}^{l}a_i+\sum_{j=l+1}^{k+l}(-a_j+1)=k-\sum_{i=1}^{k+l}a_i=k/2$, the right hand side of (\ref{sigmarelations1}) also gives a summand in $\sigma_{k/2}^{(k)}$, which is always distinct from the one on the left hand side since $k>0$. So all the monomial summands in $\sigma_{k/2}^{(k)}$ cancel out each other.
\end{proof}

\begin{lemma}\label{lemma2relationsmus}
If $k\geq 3$ is odd, then $\sum_{j\leq \lfloor k/2\rfloor} \sigma_j^{(k)}=0$
\end{lemma}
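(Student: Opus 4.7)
The plan is to pass to the generating function $P(t) := \sum_m \sigma_m^{(k)} t^m = \prod_{p=1}^{k+l} F_p(t)$, where $F_p(t) := \sum_i \alpha_i^{(p)} t^i$, and to realise the desired sum as a single coefficient in a suitable rewriting of $P$.

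First I would record two structural facts about $P$. From the symmetries (\ref{symmetryalpha}), one finds $F_p(1/t) = F_p(t)$ for $p \leq l$ and $F_p(1/t) = t^{-1} F_p(t)$ for $p > l$, so $P(1/t) = t^{-k} P(t)$. Separately, for each $p > l$ the polynomial $\sum_{j=0}^{n_p} \beta_j^{(p)} t^j$ is palindromic of odd degree, so the pairing $j \leftrightarrow n_p - j$ on its coefficients has no fixed index, and therefore in characteristic two the sum of its coefficients vanishes; equivalently $(1+t)$ divides it. Extracting this factor from each of the $k$ odd-degree factors yields a Laurent polynomial decomposition $P(t) = (1+t)^k V(t)$ in $R_k[t, t^{-1}]$, and combining with $P(1/t) = t^{-k} P(t)$ forces $V(1/t) = V(t)$.

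Since $1/(1+t) = 1 + t + t^2 + \cdots$ in characteristic two, the coefficient of $t^M$ in $P(t)/(1+t)$ is exactly $\sum_{j \leq M} \sigma_j^{(k)}$. Hence
\[
\sum_{j \leq (k-1)/2} \sigma_j^{(k)} \;=\; \bigl[t^{(k-1)/2}\bigr]\,(1+t)^{k-1} V(t) \;=\; \sum_{i=0}^{k-1} \binom{k-1}{i}\, V_{(k-1)/2 - i},
\]
where $V_j$ denotes the coefficient of $t^j$ in $V$. Pairing $i$ with $k-1-i$ in this sum, the palindromies $V_j = V_{-j}$ and $\binom{k-1}{i} = \binom{k-1}{k-1-i}$ make the two paired terms equal, so their mod-$2$ sum vanishes; only the fixed point $i = (k-1)/2$ survives, contributing $\binom{k-1}{(k-1)/2}\, V_0$.

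The last ingredient is that $\binom{k-1}{(k-1)/2} = \binom{2m}{m}$ with $m = (k-1)/2 \geq 1$ is even, which follows from Legendre's formula via $v_2\bigl(\binom{2m}{m}\bigr) = s_2(m) \geq 1$. The coefficient therefore vanishes and the lemma follows. The only delicate step in this plan is establishing the Laurent-polynomial factorisation $P(t) = (1+t)^k V(t)$ together with the palindromy of $V$; once those are in place the remainder is a clean binomial pairing argument.
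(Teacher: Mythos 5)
Your argument is correct, and it takes a genuinely different route from the paper. The paper proves the lemma by peeling off the last tensor factor: it expands $\sigma_j^{(k)}=\sum_m\alpha_{-m}^{(k+l)}\kappa_{k-1}(\sigma_{j+m}^{(k-1)})$, folds the sum over $m$ using $\alpha_{-m}^{(k+l)}=\alpha_{m+1}^{(k+l)}$, and then shows each resulting inner sum $\sum_{\lfloor k/2\rfloor-m\le j\le\lfloor k/2\rfloor+m}\sigma_j^{(k-1)}$ vanishes by combining the palindromy $\sigma_j^{(k-1)}=\sigma_{k-1-j}^{(k-1)}$ with Lemma~\ref{lemma1relationsmu} ($\sigma_{(k-1)/2}^{(k-1)}=0$). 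You instead work with the generating series $P(t)=\prod_p F_p(t)$ directly: you extract a factor of $(1+t)$ from each of the $k$ odd-length palindromic blocks (since an odd-length palindrome has zero coefficient sum mod $2$), obtain $P=(1+t)^kV$ with $V$ palindromic from $P(1/t)=t^{-k}P(t)$, and then the desired sum becomes $\binom{k-1}{(k-1)/2}V_0$, which vanishes because central binomial coefficients $\binom{2m}{m}$ with $m\ge1$ are even. What your approach buys is that it is self-contained --- you do not need Lemma~\ref{lemma1relationsmu} at all, and in fact the same coefficient extraction at $t^{k/2}$ with $(1+t)^kV$ reproves it, since $\binom{k}{k/2}$ is even for even $k\ge2$ --- and the divisibility $(1+t)^k\mid P(t)$ with palindromic cofactor is a sharper structural invariant than the pair of ad hoc cancellation identities in the paper. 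It also sits naturally alongside the later power-series manipulations of Proposition~\ref{propositionkernelpsi}, where factors $(1+t^{-1})^{-j}$ appear, so the observation could likely streamline that part too. One point to make explicit in a polished write-up: the manipulation $P(t)\cdot(1+t+t^2+\cdots)=(1+t)^{k-1}V(t)$ should be read in the ring of Laurent series over $R_k$ that are bounded below in degree, where $1+t$ is a unit with inverse $1+t+t^2+\cdots$; you use this implicitly and it is correct, since the two sides agree as elements there and the left side has the coefficients you claim.
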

\begin{proof}
We have
\begin{align*}
    \sum_{j\leq \lfloor k/2\rfloor}\sigma_j^{(k)}=&\sum_{j\leq\lfloor k/2\rfloor}\sum_{m\in\mathbb{Z}}\left(\alpha_{-m}^{(k+l)}\cdot \kappa_{k-1}\left(\sigma_{j+m}^{(k-1)}\right)\right)\\
    =&\sum_{j\leq\lfloor k/2\rfloor}\sum_{m\geq0}\left(\alpha_{-m}^{(k+l)}\cdot \kappa_{k-1}\left(\sigma_{j+m}^{(k-1)}\right)+\alpha_{m+1}^{(k+l)}\cdot \kappa_{k-1}\left(\sigma_{j-m-1}^{(k-1)}\right)\right)\\
    =&\sum_{m\geq 0}\alpha_{-m}^{(k+l)}\cdot \kappa_{k-1}\left(\sum_{j\leq \lfloor k/2\rfloor}\left(\sigma_{j+m}^{(k-1)}+\sigma_{j-m-1}^{(k-1)}\right)\right)
\end{align*}
Now in $R_{k-1}$,
\begin{align*}
    \sum_{j\leq \lfloor k/2\rfloor}\left(\sigma_{j+m}^{(k-1)}+\sigma_{j-m-1}^{(k-1)}\right)= &\sum_{\lfloor k/2\rfloor-m\leq j\leq \lfloor k/2\rfloor +m}\sigma_j^{(k-1)}\\
    = & \sigma_{\frac{k-1}{2}}^{(k-1)}+\sum_{\frac{k-1}{2}-m\leq j<\frac{k-1}{2}}\left(\sigma_j^{(k-1)}+\sigma_{-j+k-1}^{(k-1)}\right)=0
\end{align*}
where we use (\ref{symmetrysigma}) and Lemma \ref{lemma1relationsmu}. Hence the claim follows.
\end{proof}
The previous lemmas already yield some of the linear relations we need. It turns out that we can deduce more linear relations from the basic ones above by induction on $k$. To see how to obtain these, it is useful to rephrase the problem in terms of power series. Very roughly, we shall see that obtaining a new linear relation from the basic ones above corresponds to a manipulation of the corresponding series that is easy to describe.

Let $\mathbb{Z}_2\langle t\rangle$ denote the ring of series of the form $\sum_{i\in\mathbb{Z}}a_it^ i$ with $a_i\in\mathbb{Z}_2$ for all $i\in\mathbb{Z}$ and $a_i=0$ for $i\gg0$. For each $k\geq 0$, we define a map
\[
\psi_k\colon \mathbb{Z}_2\langle t\rangle \to R_k,~~~\sum_{j\in\mathbb{Z}}a_j t^j\mapsto \sum_{j\in\mathbb{Z}}a_j\sigma_j^{(k)}
\]
This map is well-defined since $\sigma_j^{(k)}=0$ for $j\ll0$. Clearly, $\psi_k$ is a group homomorphism.

Finding linear relations among the $\sigma_j^{(k)}$ is now the same as finding elements in $\text{ker}(\psi_k)$. Let
\[
P_k(t):=\sum_{j\leq \lfloor k/2\rfloor} t^j\in\mathbb{Z}_2\langle t \rangle
\]
We deduce from Lemmas \ref{lemma1relationsmu} and \ref{lemma2relationsmus} and from (\ref{symmetrysigma}):
\begin{description}
\item[(P1)] If $k\geq 3$ is odd, then $P_k(t)\in \text{ker}(\psi_k)$.
\item[(P2)] If $k\geq 2$ is even, then $t^{k/2}\in \text{ker}(\psi_k)$.
\item[(P3)] For every $k\geq 0$ and every $s\in\mathbb{Z}$, we have $t^{-s}+t^{s+k}\in\text{ker}(\psi_k)$.
\end{description}
We want to find more elements in the kernel of $\psi_k$ by induction on $k$. So we need to be able to reduce from $k$ to $k-1$:
\begin{lemma}\label{lemmapsireductiontok-1}
Let $Q(t)\in\mathbb{Z}_2\langle t\rangle$ and $k\geq 1$. Then
\[
\psi_k(Q(t))=\sum_{i=1}^{\infty} \alpha_i^{(k+l)}\cdot \kappa_{k-1}\psi_{k-1}\left((t^{-i}+t^{i-1})Q(t)\right)
\]
\end{lemma}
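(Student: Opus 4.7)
The plan is a direct computation: factor out the contribution of the last family of indeterminates in $\sigma_j^{(k)}$, exploit the symmetry (\ref{symmetryalpha}) to combine the negative-index and non-negative-index contributions, and then recognise the result as a shifted $\psi_{k-1}$ applied to $Q(t)$. The only input beyond bookkeeping is that $k \geq 1$ forces $k+l > l$, so $n_{k+l}$ is odd and the relevant symmetry reads $\alpha_{-i}^{(k+l)} = \alpha_{i+1}^{(k+l)}$ (as opposed to $\alpha_{-i}^{(p)} = \alpha_i^{(p)}$ in the even case).

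The key identity I would establish first is
\[
\sigma_j^{(k)} = \sum_{i=1}^{\infty} \alpha_i^{(k+l)} \cdot \kappa_{k-1}\bigl(\sigma_{j-i}^{(k-1)} + \sigma_{j+i-1}^{(k-1)}\bigr).
\]
To prove it, peel off the last index in the defining sum of $\sigma_j^{(k)}$ to get $\sigma_j^{(k)} = \sum_{m\in\mathbb{Z}} \alpha_m^{(k+l)} \cdot \kappa_{k-1}(\sigma_{j-m}^{(k-1)})$ (a finite sum), then split according to $m \geq 1$ versus $m \leq 0$. In the second piece substitute $m = -i$ with $i \geq 0$ and apply $\alpha_{-i}^{(k+l)} = \alpha_{i+1}^{(k+l)}$; reindexing $i \mapsto i-1$ produces the second summand $\sigma_{j+i-1}^{(k-1)}$ and gives the displayed identity.

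To conclude, apply $\psi_k$ to $Q(t) = \sum_j a_j t^j$ and interchange the two summations:
\[
\psi_k(Q(t)) = \sum_{i=1}^{\infty} \alpha_i^{(k+l)} \cdot \kappa_{k-1}\Bigl(\sum_j a_j \sigma_{j-i}^{(k-1)} + \sum_j a_j \sigma_{j+i-1}^{(k-1)}\Bigr).
\]
From the definitions, multiplication by $t^r$ on $Q(t)$ shifts coefficient indices, so by $\mathbb{Z}_2$-linearity of $\psi_{k-1}$ one has $\psi_{k-1}(t^r Q(t)) = \sum_j a_j \sigma_{j+r}^{(k-1)}$; applied with $r = -i$ and $r = i-1$, this turns the two inner sums into $\psi_{k-1}(t^{-i}Q(t))$ and $\psi_{k-1}(t^{i-1}Q(t))$, and the lemma follows.

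No step is genuinely hard; the main risk is misidentifying the exponents, and the asymmetric pair $-i,\, i-1$ of shifts in the final answer is the signature of the odd-type symmetry $\alpha_{-i}^{(k+l)} = \alpha_{i+1}^{(k+l)}$. Were the last family even-type, the same argument would yield $t^{-i} + t^i$ instead; tracking this distinction carefully is the one place where care is needed.
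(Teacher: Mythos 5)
Your proposal is correct and follows essentially the same approach as the paper: both proofs peel off the last family of indeterminates to write $\sigma_j^{(k)}$ in terms of $\sigma_{j-m}^{(k-1)}$, use the odd-type symmetry $\alpha_{-i}^{(k+l)}=\alpha_{i+1}^{(k+l)}$ to fold the $m\leq 0$ terms onto the $m\geq 1$ terms, and then repackage via $\psi_{k-1}(t^rQ(t))=\sum_j a_j\sigma_{j+r}^{(k-1)}$. The only cosmetic difference is that you carry out the fold at the level of the individual $\sigma_j^{(k)}$ before applying $\psi_k$, whereas the paper does it after; this has no mathematical content.
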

\begin{proof}
Let $Q(t)=\sum_{j\in\mathbb{Z}}a_j t^j$. Then
\begin{align*}
    \psi_k(Q(t))=& \sum_{j\in \mathbb{Z}} a_j \sigma_j^{(k)}
    = \sum_{j\in\mathbb{Z}} a_j\sum_{i\in\mathbb{Z}} \alpha_i^{(k+l)}\cdot \kappa_{k-1}\left(\sigma_{j-i}^{(k-1)}\right)
    =\sum_{i\in\mathbb{Z}}\alpha_i^{(k+l)}\cdot \kappa_{k-1}\left(\sum_{j\in\mathbb{Z}} a_j\sigma_{j-i}^{(k-1)}\right)\\
    =& \sum_{i\in\mathbb{Z}} \alpha_i^{(k+l)}\cdot \kappa_{k-1}\psi_{k-1}\left(t^{-i}Q(t)\right)
    =\sum_{i\geq1} \alpha_i^{(k+l)}\cdot \kappa_{k-1}\psi_{k-1}\left((t^{-i}+t^{i-1})Q(t)\right)
\end{align*}
as required.
\end{proof}
The following result now yields the elements of $\text{ker}(\psi_k)$ that we need:
\begin{proposition}\label{propositionkernelpsi}
If $k=2m+1\geq3$ is odd, then
\[
\frac{1}{(1+t^{-1})^{2j}}\cdot P_{k-2j}(t)\in\text{ker}(\psi_k)\text{ for all }0\leq j< m
\]
If $k=2m\geq4$ is even, then
\[
\frac{1}{(1+t^{-1})^{2j+1}}\cdot P_{k-2j-1}(t)\in\text{ker}(\psi_k)\text{ for all }0\leq j<m-1
\]
\end{proposition}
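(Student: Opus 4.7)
The plan is to prove Proposition~\ref{propositionkernelpsi} by induction on $k$, using Lemma~\ref{lemmapsireductiontok-1} to pass from $\psi_k$ to $\psi_{k-1}$. Write $Q_{k,j}$ for the claimed kernel element indexed by $j$; the key computational identity underlying everything is
\[
(1+t^{-1})\, P_n(t) \;=\; t^{\lfloor n/2 \rfloor},
\]
which one verifies at once from the telescoping sum $\sum_{q \le N}(t^q + t^{q-1})$ in characteristic 2. In particular, every $Q_{k,j}$ admits the closed form $t^A/(1+t^{-1})^B$ with $2A+B = k$ and $B$ of the same parity as $k$.

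The base cases are $k=3$ and $k=4$. For $k=3$ the only assertion is $P_3(t) \in \ker \psi_3$, which is property (P1). For $k=4$ the only assertion is $P_3(t)/(1+t^{-1}) \in \ker \psi_4$: Lemma~\ref{lemmapsireductiontok-1} reduces this to checking that $(t^{-i}+t^{i-1})P_3(t)/(1+t^{-1}) \in \ker \psi_3$ for every $i \ge 1$, which in turn follows from (P1), (P3), and a short direct calculation using the identity above.

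For the inductive step, assume the proposition for $k-1$. By Lemma~\ref{lemmapsireductiontok-1} it suffices to show $(t^{-i}+t^{i-1})\, Q_{k,j} \in \ker \psi_{k-1}$ for every $i \ge 1$. We do this by a secondary induction on $i$. For $i=1$ the identity above gives
\[
(t^{-1}+1)\, Q_{k,j} \;=\; \frac{t^{\lfloor(k-2j)/2\rfloor}}{(1+t^{-1})^{2j}}
\;=\; \begin{cases} Q_{k-1,\,j-1}, & j \ge 1, \\ t^{\lfloor k/2\rfloor}, & j=0, \end{cases}
\]
which lies in $\ker \psi_{k-1}$ by the outer inductive hypothesis when $j \ge 1$, and by property (P2) when $j=0$ (the analogous calculation handles both parities of $k$). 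For $i \ge 2$ we use the characteristic-2 telescoping identity
\[
(t^{-i}+t^{i-1}) + (t^{-(i-1)}+t^{i-2}) \;=\; (1+t)\bigl(t^{-i}+t^{i-2}\bigr).
\]
Combined with $(1+t) = t(1+t^{-1})$ and $(1+t^{-1})\, Q_{k,j} = Q_{k-1,\,j-1}$, this rewrites $(t^{-i}+t^{i-1})\, Q_{k,j}$ as the sum of $(t^{-(i-1)}+t^{i-2})\, Q_{k,j}$ (in $\ker \psi_{k-1}$ by the secondary inductive hypothesis) and an element of the form $(t^{-(i-1)}+t^{i-1})\, Q_{k-1,\,j-1}$, which a further short computation expresses as a $\mathbb{Z}_2$-linear combination of kernel elements covered by the outer inductive hypothesis together with elements of the form $t^c + t^{(k-1)-c}$ killed by property (P3).

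The main obstacle is the combinatorial bookkeeping in the last step: matching the rewritten expression against IH-kernel elements depends delicately on the parities of $k$, $j$, and $i$, and on whether $j$ sits at the boundary of its allowed range, so the argument splits into several subcases. All of the manipulations themselves, however, are formal consequences of the identity $(1+t^{-1}) P_n(t) = t^{\lfloor n/2\rfloor}$ together with the three properties (P1), (P2), and (P3) collected above.
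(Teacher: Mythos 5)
Your outer structure agrees with the paper: induction on $k$, reduction to $\psi_{k-1}$ via Lemma~\ref{lemmapsireductiontok-1}, and the identity $(1+t^{-1})P_n(t)=t^{\lfloor n/2\rfloor}$ (which the paper uses implicitly when it rewrites everything in terms of $Q_{k,j}$). The base case $k=3$ is also fine. However, the secondary induction on $i$ is where the proposal breaks down, because it does not actually reduce the difficulty of the problem.

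After your telescoping step, what remains to be shown is that
\[
(t^{-(i-1)}+t^{i-1})\cdot(1+t^{-1})Q_{k,j}
\]
lies in $\ker\psi_{k-1}$, and $(1+t^{-1})Q_{k,j}$ is (up to boundary cases) exactly an inductive-hypothesis kernel element $Q_{k-1,j'}$. The outer IH only asserts $Q_{k-1,j'}\in\ker\psi_{k-1}$; it says nothing about $(t^{-c}+t^c)\,Q_{k-1,j'}$. Since $\ker\psi_{k-1}$ is merely an additive subgroup (the $\psi_k$ are not ring maps), there is no general principle that lets you multiply kernel elements by symmetric Laurent polynomials and stay in the kernel. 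The assertion that $(t^{-(i-1)}+t^{i-1})Q_{k-1,j'}$ is a $\mathbb{Z}_2$-linear combination of the IH elements and (P3) elements is precisely the content of the ``further short computation'' you defer, and you yourself flag it as ``the main obstacle,'' anticipating a case split on the parities of $k,j,i$. That step is the heart of the proposition, and as written it is neither carried out nor reduced to anything simpler than what you started with: each $(t^{-c}+t^c)$ is just another symmetric element, so iterating the telescoping goes in circles.

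The paper avoids the case analysis with a clean structural observation. It fixes $\hat{Q}_{k,j}:=(1+t^{-1})Q_{k,j}$ and exhibits an explicit $\mathbb{Z}_2$-basis $\{r_{j-b}(t)\mid 0\le b\le j\}\cup\{s_{c-m,j}(t)\mid c\ge m\}$ of the space of all finite symmetric Laurent polynomials, chosen so that each $r_{j-b}\hat{Q}_{k,j}$ is an IH kernel element at level $k-1$ and each $s_{c-m,j}\hat{Q}_{k,j}$ is a (P3) element (this is a direct check; the $r$'s and $s$'s have distinct top degrees $0,1,\ldots,j,j+1,\ldots$, which gives the basis claim in one line). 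Then any symmetric Laurent polynomial times $\hat{Q}_{k,j}$ — in particular $q_{a-1}(t)\hat{Q}_{k,j}=(t^{-a}+t^{a-1})Q_{k,j}$ — lands in $\ker\psi_{k-1}$, uniformly in $a$, with no parity bookkeeping. Your proposal does not contain this argument or a substitute for it, and the secondary induction does not make it go away. If you want to keep your framework, the cleanest fix would be to strengthen the outer inductive hypothesis to ``$S(t)Q_{k',j'}\in\ker\psi_{k'}$ for every symmetric finite Laurent polynomial $S$'': then, since $(t^{-i}+t^{i-1})S(t)Q_{k,j}=q_{i-1}(t)S(t)\cdot(1+t^{-1})Q_{k,j}$ and $q_{i-1}S$ is again symmetric, the inductive step closes in one line (the $j=0$ boundary cases are handled by (P1)/(P2) together with (P3)). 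As written, however, the proposal has a genuine gap at the decisive step.
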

Before proving this, let us see how it implies Proposition \ref{propositionrelationssigma}.
\begin{proof}[of Proposition \ref{propositionrelationssigma}.]
Suppose $k=2m+1$ is odd. For all $0\leq j< m$, the highest non-zero term of
\[
\frac{1}{(1+t^{-1})^{2j}}\cdot P_{k-2j}(t)=(1+t^{-1}+t^{-2}+\ldots)^{2j}\cdot\sum_{i\leq m-j} t^i
\]
is of degree $m-j$. Since all these elements are in $\text{ker}(\psi_k)$, this means that for each $0< j\leq m=\lfloor k/2\rfloor$, we can write $\sigma_j^{(k)}$ as a $\mathbb{Z}_2$-linear combination of the $\sigma_i^{(k)}$ for $i<j$. This implies the claim.

For $k=2m$ even and $0\leq j<m-1$, the highest non-zero term of
\[
\frac{1}{(1+t^{-1})^{2j+1}}\cdot P_{k-2j-1}(t)=(1+t^{-1}+t^{-2}+\ldots)^{2j+1}\cdot\sum_{j\leq m-j-1} t^i
\]
is of degree $m-1-j$. Thus for every $0< j<m=k/2$, we can write $\sigma_j^{(k)}$ as a $\mathbb{Z}_2$-linear combination of the $\sigma_i^{(k)}$ for $i<j$. Thus for every $0<j<m$, we can write $\sigma_j^{(k)}$ as a $\mathbb{Z}_2$-linear combination of the $ \sigma_i^{(k)}$ for $i\leq0$. Furthermore, from Lemma \ref{lemma1relationsmu} we know that $\sigma_{m}^{(k)}=0$. So we are done in this case as well.
\end{proof}
\begin{proof}[of Proposition \ref{propositionkernelpsi}.]
We prove this by induction on $k$. For $k=3$, this is (P1).

So suppose $k=2m\geq 4$ is even and let $0\leq j<m-1$. By Lemma \ref{lemmapsireductiontok-1}, it suffices to show that
\[
\frac{t^{-a}+t^{a-1}}{(1+t^{-1})^{2j+1}}\cdot P_{k-2j-1}(t)\in\text{ker}(\psi_{k-1})\text{ for all }a\geq 1.
\]
Note that
\begin{align*}
    \frac{t^{-a}+t^{a-1}}{(1+t^{-1})^{2j+1}}\cdot P_{k-2j-1}(t)=&(t^{-a}+t^{a-1})\cdot \sum_{i\leq 0} t^i\cdot \frac{P_{k-2j-1}(t)}{(1+t^{-1})^{2j}}\\
    =&(t^{-a+1}+t^{-a+2}+\ldots+t^{a-2}+t^{a-1})\cdot Q_{k,j}(t)\\
    =& q_{a-1}(t)\cdot Q_{k,j}(t)
\end{align*}
where $Q_{k,j}(t):=P_{k-2j-1}(t)\cdot (1+t^{-1})^{-2j}$, and $q_{a-1}(t):=\sum_{|i|\leq a-1}t^i$ is a finite sum with $q_{a-1}(t)=q_{a-1}(t^{-1})$.

By induction hypothesis, we know that for all $0\leq b<m-1$,
\begin{align*}
    \text{ker}(\psi_{k-1})\ni& \frac{1}{(1+t^{-1})^{2b}}\cdot P_{k-2b-1}(t)\\
    =&(1+t^{-1})^{-2b}\cdot (1+t^{-1})^{2j}\cdot t^{j-b}\cdot \frac{P_{k-2j-1}(t)}{(1+t^{-1})^{2j}}\\
    =& t^{j-b}(1+t^{-1})^{2(j-b)}\cdot Q_{k,j}(t)\\
    =&r_{j-b}(t)\cdot Q_{k,j}(t)
\end{align*}
where $r_{j-b}(t):=t^{j-b}(1+t^{-1})^{2(j-b)}$. For all $0\leq b\leq j$, the element $r_{j-b}(t)$ is a finite sum with highest non-zero term of degree $j-b$ such that $r_{j-b}(t^{-1})=r_{j-b}(t)$.

Furthermore, by (P3), for every $c\in\mathbb{Z}$, we have that $t^c+t^{-c+k-1}\in\text{ker}(\psi_{k-1})$. We can write
\begin{align*}
    t^c+t^{-c+k-1}=& (t^c+t^{-c+2m-1})\cdot \frac{(1+t^{-1})^{2j}}{P_{k-2j-1}(t)}\cdot \frac{P_{k-2j-1}(t)}{(1+t^{-1})^{2j}}\\
    =&(t^c+t^{-c+2m-1})\cdot(1+t^{-1})^{2j}\cdot t^{-\left\lfloor \frac{k-2j-1}{2}\right\rfloor}(1+t^{-1}) \cdot Q_{k,j}(t)\\
    =&(t^c+t^{-c+2m-1})\cdot(1+t^{-1})^{2j+1}\cdot t^{-(m-j-1)}\cdot Q_{k,j}(t)\\
    =&(t^{c-m}+t^{m-c-1})\cdot t^{j+1}\cdot (1+t^{-1})^{2j+1}\cdot Q_{k,j}(t)\\
    =& s_{c-m,j}(t)\cdot Q_{k,j}(t)
\end{align*}
where $s_{c-m,j}(t):= (t^{c-m}+t^{m-c-1})\cdot t^{j+1}\cdot (1+t^{-1})^{2j+1}$. Note that $s_{c-m,j}(t)$ is a finite sum, we have $s_{c-m,j}(t^{-1})=s_{c-m,j}(t)$ and for $c\geq m$ it has highest term of degree $1+j+c-m$.

Hence we see that
\[
\left\{r_{j-b}(t)\mid 0\leq b\leq j\right\}\cup \left\{s_{c-m,j}(t)\mid c\geq m\right\}
\]
is a basis of the $\mathbb{Z}_2$-vector space
\[
\left\{\sum_{i\in\mathbb{Z}}a_i t\in\mathbb{Z}_2\langle t \rangle\mid a_i=a_{-i}\text{ for all }i\in\mathbb{Z}\right\}
\]
Thus for all $a\in\mathbb{Z}$, the element $q_{a-1}(t)$ can be written as a $\mathbb{Z}_2$-linear combination of these basis elements. But by the above, this shows that $q_{a-1}(t)\cdot Q_{k,j}(t)$ can be written as a $\mathbb{Z}_2$-linear combination of elements in $\text{ker}(\psi_{k-1})$ for all $a\geq 1$. So it is itself in $\text{ker}(\psi_{k-1})$.

The case that $k$ is odd follows similarly.
\end{proof}
\subsection{The polynomials $\nu_j$}
We extend the results of the previous section to the more general polynomials occurring in the computations for types $C_n$ and $D_n$.

Let $m,n_1,\ldots,n_l\in\mathbb{N}$ and define
\[
S:=\frac{\mathbb{Z}_2[\alpha_0,\alpha_1,\ldots,\alpha_{2m}]\otimes\bigotimes_{p=1}^l \mathbb{Z}_2\left[\beta_0^{(p)},\beta_1^{(p)},\ldots,\beta_{n_p}^{(p)}\right]}{I_\alpha+I_\beta}
\]
where
\begin{align*}
    I_\alpha=&(\alpha_i+\alpha_{2m-i}\mid 0\leq i\leq 2m)+(\alpha_0+1)\\
    I_\beta=&\left(\beta_i^{(p)}+\beta_{n_p-i}^{(p)}\mid 1\leq p\leq l,~0\leq i\leq n_p\right)+\left(\beta_0^{(p)}+1\mid 1\leq p\leq l\right)
\end{align*}
We set $n:=m+n_1+\ldots+n_l$. The ring $S$ is isomorphic to a polynomial ring in $m+\lfloor n_1/2\rfloor+\ldots+\lfloor n_l/2\rfloor$ indeterminates. Just as in the previous section, we define a mod-2 rank ring homomorphism
$
\text{rk}\colon S\to \mathbb{Z}_2
$
via $\text{rk}\left(\alpha_i\right):=\binom{2m}{i}$ and $\text{rk}\left(\beta_i^{(p)}\right):=\binom{n_p}{i}$.

We define elements in $S$ by
\begin{align*}
    \mu_k:= \sum_{a_1+\ldots+a_l=k}\beta_{a_1}^{(1)}\ldots \beta_{a_l}^{(l)},~~~~
    \nu_k:=\sum_{i=0}^{\lfloor k/2\rfloor} \alpha_{k-2i}\cdot \mu_i+\binom{2n}{k}
\end{align*}
From the interpretation of the $\nu_k$ as elements in the Tate cohomology of a representation ring and the above mod-2 rank function as computing the rank of representations modulo 2, it is clear that $\text{rk}(\nu_k)=0$ for all $k$. The $\mu_k$ are basically the polynomials that we considered in the previous section. Up to the additive constant, they can be obtained as a special case of the $\nu_k$ by taking $m=0$.
\begin{example}\label{examplenu}
Let $l=2$ and $m=2$, $n_1=3$, $n_2=5$. Then $n=10$ and
\begin{align*}
    \nu_1&=\alpha_1
    &\nu_2&=\alpha_2+\mu_1\\
    \nu_3&=\alpha_1+\alpha_1\mu_1
    &\nu_4&=1+\alpha_2\mu_1+\mu_2+1\\
    \nu_5&=\alpha_1\mu_1+\alpha_1\mu_2
    &\nu_6&=\mu_1+\alpha_2\mu_2+\mu_3\\
    \nu_7&=\alpha_1\mu_2+\alpha_1\mu_3
    &\nu_8&=\mu_2+\alpha_2\mu_3+\mu_4\\
    \nu_9&=\alpha_1\mu_3+\alpha_1\mu_4
    &\nu_{10}&=\alpha_2\mu_4
\end{align*}
The polynomials $\mu_j$ were given in Example \ref{exampleformu}.
\end{example}
\subsubsection{Regularity and dimension}
\begin{lemma}\label{lemmanuregular1}
If $k$ is odd, then $\nu_k$ is a $\bigotimes_{p=1}^l \mathbb{Z}_2\left[\beta_0^{(p)},\beta_1^{(p)},\ldots,\beta_{n_p}^{(p)}\right]$-linear combination of the $\alpha_i$ where $i$ is odd.

We have $(\nu_i\mid 1\leq i\leq m\text{ odd})=(\alpha_i\mid 1\leq i\leq m\text{ odd})$. Moreover, the elements in $\left\{\nu_i\mid 1\leq i\leq m\text{ odd}\right\}$ are $S$-regular in any order.
\end{lemma}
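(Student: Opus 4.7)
The plan is to isolate a ``leading term'' of $\nu_k$ with respect to a convenient grading on $S$, establish the ideal equality by a triangular induction, and then deduce regularity in any order via Corollary~\ref{corollaryinhomogeneousregseq}.

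First I would observe that $\binom{2n}{k}\equiv 0\pmod{2}$ whenever $k$ is odd: the last binary digit of $2n$ is $0$ while the last binary digit of $k$ is $1$, so Lucas's theorem forces a factor $\binom{0}{1}=0$. Hence for $k$ odd,
\[
\nu_k\;=\;\sum_{i=0}^{(k-1)/2}\alpha_{k-2i}\,\mu_i.
\]
Each index $k-2i$ is odd and each $\mu_i$ is a polynomial in the $\beta$'s; after reducing any indices exceeding $m$ via $\alpha_j=\alpha_{2m-j}$ (which preserves oddness, since $2m$ is even), this exhibits $\nu_k$ as a $\bigotimes_p\mathbb{Z}_2[\beta^{(p)}_\ast]$-linear combination of odd-indexed $\alpha_j$'s, proving the first assertion.

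Next, set $J_\alpha:=(\alpha_i\mid 1\leq i\leq m,\ i\text{ odd})$ and $J_\nu:=(\nu_i\mid 1\leq i\leq m,\ i\text{ odd})$. If $k\in[1,m]$ is odd then every index $k-2i$ lies in $\{1,3,\ldots,k\}\subseteq[1,m]$, so the displayed formula gives $J_\nu\subseteq J_\alpha$. For the reverse inclusion I would induct on odd $k\in[1,m]$: using $\mu_0=1$ one rewrites
\[
\alpha_k\;=\;\nu_k-\sum_{i=1}^{(k-1)/2}\alpha_{k-2i}\mu_i,
\]
and every $\alpha_{k-2i}$ on the right has strictly smaller odd index, hence lies in $J_\nu$ by the inductive hypothesis.

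Finally, for regularity in any order, I would apply Corollary~\ref{corollaryinhomogeneousregseq} with the grading $|\alpha_i|:=i$, $|\beta^{(p)}_j|:=j$. Although $\nu_k$ is inhomogeneous in general, its constant term vanishes (by the Lucas observation) and its homogeneous components have degrees $|\alpha_{k-2i}\mu_i|=(k-2i)+i=k-i$, maximised uniquely at $i=0$, so the top-degree part is precisely $\alpha_k$. The odd-indexed $\alpha_j$ with $j\in[1,m]$ sit among the free polynomial generators of $S$ and hence form an $S$-regular sequence in any order, and Corollary~\ref{corollaryinhomogeneousregseq} then promotes this to regularity in any order of the inhomogeneous sequence $\nu_1,\nu_3,\ldots$. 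I do not anticipate a serious obstacle here; the only mildly delicate input is the mod-$2$ vanishing of $\binom{2n}{k}$, which is immediate from Lucas.
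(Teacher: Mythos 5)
Your argument is correct and follows the same basic line as the paper: the Lucas/parity observation kills the constant term, parity of $k-2i$ gives the first assertion, and the triangular shape $\nu_k=\alpha_k+(\text{terms in }\alpha_i,\ i<k\text{ odd})$ gives the ideal equality and regularity. The paper leaves the regularity step as ``This implies the second part of the lemma'' (tacitly: a unipotent triangular change of the free polynomial generators $\alpha_1,\alpha_3,\dots$ of $S$ is an automorphism, hence preserves regular sequences in any order), whereas you make it explicit via Corollary~\ref{corollaryinhomogeneousregseq}; both routes are fine, and using Corollary~\ref{corollaryinhomogeneousregseq} is consistent with how the paper handles the analogous step in Proposition~\ref{propmuregular}. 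Two small imprecisions worth tidying: the grading should be declared on the free generators $\alpha_i$ ($1\le i\le m$) and $\beta_j^{(p)}$ ($1\le j\le\lfloor n_p/2\rfloor$) and then extended via the mirror relations, since setting $|\alpha_i|=i$ for all $0\le i\le 2m$ is incompatible with $\alpha_i=\alpha_{2m-i}$; and $\mu_i$ is not homogeneous of degree $i$ (only of degree $\le i$ after applying $\beta_j^{(p)}=\beta_{n_p-j}^{(p)}$), so $|\alpha_{k-2i}\mu_i|\le k-i$ rather than $=k-i$ — but the conclusion that the top-degree part of $\nu_k$ for odd $k\le m$ is exactly $\alpha_k$ still stands.
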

\begin{proof}
Let $k$ be odd. Then $\binom{2n}{k}\equiv 0$ (mod 2) and both $k-2i$ and $2m-(k-2i)$ are odd for every $i$. So from the definition, we obtain that $\nu_k$ is a $\bigotimes_{p=1}^l \mathbb{Z}_2\left[\beta_0^{(p)},\beta_1^{(p)},\ldots,\beta_{n_p}^{(p)}\right]$-linear combination of the $\alpha_i$ where $i$ is odd. Furthermore, note that if $0\leq k\leq m$ is odd, then
\[
\nu_k=\alpha_k+\epsilon_k
\]
where each $\epsilon_k$ is a $\bigotimes_{p=1}^l \mathbb{Z}_2\left[\beta_0^{(p)},\beta_1^{(p)},\ldots,\beta_{n_p}^{(p)}\right]$-linear combination of the $\alpha_i$ where $i$ is odd with $1\leq i<k$. This implies the second part of the lemma.
\end{proof}
\begin{proposition}\label{propositionregularitynu}
Let
\begin{align*}
    A_1:=&\left\{\nu_i\mid 1\leq i\leq m\text{ odd}\right\},\\A_2:=&\left\{\nu_i\mid 2\leq i\leq 2(\lfloor m/2\rfloor +\lfloor n_1/2\rfloor+\ldots+\lfloor n_l/2\rfloor)\text{ even}\right\}
\end{align*}
The elements in $A:=A_1\cup A_2$ form an $S$-regular sequence in some order.
\end{proposition}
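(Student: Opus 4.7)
The plan is to prove regularity of $A$ by ordering it with $A_1$ first (in any order, supplied by Lemma \ref{lemmanuregular1}) followed by $A_2$ in order of increasing index, and then reducing to a regularity statement already established in Section \ref{sectionpolynomialeqns}. First, Lemma \ref{lemmanuregular1} gives both that $A_1$ is $S$-regular and that $(A_1) = (\alpha_i \mid 1 \leq i \leq m,~i \text{ odd})$ as ideals of $S$. Because of the symmetry $\alpha_i = \alpha_{2m-i}$, killing the odd-indexed $\alpha_i$ with $1 \leq i \leq m$ simultaneously kills all odd-indexed $\alpha_i$ with $m < i < 2m$. Setting $\tilde{\alpha}_j := \alpha_{2j}$ (so that $\tilde{\alpha}_0 = 1$ and $\tilde{\alpha}_j = \tilde{\alpha}_{m-j}$), we obtain an isomorphism
\[
S/(A_1) \cong R' := \frac{\mathbb{Z}_2\left[\tilde{\alpha}_1,\ldots,\tilde{\alpha}_{\lfloor m/2\rfloor}\right] \otimes \bigotimes_{p=1}^l \mathbb{Z}_2\left[\beta_1^{(p)},\ldots,\beta_{\lfloor n_p/2\rfloor}^{(p)}\right]}{(\text{symmetry relations})},
\]
a polynomial ring of Krull dimension $\lfloor m/2\rfloor + \lfloor n_1/2\rfloor + \ldots + \lfloor n_l/2\rfloor$, which is precisely $|A_2|$.

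Next, I would compute the images of the elements of $A_2$ in $R'$. For an even index $k=2j$, only those summands of $\nu_{2j}$ involving an even-indexed $\alpha$ survive modulo $A_1$, so
\[
\nu_{2j} \equiv \sum_{c+c_1+\ldots+c_l=j} \tilde{\alpha}_c \beta_{c_1}^{(1)} \cdots \beta_{c_l}^{(l)} + \binom{2n}{2j} \pmod{(A_1)}.
\]
This is exactly the shape of the polynomials $\tilde{\mu}_j^{(\cdot)}$ from Section \ref{sectionpolynomialeqns}, treating the $\tilde{\alpha}$'s as an additional family with parameter $m$ in parallel with the $\beta^{(p)}$ families of parameters $n_p$; the only difference is that the additive constant $\binom{2n}{2j}$ replaces the rank constant used there.

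Finally, I would invoke the inhomogeneous-regularity machinery. Grading $R'$ by $|\tilde{\alpha}_i| = |\beta_j^{(p)}| = i,j$, the top homogeneous component of $\nu_{2j} \pmod{A_1}$ coincides with the top component $h_j^{(\cdot)}$ of the corresponding $\tilde{\mu}_j^{(\cdot)}$. Proposition \ref{Qaregularsequence} shows that the $h_j^{(\cdot)}$ form an $R'$-regular sequence, and Corollary \ref{corollaryinhomogeneousregseq} then lifts this to the inhomogeneous sequence $(\nu_{2j} \pmod{A_1})_j$; equivalently, one simply applies Proposition \ref{propmuregular} in the ring $R'$, noting that the proof given there is insensitive to the constant term. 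Combining this with the regularity of $A_1$ in $S$ yields the desired $S$-regular ordering of $A$. The main technical step is the identification of $S/(A_1)$ with a ring of the form covered by Section \ref{sectionpolynomialeqns}: one must check that the inherited symmetry $\tilde{\alpha}_j = \tilde{\alpha}_{m-j}$ plays the same structural role as $\beta_j^{(p)} = \beta_{n_p-j}^{(p)}$ and that the presence of the constants $\binom{2n}{2j}$ does not interfere with the inductive argument in Proposition \ref{propmuregular}; everything else is a direct appeal to results already proved.
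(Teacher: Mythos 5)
Your proposal is correct and follows essentially the same path as the paper's own proof: kill $A_1$ first via Lemma \ref{lemmanuregular1}, identify $S/(A_1)$ with a polynomial ring in the $\overline{\alpha}_{2i}$ and $\overline{\beta}_j^{(p)}$, observe that the even-indexed $\nu_{2j}$ reduce to polynomials of the $\mu_j$-type from Section \ref{sectionpolynomialeqns}, and apply Proposition \ref{propmuregular} (itself a consequence of Proposition \ref{Qaregularsequence} and Corollary \ref{corollaryinhomogeneousregseq}). Your extra remark that the constant term $\binom{2n}{2j}$ is harmless because the regularity argument only sees the top homogeneous component is a useful clarification the paper leaves implicit.
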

\begin{proof}
First note that if $i$ is even, then the expression for $\nu_i$ does not contain any $\alpha_j$ for odd $j$. By Lemma \ref{lemmanuregular1}, the elements in $A_1$ form an $S$-regular sequence. Let
\[
\overline{S}:=S/(A_1)=S/(\alpha_i\mid i\text{ odd})
\]
In $\overline{S}$, we define $\overline{\gamma}_i:=\overline{\alpha}_{2i}$. Then $\overline{\gamma}_{m-i}=\overline{\alpha}_{2(m-i)}=\overline{\alpha}_{2i}=\overline{\gamma}_i$ for all $i$. Note that for every $i$,
\begin{align*}
    \overline{\nu}_{2i}=\sum_{j=0}^i \overline{\alpha}_{2i-2j}\cdot \overline{\mu}_j+\binom{2n}{i}=\sum_{j=0}^{i} \overline{\gamma}_{i-j}\cdot \overline{\mu}_j+\binom{2n}{i}
\end{align*}
Up to the constant $\binom{2n}{i}$, this now is an element in $\overline{S}$ of the type that we considered in the previous section. So from Proposition \ref{propmuregular}, we deduce that $\overline{\nu}_2,\overline{\nu}_4,\ldots,\overline{\nu}_{2(\lfloor m/2\rfloor+\lfloor n_1/2\rfloor+\ldots+\lfloor n_l/2\rfloor)}$ is an $\overline{S}$-regular sequence. This finishes the proof.
\end{proof}
From Propositions \ref{isogradedring} and \ref{propdimensionQaalgebra}, we deduce:
\begin{proposition}\label{propositiondimensionquotientnu}
\[
\text{dim}_{\mathbb{Z}_2}\left(S/\left(A_1\cup A_2\right)\right)=\frac{\left(\lfloor \frac{m}{2}\rfloor+\lfloor \frac{n_1}{2}\rfloor+\ldots+\lfloor \frac{n_{k+l}}{2}\rfloor\right)!}{\lfloor \frac{m}{2}\rfloor!\cdot \lfloor \frac{n_1}{2}\rfloor!\cdot\ldots\cdot \lfloor \frac{n_{k+l}}{2}\rfloor!}
\]
\end{proposition}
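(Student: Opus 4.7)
The plan is to follow the strategy already signposted in the preceding Proposition \ref{propositionregularitynu}: first kill the odd $\alpha_i$'s, then recognise the remaining relations as instances of the polynomials $\mu_m^{(k)}$ from the previous subsection, and finally invoke the graded-ring machinery to pass to the leading terms, where the dimension is known.

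More concretely, I would first observe that by Lemma \ref{lemmanuregular1} the ideal generated by $A_1$ equals $(\alpha_i \mid 1\leq i\leq m\text{ odd})$, so that
\[
\overline{S} := S/(A_1) \cong \mathbb{Z}_2[\gamma_1,\ldots,\gamma_{\lfloor m/2\rfloor}] \otimes \bigotimes_{p=1}^{l}\mathbb{Z}_2\bigl[\beta_1^{(p)},\ldots,\beta_{\lfloor n_p/2\rfloor}^{(p)}\bigr]
\]
is a polynomial ring in $\lfloor m/2\rfloor + \sum_p \lfloor n_p/2\rfloor$ indeterminates, where $\gamma_i := \overline{\alpha}_{2i}$ (using the symmetry $\gamma_{m-i}=\gamma_i$ to eliminate the higher indices). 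Computing $\overline{\nu}_{2i}$ in $\overline{S}$ yields
\[
\overline{\nu}_{2i} = \sum_{j=0}^{i}\gamma_{i-j}\,\overline{\mu}_j + \binom{2n}{2i},
\]
which is precisely a polynomial of the type $\tilde{\mu}_{i}^{(k)}$ studied in the previous subsection, with the role of the $\beta$-families now played by the single family $\gamma$ together with the existing $\beta^{(p)}$.

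Next, so that Proposition \ref{propmuregular} applies directly, showing that the sequence $\overline{\nu}_2,\overline{\nu}_4,\ldots,\overline{\nu}_{2N}$ with $N := \lfloor m/2\rfloor + \lfloor n_1/2\rfloor + \ldots + \lfloor n_l/2\rfloor$ is $\overline{S}$-regular (this is already done inside the proof of Proposition \ref{propositionregularitynu}). Grade $\overline{S}$ by $|\gamma_i| = i$ and $|\beta_i^{(p)}| = i$; the top-degree homogeneous component of $\overline{\nu}_{2i}$ is exactly the polynomial $h^{(k)}_i$, i.e.\ the $Q_i$ of Proposition \ref{Qaregularsequence} in the variables $\gamma_*,\beta_*^{(p)}$. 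By Proposition \ref{isogradedring}(ii), the quotient $\overline{S}/(\overline{\nu}_2,\ldots,\overline{\nu}_{2N})$ and the quotient $\overline{S}/(h^{(k)}_1,\ldots,h^{(k)}_N)$ have isomorphic associated graded rings with respect to the filtration induced by the grading, and therefore the same $\mathbb{Z}_2$-dimension.

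Finally, Proposition \ref{propdimensionQaalgebra} computes the dimension of the latter as the multinomial coefficient
\[
\frac{\bigl(\lfloor m/2\rfloor + \lfloor n_1/2\rfloor + \ldots + \lfloor n_l/2\rfloor\bigr)!}{\lfloor m/2\rfloor!\cdot \lfloor n_1/2\rfloor!\cdot\ldots\cdot\lfloor n_l/2\rfloor!},
\]
yielding the claim. The only conceptually nontrivial step is the middle one, namely justifying the passage to the associated graded ring; but this is already packaged in Proposition \ref{isogradedring}(ii), and regularity of the sequence in every order is inherited from the leading-term sequence $Q_1,\ldots,Q_N$ (which is regular in any order as the $Q_a$ generate a height-$N$ homogeneous ideal in a polynomial ring over a field), so no additional argument is required.
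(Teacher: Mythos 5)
Your proposal is correct and takes essentially the same approach the paper intends: pass to $\overline{S}=S/(A_1)$ via Lemma \ref{lemmanuregular1}, recognise the images $\overline{\nu}_{2i}$ as polynomials of the shape treated in the $\mu$-subsection, equip $\overline{S}$ with the grading $|\gamma_i|=|\beta_i^{(p)}|=i$ so that the leading terms are the $Q_a$ of Proposition~\ref{Qaregularsequence}, and apply Propositions~\ref{isogradedring} and \ref{propdimensionQaalgebra}. The paper compresses this into the single line ``From Propositions~\ref{isogradedring} and \ref{propdimensionQaalgebra}, we deduce''; you have simply spelled out the reduction that is left implicit there (and correctly read the statement's $n_{k+l}$ as $n_l$, which is the notation actually in force in this subsection).
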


\subsubsection{Relations between the $\nu_i$}
Recall the definition of $A_1$ and $A_2$ in Proposition \ref{propositionregularitynu}.
\begin{proposition}\label{propositionrelationsnu}
If $i$ is odd, then $\nu_i$ is a $\bigotimes_{p=1}^l \mathbb{Z}_2\left[\beta_0^{(p)},\beta_1^{(p)},\ldots,\beta_{n_p}^{(p)}\right]$-linear combination of the $\nu_j$ where $1\leq j\leq m$ are odd. In particular, $\nu_i\in (A_1)$. If $i$ is even, then $\nu_i$ is a $\mathbb{Z}_2$-linear combination of elements in $A_2$. In particular, $\nu_i\in (A_2)$.
\end{proposition}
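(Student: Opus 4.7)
The plan is to handle the two parities of $i$ separately and reduce each case to a statement already established. For odd $i$, Lemma~\ref{lemmanuregular1} already shows that $\nu_i$ is a $\bigotimes_{p=1}^l \mathbb{Z}_2[\beta_0^{(p)},\ldots,\beta_{n_p}^{(p)}]$-linear combination of the $\alpha_j$ with $j$ odd. The defining relation $\alpha_j = \alpha_{2m-j}$ folds every such $\alpha_j$ to some $\alpha_{j'}$ with $1 \leq j' \leq m$ odd, and the triangular identity $\nu_k = \alpha_k + \epsilon_k$ from the proof of Lemma~\ref{lemmanuregular1} (with $\epsilon_k$ a $\beta$-linear combination of odd-indexed $\alpha$'s of strictly smaller index) can be inverted by induction on $k$, expressing each such $\alpha_{j'}$ as a $\bigotimes_{p=1}^l \mathbb{Z}_2[\beta_0^{(p)},\ldots,\beta_{n_p}^{(p)}]$-linear combination of $\{\nu_k \mid 1 \leq k \leq m,\ k \text{ odd}\}$. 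Substituting back into the expression for $\nu_i$ settles the odd case.

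For even $i$, the strategy is to pass to $\overline{S} := S/(A_1)$ and identify the image of $\nu_{2i}$ there with the reduced polynomials $\tilde{\mu}$ from the previous subsection. Let $T \subset S$ be the subring generated by the even-index $\alpha$'s and all $\beta$'s. Since $S$ is polynomial in $\alpha_1,\ldots,\alpha_m$ together with the $\beta_1^{(p)},\ldots,\beta_{\lfloor n_p/2\rfloor}^{(p)}$, and we quotient by exactly the odd-index $\alpha$'s, the projection $S\to\overline{S}$ restricts to an isomorphism $T \xrightarrow{\cong} \overline{S}$ of $\mathbb{Z}_2$-algebras. Since every $\nu_{2i}$ lies in $T$ (no odd-index $\alpha$ appears in its expression), any $\mathbb{Z}_2$-linear relation holding among the $\overline{\nu}_{2i}$ in $\overline{S}$ lifts uniquely to the same relation among the $\nu_{2i}$ in $T \subset S$.

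In $\overline{S}$, set $\gamma_c := \overline{\alpha}_{2c}$ for $0 \leq c \leq m$, so that $\gamma_0 = 1$ and $\gamma_{m-c} = \gamma_c$. One computes
\[
\overline{\nu}_{2i} = \sum_{c + c_1 + \cdots + c_l = i} \gamma_c\,\beta_{c_1}^{(1)}\cdots \beta_{c_l}^{(l)} + \binom{2n}{2i},
\]
and Lucas' theorem gives $\binom{2n}{2i} \equiv \binom{n}{i} \pmod{2}$. Hence $\overline{\nu}_{2i}$ coincides with $\tilde{\mu}_i$ from the previous subsection, applied to the $(l+1)$-block data of sizes $(m,n_1,\ldots,n_l)$ (after reordering so that the even sizes come first, as the setup there requires). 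Setting $h := \lfloor m/2\rfloor + \lfloor n_1/2\rfloor + \cdots + \lfloor n_l/2\rfloor$, Proposition~\ref{propositionrelationsmu} then expresses $\overline{\nu}_{2i}$, for every $h < i \leq \lfloor n/2\rfloor$, as a $\mathbb{Z}_2$-linear combination of $\overline{\nu}_2, \overline{\nu}_4, \ldots, \overline{\nu}_{2h}$; lifting through $T \cong \overline{S}$ gives the required relation in $S$ among elements of $A_2$ (the case $i\le h$ being trivial).

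The main obstacle is the clean identification of $\overline{\nu}_{2i}$ with the polynomials from the previous subsection: verifying that $T \xrightarrow{\cong} \overline{S}$ so that $\mathbb{Z}_2$-linear relations lift, that the constant terms match via Lucas' theorem, and that the new $\gamma$-variable behaves exactly as an additional $\beta$-block of size $m$ so that Proposition~\ref{propositionrelationsmu} applies without further modification.
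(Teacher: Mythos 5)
Your proposal is correct and follows the paper's proof in its essential structure: the odd case is read off from Lemma~\ref{lemmanuregular1} by inverting the triangular system $\nu_k=\alpha_k+\epsilon_k$, and the even case reduces to Proposition~\ref{propositionrelationsmu} after the substitution $\gamma_j:=\alpha_{2j}$, which realises the $\nu_{2i}$ as polynomials of the $\mu$-type in one extra block of size $m$. The one genuine deviation is cosmetic but pleasant: you invoke Lucas' theorem to observe $\binom{2n}{2i}\equiv\binom{n}{i}\pmod 2$, so that $\overline{\nu}_{2i}$ literally equals $\tilde{\mu}_i$ and Proposition~\ref{propositionrelationsmu} for the reduced polynomials applies directly with no constant left over; the paper instead applies the unreduced version (whose spanning set includes $\mu_0=1$) and then kills the resulting constant with the mod-$2$ rank homomorphism. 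Your passage through the quotient $T\cong\overline{S}$ is likewise just an explicit unpacking of what the paper does implicitly by setting $\gamma_j:=\alpha_{2j}$ inside $S$ and noting that the even $\nu$'s contain no odd-index $\alpha$.
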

\begin{proof}
The claim about odd $i$ follows immediately from Lemma \ref{lemmanuregular1}. Now consider the $\nu_i$ for even $i$. For each $j$, we define $\gamma_j:=\alpha_{2j}$ in $S$. We have $\gamma_{m-j}=\alpha_{2m-2j}=\alpha_{2j}=\gamma_j$ for all $j$ and then for every $i$,
\[
\nu_{2i}=\sum_{j=0}^i \alpha_{2i-2j}\cdot \mu_j+\binom{2n}{i}=\sum_{j=0}^{i}\gamma_{i-j}\cdot \mu_j+\binom{2n}{i}
\]
and so up to the constant $\binom{2n}{i}$, the $\nu_{2i}$ are polynomials of the form considered in the previous section in the variables $\gamma_j$ and $\beta_j^{(p)}$. Thus Proposition \ref{propositionrelationsmu} implies that each $\nu_{2i}$ is a $\mathbb{Z}_2$-linear combination of 1 and the $\nu_j\in A_2$. But applying the rank function, we see that it is actually a $\mathbb{Z}_2$-linear combination of just the $\nu_j\in A_2$ since $\text{rk}(\nu_j)=0$ for all $j$.
\end{proof}

\subsection{The polynomials $\xi_j$}
We now consider the polynomials occurring in the computation for type $B_n$.

Let $m,n_1,\ldots, n_l\in \mathbb{N}$. We set $n:=m+n_1+\ldots+n_l$ and define
\[
T:=\frac{\mathbb{Z}_2[\alpha_0,\alpha_1,\ldots,\alpha_{2m+1}]\otimes\bigotimes_{p=1}^l \mathbb{Z}_2\left[\beta_0^{(p)},\beta_1^{(p)},\ldots,\beta_{n_p}^{(p)}\right]}{J_\alpha+J_\beta}
\]
where
\begin{align*}
    J_\alpha=&(\alpha_i+\alpha_{2m+1-i}\mid 0\leq i\leq 2m+1)+(\alpha_0+1)\\
    J_\beta=&\left(\beta_i^{(p)}+\beta_{n_p-i}^{(p)}\mid 1\leq p\leq l,~0\leq i\leq n_p\right)+\left(\beta_0^{(p)}+1\mid 1\leq p\leq l\right)
\end{align*}
Note that $T$ is very similar to the ring $S$ from the previous section, but the largest index $2m+1$ occurring for the $\alpha_i$ is odd for $T$. The ring $T$ is isomorphic to a polynomial ring in $m+\lfloor n_1/2\rfloor +\ldots+\lfloor n_l/2\rfloor$ indeterminates. We define a mod-2 rank ring homomorphism
$
\text{rk}\colon T\to \mathbb{Z}_2
$
via $\text{rk}(\alpha_i):=\binom{2m+1}{i}$ and $\text{rk}\left(\beta_j^{(p)}\right):=\binom{n_p}{j}$.
In $T$, we define elements
\begin{align*}
    \mu_k:=\sum_{a_1+\ldots+a_l=k}\beta_{a_1}^{(1)}\ldots \beta_{a_l}^{(l)},~~~~
    \xi_k:=\sum_{i=0}^{\lfloor k/2\rfloor} \alpha_{k-2i}\cdot \mu_i+\binom{2n+1}{k}
\end{align*}
From the interpretation of the $\xi_k$ as elements in the Tate cohomology of a representation ring and the above mod-2 rank function as computing the rank of representations modulo 2, it is clear that $\text{rk}(\xi_k)=0$ for all $k$.
These polynomials $\xi_j$ in $T$ formally look the same as the polynomials $\nu_j$ in $S$. However, due to the fact that the highest occurring index of the $\alpha_i$ is odd in one case and even in the other, the polynomials $\xi_j$ and $\nu_j$ are actually qualitatively different. To illustrate this, compare the following with Example \ref{examplenu}.
\begin{example}
Let $l=2$ and $m=2$, $n_1=3$, $n_2=5$. We then have $n=10$ and
\begin{align*}
    \xi_1&= \alpha_1+1 &\xi_2&=\alpha_2+\mu_1\\
    \xi_3&=\alpha_2+\alpha_1\mu_1 &\xi_4&=\alpha_1+\alpha_2\mu_1+\mu_2+1\\
    \xi_5&=\alpha_2\mu_1+\alpha_1\mu_2 &\xi_6&=\alpha_1\mu_1+\alpha_2\mu_2+\mu_3\\
    \xi_7&=\mu_1+\alpha_2\mu_2+\alpha_1\mu_3 &\xi_8&=\alpha_1\mu_2+\alpha_2\mu_3+\mu_4\\
    \xi_9&=\mu_2+\alpha_2\mu_3+\alpha_1\mu_4 &\xi_{10}&=\alpha_1\mu_3+\alpha_2\mu_4+\mu_3
\end{align*}
The expressions for the $\mu_j$ were given in Example \ref{exampleformu}.
\end{example}

\begin{proposition}\label{propositionregularityxi}
The elements in
\[
\left\{\xi_i\mid 1\leq i\leq m\text{ odd}\right\}\cup\left\{\xi_i\mid 2\leq i\leq 2\left(\left\lfloor \frac{m}{2}\right\rfloor+\left\lfloor \frac{n_1}{2}\right\rfloor+\ldots+\left\lfloor \frac{n_l}{2}\right\rfloor\right)\text{ even}\right\}
\]
form a $T$-regular sequence in some order.
\end{proposition}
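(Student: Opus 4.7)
The plan is to establish the regularity of $A_1\cup A_2$ in two stages, mirroring the proof of Proposition~\ref{propositionregularitynu}: first show that $A_1$ is $T$-regular, then that the images of the elements of $A_2$ form a regular sequence in $T/(A_1)$. Both stages will invoke Corollary~\ref{corollaryinhomogeneousregseq} applied to the natural grading $|\alpha_j|=j$, $|\beta_j^{(p)}|=j$ on $T$. For stage one, note that for $k$ odd with $1\leq k\leq m$, the polynomial
\[
\xi_k=\alpha_k+\sum_{i\geq 1}\alpha_{k-2i}\mu_i+\binom{2n+1}{k}
\]
has highest homogeneous component $\alpha_k$, because every $i\geq 1$ summand has degree $(k-2i)+i\leq k-1$. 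Since the odd-indexed $\alpha_k$ are distinct polynomial generators of $T$, they form a $T$-regular sequence in every order, and Corollary~\ref{corollaryinhomogeneousregseq} yields regularity of $A_1$. The resulting relations $\xi_1=\xi_3=\cdots=0$ let us solve inductively for each odd-indexed $\alpha_{2j+1}$ (with $2j+1\leq m$) as an explicit polynomial $\phi_{2j+1}\in\mathbb{Z}_2[\beta_j^{(p)}]$ of natural degree exactly $j$, identifying $T/(A_1)$ with the polynomial ring
\[
R':=\mathbb{Z}_2[\delta_1,\ldots,\delta_{\lfloor m/2\rfloor}]\otimes\bigotimes_{p=1}^{l}\mathbb{Z}_2[\beta^{(p)}_1,\ldots,\beta^{(p)}_{\lfloor n_p/2\rfloor}],
\]
where $\delta_j:=\alpha_{2j}$, of Krull dimension $N:=\lfloor m/2\rfloor+\sum_p\lfloor n_p/2\rfloor$.

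For stage two, I would rewrite $\xi_{2i}=\sum_{c'=0}^{\min(i,m)}\alpha_{2c'}\mu_{i-c'}+\binom{2n+1}{2i}$ and split the summation according to whether $2c'\leq m$ (giving $\alpha_{2c'}=\delta_{c'}$, with $\delta_0:=1$) or $2c'>m$, in which case the mirror relation $\alpha_{2c'}=\alpha_{2m+1-2c'}$ produces an odd-indexed $\alpha$ that is replaced by the corresponding $\phi$ in $R'$. This gives $\overline{\xi}_{2i}=P_i+g_i+\binom{2n+1}{2i}$, with
\[
P_i:=\sum_{c'=0}^{\lfloor m/2\rfloor}\delta_{c'}\mu_{i-c'}\quad\text{and}\quad g_i\in\mathbb{Z}_2[\beta_j^{(p)}];
\]
using $\deg\phi_{2j+1}=j$, a direct degree count gives $\deg g_i\leq i-1$, so the degree-$i$ homogeneous part of $\overline{\xi}_{2i}$ coincides with that of $P_i$, namely
\[
h_i^{(l+1)}:=\sum_{\substack{a_0+a_1+\cdots+a_l=i\\ 0\leq a_0\leq\lfloor m/2\rfloor,\ 0\leq a_p\leq\lfloor n_p/2\rfloor}}\delta_{a_0}\beta^{(1)}_{a_1}\cdots\beta^{(l)}_{a_l}.
\]
These are precisely the polynomials $Q_i$ of Proposition~\ref{Qaregularsequence} applied to the $l+1$ families of sizes $\lfloor m/2\rfloor,\lfloor n_1/2\rfloor,\ldots,\lfloor n_l/2\rfloor$, so $h_1^{(l+1)},\ldots,h_N^{(l+1)}$ form an $R'$-regular sequence; a second application of Corollary~\ref{corollaryinhomogeneousregseq} then shows $\overline{\xi}_2,\overline{\xi}_4,\ldots,\overline{\xi}_{2N}$ is $R'$-regular, and combining with stage one yields the claimed regularity of $A_1\cup A_2$ in $T$.

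The main obstacle will be the stage-two bookkeeping: carefully tracking the parity-swapping mirror relation $\alpha_i=\alpha_{2m+1-i}$ together with the inductive substitutions $\alpha_{2j+1}\mapsto\phi_{2j+1}$ so as to put $\overline{\xi}_{2i}$ into the clean form $P_i+g_i$, and verifying the degree bound $\deg g_i\leq i-1$. Once this is unpacked, the identification of the leading part with a $Q_i$-polynomial and the invocation of Propositions~\ref{Qaregularsequence} and~\ref{corollaryinhomogeneousregseq} is routine.
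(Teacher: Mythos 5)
Your proposal is correct and follows essentially the same two-stage strategy as the paper: first pass to $\overline{T}:=T/(A_1)$ and then show that the images $\overline{\xi}_{2i}$ form a regular sequence by identifying their leading terms with the polynomials $Q_i$ of Proposition~\ref{Qaregularsequence} and invoking Corollary~\ref{corollaryinhomogeneousregseq}. Two small imprecisions are worth flagging. First, you announce that both stages use the ``natural'' grading $|\alpha_j|=j$, $|\beta_j^{(p)}|=j$ on $T$; but with that grading $|\delta_{c'}|=|\alpha_{2c'}|=2c'$, so $h_i^{(l+1)}$ is not homogeneous and a term $\delta_{c'}\mu_{i-c'}$ of $P_i$ has degree up to $i+c'$, so your identification of the degree-$i$ part would fail. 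What you actually use, and what the paper makes explicit, is the regrading on $\overline{T}\cong R'$ with $|\delta_j|=j$ (and $|\beta_j^{(p)}|=j$), under which $P_i$ has top component $h_i^{(l+1)}$ and $g_i$ is a consequence of $\deg\phi_{2j+1}\leq j$; it would be cleaner to say so. Second, $\deg\phi_{2j+1}$ is only bounded above by $j$ (e.g.\ it can collapse to a constant if all $\mu_i$ vanish), not exactly $j$; the argument only needs the upper bound, so nothing breaks. For stage one the paper just reads off the unitriangular structure $\xi_k=\alpha_k+\eta_k+\text{const}$ with $\eta_k\in(\alpha_1,\ldots,\alpha_{k-2})$ rather than invoking Corollary~\ref{corollaryinhomogeneousregseq}, but your route is equally valid.
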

\begin{proof}
If $1\leq i\leq m$ is odd, then
\[
\xi_i=\alpha_i+\eta_i+\binom{2n+1}{i}
\]
where $\eta_i\in(\alpha_1,\alpha_3,\ldots,\alpha_{i-2})$ for every odd $1\leq i\leq m$. Hence it is clear that the $\xi_i$ for odd $1\leq i\leq m$ form a $T$-regular sequence.

So now let
\[
\overline{T}:=T/(\xi_i\mid 1\leq i\leq m\text{ odd})
\]
We see from the above that if $1\leq i\leq m$ is odd, then $\overline{\alpha}_i\in\overline{T}$ can be expressed in terms of the $\overline{\beta}_j^{(p)}$ and a constant. Note that $\overline{T}$ is still a polynomial ring. We can define a grading on $\overline{T}$ by setting
\begin{align*}
    \left|\overline{\beta}_i^{(p)}\right|&:=i~~\text{ for all }1\leq p\leq l,~1\leq i\leq \lfloor n_p/2\rfloor\\
    \left|\overline{\alpha}_{2i}\right|&:= i~~\text{ for all }1\leq i\leq \lfloor m/2\rfloor
\end{align*}
Then for all $2\leq 2i\leq 2(\lfloor m/2\rfloor +\lfloor n_1/2\rfloor+\ldots+\lfloor n_l/2\rfloor)$, we see that the highest homogeneous component of $\overline{\xi}_{2i}\in\overline{T}$ is of degree $i$, and Proposition \ref{Qaregularsequence} shows that they form a $\overline{T}$-regular sequence. Hence by Corollary \ref{corollaryinhomogeneousregseq}, the elements $\overline{\xi}_{2i}$ for $2\leq 2i\leq 2(\lfloor m/2\rfloor +\lfloor n_1/2\rfloor+\ldots+\lfloor n_l/2\rfloor)$ form a $\overline{T}$-regular sequence.

This completes the proof.
\end{proof}
\begin{remark}\label{remarkregularityxi}
The proof of the previous Proposition shows that we may replace $\xi_{2i}$ for $1\leq i\leq \lfloor m/2\rfloor+\lfloor n_1/2\rfloor +\ldots+\lfloor n_l/2\rfloor$ by any $\delta\in T$ such that $\overline{\delta}\in\overline{T}$ has the same highest homogeneous component as $\overline{\xi}_{2i}\in\overline{T}$ and still obtain a regular sequence.
\end{remark}
From Propositions \ref{isogradedring} and \ref{propdimensionQaalgebra}, we deduce:
\begin{proposition}\label{propositiondimensionquotientxi}
Let $I$ be the ideal in $T$ generated by the $\xi_i$ where $1\leq i\leq m$ is odd and by the $\xi_i$ where $2\leq i\leq 2(\lfloor m/2\rfloor+\lfloor n_1/2\rfloor+\ldots+\lfloor n_l/2\rfloor)$ is even. Then
\[
\text{dim}_{\mathbb{Z}_2}\left(T/I\right)=\frac{\left(\lfloor \frac{m}{2}\rfloor+\lfloor \frac{n_1}{2}\rfloor+\ldots+\lfloor \frac{n_{k+l}}{2}\rfloor\right)!}{\lfloor \frac{m}{2}\rfloor!\cdot \lfloor \frac{n_1}{2}\rfloor!\cdot\ldots\cdot \lfloor \frac{n_{k+l}}{2}\rfloor!}
\]
\end{proposition}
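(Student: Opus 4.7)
The plan is to reduce this dimension computation to Proposition \ref{propdimensionQaalgebra} by exactly the same two-step strategy used in the proof of Proposition \ref{propositionregularityxi}: first kill the odd-index generators (which trivially eliminate certain variables), and then apply the associated-graded machinery of Proposition \ref{isogradedring} to the remaining even-index generators.

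First I would observe that $T$ is itself a polynomial ring over $\mathbb{Z}_2$ in $m+\lfloor n_1/2\rfloor+\ldots+\lfloor n_l/2\rfloor$ indeterminates (with, say, the $\overline{\alpha}_i$ for $1\leq i\leq m$ and the $\overline{\beta}_j^{(p)}$ for $1\leq j\leq\lfloor n_p/2\rfloor$ as representatives). Proposition \ref{propositionregularityxi} tells us that the generators of $I$ form a regular sequence of length exactly $\lceil m/2\rceil+\lfloor m/2\rfloor+\lfloor n_1/2\rfloor+\ldots+\lfloor n_l/2\rfloor=m+\lfloor n_1/2\rfloor+\ldots+\lfloor n_l/2\rfloor$, which equals the Krull dimension of $T$. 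Hence $T/I$ is zero-dimensional, and in particular a finite-dimensional $\mathbb{Z}_2$-vector space.

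Next, following the proof of Proposition \ref{propositionregularityxi}, I would pass to the intermediate quotient
\[
\overline{T}:=T/(\xi_i\mid 1\leq i\leq m\text{ odd}).
\]
Since each such $\xi_i$ is of the form $\alpha_i+\eta_i+\binom{2n+1}{i}$ with $\eta_i\in(\alpha_1,\alpha_3,\ldots,\alpha_{i-2})$, the quotient $\overline{T}$ is still a polynomial ring, namely in the variables $\overline{\alpha}_{2i}$ for $1\leq i\leq\lfloor m/2\rfloor$ and $\overline{\beta}_j^{(p)}$ for $1\leq j\leq\lfloor n_p/2\rfloor$. In particular $\dim_{\mathbb{Z}_2}(T/I)=\dim_{\mathbb{Z}_2}\bigl(\overline{T}/(\overline{\xi}_{2i}\mid 1\leq i\leq N)\bigr)$ with $N:=\lfloor m/2\rfloor+\lfloor n_1/2\rfloor+\ldots+\lfloor n_l/2\rfloor$.

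Finally I would equip $\overline{T}$ with the grading $|\overline{\alpha}_{2i}|=i$, $|\overline{\beta}_j^{(p)}|=j$. Under this grading the highest homogeneous component of $\overline{\xi}_{2i}$ is the polynomial
\[
h_i:=\sum_{a+b_1+\ldots+b_l=i}\overline{\alpha}_{2a}\cdot\overline{\beta}_{b_1}^{(1)}\ldots\overline{\beta}_{b_l}^{(l)},
\]
which (after rewriting as a product-type symmetric sum with the conventions $\overline{\alpha}_0=\overline{\beta}_0^{(p)}=1$) is precisely of the shape $Q_i$ appearing in Proposition \ref{Qaregularsequence}, with the $l+1$ families of indeterminates having $\lfloor m/2\rfloor,\lfloor n_1/2\rfloor,\ldots,\lfloor n_l/2\rfloor$ members respectively. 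By Proposition \ref{Qaregularsequence} the $h_i$ form a regular sequence, so Proposition \ref{isogradedring}(ii) yields a filtration on $\overline{T}/(\overline{\xi}_{2i})$ whose associated graded is $\overline{T}/(h_1,\ldots,h_N)$; since a filtered module and its associated graded have the same $\mathbb{Z}_2$-dimension, Proposition \ref{propdimensionQaalgebra} gives
\[
\dim_{\mathbb{Z}_2}(T/I)=\frac{N!}{\lfloor m/2\rfloor!\cdot\lfloor n_1/2\rfloor!\cdot\ldots\cdot\lfloor n_l/2\rfloor!},
\]
which is the claimed formula. The only real subtlety (and the step I would check most carefully) is verifying that the highest homogeneous component $h_i$ of $\overline{\xi}_{2i}$ really matches the shape $Q_a$ of Proposition \ref{Qaregularsequence} under the given grading; everything else is a direct concatenation of results already established.
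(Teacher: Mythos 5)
Your proposal is correct and takes essentially the same approach as the paper: the paper's own proof is a one-line citation of Propositions \ref{isogradedring} and \ref{propdimensionQaalgebra}, and your expansion correctly reconstructs the intended argument by mirroring the proof of Proposition \ref{propositionregularityxi} (pass to $\overline{T}$, grade by $|\overline{\alpha}_{2i}|=i$, $|\overline{\beta}_j^{(p)}|=j$, identify the top homogeneous parts with the $Q_a$ of Proposition \ref{Qaregularsequence}, then compare dimensions via the associated graded). One small imprecision worth tightening: the displayed $h_i$ is not literally the top homogeneous component until you restrict the sum to $a\le\lfloor m/2\rfloor$ and $b_p\le\lfloor n_p/2\rfloor$, since the wrap-around substitutions $\overline{\alpha}_{2a}=\overline{\alpha}_{2m+1-2a}$ and $\overline{\beta}_{b}^{(p)}=\overline{\beta}_{n_p-b}^{(p)}$ strictly lower the degree — but you flag exactly this point as the step to check, and the check goes through.
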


\begin{lemma}\label{lemmaevenxioddxi}
If $1\leq 2j+1\leq m$, then $\alpha_{2j+1}+\alpha_{2j}\in(\xi_i\mid 1\leq i\leq m)$. More precisely, $\alpha_{2j+1}+\alpha_{2j}$ is a $\bigotimes_{p=1}^l \mathbb{Z}_2\left[\beta_0^{(p)},\beta_1^{(p)},\ldots,\beta_{n_p}^{(p)}\right]$-linear combination of the $\xi_i$ where $1\leq i\leq m$.
\end{lemma}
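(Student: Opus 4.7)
The plan is to read off $\alpha_{2j+1}+\alpha_{2j}$ as the coefficient of $t^{2j+1}$ in a cleverly chosen generating-function identity. Set $B:=\bigotimes_{p=1}^l\mathbb{Z}_2[\beta_0^{(p)},\ldots,\beta_{n_p}^{(p)}]$, viewed as a subring of $T$, and introduce in $T[\![t]\!]$ the series
\[
\mu(t):=\sum_{i\ge0}\mu_i t^{2i}\in B[\![t^2]\!],\qquad\alpha(t):=\sum_{k\ge0}\alpha_k t^k,\qquad g(t):=\sum_{k\ge0}\xi_k t^k.
\]
The defining formula for $\xi_k$, together with $\xi_0=\alpha_0+\binom{2n+1}{0}=1+1=0$, repackages cleanly into the identity $g(t)=\mu(t)\alpha(t)+(1+t)^{2n+1}$ in $T[\![t]\!]$.

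First I would invert $\mu(t)$: since $\mu_0=\beta_0^{(1)}\cdots\beta_0^{(l)}=1$, there exists $\tilde\mu(t):=\mu(t)^{-1}=\sum_{i\ge0}\tilde\mu_i t^{2i}\in B[\![t^2]\!]$. Multiplying the identity by $\tilde\mu(t)$ and then by $(1+t)$ yields
\[
(1+t)\alpha(t)=(1+t)\tilde\mu(t)g(t)+(1+t)^{2n+2}\tilde\mu(t).
\]
The crucial observation is the Frobenius identity in characteristic two, $(1+t)^{2n+2}=(1+t^2)^{n+1}$, which together with $\tilde\mu(t)\in B[\![t^2]\!]$ places the rightmost summand in $B[\![t^2]\!]$ and so kills all of its odd-degree coefficients.

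Extracting the coefficient of $t^{2j+1}$ on both sides then gives at once
\[
\alpha_{2j+1}+\alpha_{2j}=\sum_{i=0}^{j}\tilde\mu_i\bigl(\xi_{2j+1-2i}+\xi_{2j-2i}\bigr).
\]
Each $\tilde\mu_i$ lies in $B$; the term $\xi_0=0$ may be dropped; and the surviving indices lie in $\{1,\ldots,2j+1\}\subseteq\{1,\ldots,m\}$ by the hypothesis $2j+1\le m$. Hence $\alpha_{2j+1}+\alpha_{2j}$ is exhibited as a $B$-linear combination of $\xi_1,\ldots,\xi_m$, as required. I do not anticipate any serious obstacle: the whole argument is essentially mechanical, hinging only on the invertibility of $\mu(t)$ (which is immediate from $\mu_0=1$) and on the characteristic-two identity $(1+t)^2=1+t^2$.
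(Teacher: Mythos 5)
Your proof is correct, and it takes a genuinely different (and in some ways more explicit) route than the paper's. The paper proves the lemma by induction on $j$: it computes $\xi_{2j+1}+\xi_{2j}=\alpha_{2j+1}+\alpha_{2j}+\sum_{i=1}^{j}(\alpha_{2j-2i+1}+\alpha_{2j-2i})\mu_i$ directly from the defining formula (using $\binom{2n+1}{2j+1}+\binom{2n+1}{2j}=\binom{2n+2}{2j+1}\equiv 0\ (\mathrm{mod}\ 2)$), then substitutes recursively via the inductive hypothesis. Your generating-function argument encapsulates exactly this recursion: the power series $\tilde\mu(t)=\mu(t)^{-1}$ is precisely what carries out the recursive substitution in closed form, giving the explicit coefficients $\tilde\mu_i$ of the $B$-linear combination, which the paper's induction leaves implicit. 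The two approaches are mathematically equivalent, but yours buys a clean closed formula, while the paper's is more self-contained and avoids passing to the formal power series ring. One small point to be aware of: to assert the identity $g(t)=\mu(t)\alpha(t)+(1+t)^{2n+1}$ in $T[\![t]\!]$ for all coefficients (not just the ones you extract), you are implicitly using the conventions $\alpha_k=0$ for $k>2m+1$ and $\mu_i=0$ for $i>n-m$; this is consistent with the paper's usage (cf. its Example 5.24), but worth stating explicitly.
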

\begin{proof}
We prove this by induction on $j$.

For $j=0$, the assertion holds as
\[
\alpha_1+\alpha_0=\alpha_1+1=\alpha_1+\binom{2n+1}{1}=\xi_1\in (\xi_i\mid 1\leq i\leq m)
\]
Suppose now $j>0$. Then
\begin{align}\label{equationevenxioddxi}
\begin{split}
    \xi_{2j+1}+\xi_{2j}=&\sum_{i=0}^j \alpha_{2j+1-2i}\cdot \mu_i+\binom{2n+1}{2j+1}+\sum_{i=0}^j \alpha_{2j-2i}\cdot \mu_i+\binom{2n+1}{2j}\\
    =&\sum_{i=0}^j (\alpha_{2j-2i+1}+\alpha_{2j-2i})\mu_i\\
    =&\alpha_{2j+1}+\alpha_{2j}+\sum_{i=1}^j (\alpha_{2j-2i+1}+\alpha_{2j-2i})\mu_i
\end{split}
\end{align}
using that $\binom{2n+1}{2j+1}+\binom{2n+1}{2j}=\binom{2n+2}{2j+1}\equiv 0$ (mod 2). By induction hypothesis, for all $1\leq i\leq j$, the element $\alpha_{2j-2i+1}+\alpha_{2j-2i}$ is a $\bigotimes_{p=1}^l \mathbb{Z}_2\left[\beta_0^{(p)},\beta_1^{(p)},\ldots,\beta_{n_p}^{(p)}\right]$-linear combination of the $\xi_p$ for $1\leq p\leq m$.
Hence we deduce that so is $\alpha_{2j+1}+\alpha_{2j}$. This completes the proof.
\end{proof}
\begin{proposition}\label{propositionrelationsxi}
For all $1\leq j\leq n$,
\[
\xi_j\in (\xi_i\mid 1\leq i\leq m\text{ odd})+\left(\xi_i\mid 2\leq i\leq 2\left(\left\lfloor \frac{m}{2}\right\rfloor+\left\lfloor \frac{n_1}{2}\right\rfloor+\ldots+\left\lfloor \frac{n_l}{2}\right\rfloor\right)\text{ even}\right)=:\hat{I}
\]
\end{proposition}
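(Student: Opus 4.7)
The strategy is to leverage a palindromic symmetry coming from the generating-function form of the $\xi_j$'s, then reduce to an assertion about even indices in a convenient quotient of $T$, and finally return to the odd indices via symmetry.

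\smallskip
\textbf{Step 1 (Palindromic symmetry).} Set $N := n_1 + \cdots + n_l$ and introduce $A(t) := \sum_{i=0}^{2m+1} \alpha_i t^i$ and $M(t) := \sum_{i=0}^{N} \mu_i t^{2i}$ in $T[t]$. The defining relations $\alpha_i = \alpha_{2m+1-i}$ and $\mu_i = \mu_{N-i}$ make $A(t)$ and $M(t)$ palindromic of degrees $2m+1$ and $2N$ respectively, and $(1+t)^{2n+1}$ is palindromic of degree $2n+1$. Hence
\[
\sum_{j\geq 0}\xi_j t^j \;=\; A(t)M(t) + (1+t)^{2n+1}
\]
is palindromic of degree $2n+1$, giving the identity $\xi_j = \xi_{2n+1-j}$ in $T$ for every $j$.

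\smallskip
\textbf{Step 2 (Even indices).} I will show $\xi_{2i}\in\hat I$ for every $i\ge 1$; the cases $1\le i\le h$ are immediate. For $i>h$, pass to $\overline T := T/(\xi_s\mid s\text{ odd},\ 1\le s\le m)$. As in the proof of Proposition~\ref{propositionregularityxi}, each odd $\overline\alpha_s$ with $1\le s\le m$ becomes a polynomial in the $\overline\beta_j^{(p)}$, and via the relation $\alpha_{2j}=\alpha_{2m+1-2j}$ the same holds for $\overline\alpha_{2j}$ when $\lfloor m/2\rfloor<j\le m$. Set $\overline\gamma_i := \overline\alpha_{2i}$ for $1\le i\le \lfloor m/2\rfloor$ and equip $\overline T$ with the grading $|\overline\gamma_i|=i$, $|\overline\beta_j^{(p)}|=j$. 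A direct expansion identifies the top-degree homogeneous component of $\overline\xi_{2i}$ with the polynomial
\[
Q_i \;=\; \sum_{\substack{i_0+i_1+\cdots+i_l=i\\ i_0\le\lfloor m/2\rfloor,\ i_p\le\lfloor n_p/2\rfloor}} \overline\gamma_{i_0}\,\overline\beta_{i_1}^{(1)}\cdots\overline\beta_{i_l}^{(l)}
\]
of Proposition~\ref{Qaregularsequence}, the remaining contributions (from the $\beta$-polynomial forms of the higher $\overline\alpha_{2j}$ and from $\binom{2n+1}{2i}$) lying in strictly smaller degree. By Corollary~\ref{corollaryinhomogeneousregseq} the elements $\overline\xi_2,\ldots,\overline\xi_{2h}$ form an $\overline T$-regular sequence, and Proposition~\ref{isogradedring}(ii) identifies $\overline T/(\overline\xi_2,\ldots,\overline\xi_{2h})$ with the associated graded ring $\mathbb Z_2[\overline\gamma_*,\overline\beta^{(*)}_*]/(Q_1,\ldots,Q_h)$. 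Since $Q_i=0$ for $i>h$, a descending filtration-degree induction on $\overline\xi_{2i}$ in this quotient (at each stage killing the current leading homogeneous part by subtracting an appropriate multiple of some $\overline\xi_{2k}$ with $k\le h$, using that the leading parts $Q_1,\ldots,Q_h$ span the ideal $(Q_1,\ldots,Q_h)$ in the associated graded) yields $\overline\xi_{2i}=0$; lifting, $\xi_{2i}\in\hat I$.

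\smallskip
\textbf{Step 3 (Odd indices).} For odd $j$ with $1\le j\le m$ the claim is trivial. For odd $j$ with $m<j\le n$, the partner $2n+1-j$ is even, and Step 2 gives $\xi_{2n+1-j}\in\hat I$; combined with Step 1 this yields $\xi_j=\xi_{2n+1-j}\in\hat I$.

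\smallskip
The main obstacle is the descending-induction argument in Step 2: one must verify carefully that after the vanishing of the top-degree component $Q_i$, each successive lower-degree component of $\overline\xi_{2i}$ still lies in the ideal $(Q_1,\ldots,Q_h)$ of the associated graded, so that the reduction can be iterated all the way to zero. Equivalently, one has to check that passing from $(\overline\xi_2,\ldots,\overline\xi_{2h})$ to the larger ideal generated by all even $\overline\xi_{2i}$ does not change the associated graded quotient, which is exactly the content of the claim.
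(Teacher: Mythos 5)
Your Step 1 is a correct observation: the palindromic symmetry $\xi_j = \xi_{2n+1-j}$ does follow from the identity $\sum_{j}\xi_j t^j = A(t)M(t) + (1+t)^{2n+1}$ in $T[t]$, and it serves as a clean alternative to the paper's reduction, which instead uses equation (\ref{equationevenxioddxi}) and Lemma \ref{lemmaevenxioddxi} to show $\xi_{2j+1}+\xi_{2j}\in(\xi_i\mid 1\le i\le m)$. The real problem is Step 2, and you diagnose it yourself in your closing paragraph. The vanishing of $Q_i$ for $i>h$ only says that the degree-$i$ component of $\overline{\xi}_{2i}$ is zero; it gives you no control over whether the new leading component, in some degree $d<i$, lies in $(Q_1,\ldots,Q_h)$. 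By Lemma \ref{lemmainhomogeneoussequence}, that containment is a \emph{consequence} of the membership $\overline{\xi}_{2i}\in(\overline{\xi}_2,\ldots,\overline{\xi}_{2h})$ you are trying to prove, so assuming it at each stage of the descending induction is circular, and the argument as written does not establish the claim.

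What the paper supplies at precisely this point is the essential, nonelementary input: it passes to $\hat{T}=T/\hat{I}$ (the quotient by the full candidate ideal, not merely the odd $\xi_i$), rewrites $\hat{\xi}_{2j}=\sum_{a_0+\ldots+a_l=j}\gamma_{a_0}\hat{\beta}_{a_1}^{(1)}\ldots\hat{\beta}_{a_l}^{(l)}+\binom{2n+1}{2j}$ so that it takes exactly the form treated in Proposition \ref{propositionrelationsmu}, and applies that proposition to conclude that $\hat{\xi}_{2j}$ is a $\mathbb{Z}_2$-linear combination of $\hat{\xi}_2,\ldots,\hat{\xi}_{2h}$, all of which vanish in $\hat{T}$. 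Proposition \ref{propositionrelationsmu} is itself established by the generating-function argument of Section \ref{sectionpolynomialeqns} (culminating in Proposition \ref{propositionkernelpsi}); that is where the actual content of the linear relations lives. Your attempt bypasses this ingredient entirely, but supplies no substitute for it, so the induction in Step 2 cannot get started.
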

\begin{proof}
By equation (\ref{equationevenxioddxi}) and Lemma \ref{lemmaevenxioddxi}, the claim for odd $j$ follows from the statement for even $j$. Let $\hat{T}:= T/\hat{I}$. We need to show that the elements $\hat{\xi}_j\in\hat{T}$ represented by $\xi_j\in T$ are zero in $\hat{T}$ for even $j$.

Note that $(\xi_1,\ldots,\xi_m)\subset \hat{I}$. Let $\hat{\alpha}_{i}\in \hat{T}$ be the element represented by $\alpha_i\in T$. For $0\leq i\leq m$, we define $\gamma_i:=\hat{\alpha}_{2i}\in \hat{T}$. By Lemma \ref{lemmaevenxioddxi}, we have $\hat{\alpha}_{2i}=\hat{\alpha}_{2i+1}$ for $1\leq 2i+1\leq m$. Hence, for $m\geq i>\lfloor m/2\rfloor$,
\[
\gamma_i=\hat{\alpha}_{2i}=\hat{\alpha}_{2m+1-2i}=\hat{\alpha}_{2(m-i)+1}=\hat{\alpha}_{2(m-i)}=\gamma_{m-i}\text{ in }\hat{T}
\]
Furthermore, for $1\leq j\leq \lfloor n/2\rfloor$, we can write
\begin{align*}
    \hat{\xi}_{2j}=&\sum_{i=0}^j \hat{\alpha}_{2j-2i}\hat{\mu}_i +\binom{2n+1}{2j}\\
    =&\sum_{i=0}^j \gamma_{j-i}\hat{\mu}_i+\binom{2n+1}{2j}\\
    =&\sum_{a_0+\ldots+a_l=j}\gamma_{a_0}\hat{\beta}_{a_1}^{(1)}\ldots\hat{\beta}_{a_l}^{(l)}+\binom{2n+1}{2j}
\end{align*}
Thus Proposition \ref{propositionrelationsmu} implies that $\hat{\xi}_{2j}$ is a $\mathbb{Z}_2$-linear combination of
\[
1,~\hat{\xi}_2,~\hat{\xi}_4,\ldots,~\hat{\xi}_{2(\lfloor m/2\rfloor+\lfloor n_1/2\rfloor+\ldots+\lfloor n_l/2\rfloor)}
\]
But applying the rank function and using that $\text{rk}(\xi_i)=0$ for all $i$, we see that $\hat{\xi}_{2j}$ is actually a $\mathbb{Z}_2$-linear combination of
\[
\hat{\xi}_2,~\hat{\xi}_4,\ldots,~\hat{\xi}_{2(\lfloor m/2\rfloor+\lfloor n_1/2\rfloor+\ldots+\lfloor n_l/2\rfloor)}
\]
But $\hat{\xi}_{2i}=0$ in $\hat{T}$ for all $1\leq i\leq \lfloor m/2\rfloor+\lfloor n_1/2\rfloor +\ldots+\lfloor n_l/2\rfloor$. So the assertion follows.
\end{proof}
\begin{remark}\label{remarkxi}
We see from Lemma \ref{lemmaevenxioddxi} and the proof of Proposition \ref{propositionrelationsxi} that a slightly stronger claim holds: For all $1\leq j\leq n$, the element $\xi_j$ is a $\bigotimes_{p=1}^l \mathbb{Z}_2\left[\beta_0^{(p)},\beta_1^{(p)},\ldots,\beta_{n_p}^{(p)}\right]$-linear combination of the generators of $\hat{I}$ displayed above. We may even restrict to those generators $\xi_i\in\hat{I}$ for which $i<j$.
\end{remark}

\section*{Appendix A}
Suppose $W^\ast=\bigwedge(v_1,\ldots, v_f,w_1,\ldots,w_g)$ is a $\mathbb{Z}_4$-graded exterior algebra over $\mathbb{Z}_2$ with $v_i\in W^{-1}$ for all $i$ and $w_j\in W^{-3}$ for all $j$. Let $u_k:=\text{dim}_{\mathbb{Z}_2} W^k$ for $0\geq k\geq -3$.

If we regarded $W^\ast$ as a $\mathbb{Z}$-graded algebra with generators in the degrees given above, its Hilbert polynomial would be
\[
H(t):=(1+t^{-1})^f\cdot (1+t^{-3})^g\in \mathbb{Z}[t]
\]
Noting that in $\mathbb{C}$, we have the identities
\begin{align*}
    1^0+(-1)^0+i^0+(-i)^0&=4,\\
    1^1+(-1)^1+i^1+(-i)^1&=0,\\
    1^2+(-1)^2+i^2+(-i)^2&=0,\\
    1^3+(-1)^3+i^3+(-i)^3&=0,
\end{align*}
we deduce that if $(f,g)\neq(0,0)$, then
\begin{align*}
    u_{-k}=&\frac{1}{4}\left(1^k\cdot H(1)+(-1)^k\cdot H(-1)+i^k\cdot H(i)+(-i)^k\cdot H(-i)\right)\\
    =&\frac{1}{4}\left(2^{f+g}+2\cdot\text{Re}(i^k\cdot H(i))\right)\\
    =&\frac{1}{4}\left(2^{f+g}+2\cdot \text{Re}\left(i^k(1-i)^f(1+i)^g\right)\right)
\end{align*}
Let $\zeta:=e^{\frac{\pi i}{4}}$ be a primitive eighth root of unity. Then $1+i=\sqrt{2}\cdot \zeta$ and $1-i=\sqrt{2}\cdot \zeta^{-1}$ and so we deduce
\begin{align*}
u_{-k}=2^{f+g-2}+2^{\frac{f+g-2}{2}}\cdot \text{Re}\left(\zeta^{2k-f+g}\right)
\end{align*}
Concretely, we obtain the following table for $(f,g)\neq (0,0)$. The first two columns contain the values of $f$ and $g$ modulo 4. Throughout, we write $x:=f+g$.
\begin{table}[h]
\resizebox{\textwidth}{!}{%
\begin{tabular}{c|c|c|c|c|c}
$f~(4)$ & $g~(4)$ & $u_0$ & $u_{-1}$ & $u_{-2}$ & $u_{-3}$\\
\hline
0 & 0 & $2^{x-2}-2\cdot (-4)^{\frac{x-4}{4}}$ & $2^{x-2}$ & $2^{x-2}+2\cdot (-4)^{\frac{x-4}{4}}$ & $2^{x-2}$\\
1 & 0 & $2^{x-2}-2\cdot (-4)^{\frac{x-5}{4}}$ & $2^{x-2}-2\cdot (-4)^{\frac{x-5}{4}}$ & $2^{x-2}+2\cdot (-4)^{\frac{x-5}{4}}$ & $2^{x-2}+2\cdot (-4)^{\frac{x-5}{4}}$\\
2 & 0 & $2^{x-2}$ & $2^{x-2}+ (-4)^{\frac{x-2}{4}}$ & $2^{x-2}$ & $2^{x-2}- (-4)^{\frac{x-2}{4}}$\\
3 & 0 & $2^{x-2}- (-4)^{\frac{x-3}{4}}$ & $2^{x-2}+(-4)^{\frac{x-3}{4}}$ & $2^{x-2}+ (-4)^{\frac{x-3}{4}}$ & $2^{x-2}-(-4)^{\frac{x-3}{4}}$\\
0 & 1 & $2^{x-2}-2\cdot (-4)^{\frac{x-5}{4}}$ & $2^{x-2}+2\cdot (-4)^{\frac{x-5}{4}}$ & $2^{x-2}+2\cdot (-4)^{\frac{x-5}{4}}$ & $2^{x-2}-2\cdot (-4)^{\frac{x-5}{4}}$\\
1 & 1 & $2^{x-2}+ (-4)^{\frac{x-2}{4}}$ & $2^{x-2}$ & $2^{x-2}- (-4)^{\frac{x-2}{4}}$ & $2^{x-2}$\\
2 & 1 & $2^{x-2}+(-4)^{\frac{x-3}{4}}$ & $2^{x-2}+(-4)^{\frac{x-3}{4}}$ & $2^{x-2}- (-4)^{\frac{x-3}{4}}$ & $2^{x-2}- (-4)^{\frac{x-3}{4}}$\\
3 & 1 & $2^{x-2}$ & $2^{x-2}+2\cdot (-4)^{\frac{x-4}{4}}$ & $2^{x-2}$ & $2^{x-2}-2\cdot (-4)^{\frac{x-4}{4}}$\\
0 & 2 & $2^{x-2}$ & $2^{x-2}- (-4)^{\frac{x-2}{4}}$ & $2^{x-2}$ & $2^{x-2}+ (-4)^{\frac{x-2}{4}}$\\
1 & 2 & $2^{x-2}+ (-4)^{\frac{x-3}{4}}$ & $2^{x-2}- (-4)^{\frac{x-3}{4}}$ & $2^{x-2}- (-4)^{\frac{x-3}{4}}$ & $2^{x-2}+ (-4)^{\frac{x-3}{4}}$\\
2 & 2 & $2^{x-2}+2\cdot (-4)^{\frac{x-4}{4}}$ & $2^{x-2}$ & $2^{x-2}-2\cdot (-4)^{\frac{x-4}{4}}$ & $2^{x-2}$\\
3 & 2 & $2^{x-2}+2\cdot (-4)^{\frac{x-5}{4}}$ & $2^{x-2}+2\cdot (-4)^{\frac{x-5}{4}}$ & $2^{x-2}-2\cdot (-4)^{\frac{x-5}{4}}$ & $2^{x-2}-2\cdot (-4)^{\frac{x-5}{4}}$\\
0 & 3 & $2^{x-2}- (-4)^{\frac{x-3}{4}}$ & $2^{x-2}- (-4)^{\frac{x-3}{4}}$ & $2^{x-2}+ (-4)^{\frac{x-3}{4}}$ & $2^{x-2}+ (-4)^{\frac{x-3}{4}}$\\
1 & 3 & $2^{x-2}$ & $2^{x-2}-2\cdot (-4)^{\frac{x-4}{4}}$ & $2^{x-2}$ & $2^{x-2}+2\cdot (-4)^{\frac{x-4}{4}}$\\
2 & 3 & $2^{x-2}+2\cdot (-4)^{\frac{x-5}{4}}$ & $2^{x-2}-2\cdot (-4)^{\frac{x-5}{4}}$ & $2^{x-2}-2\cdot (-4)^{\frac{x-5}{4}}$ & $2^{x-2}+2\cdot (-4)^{\frac{x-5}{4}}$\\
3 & 3 & $2^{x-2}-(-4)^{\frac{x-2}{4}}$ & $2^{x-2}$ & $2^{x-2}+ (-4)^{\frac{x-2}{4}}$ & $2^{x-2}$
\end{tabular}}
\end{table}

\affiliationone{% in this example, two authors share an institution
   Tobias Hemmert\\
   Heinrich-Heine-Universit\"at D\"usseldorf\\
   Institut f\"ur Mathematik\\
   Universit\"atsstra\ss{}e 1\\
   40225 D\"usseldorf\\
   Germany
   \email{hemmert@hhu.de}
}
\end{document}